\DeclareMathOperator*{\argmin}{arg\,min}
\DeclarePairedDelimiter\floor{\lfloor}{\rfloor}
\newcommand{\cV}{\mathcal{V}}
\newcommand{\bbH}{\mathbb{H}}
\newcommand{\bbR}{\mathbb{R}}
\newcommand{\bV}{\mathbf{V}}
\newcommand{\bX}{\mathbf{X}}
\newcommand{\bY}{\mathbf{Y}}
\newcommand{\bZ}{\mathbf{Z}}
\def\Frechet{Fr\'{e}chet}
\def\av{\mathbf a}
\def\ev{\mathbf e}
\def\vv{\mathbf v}
\def\xv{\mathbf x}
\def\yv{\mathbf y}
\theoremstyle{plain}
\newtheorem{theorem}{Theorem}
\newtheorem{lemma}{Lemma}
\newtheorem{definition}{Definition}
\newtheorem{remark}{Remark}
\newtheorem{corollary}{Corollary}
\newtheorem{example}{Example}
\newtheorem{proposition}{Proposition}
\theoremstyle{remark}
\begin{document}

\begin{frontmatter}
\title{Huber means on Riemannian manifolds}
\runauthor{Lee and Jung}
\runtitle{Huber means on Riemannian manifolds}

\begin{aug}
\author[A]{\fnms{Jongmin}~\snm{Lee}\ead[label=e1]{jongmin.lee@pusan.ac.kr}\orcid{0000-0003-1723-4615}}
\and
\author[B]{\fnms{Sungkyu}~\snm{Jung}\ead[label=e2]{sungkyu@snu.ac.kr}\orcid{0000-0002-6023-8956}}
\address[A]{Department of Statistics,
Pusan National University\printead[presep={,\ }]{e1}}

\address[B]{Department of Statistics and Institute for Data Innovation in Science,
Seoul National University\printead[presep={,\ }]{e2}}
\end{aug}

\begin{abstract}
This article introduces Huber means on Riemannian manifolds, providing a robust alternative to the Fr\'{e}chet mean by integrating elements of both $L_2$ and $L_1$ loss functions. The Huber means are designed to be highly resistant to outliers while maintaining efficiency, making it a valuable generalization of Huber's $M$-estimator for manifold-valued data. We comprehensively investigate the statistical and computational aspects of Huber means, demonstrating their utility in manifold-valued data analysis. Specifically, we establish nearly minimal conditions for ensuring the existence and uniqueness of the Huber mean and discuss regularity conditions for unbiasedness. The Huber means are consistent and enjoy the central limit theorem. Additionally, we propose a novel moment-based estimator for the limiting covariance matrix, which is used to construct a robust one-sample location test procedure and an approximate confidence region for location parameters. The Huber mean is shown to be highly robust and efficient in the presence of outliers or under heavy-tailed distributions. Specifically, it achieves a breakdown point of at least 0.5, the highest among all isometric equivariant estimators, and is more efficient than the Fr\'{e}chet mean under heavy-tailed distributions. Numerical examples on spheres and the space of symmetric positive-definite matrices further illustrate the efficiency and reliability of the proposed Huber means on Riemannian manifolds.
\end{abstract}

\begin{keyword}[class=MSC]
\kwd[Primary ]{62R30}
\kwd{62G35}
\kwd[; secondary ]{62E20}
\end{keyword}

\begin{keyword}
\kwd{Central limit theorem}
\kwd{Riemannian center of mass}
\kwd{Robust statistics}
\kwd{Statistics on manifolds}
\end{keyword}

\end{frontmatter}
\tableofcontents

\section{Introduction}

Locating the mean of data has long been a fundamental task in statistics, serving as the basis for numerous statistical inferences and computations. Conventional mean estimation methods often rely on the $L_2$ loss function, exemplified by the \Frechet\ mean for data lying in spaces more general than Euclidean vector spaces. However, these methods can be significantly influenced by outliers, especially when dealing with manifold-valued data, which are increasingly encountered in contemporary sciences.

In this article, we introduce the concept of Huber means on Riemannian manifolds. The Huber means are defined as the minimizers of the expected Huber loss with a prespecified robustification parameter $c\in [0, \infty]$. It offers a robust alternative to the \Frechet\ mean by combining elements of $L_2$ loss for distances smaller than $c$, thereby conserving efficiency, and the $L_1$ loss for larger distances, inducing robustness to the tail of the distribution. The cutoff parameter $c$ controls the balance between these loss functions. 
The Huber mean serves as a natural generalization of Huber's $M$-estimator \citep{huber1964robust} to the manifold setting, and can be viewed as a robust extension of the Fr\'{e}chet mean; see Figure~\ref{flow_chart}. 

 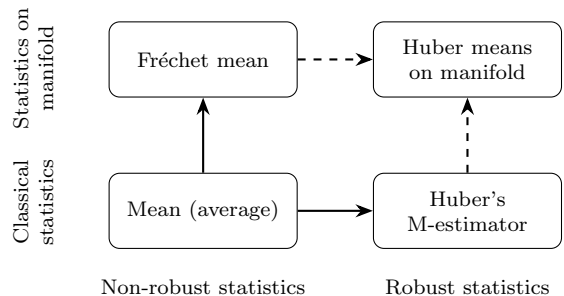
\begin{figure}[t]
     \centering
     \begin{tikzpicture}[
         node distance=1.4cm,
         nonrobust/.style={rectangle, draw, rounded corners, minimum width=2.5cm, minimum height=1cm, align=center},
         robust/.style={rectangle, draw, rounded corners, minimum width=2.5cm, minimum height=1cm, align=center},
         arrow/.style={-Stealth, thick},
         dashedarrow/.style={-Stealth, dashed, thick, black},
         dotarrow/.style={-Stealth, dashed, thick, black}
     ]

     \node[nonrobust] (classical) {Mean (average)};
     \node[nonrobust, above=of classical] (manifold1) { Fr\'{e}chet mean };
     \node[robust, right=of manifold1] (manifold2) {Huber means \\ on manifold};
     \node[robust, right=of classical] (robust) {Huber's\\ M-estimator};

     \draw[arrow] (classical) -- (robust);
     \draw[arrow] (classical) -- (manifold1);
     \draw[dashedarrow] (manifold1) -- (manifold2);
     \draw[dotarrow] (robust) -- (manifold2);

     \node[align=center, left=of manifold1, rotate=90, xshift=1.4cm] {Statistics on\\ manifold};
     \node[align=center, left=of classical, rotate=90, xshift=0.8cm] {Classical\\ statistics};
     \node[below=0.3cm of classical] {Non-robust statistics};
     \node[below=0.3cm of robust] {Robust statistics};

     \end{tikzpicture}
     \caption{A generalization of Huber means for data on Riemannian manifolds.}
     \label{flow_chart}
 \end{figure}

Our work comprehensively investigates the mathematical, statistical, and computational perspectives of Huber means for data on Riemannian manifolds. We begin this journey by verifying the conditions under which the Huber mean exists and is unique. While we believe these conditions are minimal and, in some cases, necessary, our requirement for ensuring a unique Huber mean involves bounding the support of the underlying distribution ($P_{X}$) for manifolds with positive sectional curvature. Conversely, for manifolds with nonpositive sectional curvature, we do not restrict on the support of $P_X$, which can be deemed as an improvement over the bounded support condition commonly used in the literature for the geometric median \citep{fletcher2009geometric, yang2010riemannian, afsari2011riemannian}.

The sample Huber mean is a natural estimator of the population Huber mean (see Definition~\ref{def:Huber}). We show that the population Huber means for any $c$ equal the population \Frechet\ mean $\mu \in M$ when $M$ is a Riemannian symmetric space and the distribution $P_X$ is geodesically symmetric about $\mu$. Under the same regularity conditions, the sample Huber mean is shown to be an \textit{unbiased} estimator of $\mu$. We also verify that the sample Huber mean set is a \textit{consistent} estimator of the population Huber mean set, even in cases where the Huber means are not unique. Furthermore, with appropriate regularity conditions, the \textit{central limit theorem} of the sample Huber mean is established.

We further propose a novel estimation strategy for the limiting covariance matrix of the sample Huber mean, which is crucial for constructing robust tests for location parameters. The limiting covariance matrix  $A_c = H_c^{-1} \Sigma_c H_c^{-1}$ is a combination of the expected Hessian matrix $H_c$ and the covariance matrix $\Sigma_c$ of the gradient of the Huber loss. Due to the challenging nature of explicitly writing out the Hessian matrix (see, for example, \cite{bhattacharya2008statistics, pennec2018barycentric,pennec2019curvature}), there has been scarce work on estimating $H_c$ even for the \Frechet\ mean case. Instead of directly estimating $H_c$, our approach involves evaluating a finite set of second-directional derivatives of the Huber loss function, from which the corresponding Hessian matrix can be constructed. After verifying the consistency of the proposed estimator of $A_c$, we utilize both the asymptotic normality and the estimator of $A_c$ to construct a robust one-sample location test procedure. The proposed test controls the type I error rate, and possesses an asymptotic power of 1 even under contiguous alternatives shrinking towards the null hypothesis.  

The robust nature of Huber means is further highlighted by their optimal breakdown point of 0.5, the highest possible among isometric-equivariant estimators on unbounded homogeneous spaces. This robustness is demonstrated through numerical examples on spheres and the space of symmetric positive-definite matrices, showing that Huber means remain efficient and reliable even in the presence of outliers or under heavy-tailed distribution. 

The efficiency of Huber means is compared with that of the \Frechet\ mean, under various situations. We demonstrate that the Huber means are more efficient than the \Frechet\ means when $P_X$ is heavy-tailed. Furthermore, it is verified that the robustification parameter $c$ can be chosen to achieve at least 95\% of relative efficiency over the \Frechet\ mean under Gaussian-type distributions. A practical guideline for choosing the value of $c$, ensuring 95\% relative efficiency, as used in \cite{holland1977robust}, is briefly discussed.

For Euclidean data, the (\Frechet) mean is simply the average of observations and thus is computationally cheap. For data on manifolds, however, the \Frechet\ mean often needs to be evaluated by iterative algorithms, thus the computational merit of the \Frechet\ mean diminishes. We develop a numerical recipe to compute the sample Huber and pseudo-Huber means, and show that the latter has a linear rate of convergence. As a comparison, both the subgradient algorithm of \cite{yang2010riemannian} and the Weiszfeld algorithm \citep{fletcher2009geometric}, commonly used for computing geometric median, are known to have sublinear convergence rates \citep{yang2010riemannian}.

A real dataset consisting of symmetric positive-definite matrices, obtained from a multivariate tensor-based morphometry study \citep{paquette2017ventricular}, is used to further demonstrate the robustness and efficiency of Huber means. In particular, the asymptotic normality of Huber means and approximate 95\% confidence regions for population location parameters are validated against resampling-based alternatives.

\subsection{Related work}

The Huber means, defined for a robustification parameter $c \in [0,\infty]$, include the celebrated \Frechet\ mean and the geometric median, which are sometimes referred to as the Riemannian $L_p$ center of mass \citep{afsari2011riemannian} (for $p = 2$ and $1$, respectively). See \cite{bhattacharya2005large, huckemann2011intrinsic, afsari2011riemannian, eltzner2019smeary, huckemann2021data,fletcher2009geometric, arnaudon2012medians} for statistical aspects of the \Frechet\ mean and the geometric median on non-Euclidean spaces. There is a vast collection of research endeavors on the statistical properties of the \Frechet\ mean and its generalizations. For the \Frechet\ mean on manifolds, its theoretical properties including the existence, uniqueness, consistency, asymptotic normality, and surprising peculiarities such as smeariness have been extensively studied; see e.g., \cite{huckemann2021data, eltzner2019smeary}. Some of our theoretical developments are rooted in these previous findings, building upon the established results and technicalities as used in \cite{huckemann2011inference, afsari2011riemannian, jung2025averaging}. 
The Huber mean is a special case of a \textit{generalized} \Frechet\ mean, which extends the classical \Frechet\ mean to various settings such as a largely unrestricted family of loss functions, and different spaces for the estimator and the sample. While the properties of the generalized \Frechet\ means under a general form of loss functions have been studied by \cite{huckemann2011inference, schotz2019convergence, schotz2022strong, park2023strong,evans2024limit}, we focus on the specific case of Huber loss. By doing so, we can leverage the advantages of the Huber loss function to derive a robust and efficient estimator.

%


This work can be naturally extended to incorporate other choices of robustness-inducing loss functions, or general robust $M$-estimators \citep{huber2004robust}. While we focus on the Huber loss function, potential extensions and applications are further discussed in Section~\ref{sec:discuss}. For this work, we have adopted the criteria for robustness and efficiency from the literature on robust statistics, such as \cite{huber2004robust}. In classical robust statistics, Huber means on Euclidean spaces (or Huber's $M$-estimators) have originated from the fundamental problem of location and scale estimation \citep{huber1964robust}, but have found significant applications in robust regression problems \citep{fan2017estimation, jiang2019robust, zhou2018new}. In this respect, our work paves the way for comprehensive studies on Huber regression for manifold-valued response variables.

\section{Location estimation under Huber loss}\label{sec2}
\subsection{Setup and notations}\label{sec:setup}

Throughout this article, unless otherwise stated, $M$ is a smooth manifold without boundary, endowed with a Riemannian metric. The Riemannian metric naturally induces the Riemannian distance $d: M \times M \rightarrow [0, \infty)$, thereby making $(M, d)$ a metric space. We assume that $M$ is geodesically complete, connected, and separable. 

For each point $p \in M$, a distinct vector space called the tangent space $T_{p}M$ is assigned, and its elements are called tangent vectors. The Riemannian norm of $\mathbf{v} \in T_pM$, denoted by $\| \mathbf{v} \|_{p}$, is induced by the Riemannian metric at $p$. By the fundamental theorem of Riemannian geometry, there is a unique torsion-free connection compatible with the Riemannian metric \citep[Theorem 5.4,][]{lee2006riemannian}, known as the Levi-Civita connection, denoted  by $\nabla$. This connection generalizes the directional derivative in Euclidean spaces. A smooth curve $\gamma$ on $M$ is called a \emph{geodesic} if $\nabla_{\gamma'} \gamma'= \textbf{0}$. Given a fixed point $p \in M$ and a tangent vector $\mathbf{v} \in T_{p}M$, there exists a unique geodesic $\gamma_{p, \mathbf{v}}$ starting from $p$ in the direction of $\mathbf{v}$ with constant speed. The Riemannian exponential map at $p$, $\mbox{Exp}_{p}:T_{p}M \to M$, is then defined by $\mbox{Exp}_{p}(\mathbf{v})=\gamma_{p, \mathbf{v}}(1)$. 
By the Hopf-Rinow theorem (see, e.g., Theorem 2.8 of \cite{do1992riemannian}), 
since $M$ is assumed to be a complete metric space, the exponential map at $p$ is well-defined on the entire $T_{p}M$. Moreover, for any $q\in M$ there is at least one geodesic joining $p, q \in M$. The geodesic is called a minimal geodesic between these points when its length equals $d(p, q)$. The tangential cut locus at $p$, denoted by $C_p$, is defined as the set of tangent vectors in $T_{p}M$, where their image of $\mbox{Exp}_{p}$ no longer gives unique minimal geodesics on $M$. 
The exponential map at $p$, $\mbox{Exp}_{p}: T_{p}M \to M$, may not be injective. 
The injectivity domain at $p$, denoted by $D_p$, refers to the largest region around $\mathbf{0} \in T_{p}M$ where $\mbox{Exp}_{p}$ remains an injective function. The cut locus of $p$, $\mbox{Cut}(p)$, is defined as the closure of $\mbox{Exp}_{p}(C_p)$. 
%
By restricting the domain of $\mbox{Exp}_{p}$ to $D_p$, the Riemannian logarithmic map at $p$ is defined as the inverse of the restricted exponential map; that is, $\mbox{Log}_{p}: M\setminus \mbox{Cut}(p) \to D_p$. The whole image of $\mbox{Log}_{p}$ equals the injectivity domain at $p$. The injectivity radius of $M$ is defined as $r_{\mbox{\tiny inj}}(M) = \inf_{p \in M} d(p, \mbox{Cut}(p))$. 
We denote by $\Delta$ (and $\delta$) the least upper bound (and the greatest lower bound, respectively) of sectional curvatures of $M$, and assume that $ -\infty < \delta \le \Delta < \infty$.
The analog of the Lebesgue measure to Riemannian manifolds is said to be the Riemannian measure (also known as the volume measure), denoted by $V$. Fortunately, it is well known that $\mbox{Cut}(p)$ is a Riemannian measure zero set. 
For further details on Riemannian geometry, we refer the reader to \cite{do1992riemannian, chavel2006riemannian,petersen2006riemannian,lee2006riemannian}.

 Let $P$ be the probability measure defined on a sample space, $X$ be an $M$-valued random variable defined on the sample space, and $P_{X}$ be the distribution (law) of $X$, i.e., $P_{X}(B)=P(X \in B)$ for all $B \in \mathcal{B}(M)$, where $\mathcal{B}(M)$ is the Borel $\sigma$-field of $M$ generated by the distance $d$. For any $x \in M$, $\delta_{x}$ stands for the Dirac measure at $x$. Given an $n$-tuple of deterministic points $(x_{1}, x_{2}, ..., x_{n}) \in M^{n}$, the empirical distribution is denoted by  $P_{n} = \frac{1}{n}\sum_{i=1}^{n} \delta_{x_{i}}$.

\subsection{Huber means on Riemannian manifolds} \label{subsec:Huber}


To define a robust descriptor of the central location of a distribution, we consider a distance-based loss function of the form $\rho_{c}\{d(\cdot, \cdot)\}$, where $\rho_{c}(\cdot)$ is the \textit{Huber loss} function with a cutoff parameter $c>0$.
The Huber loss function, introduced by \cite{huber1964robust}, combines elements of both $L_{2}$ and $L_{1}$ losses. For a cutoff parameter $c > 0$, the Huber loss function is defined for $x \ge 0$ as follows:
\[
\rho_{c}(x) = \begin{cases} x^2 & \mbox{if} ~ x \le c, \\ 
2c(x- \tfrac{c}{2}) &  \mbox{if} ~ x > c.
\end{cases}
\]
When $c \simeq 0$, the Huber loss closely resembles $2c$ times the $L_{1}$ loss, since $\rho_{c}(x) = 2cx - c^2 \simeq 2cx$ for small $c$. As $c \rightarrow \infty$, the Huber loss converges pointwise to the $L_{2}$ loss.

The Huber loss is continuously differentiable 
but is not twice differentiable at $x = c$, 
as illustrated in the right panel of Figure~\ref{comparison_loss}. 
This lack of smoothness often presents challenges in theoretical developments and calculations related to the Huber mean (see Definition~\ref{def:Huber} below). These issues motivate the adoption of \textit{pseudo-Huber loss} \citep{hartley2003multiple}, defined for $0 \le x < \infty$ as 
$\tilde{\rho}_{c}(x) = 2c^2\{\sqrt{1+(x/c)^2}-1 \}.$
This function serves as a smooth 
approximation of the Huber loss function. Similar to the Huber loss function, the pseudo-Huber loss converges pointwise to the $L_{2}$ loss as $c \rightarrow \infty$, and closely approximates $2cx$ for $c \simeq 0$. The graphs of these loss functions are illustrated in Figure~\ref{comparison_loss}.

\begin{figure}[t!]
\center
\includegraphics[width =0.8\linewidth]{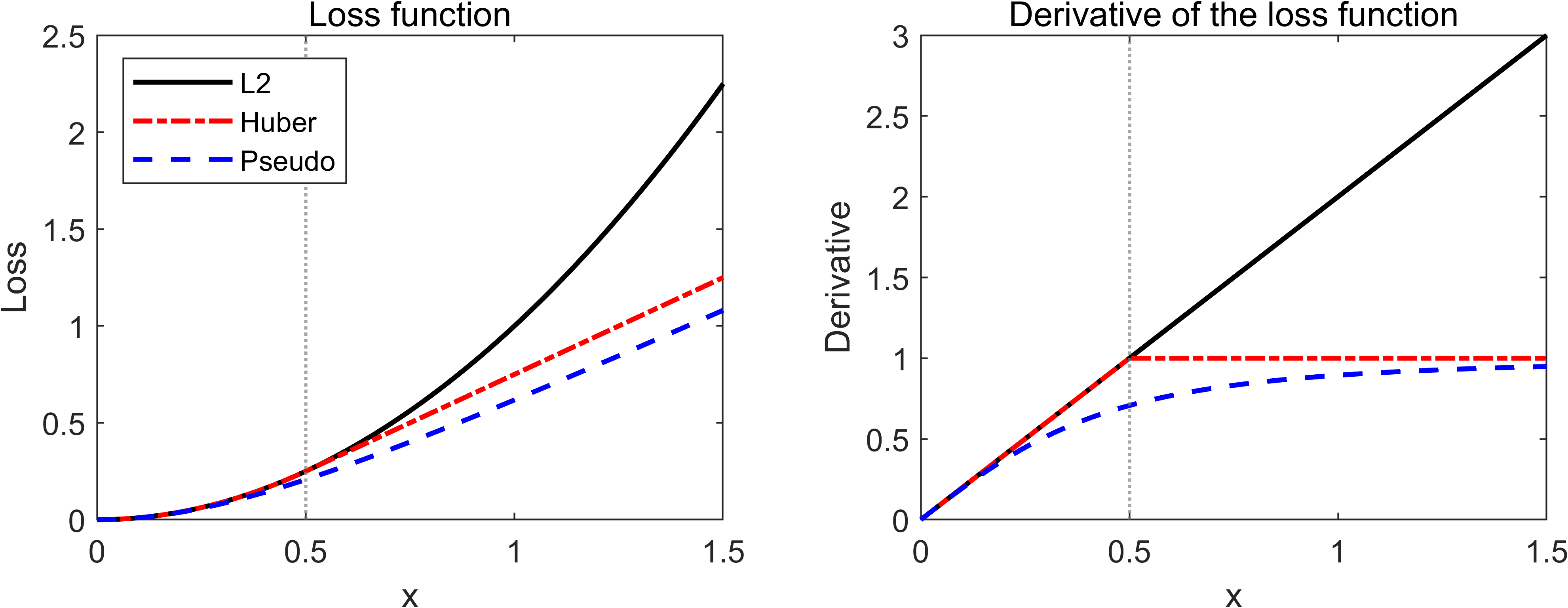} 
\caption{The $L_{2}$, Huber and pseudo-Huber loss functions for $c = 0.5$ and their derivatives.}
\label{comparison_loss}
\end{figure}

For a comprehensive study, we extend the definitions of (pseudo) Huber loss function to the limiting cases $c =0$ and $c = \infty$ by setting  
$\rho_{0}(x) = \tilde{\rho}_{0}(x) := L_{1}(x) = x$ and $\rho_{\infty}(x) = \tilde{\rho}_{\infty}(x) := L_{2}(x) = x^2$. 
The population (or sample) Huber mean is defined as any minimizer of the expected Huber loss with respect to $P_{X}$ (or $P_{n}$, respectively): 
\begin{definition}\label{def:Huber}
Given a prespecified constant $c \in [0, \infty]$, the \textit{population Huber mean set} with respect to $P_{X}$ is
\[
E^{(c)}:=\mbox{argmin}_{m \in M} F^{c}(m), ~ F^{c}(m) := \int \rho_{c}\{d(X, m)\} dP. 
\]
For given $n$ deterministic observations $(x_{1}, x_{2}, ..., x_{n}) \in M^{n}$, the \textit{sample Huber mean set} is 
\begin{equation}\label{eq:Huberfunctional}
E^{(c)}_{n}:=\mbox{argmin}_{m \in M} F^{c}_{n}(m), ~ F^{c}_{n}(m) := \frac{1}{n} \sum_{i=1}^{n} \rho_{c}\{d(x_{i}, m)\}.
\end{equation}
\end{definition}

 The pseudo-Huber mean sets are defined analogously to Definition~\ref{def:Huber}, with $\rho_c$ substituted by $\tilde{\rho}_{c}$, and are denoted by $\tilde{E}^{(c)}$ and $\tilde{E}_n^{(c)}$. Note that the definition specifies the set of Huber means rather than a singular Huber mean, as there are instances where more than one minimizer of the expected Huber loss exists.

While the Huber mean for $0<c<\infty$ corresponds to the usual definition of Huber means, the Huber mean for $c = 0$ coincides with the geometric median. For the choice of $c = \infty$, the Huber mean equals the  \Frechet\ mean.  We have extended the definition of Huber means to encompass the two extremes to facilitate a comprehensive discussion on the properties of Huber means that are dependent on the choice of $c$. For example, it is demonstrated that the Huber means with varying $0 < c < \infty$ ``continuously" bridge the Fr\'{e}chet mean and geometric median (see Section A.2 for the precise meaning), and that the breakdown point of Huber means for $0 \le c < \infty$ is 0.5 (see Theorem~\ref{thm:robust:breakdown}). Furthermore, Huber means are maximum likelihood estimators of a location parameter under the class of certain distributions including the isotropic Gaussian-type and Laplace-type ones (see Section A.1 of the supplementary material for a proof).

We note that the Huber mean is defined for \textit{any} metric space $(M, d)$, not limited to Riemannian manifolds. However, for a thorough discussion on the theoretical, statistical, and computational properties of the Huber mean, it is beneficial to focus our examination specifically on Huber means defined on Riemannian manifolds. In the subsequent subsection, we introduce a gradient descent algorithm for computing sample Huber means, utilizing the Riemannian logarithmic map. The theoretical and statistical properties of the Huber mean are thoroughly investigated in Section \ref{sec:thms}.

\subsection{An algorithm for computing sample Huber means}

In this subsection, we present a gradient descent algorithm for computing a sample Huber mean for the cases $c \in (0,\infty)$. For the special case where $c = 0$, the corresponding Huber mean is the (sample) geometric median on Riemannian manifolds, and can be computed either by the Weiszfeld algorithm of \cite{fletcher2009geometric} or by the subgradient descent algorithm of \cite{yang2010riemannian}. For $c = \infty$, the corresponding Huber mean or, equivalently, the Fr\'{e}chet mean can be computed by a gradient descent algorithm \citep{ferreira2019gradient}. 
 
Let $c \in (0,\infty)$ be fixed. 
For a given set of data $(x_{1}, x_{2}, ..., x_{n}) \in M^{n}$, the sample Huber mean minimizes the objective function $F_n^c$ (see (\ref{eq:Huberfunctional})), which can be rewritten as
\begin{align}
    \label{eq:HuberObjective}
    F^{c}_{n}(m) = \frac{1}{n}\sum_{i=1}^{n}[d^2(m, x_i)\cdot 1_{d(m, x_i) \le c}+ \{2c \cdot d(m, x_i)-c^2\}\cdot 1_{d(m, x_i) > c}],
\end{align}
where ``$1_{\cdot}$'' stands for the indicator function. Then, the estimating equation is given by $\textsf{grad}F^{c}_{n}(m) = \frac{1}{n}\sum_{i=1}^{n} \textsf{grad}\rho_{c}\{d(m, x_{i})\} = \mathbf{0} \in T_{m} M$, where $\textsf{grad}$ denotes the Riemannian gradient evaluated at $m \in M$. 
The gradients for the squared distance and the distance functions, necessary for this computation, are 
\begin{eqnarray}\label{grad}
\textsf{grad}d^2(m, x) &=& -2 \mbox{Log}_{m}(x) \quad \mbox{for any} ~ x \notin  \mbox{Cut}(m), ~ \mbox{and} \nonumber \\
\textsf{grad}d(m, x) &=& -\frac{\mbox{Log}_{m}(x)}{\|\mbox{Log}_{m}(x)\|_{m}} \quad \mbox{for any} ~ x \notin \mbox{Cut}(m) \cup \{m\},
\end{eqnarray}
where $\| \cdot \|_{m}$ denotes the Riemannian norm on the tangent space $T_mM$. (With an appropriate choice of the coordinate system on $T_mM$, $\| \cdot \|_{m}$ is equivalent to the Euclidean norm.)
Combining (\ref{eq:HuberObjective}) and (\ref{grad}), the negative gradient of $F^{c}_{n}(\cdot)$ at $m$ is given by 
\begin{equation}\label{Huber_grad}
-\textsf{grad}F^{c}_{n}(m) = \frac{2}{n}\sum_{i=1}^n \big[\mbox{Log}_{m}(x_i) \cdot 1_{d(m, x_i)\le c} + \frac{c \cdot \mbox{Log}_{m}(x_i)}{\| \mbox{Log}_{m}(x_i)\|_{m}} \cdot 1_{d(m, x_i) > c}\big] \in T_{m}M,
\end{equation}
under the assumption that $x_i \notin \mbox{Cut}(m)$ for all $1\le i\le n$. 
From a computational perspective, data points lying on $\mbox{Cut}(m)$ are excluded from the gradient calculation in  (\ref{Huber_grad}) and (\ref{pseudo_grad}). This exclusion is incorporated into the algorithm used for computing Huber means. 

The Riemannian gradient descent algorithm for minimizing $F_n^c$ begins with an initial candidate $m_0$ for the Huber mean, and iteratively updates the candidate along the direction of negative gradient. Since the gradient is a tangent vector, the exponential map is used to update the candidate. Our procedure for computing Huber means is summarized in Algorithm~\ref{alg:Huber}. 

%

For the case where the pseudo-Huber loss is used, the objective function evaluated at $m \in M$ is given by $$\tilde{F}^{c}_{n}(m)= \frac{1}{n}\sum_{i=1}^{n} \tilde{\rho}_{c}\{d(x_i, m)\}= \frac{2c^2}{n}\sum_{i=1}^{n} (\sqrt{1+d^2(x_i, m)/c^2}-1),$$ and the negative gradient of $\tilde{F}^{c}_{n}(\cdot)$ evaluated at $m$ is, for $m \notin \cup_{i=1}^n\mbox{Cut}(x_i)$,
\begin{eqnarray}\label{pseudo_grad}
-\textsf{grad} \tilde{F}^{c}_{n}(m) = \frac{2c^2}{n}\sum_{i=1}^{n}\frac{\mbox{Log}_{m}(x_i)}{\sqrt{1 + (\|\mbox{Log}_{m}(x_i)\|_{m}/c)^2}} \in T_{m}M. 
\end{eqnarray}

\begin{algorithm}[t]
  \caption{Computation for (pseudo) Huber mean for $c \in (0, \infty)$ 
  }
  \label{alg:Huber}
  \begin{algorithmic}
  \State \textbf{Input:}  Data $(x_{1},\, x_{2},\, \ldots,\, x_{n}) \in M^{n}$, an initial point $m_0$, a step size $\alpha > 0$, and a threshold $>0$.

  \While{ ($\Delta m \ge \mbox{threshold}$) } 
    \State $\Delta m = \frac{2}{n}\sum_{\{1 \le i \le n: ~ x_i \notin \mbox{\tiny Cut}(m_k)\}} \big[\mathrm{\text{Log}}_{m_k}(x_i) \cdot 1_{d(m_{k}, x_i) \le c}
    + \frac{c \cdot \text{Log}_{m_{k}}(x_i)}{\|\text{Log}_{m_{k}}(x_i)\|_{m_k}} \cdot 1_{d(m_{k}, x_i) > c}\big]$  \\
    (or $\Delta m = \frac{2c^2}{n}\sum_{\{1 \le i \le n: ~ x_i \notin \mbox{\tiny Cut}(m_k)\}} \mbox{Log}_{m_{k}}(x_i)/\sqrt{1+(\|\mbox{Log}_{m_{k}}(x_i)\|_{m_k}/c)^2}$ if the pseudo-Huber loss is used)
    \State  $m_{k+1} = \mathrm{\text{Exp}}_{m_{k}}(\alpha \Delta m)$
    \EndWhile  
  \end{algorithmic}
\end{algorithm}

A pertinent question is whether the algorithm described in Algorithm~\ref{alg:Huber} converges. The conditions for ensuring the convergence of Algorithm~\ref{alg:Huber} include limiting the data diameter in terms of the sectional curvatures of $M$ and carefully determining the step size $\alpha$. In Section A.3, we demonstrate that, under appropriate conditions, the iterate $m_k$ in the algorithm converges linearly to the sample pseudo-Huber mean when the pseudo-Huber loss is employed. Although it has been technically difficult to verify the convergence of Algorithm~\ref{alg:Huber} when the Huber loss is used, in all of our numerical experiments in Section~\ref{sec:examples}, the algorithm has converged in a few steps.

\section{Theoretical properties of Huber means}\label{sec:thms}
In this section, we scrutinize the mathematical, statistical, and computational properties of Huber means. Most results are stated only for Huber means with the Huber loss $\rho_{c}$, but the findings are valid when $\rho_{c}$ is replaced by $\tilde{\rho}_{c}$. 
The exception is Theorem~\ref{thm:robust:breakdown} for which we clearly state which loss function is applicable. Proofs of all findings, technical details, and auxiliary lemmas are given in Appendix.


\subsection{Existence and uniqueness}\label{sec:exist:unique}

The conditions for the uniqueness and existence of the Riemannian $L_{p}$ center of mass, including the Fr\'{e}chet mean and geometric median, are thoroughly provided in \cite{afsari2011riemannian}; see also \cite{sturm2003probability,huckemann2021data} for Fr\'{e}chet means. We adopt the arguments used in \cite{afsari2011riemannian} to extend these results to the Huber means for $c \in (0,\infty)$.

To ensure the existence of Huber means, we impose an integrability condition: 

\indent \underline{(A1)} For some $m \in M$, $F^c(m) = \int \rho_{c}\{d(X, m)\}dP < \infty$. 

It can be shown that, for any $c \in [0, \infty]$, Assumption (A1) is equivalent to the condition that $\int \rho_{c}\{d(X, m)\}dP < \infty$ for \textit{any} $m \in M$.
If $P_X$ has a finite variance, i.e., $\int d^2(X, m)dP < \infty$ for an $m\in M$, then (A1) holds for any $c \in [0, \infty]$. If the pseudo-Huber mean is of interest, (A1) is interpreted as $\tilde{F}^c(m) = \int \tilde{\rho}_{c}\{d(X, m)\}dP < \infty$ for some $m \in M$. For a more relaxed condition on $P_{X}$, Assumption (A1) can be replaced by $\int [\rho_{c}\{d(X, m)\} - \rho_c\{d(X, m_0)\}]dP < \infty$ for some $m, m_0\in M$. This enables us to define the Huber mean when $P_{X}$ is highly heavy-tailed. In this case, $E^{(c)}$ in Definition~\ref{def:Huber} should be defined as $\mbox{argmin}_{m \in M} \int [\rho_c\{d(X, m)\} - \rho_c\{d(X, m_0)\}]dP$, as in the trick used in \cite{sturm2003probability}. For ease of discussion, however, we leave it in the form of (A1).

 In the next result, we show and use the facts that for any choice of $c \in [0,\infty]$ the objective function $F^{c}$ is coercive, i.e., has its minimum on a compact set, and that $F^c$ is continuous  to ensure that the Huber mean set is non-empty.

\begin{theorem}[Existence of the population Huber means]\label{thm:exist}
For a given $c \in [0, \infty]$, assume that $P_{X}$ satisfies Assumption (A1). Then, the population Huber mean exists, i.e., $E^{(c)} \neq \phi$.
\end{theorem}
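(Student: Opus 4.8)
The plan is to combine the coercivity established in \Cref{prop:cpt} with the continuity of the objective function: \Cref{prop:cpt} reduces the infimum of $F^{c}$ over all of $M$ to its infimum over a compact set $K_{c}$, and a continuous real-valued function on a nonempty compact set attains its minimum. Note first that $K_{c} \neq \phi$, for otherwise $\inf_{m \in K_{c}} F^{c}(m) = +\infty$, contradicting $\inf_{m \in M} F^{c}(m) < \infty$, which holds under Assumption (A1). It therefore remains only to show that $m \mapsto F^{c}(m)$ is continuous on $M$ (continuity on $K_{c}$ would already suffice).

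To prove continuity of $F^{c}$, fix $m \in M$ and let $m_{k} \to m$ in $M$. For each fixed realization $x$ of $X$, the map $p \mapsto d(x, p)$ is $1$-Lipschitz and $\rho_{c}$ is continuous and nondecreasing on $[0,\infty)$, so $\rho_{c}\{d(x, m_{k})\} \to \rho_{c}\{d(x, m)\}$ as $k \to \infty$. To interchange this limit with the integral defining $F^{c}$, I exhibit an integrable dominating function. Setting $R := \sup_{k} d(m, m_{k}) < \infty$, the triangle inequality together with monotonicity of $\rho_{c}$ gives $\rho_{c}\{d(x, m_{k})\} \le \rho_{c}\{d(x, m) + R\}$ for all $k$. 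When $c \in (0,\infty)$, $\rho_{c}$ is $2c$-Lipschitz, so $\rho_{c}\{d(x, m) + R\} \le \rho_{c}\{d(x, m)\} + 2cR$; when $c = 0$, $\rho_{0}\{d(x, m) + R\} = d(x, m) + R$; and when $c = \infty$, $\rho_{\infty}\{d(x, m) + R\} \le 2 d^{2}(x, m) + 2R^{2}$. In every case the right-hand side is $P$-integrable: by the equivalence noted just after Assumption (A1), (A1) implies $\int \rho_{c}\{d(X, m)\}\,dP < \infty$, and for $c = \infty$ it is literally the statement $\int d^{2}(X, m)\,dP < \infty$. The dominated convergence theorem then yields $F^{c}(m_{k}) \to F^{c}(m)$, so $F^{c}$ is sequentially continuous, hence continuous, on the metric space $M$.

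Since $K_{c}$ is a nonempty compact subset of $M$ and $F^{c}$ is continuous, there exists $m^{\star} \in K_{c}$ with $F^{c}(m^{\star}) = \inf_{m \in K_{c}} F^{c}(m)$; by \Cref{prop:cpt} this equals $\inf_{m \in M} F^{c}(m)$, so $m^{\star} \in E^{(c)}$ and $E^{(c)} \neq \phi$. The identical argument establishes existence of the population pseudo-Huber mean: $\tilde{\rho}_{c}$ is smooth, nondecreasing, and for $c < \infty$ has derivative bounded by $2c$ (hence is $2c$-Lipschitz), while $\tilde{\rho}_{\infty} = L_{2}$, so the same domination and continuity arguments carry over verbatim.

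The step I expect to require the most care is the construction of the integrable majorant: one must make the bound uniform over the whole sequence $m_{k}$ and correctly separate the three regimes $c = 0$, $0 < c < \infty$, and $c = \infty$, the last relying on reading (A1) as a finite-second-moment condition. Everything else — the reduction via \Cref{prop:cpt}, sequential continuity implying continuity on a metric space, and a continuous function attaining its minimum on a compact set — is routine.
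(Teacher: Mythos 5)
Your proof is correct and follows essentially the same route as the paper: coercivity from \Cref{prop:cpt} reduces the problem to a compact set, continuity of $F^{c}$ is established, and the minimum is attained by the extreme value theorem. The only cosmetic difference is that you verify continuity via dominated convergence in every regime, whereas the paper uses the Lipschitz constant $2c \lor 1$ of $\rho_{c}$ directly for $c \in [0,\infty)$ and reserves dominated convergence for $c = \infty$; both are sound.
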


For any data set with finite sample size $n$, the sample (pseudo) Huber mean exists for any sample size $n$ and $c \in [0,\infty]$; that is, $E_{n}^{(c)} \neq \phi$. This is observed by substituting $P_{X}$ in Theorem~\ref{thm:exist} with $P_{n}$.

It is well known that the geometric median, which minimizes the sum of absolute deviations, may not be unique in Euclidean spaces. Similarly, the Huber means for $c < \infty$  on Euclidean spaces are not necessarily unique. Additionally,  the Fr\'{e}chet mean may lack uniqueness for general manifolds with non-zero curvature. These complexities raise an important question: Under what conditions can the uniqueness of the Huber mean be guaranteed?

To address this question, we would ideally hope for the convexity of the objective function  $F^{c}$ on $M$. Unfortunately, it is known that any non-constant continuous function on a compact manifold with no boundary, e.g., the unit sphere $S^k$, cannot be convex \citep{yau1974non}. Our strategy is to ensure that the Huber mean is located in a strongly convex subset of $M$, and to show that $F^{c}$ is strictly convex on the region. As previously done in \cite{yang2010riemannian, afsari2011riemannian}, we impose a condition on the support of $P_{X}$ to ensure both strong convexity of the domain and the strict convexity of $F^c$.

Recall that $\Delta ~ (< \infty)$ stands for the least upper bound of the sectional curvatures of $M$, and $r_{\mbox{\tiny inj}}(M)$ for the injectivity radius of $M$.
Throughout, we use the convention that ${1}/{\sqrt{\Delta}}$ is treated as $\infty$ in the case of $\Delta \le 0$. 

\indent \underline{(A2)} For the prespecified $c \in [0, \infty]$, there exists $p_{0} \in M$ such that 
$\textsf{supp}(P_{X}) \subseteq B_{r_0}(p_0)$, where $\textsf{supp}(P_{X})$ denotes the support of $P_{X}$,  and 
\begin{equation}\label{eq:r_0}
    r_0 = \begin{cases}
    \frac{1}{2}\min\{ \frac{\pi}{2\sqrt{\Delta}}, r_{\mbox{\tiny inj}}(M)  \} & \mbox{if} ~  0 \le c < \frac{\pi}{\sqrt{\Delta}},\\
     \frac{1}{2}\min\{\frac{\pi}{\sqrt{\Delta}}, r_{\mbox{\tiny inj}}(M) \} & \mbox{if} ~  \frac{\pi}{\sqrt{\Delta}} \le c \le \infty. \\
\end{cases}
\end{equation} 

We mention that if $c \ge \pi / \sqrt{\Delta}$ and (A2) is satisfied, then the corresponding Huber mean set $E^{(c)}$ is exactly the Fr\'{e}chet mean set: $E^{(c)} = E^{(\infty)}$ for all $\pi / \sqrt{\Delta} \le c \le \infty$. 
For cases where $0 \le c < \pi/\sqrt{\Delta}$, we require the radius $r_0$ to be less than or equal to $\pi/(4\sqrt{\Delta})$, rather than $\pi/(2\sqrt{\Delta})$, which was used in \cite{afsari2011riemannian}. In these cases, the Huber loss function is still convex, but is not strictly convex (due to the presence of distance function $x \mapsto d(x, m)$ in the loss). The smaller upper bound $\pi/(4\sqrt{\Delta})$ of the radius $r_0$ is imposed to ensure strict convexity of the objective function $F^c$. When the pseudo-Huber loss function is used, $r_0$ in Assumption (A2) can be replaced by $r_0 = \tfrac{1}{2}\min\{\tfrac{\pi}{\sqrt{\Delta}}, r_{\mbox{\tiny inj}}(M) \}$, since 
$m \mapsto \tilde{\rho}_{c} \{d(x,m)\}$ is strictly convex on $B_{r_0}(p_0)$ for any $x \in B_{r_0}(p_0)$ and $c \in (0, \infty)$.

\begin{theorem}[Uniqueness of population Huber means]\label{thm:unique:pop} 
\begin{enumerate}
     \item[(a)]
For a prespecified constant $c \in [0,\infty]$, suppose that $P_{X}$ satisfies Assumptions (A1) and (A2). 
  Suppose further that  either of the following holds: 
          \begin{enumerate}
              \item[(i)] $P_X$ does not degenerate to any single geodesic.
              \item[(ii)] $P_X$ degenerates to a single geodesic $\gamma_X$,  and  for any $m_0 \in E^{(c)}$ it holds that $P(d(X, m_0) \le c+\epsilon ) > 0$ for all $\epsilon > 0$.
          \end{enumerate} 
Then, the population Huber mean with respect to $P_{X}$ is unique. 
     \item[(b)]
For a prespecified constant $c \in (0,\infty)$, suppose $P_{X}$ satisfies $\int_M \tilde{\rho}_{c} \{d(X,m)\} dP < \infty$ for some $m$, and that the support of $P_X$ lies in an open ball of radius $r_0 = \frac{1}{2}\min\{\frac{\pi}{\sqrt{\Delta}}, r_{\mbox{\tiny inj}}(M) \}$. Under these conditions, the population pseudo-Huber mean with respect to $P_{X}$ is unique.
\end{enumerate}
\end{theorem}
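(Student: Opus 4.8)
The overall plan is to place every (pseudo-)Huber mean inside a strongly convex geodesic ball on which the objective function is strictly geodesically convex, whence uniqueness follows from the standard midpoint argument; existence (nonemptiness of the mean set) is already supplied by \Cref{thm:exist}. For \textbf{Part~(a)} I would first treat the range $c\ge\pi/\sqrt{\Delta}$ separately: under (A2) one has $d(x,m)<2r_0\le\pi/\sqrt{\Delta}\le c$ for every $x\in\operatorname{supp}(P_X)$ and $m\in\overline{B_{r_0}(p_0)}$, so $F^c\equiv F^\infty$ on that ball; since both $E^{(c)}$ and the Fr\'echet mean set lie in it (by the inclusion step below, applied to $\rho_c$ and to $\rho_\infty$), $E^{(c)}=E^{(\infty)}$, which is a singleton by the classical Karcher/Fr\'echet uniqueness theorem \citep{karcher1977riemannian,afsari2011riemannian}; note that (i)/(ii) play no role here. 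So suppose $0\le c<\pi/\sqrt{\Delta}$, hence $r_0\le\pi/(4\sqrt{\Delta})$ and $2r_0\le\min\{\pi/(2\sqrt{\Delta}),r_{\mathrm{inj}}(M)\}$; then $\overline{B_{r_0}(p_0)}$ is strongly convex and, for each $x\in\operatorname{supp}(P_X)$, $m\mapsto d^2(x,m)$ is strictly, and $m\mapsto d(x,m)$ is, geodesically convex on it. I would then (i) observe $E^{(c)}\subseteq\overline{B_{r_0}(p_0)}$ by the same argument used for the $L^p$ center \citep{afsari2011riemannian}, which uses only that $\rho_c$ is nondecreasing, and (ii) note $F^c=\int\rho_c\{d(x,\cdot)\}\,dP_X$ is geodesically convex on the ball because $\rho_c$ is convex and nondecreasing.

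The crux is \emph{strict} convexity of $F^c$ there. Arguing by contradiction, a failure produces a nonconstant geodesic $\gamma\colon[0,1]\to\overline{B_{r_0}(p_0)}$ with $F^c\circ\gamma$ affine; comparing the convex maps $t\mapsto\rho_c\{d(x,\gamma(t))\}$ at $t=0,\tfrac12,1$ forces each to be affine on $[0,1]$ for $P_X$-a.e.\ $x$. Fix such an $x$ and put $h(t)=d(x,\gamma(t))$: strict convexity of $d^2(x,\cdot)$ rules out $h<c$ or $h\equiv c$ on a subinterval, so $h>c$ on a dense open subset of $[0,1]$, where $\rho_c(h)=2ch-c^2$ is affine and hence $h$ is affine; then $h''\equiv0$, and since $h<2r_0\le\pi/(2\sqrt{\Delta})$ the Hessian comparison theorem makes $\operatorname{Hess}d(x,\cdot)$ positive semidefinite with kernel $\mathbb{R}\nabla d(x,\cdot)$, so $h''=\operatorname{Hess}d(x,\cdot)(\gamma',\gamma')\equiv0$ forces $\gamma'$ to be radial from $x$; thus the geodesic extension of $\gamma$ passes through $x$ (for $c=0$ this is immediate, after ruling out $x\in\gamma((0,1))$, which would give a corner in $h$). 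Hence $P_X$ concentrates on a single geodesic $\hat\gamma\supseteq\gamma$ --- contradicting (i), so uniqueness holds under (i). Under (ii), $P_X$ lives on $\gamma_X$; if $\operatorname{supp}(P_X)$ is one point uniqueness is trivial, otherwise suppose $m_0\ne m_1\in E^{(c)}\subseteq\overline{B_{r_0}(p_0)}$: the minimizing geodesic $\gamma$ between them lies in the ball and $F^c\circ\gamma$ is convex with equal minimal endpoint values, hence constant, so by the argument above $P_X$ lives on $\hat\gamma$, which, sharing $\ge2$ support points with $\gamma_X$, coincides with $\gamma_X$ inside the ball; thus $\gamma\subseteq\gamma_X$ and the whole segment $\gamma$ --- in particular its midpoint $m_*$ --- lies in $E^{(c)}$. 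Parametrizing $\gamma_X$ by arclength and letting $\mu$ be the pushforward of $P_X$, $F^c$ restricted to $\gamma_X$ becomes the Euclidean Huber objective $g(s)=\int\rho_c(|s-u|)\,d\mu(u)$, constant on an interval $[a,b]$; from $g''=2\mu\{u:|s-u|<c\}=0$ a.e.\ on $(a,b)$, lower semicontinuity of $s\mapsto\mu((s-c,s+c))$, and a countable covering of $(a-c,b+c)$ by such intervals, I get $\mu((a-c,b+c))=0$; but for $0<\epsilon<\tfrac{b-a}{2}$ the closed interval $[\tfrac{a+b}{2}-c-\epsilon,\tfrac{a+b}{2}+c+\epsilon]$ lies in $(a-c,b+c)$, so $P(d(X,m_*)\le c+\epsilon)=0$, contradicting (ii). Either way the population Huber mean is unique.

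For \textbf{Part~(b)}, the integrability hypothesis is (A1) for $\tilde\rho_c$, so $\tilde E^{(c)}\ne\phi$ by \Cref{thm:exist}; since $\tilde\rho_c$ is strictly increasing, the same inclusion argument gives $\tilde E^{(c)}\subseteq\overline{B_{r_0}(p_0)}$ with $r_0=\tfrac12\min\{\pi/\sqrt{\Delta},r_{\mathrm{inj}}(M)\}$, and on this ball $m\mapsto\tilde\rho_c\{d(x,m)\}$ is strictly geodesically convex for every $x\in\operatorname{supp}(P_X)$ (as recorded in \Cref{sec:exist:unique}); integrating, $\tilde F^c$ is strictly geodesically convex on $\overline{B_{r_0}(p_0)}$ and therefore has at most one minimizer. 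I expect the main obstacle to be the middle of Part~(a): proving that a failure of strict convexity of $F^c$ forces $P_X$ onto a single geodesic --- this is precisely where the Hessian comparison enters, and where the sharper radius bound $\pi/(4\sqrt{\Delta})$ rather than $\pi/(2\sqrt{\Delta})$ is needed --- and then, in the degenerate-geodesic case, converting condition (ii) into a contradiction through the one-dimensional reduction and the width-$2c$ flat stretch of the Huber objective it produces.
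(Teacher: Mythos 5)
Your proposal follows the same skeleton as the paper's proof: localize all minimizers to a strongly convex ball via the inclusion argument (\Cref{lem:contain:ball}), dispose of $c\ge \pi/\sqrt{\Delta}$ by reduction to the Fr\'echet mean, and for $c<\pi/\sqrt{\Delta}$ derive uniqueness from geodesic convexity, with the Hessian comparison theorem and the tighter radius $\pi/(4\sqrt{\Delta})$ doing the real work, plus a one-dimensional reduction for the degenerate case (ii); Part (b) is handled identically in both. Two sub-steps differ in flavor. Under (i) you argue contrapositively (failure of strict convexity makes each integrand affine, and the equality case of the Hessian comparison then forces $P_X$ onto the extension of $\gamma$), whereas the paper directly splits the integral over $B\setminus\gamma$ and $\gamma$ and uses strict convexity of the off-geodesic part; these are equivalent. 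Under (ii) you use a second-derivative/measure argument ($g''=0$ a.e.\ on $(a,b)$ kills all mass of the pushforward $\mu$ in $(a-c,b+c)$, contradicting (ii) at the midpoint), while the paper runs a first-derivative increment argument at $s=s_0+h$ for $c>0$ and a separate cdf/median argument for $c=0$. Your treatment of where the two hypothetical minimizers sit relative to $\gamma_X$ under (ii) is, if anything, more careful than the paper's, which implicitly places them on $\gamma_X$.

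One concrete gap: your covering argument under (ii) degenerates at the endpoint $c=0$, which Part (a) includes. For $c=0$ the windows $(s-c,s+c)$ are empty, so ``$g''=2\mu((s-c,s+c))=0$'' carries no information and $\bigcup_{s}(s-c,s+c)$ cannot cover $(a,b)$; you cannot conclude $\mu((a-c,b+c))=0$ this way. The fix is essentially the paper's Case III, or a one-line first-derivative computation: constancy of $g(s)=\int|s-u|\,d\mu(u)$ on $[a,b]$ gives $0=g'(s_2^-)-g'(s_1^+)=2\mu((s_1,s_2))$ for all $a<s_1<s_2<b$, hence $\mu((a,b))=0$, after which your midpoint contradiction with (ii) goes through unchanged. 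Two smaller points to spell out in a full write-up: deducing that $t\mapsto\rho_c\{d(x,\gamma(t))\}$ is affine for $P_X$-a.e.\ $x$ requires the midpoint comparison over a countable dense family of triples (one triple only yields midpoint equality at $t=\tfrac12$), and the identity $g''(s)=2\mu((s-c,s+c))$ a.e.\ needs a short justification via difference quotients of the Lipschitz, monotone $g'$ together with the fact that $\mu$ has at most countably many atoms.
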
  

We emphasize that, for the case where $r_0$ is infinite, Theorem~\ref{thm:unique:pop} essentially requires only the integrability of the loss function, while allowing unbounded support for the distribution.  
As a comparison, in \cite{yang2010riemannian,afsari2011riemannian}, the authors require a bounded support condition to ensure the uniqueness of the geometric median, even on manifolds with nonpositive sectional curvature. In this respect, 
Part (a) of Theorem~\ref{thm:unique:pop} is a significant improvement over the bounded support condition for the case $c = 0$.

Our result ensures uniqueness without any restrictions on the support of $P_X$ for manifolds with nonpositive sectional curvature, including $\bbR^{k}$, the space of symmetric positive-definite matrices $\mbox{Sym}^{+}(k)$ endowed with log-Euclidean or affine-invariant Riemannian distances \citep{pennec2006riemannian}, and the hyperbolic space $\bbH^{k}$.


For manifolds with positive sectional curvature, it is noteworthy that the radius of the open ball $r_0 = \tfrac{1}{2}\min\{ \tfrac{\pi}{2\sqrt{\Delta}}, r_{\mbox{\tiny inj}}(M)  \}$ in Assumption (A2) for smaller $c \le \pi/\sqrt{\Delta}$ coincides with the previous findings on the support condition for the case $c = 0$ \citep{yang2010riemannian,afsari2011riemannian}. For the $k$-dimensional unit sphere $S^{k}$ (with the usual metric, i.e., $\Delta = 1$), the Huber mean for $c \in [0,\pi)$ is unique if $\textsf{supp}(P_{X})$ lies in an open ball of radius $\pi/4$; for $c \ge \pi$, 
the Huber mean coincides with the Fr\'{e}chet mean, which is unique if the support lies in an open $(\pi/2)$-ball. For smaller $c$, we do not believe that the radius $r_0$ in the support condition (A2) is the largest possible. Nevertheless, there are nontrivial examples where the Huber mean is not unique when the condition (A2) is violated; see Section B.3 for examples. 
On the other hand, the uniqueness of pseudo-Huber means for any $c \in (0, \infty)$ is guaranteed for any distribution supported on an open hemisphere. 

We now explain the somewhat unusual requirement (ii) of Part (a) in Theorem~\ref{thm:unique:pop}. This special case, where $P_X$ degenerates to a single geodesic, corresponds to the typical situation of scalar-valued Huber means. For this scenario, the requirement of non-negligible density or mass near any $m_0 \in E^{(c)}$ is needed to prevent situations where Huber loss function $F^c$ is \textit{flat} at any Huber mean. An illustrative example of a situation where requirement (ii) is violated is given in Figure \ref{fig:unique_c}. In this example, there exists a closed geodesic segment such that every element in the segment is Huber mean. This occurs due to the presence of $d(x,m)$ in the loss function, whose sum or integral might be flat along the geodesic segment. Requirement (ii) is imposed to prevent such situations. We note that, for $c > 0$, the condition can be simplified to $P(d(X, m_0) \le c) > 0$ for any $m_0 \in E^{(c)}$. 

\begin{figure}[t]
\centering
\includegraphics[scale=0.4]{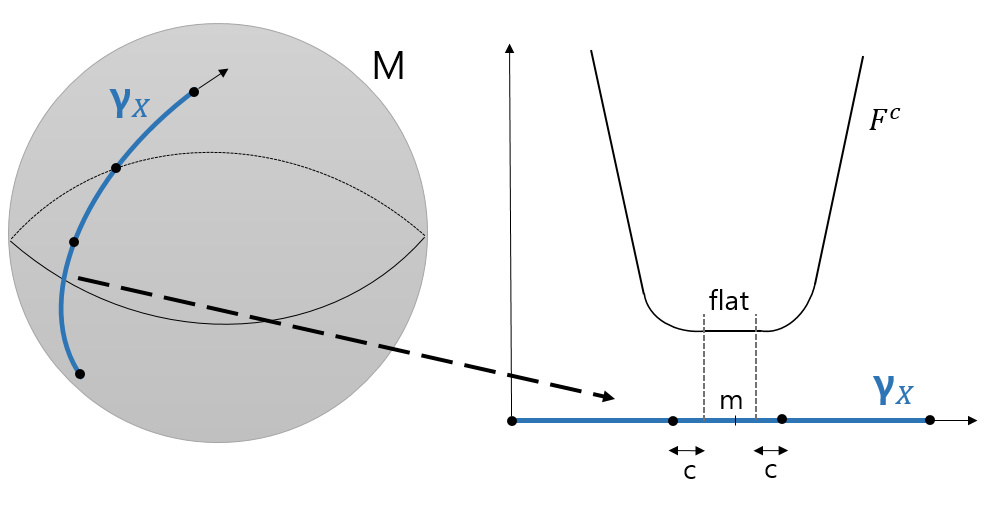}
\caption{An example of non-unique Huber means for which the requirement (ii) of Theorem~\ref{thm:unique:pop} is violated. (Left) $P_{X}$ is equally distributed on four black dots on a geodesic $\gamma_X$. (Right) The graph of $F^{c}$ parametrized by the arc length of $\gamma_{X}$. In this situation, there is a point $m \in E^{(c)} \subset \gamma_{X}$ near the center of $\gamma_{X}$ such that $P(d(X, m) \le c) = 0$. The minimum of $F^c$ occurs at every point of a closed segment within $\gamma_X$.
}
\label{fig:unique_c}
\end{figure}



\subsection{Unbiasedness of sample Huber means}\label{sec:unbiased}

On Euclidean spaces $(M = \bbR^k$), the sample mean, i.e., the sample \Frechet\ mean, is an unbiased estimator of the population mean vector. For general manifolds, however, the sample \Frechet\ mean is not necessarily unbiased with respect to the population \Frechet\ mean, as demonstrated in  \cite{pennec2019curvature}. We say an estimator $\hat\theta_n$ (based on a random sample of size $n$) of $\theta \in M$ is \emph{unbiased} if the population \Frechet\ mean of $\hat\theta_n$ equals $\theta$. While  asymptotic unbiasedness of the sample \Frechet\ mean has been investigated \citep{schotz2019convergence,pennec2019curvature}, 
to our knowledge, scarce work has addressed regularity conditions under which the finite-sample unbiasedness of \Frechet\ means is exactly guaranteed. This obscurity raises a relevant question: Under what regularity conditions can the finite-sample unbiasedness of Huber means be ensured? 
 

We address this question for the case where $M$ is a \textit{Riemmannian symmetric space}. A smooth and connected Riemannian manifold is said to be a (globally) Riemannian symmetric space if, for each $\mu \in M$, there exists a unique isometry $s_\mu$, called geodesic symmetry, that reflects the manifold through $\mu$ (so that $s_\mu(\mu) = \mu$ and $s_\mu$ reverses all geodesics through $\mu$). Examples of Riemannian symmetric spaces include $\bbR^k$ with $s_\mu(\mu +m) = \mu -m$, and $S^k \subset \bbR^{k+1}$ with $s_\mu(m) = (I_{k+1} - 2 \mu \mu^T)m$. (see \cite{cornea2017regression} for more examples).

An appropriate notion of symmetric distributions on $M$ is now introduced.

\begin{definition}
The distribution $P_{X}$ on a Riemannian symmetric space $M$ is said to be \emph{geodesically symmetric} about $\mu \in M$ if the distribution of $s_\mu (X)$ is also $P_X$.
\end{definition}

To exemplify a class of geodesically symmetric distributions on $M$, we suppose $P_{X}$ is absolutely continuous with respect to $V$. Consider a class of symmetric density functions: 
\begin{equation}\label{eq:mahala:dist} 
f_{\mu, \Sigma, g}(x)= C \cdot g\{d_\Sigma(x,\mu)\},
\end{equation}
for $x \in B_{r_{\mbox{\tiny inj}}}(\mu)$,
where $r_{\mbox{\tiny inj}} ~(=r_{\mbox{\tiny inj}}(M))$ refers to the injectivity radius of $M$ (which may be infinite). 
Here, $\Sigma$ is a positive-semidefinite matrix, $d_\Sigma(x,\mu) = \mbox{Log}_{\mu}(x)^{T}\Sigma^{-1}\mbox{Log}_{\mu}(x)$ and $g:[0, \infty) \rightarrow [0, \infty)$ is any measurable function satisfying $\int_{B_{r_{\mbox{\tiny inj}}}(\mu)} g\{d_\Sigma(x,\mu)\} dV(x) \in (0, \infty)$. Any distribution with the density (\ref{eq:mahala:dist}) is geodesically symmetric about $\mu$, since $f_{\mu, \Sigma, g}(x) = f_{\mu, \Sigma, g}\{s_{\mu}(x)\}$ for any $x \in B_{r_{\mbox{\tiny inj}}}(\mu)$.


We present a set of regularity conditions for the finite-sample unbiasedness of Huber means.

\indent (C1) The following are satisfied: 
\begin{itemize}
\item[(i)] $M$ is a simply connected and complete Riemannian symmetric space.
\item[(ii)] $P_{X}$ has a finite variance, and does not degenerate to any single geodesic.
\item[(iii)] For a given $\mu \in M$, $P_{X}$ is geodesically symmetric about $\mu$, and satisfies $\textsf{supp}(P_{X}) \subseteq B_{r}(\mu)$, where $r = \frac{1}{2}\min\{\frac{\pi}{2\sqrt{\Delta}}, r_{\mbox{\tiny inj}}(M)\}$ (which may be infinite). 
\end{itemize}

If Conditions (ii) and (iii) above hold, then the Huber means are guaranteed to be unique (cf. Theorem \ref{thm:unique:pop}). 
The next result shows that, under the regularity conditions, the population Huber means are the same for any $c \in [0, \infty]$. By utilizing this, it is further shown that the sample Huber mean for any $c$ is unbiased under the same conditions.

\begin{proposition}\label{prop:unbiased}
If Condition (C1) is satisfied, then for a given $c \in [0, \infty]$ the following hold:
\begin{itemize} 
\item[(a)] The population Huber mean for $c$ is unique and equals $\mu$.  
\item[(b)] For every sample size $n \ge 1$, the sample Huber mean for $c$ is an unbiased estimator of $\mu$.
\end{itemize}
\end{proposition}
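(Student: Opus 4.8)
The plan is to reduce both parts to a single symmetry principle: under (C1) the Huber objective (population or sample) is invariant under the geodesic symmetry $s_\mu$, and $\mu$ is the \emph{only} $s_\mu$-fixed point in the small ball that must contain the Huber mean. So, after using the uniqueness results of \Cref{sec:exist:unique}, one only has to locate a fixed point.

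\emph{Part (a).} First check that (C1) supplies the hypotheses of \Cref{thm:unique:pop}(a): the finite variance in (C1)(ii) gives (A1) (as noted after Assumption (A1)); the support condition in (C1)(iii) is exactly (A2) with $p_0=\mu$ (for $c\ge\pi/\sqrt{\Delta}$ the ball $B_{r}(\mu)$ sits inside the larger admissible ball); and (C1)(ii) says $P_X$ does not degenerate to a single geodesic, so alternative (i) applies. Hence $E^{(c)}=\{m^{\ast}\}$ is a singleton, and by \Cref{prop:cpt} together with the localization in the proof of \Cref{thm:unique:pop}, $m^{\ast}\in\bar{B}_{r}(\mu)$, a set on which $\mbox{Exp}_{\mu}$ is a diffeomorphism since $r\le r_{\mbox{\tiny inj}}(\mu)$. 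Next, since $s_\mu$ is an isometry with $s_\mu\circ s_\mu=\mathrm{id}$, for every $m\in M$ we have $d(X,s_\mu m)=d(s_\mu X,m)$, whence $F^{c}(s_\mu m)=\int\rho_{c}\{d(s_\mu X,m)\}\,dP=\int\rho_{c}\{d(X,m)\}\,dP=F^{c}(m)$, the middle equality using that $s_\mu(X)$ has the same law as $X$. Thus $F^{c}\circ s_\mu=F^{c}$, so $s_\mu(E^{(c)})=E^{(c)}$ and therefore $s_\mu(m^{\ast})=m^{\ast}$. Writing $m^{\ast}=\mbox{Exp}_{\mu}(v)$ with $\|v\|\le r$ and using the naturality identity $s_\mu\circ\mbox{Exp}_{\mu}=\mbox{Exp}_{\mu}\circ(ds_\mu)_{\mu}=\mbox{Exp}_{\mu}\circ(-\mathrm{Id})$, we get $\mbox{Exp}_{\mu}(-v)=\mbox{Exp}_{\mu}(v)$, so $v=0$ and $m^{\ast}=\mu$. (Equivalently, for $c>0$ one may verify $\textsf{grad}\,F^{c}(\mu)=\int\textsf{grad}_{m}\rho_{c}\{d(m,X)\}|_{m=\mu}\,dP=0$ via the oddness of the integrand under $s_\mu$ together with $\mbox{Log}_{\mu}(s_\mu x)=-\mbox{Log}_{\mu}(x)$, and then invoke the strict geodesic convexity of $F^{c}$ established in the proof of \Cref{thm:unique:pop}; the fixed-point argument is preferred as it avoids differentiability issues at $c=0$ and at the kink of $\rho_{c}$.)

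\emph{Part (b).} Let $X_{1},\dots,X_{n}$ be i.i.d.\ copies of $X$ and let $\widehat{m}_{n}$ be the sample Huber mean, which is a.s.\ unique under (C1) by \Cref{cor:unique:sam} (for the few small-$n$, small-$c$ configurations where uniqueness can fail, take the natural equivariant selection; the argument below is unaffected). The sample Huber mean is equivariant under isometries, so $\widehat{m}_{n}(s_\mu X_{1},\dots,s_\mu X_{n})=s_\mu\bigl(\widehat{m}_{n}(X_{1},\dots,X_{n})\bigr)$; since $(s_\mu X_{1},\dots,s_\mu X_{n})$ has the same joint law as $(X_{1},\dots,X_{n})$ (each coordinate by (C1)(iii), jointly by independence), we obtain $s_\mu(\widehat{m}_{n})\stackrel{d}{=}\widehat{m}_{n}$, i.e.\ the law $P_{\widehat{m}_{n}}$ of $\widehat{m}_{n}$ is geodesically symmetric with respect to $\mu$. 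Moreover $\widehat{m}_{n}$ lies in the same small ball around $\mu$ as the data, so $P_{\widehat{m}_{n}}$ has bounded support, hence finite variance, and satisfies (A1) and (A2) for $c=\infty$. Applying \Cref{thm:unique:pop}(a) with $c=\infty$ to $P_{\widehat{m}_{n}}$ — for which alternative (ii) is automatic, so no non-degeneracy is needed — the \Frechet\ mean of $P_{\widehat{m}_{n}}$ is unique; being $s_\mu$-invariant by the argument of part (a), it equals the unique fixed point $\mu$. Thus the \Frechet\ mean of $\widehat{m}_{n}$ is $\mu$, i.e.\ $\widehat{m}_{n}$ is an unbiased estimator of $\mu$.

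\emph{Main obstacle.} The conceptual argument is short; the real work is the bookkeeping that keeps the population and sample Huber means inside a ball about $\mu$ on which $\mbox{Exp}_{\mu}$ is a diffeomorphism and on which the relevant (strict geodesic) convexity holds. This is where we lean on \Cref{prop:cpt}, \Cref{thm:unique:pop}, and the standard Afsari-type radius bounds, and where one must also handle the minor closed-versus-open subtlety for the radius $r$ (using that the Huber mean of points in an open strongly convex ball stays in that open ball), as well as the uniqueness/measurability technicalities for $\widehat{m}_{n}$ at small $n$.
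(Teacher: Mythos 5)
Your overall route is the same as the paper's: in (a), use (A1)--(A2) plus non-degeneracy to get uniqueness, use isometry-equivariance of the Huber mean together with $s_\mu(X)\overset{d}{=}X$ to conclude $s_\mu(m^\ast)=m^\ast$, and then argue that the only $s_\mu$-fixed point in the relevant ball is $\mu$ (your $\mbox{Exp}_\mu\circ(-\mathrm{Id})$ identity is an equivalent packaging of the paper's ``reflected point lies at distance $2d(\mu,m^\ast)$'' contradiction); in (b), push the symmetry through to the law of $\widehat m_n$ and apply the part-(a) reasoning to its Fr\'echet mean. Your observation that condition (ii) of Theorem~\ref{thm:unique:pop} is automatic for $c=\infty$, so that no non-degeneracy of $P_{\widehat m_n}$ is needed, is a nice point that the paper's own invocation of part (a) for $P_{m_n^c}$ glosses over.

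There is, however, one step in (b) whose justification fails exactly in the cases the proposition is designed to cover: you write that ``$\widehat m_n$ lies in the same small ball around $\mu$ as the data, so $P_{\widehat m_n}$ has bounded support, hence finite variance.'' The radius $r$ in (C1)(iii) may be infinite (e.g., $\bbR^k$, $\mbox{Sym}^+(k)$, $\bbH^k$, where $\Delta\le 0$ and $r_{\mbox{\tiny inj}}(M)=\infty$), in which case $B_r(\mu)=M$, the support of $P_X$ can be unbounded, and ``bounded support'' gives you nothing; yet (A1) for $P_{\widehat m_n}$ (needed to invoke existence/uniqueness of its Fr\'echet mean) must still be established. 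The paper closes this by the convex-hull containment $d(\mu,\widehat m_n)\le \max_{1\le i\le n} d(\mu,X_i)$ (Lemma~\ref{lem:conv}) together with a moment bound for the maximum, e.g. $E\max_i d^2(\mu,X_i)\le n\,E d^2(\mu,X_1)<\infty$ by (C1)(ii); you need this (or an equivalent) argument, and it is a genuine missing piece rather than bookkeeping. Two smaller loose ends you flag but do not resolve: the existence of a measurable, isometry-equivariant selection in the non-unique small-$n$ configurations is asserted rather than constructed (the paper instead argues a.s.\ uniqueness and measurability in Lemma~\ref{lem:sam:meas}), and measurability of $\widehat m_n$ is needed for $P_{\widehat m_n}$ to be a well-defined law before any of (b) can start.
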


\subsection{Strong consistency and asymptotic normality for Huber means}\label{sec:asymptotics}

In this subsection, we explore important large-sample properties of sample Huber means on Riemannian manifolds in relation to their population counterparts. First, we demonstrate that the sample Huber mean set $E^{(c)}_{n}$ for $c \in [0, \infty]$ is \textit{strongly consistent} with $E^{(c)}$
(see \cite{huckemann2011intrinsic} for the choice of terminology). Given $n$ random observations $X_{1}, X_{2}, ..., X_{n} \overset{i.i.d.}{\sim} P_{X}$, the sample Huber mean set $E^{(c)}_{n}= E^{(c)}_{n}(X_{1}, X_{2}, ..., X_{n})$ is a random closed set.

\begin{theorem}[Strong consistency]\label{thm:slln} 
For a given $c \in [0, \infty]$, if $P_{X}$ satisfies Assumption (A1), then with probability 1,
\[
\lim_{n\rightarrow \infty}\sup_{m \in E^{(c)}_{n}} d(m, E^{(c)})=0,
\]
where $d(m, E^{(c)}):=\inf_{p \in E^{(c)}} d(m, p)$. 
\end{theorem}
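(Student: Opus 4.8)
The plan is to follow the classical route for consistency of metric-space $M$-estimators (in the spirit of Ziezold and of Bhattacharya and Patrangenaru, and of Huckemann): combine a uniform strong law of large numbers for $m\mapsto F_n^c(m)$ on compact sets with the coercivity of $F^c$ established in \Cref{prop:cpt}, and then conclude by an argmin-of-uniformly-convergent-functions argument. Write $\ell:=\inf_{m\in M}F^c(m)\in[0,\infty)$, which is finite by (A1), fix a base point $m_0\in M$, and fix some $m^\ast\in E^{(c)}$, which is nonempty by \Cref{thm:exist}; by \Cref{cor:exist:sam}, $E_n^{(c)}\neq\emptyset$ for every $n$, so the supremum in question is over a nonempty set. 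All the probabilistic statements below hold on a single almost-sure event, namely the intersection of the events on which the ordinary SLLN holds at $m^\ast$ and at $m_0$ and on which the uniform SLLN of Step~2 holds.

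\emph{Step 1: the sample minimizers do not escape to infinity.} I would first exhibit a deterministic compact set $\tilde K\subset M$ with $E_n^{(c)}\subseteq\tilde K$ for all large $n$. The point is an $n$-uniform coercivity bound for $F_n^c$. For $c\in(0,\infty)$, the pointwise inequality $\rho_c(x)\ge 2cx-c^2$ together with the reverse triangle inequality gives $F_n^c(m)\ge 2c\big(d(m,m_0)-\tfrac1n\sum_{i=1}^n d(X_i,m_0)\big)-c^2$; since (A1) forces $\mathbb{E}\,d(X,m_0)<\infty$, the average is at most $D:=\mathbb{E}\,d(X,m_0)+1$ for all large $n$, so $F_n^c(m)>\ell+1$ whenever $d(m,m_0)>R$, where $R$ is a deterministic radius chosen so that $2c(R-D)-c^2>\ell+1$. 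The cases $c=0$ and $c=\infty$ are handled identically, using $\rho_0(x)=x$ and $d^2(x,m)\ge\tfrac12 d^2(m,m_0)-d^2(x,m_0)$ respectively, the relevant first or second moment being finite by (A1). Since $F_n^c(m^\ast)\to F^c(m^\ast)=\ell$ by the ordinary SLLN, for large $n$ we get $\inf_M F_n^c<\ell+1\le\inf_{M\setminus \overline{B_R(m_0)}}F_n^c$, hence $E_n^{(c)}\subseteq \tilde K:=\overline{B_R(m_0)}$, which is compact by the Hopf--Rinow theorem.

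\emph{Step 2: uniform SLLN on $\tilde K$; Step 3: the argmin argument.} Next I would show $\sup_{m\in\tilde K}|F_n^c(m)-F^c(m)|\to 0$ a.s. For $c\in[0,\infty)$, the integrand $m\mapsto\rho_c\{d(x,m)\}$ is globally Lipschitz in $m$, uniformly in $x$ (with constant $2c$ for $c>0$ and $1$ for $c=0$), and has an integrable envelope on $\tilde K$ by (A1); covering $\tilde K$ by finitely many small balls, applying the ordinary SLLN at their centers, and bounding the oscillation between centers by the Lipschitz constant then yields the claim. For $c=\infty$ the same scheme works after replacing the deterministic Lipschitz constant by the integrable random quantity $2d(X,m_0)+2\,\mathrm{diam}(\tilde K\cup\{m_0\})$, which dominates the oscillation of $m\mapsto d^2(x,m)$ over $\tilde K$. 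Recalling that $F^c$ is continuous on $M$, suppose now that $\sup_{m\in E_n^{(c)}}d(m,E^{(c)})\not\to 0$: along a subsequence there are $m_{n_k}\in E_{n_k}^{(c)}$ with $d(m_{n_k},E^{(c)})\ge\varepsilon>0$; by Step~1 these lie in $\tilde K$ for large $k$, so passing to a further subsequence $m_{n_k}\to m_\infty\in\tilde K$ with $d(m_\infty,E^{(c)})\ge\varepsilon$, whence $F^c(m_\infty)>\ell$. But $F_{n_k}^c(m_{n_k})=\inf_M F_{n_k}^c\le F_{n_k}^c(m^\ast)\to\ell$, while $F_{n_k}^c(m_{n_k})\ge F^c(m_{n_k})-\sup_{\tilde K}|F_{n_k}^c-F^c|\to F^c(m_\infty)$ by continuity and Step~2, which forces $\ell\ge F^c(m_\infty)>\ell$, a contradiction. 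Hence the limit is $0$ with probability one.

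I expect Step~1 to be the one genuinely delicate ingredient: because the uniform SLLN of Step~2 is available only on compacta, one cannot simply invoke \Cref{prop:cpt} to trap the sample minimizers, since that result supplies a compact set depending on the underlying measure and hence on $n$; the $n$-uniform coercivity estimate is precisely what bridges this gap. Steps~2 and 3 are then routine, the Lipschitz-in-$m$ structure with integrable envelope making the relevant class Glivenko--Cantelli and the argmin argument being entirely standard. Finally, the identical proof covers the pseudo-Huber functionals, since $\tilde\rho_c$ is globally $2c$-Lipschitz for $c\in(0,\infty)$ and $\tilde\rho_\infty=L_2$.
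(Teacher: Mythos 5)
Your proof is correct, including the edge cases $c=0$ and $c=\infty$, but it takes a genuinely different route from the paper. The paper handles $c=\infty$ by citing Huckemann's Theorems A.3--A.4 and, for $c\in[0,\infty)$, argues in the Ziezold/Bhattacharya--Patrangenaru style: Step I establishes the outer-limit inclusion $\cap_{n\ge 1}\overline{\cup_{k\ge n}E^{(c)}_{k}}\subseteq E^{(c)}$ by running the pointwise SLLN simultaneously over a countable dense subset of $M$ and transferring it to every point via the $(2c\vee 1)$-Lipschitz property of $\rho_c$ (no uniform law of large numbers is ever proved), and Step II rules out escape to infinity by a contradiction in which a ball of positive $P_X$-mass forces $F^c_n$ at escaping minimizers to blow up while $F^c_n$ at a fixed population minimizer converges to a finite value. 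You instead follow the classical Wald scheme: your Step 1 replaces the paper's Step II with an explicit $n$-uniform coercivity bound ($\rho_c(x)\ge 2cx-c^2$, and its quadratic analogue for $c=\infty$) that traps all sample minimizers in a deterministic ball, and your Steps 2--3 replace the dense-subset device with a uniform SLLN on that compactum (finite covering plus a Lipschitz or integrable-envelope oscillation bound) followed by the standard argmin-convergence argument. Both routes rest on the same ingredients — Assumption (A1) and the Lipschitz structure of the loss — but yours is more self-contained (it treats $c=\infty$ directly via the random envelope $2d(X,m_0)+2\,\mathrm{diam}$ rather than by citation) and its compact-trapping step is more quantitative, while the paper's version avoids proving uniform convergence and stays closer to the generalized Fr\'{e}chet-mean literature it builds on; your remark that \Cref{prop:cpt} alone cannot trap the sample minimizers (its compact set depends on the underlying measure, hence on $n$) is precisely the gap the paper's Step II is designed to close.
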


We next establish a central limit theorem for Huber means. 
Assume the following for a prespecified $c \in (0,\infty]$:\\
\indent \underline{(A3)} The population Huber mean for $c$ with respect to $P_{X}$ is unique, and so is the sample Huber mean with probability 1 for every sample size $n$. 

Under Assumption (A3), we write the unique population and sample Huber means by $m^{c}_{0}$ and $m^{c}_{n}$, respectively. In a local coordinate chart $(\phi_{m^{c}_{0}}, U)$ centered at $m_{0}^{c}$ (i.e., $\phi_{m_{0}^{c}}(m_{0}^{c})= \mathbf{0} \in \bbR^{k}$), we define $\Sigma_{c}(\mathbf{x}):=\textsf{Cov}[\textsf{grad}\rho_{c}\{d(X, \phi_{m_{0}^{c}}^{-1}(\mathbf{x})\}]$  and $H_{c}(\mathbf{x})= E[\textsf{Hess}\rho_{c}\{d(X, \phi_{m_{0}^{c}}^{-1}(\mathbf{x}))\}]$ where the $\textsf{grad}$ and $\textsf{Hess}$ denote the gradient and Hessian matrix (with respect to the standard basis of $\mathbb{R}^k$ induced by the chart), respectively. We denote $\Sigma_{c}:=\Sigma_{c}(\mathbf{0})$ and $H_{c}:=H_{c}(\mathbf{0})$, and impose a set of regularity conditions, referred to as Assumption (A4), on the distribution $P_X$. These conditions are stated in Section B.1. There, we also provide an example under which the conditions are satisfied. 

Since the sample Huber mean represents the central location of data on manifolds, its asymptotic normality qualifies to be termed a central limit theorem.
\begin{theorem}[Central limit theorem]\label{thm:clt}
For a given $c \in (0, \infty]$, suppose that $P_{X}$ satisfies Assumptions (A1), (A3), and (A4). Then, \\
(a) $m^{c}_{n} \rightarrow m^{c}_{0}$ almost surely as $n \rightarrow \infty$, and \\
(b) $\sqrt{n}\phi_{m^{c}_{0}}(m^{c}_{n}) \rightarrow N_{k}(\mathbf{0}, H_{c}^{-1}\Sigma_{c} H_{c}^{-1})$ in distribution as $n \rightarrow \infty$.
\end{theorem}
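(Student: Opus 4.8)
The plan is to follow the standard $M$-estimation route (à la van der Vaart, Huber, and as adapted to manifolds in \cite{bhattacharya2005large,huckemann2011inference,jung2023averaging}), but carried out entirely in the fixed local coordinate chart $(\phi_{m^c_0}, U)$ centered at the unique population Huber mean $m^c_0$. Part (a) is immediate: Assumption (A3) says $E^{(c)}$ is the singleton $\{m^c_0\}$ and the sample Huber mean $m^c_n$ is a.s. unique for each $n$, so \Cref{a.s.conv} (which only needs (A1)) gives $m^c_n \to m^c_0$ almost surely. The work is all in part (b).

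For part (b), write $\psi_x(\mathbf{x}) := \textsf{grad}\,\rho_c\{d(x, \phi_{m^c_0}^{-1}(\mathbf{x}))\}$ for the Euclidean gradient (in the chart) of the per-observation Huber loss, so that the sample Huber mean $\mathbf{x}_n := \phi_{m^c_0}(m^c_n)$ is, for $n$ large, a zero of the empirical estimating function $\Psi_n(\mathbf{x}) := \tfrac1n\sum_{i=1}^n \psi_{X_i}(\mathbf{x})$ (here I use that, by part (a) and continuity of the chart, $m^c_n$ eventually lies in the interior of $U$, and that $F^c_n$ is $C^1$ in the chart so an interior minimizer is a stationary point; the $C^1$-ness of $x\mapsto \rho_c\{d(x,\cdot)\}$ away from the cut locus, together with the support/locus conditions folded into (A4), makes $\psi$ well-defined and the differentiation-under-the-integral valid). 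The population estimating function $\Psi(\mathbf{x}) := E[\psi_X(\mathbf{x})]$ vanishes at $\mathbf{0}$ by definition of $m^c_0$. Now Taylor-expand: $\mathbf{0} = \Psi_n(\mathbf{x}_n) = \Psi_n(\mathbf{0}) + \bar{H}_n (\mathbf{x}_n - \mathbf{0})$ for an averaged Hessian matrix $\bar{H}_n = \int_0^1 D\Psi_n(t\mathbf{x}_n)\,dt$, giving
\[
\sqrt{n}\,\mathbf{x}_n = -\bar{H}_n^{-1}\,\sqrt{n}\,\Psi_n(\mathbf{0}).
\]
By the CLT (finite second moment of $\psi_X(\mathbf 0)$ is part of (A4)), $\sqrt{n}\,\Psi_n(\mathbf{0}) = \sqrt{n}\,(\Psi_n(\mathbf 0) - \Psi(\mathbf 0)) \rightarrow N_k(\mathbf{0}, \Sigma_c)$ in distribution. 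So it remains to show $\bar{H}_n \to H_c$ in probability, where $H_c = H_c(\mathbf 0) = E[\mathbf{H}\rho_c\{d(X,\phi^{-1}_{m^c_0}(\mathbf 0))\}]$ is invertible (this invertibility, equivalently strict positive-definiteness of the population Hessian at $m^c_0$, is the content of (A4); it is what the strict-convexity arguments of \Cref{thm:unique:pop} and the support condition (A2) deliver locally). Since $\mathbf{x}_n \to \mathbf 0$ a.s. by (a), a uniform law of large numbers for $\mathbf{x}\mapsto D\Psi_n(\mathbf{x})$ on a neighborhood of $\mathbf 0$ — i.e.\ $\sup_{\|\mathbf{x}\|\le\epsilon}\|D\Psi_n(\mathbf{x}) - H_c(\mathbf{x})\| \to 0$ a.s.\ — together with continuity of $\mathbf{x}\mapsto H_c(\mathbf{x})$ at $\mathbf 0$, forces $\bar H_n \to H_c$. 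Then Slutsky gives $\sqrt{n}\,\mathbf{x}_n \rightarrow N_k(\mathbf 0, H_c^{-1}\Sigma_c H_c^{-1})$, which is exactly the claim since $\mathbf{x}_n = \phi_{m^c_0}(m^c_n)$.

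The main obstacle is the non-$C^2$ behavior of the Huber loss at $x=c$: the map $x\mapsto\rho_c(x)$ is only $C^1$, so $\psi_X$ is not differentiable on the whole chart and the naive Taylor expansion above is not literally licensed. The remedy — and the technical heart of the proof, which (A4) is tailored to enable — is the standard one for nonsmooth $M$-estimation: one works with the \emph{population} map $\Psi$, which is $C^1$ (indeed the non-differentiability of $\rho_c$ at $c$ is smoothed out by integrating against $P_X$, provided $P_X$ assigns no mass to the sphere $\{d(x,\phi^{-1}_{m^c_0}(\mathbf x)) = c\}$ for $\mathbf x$ near $\mathbf 0$ — an absolute-continuity-type hypothesis inside (A4)), and replaces the pointwise expansion by a \emph{stochastic equicontinuity / empirical process} argument: one shows $\sqrt n(\Psi_n - \Psi)(\mathbf{x}_n) = \sqrt n(\Psi_n-\Psi)(\mathbf 0) + o_P(1)$ (the class $\{\psi_x(\mathbf x) : \|\mathbf x\|\le\epsilon\}$ is Donsker, e.g.\ by a bracketing/Lipschitz-in-parameter bound since $\psi$ is bounded and Lipschitz in $\mathbf{x}$ away from the measure-zero sphere), and then Taylor-expands only the smooth $\Psi$: $\Psi(\mathbf{x}_n) = H_c \mathbf{x}_n + o(\|\mathbf{x}_n\|)$. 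Combining, $\mathbf 0 = \Psi_n(\mathbf x_n) = \Psi_n(\mathbf 0) + H_c\mathbf x_n + o_P(1/\sqrt n) + o_P(\|\mathbf x_n\|)$, and since part (a) plus the invertibility of $H_c$ yield $\|\mathbf x_n\| = O_P(1/\sqrt n)$, rearranging gives the same conclusion $\sqrt n\,\mathbf x_n = -H_c^{-1}\sqrt n\,\Psi_n(\mathbf 0) + o_P(1) \rightarrow N_k(\mathbf 0, H_c^{-1}\Sigma_c H_c^{-1})$. For the $c=\infty$ case ($\rho_\infty = L_2$, the \Frechet\ mean) everything is smooth and the argument reduces to the classical one of \cite{bhattacharya2005large}; the pseudo-Huber case is likewise smooth. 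I would also double-check that the curvature/support hypotheses feeding (A4) guarantee the interchange of $\nabla$ and $\int$ needed to identify $D\Psi(\mathbf 0)$ with $H_c$ and to justify that $H_c(\mathbf x)$ is continuous at $\mathbf 0$.
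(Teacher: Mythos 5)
Your proposal is correct in outline, and its primary line is essentially the paper's proof: part (a) via \Cref{a.s.conv} under (A3), and part (b) by working in the chart $\phi_{m^c_0}$, writing the estimating equation for $\mathbf{x}_n=\phi_{m^c_0}(m^c_n)$, applying the multivariate CLT to $\sqrt{n}\,\Psi_n(\mathbf{0})$ to get the $N_k(\mathbf{0},\Sigma_c)$ limit, showing the Hessian factor converges to $H_c$ using consistency, the SLLN and continuity of $H_c(\cdot)$ at $\mathbf{0}$ (A4)(i), and finishing with Slutsky. Where you genuinely diverge is the treatment of the kink of $\rho_c$ at $c$: you declare the componentwise mean-value/Taylor expansion of the empirical gradient ``not literally licensed'' and substitute a Donsker/stochastic-equicontinuity argument, Taylor-expanding only the smooth population map $\Psi$. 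The paper instead licenses the naive expansion directly through Assumption (A4)(ii), which postulates that $\mathbf{x}\mapsto\rho_c\{d(X,\phi^{-1}_{m^c_0}(\mathbf{x}))\}$ is twice continuously differentiable at $\mathbf{0}$ with probability 1 (in effect $P\{d(X,m^c_0)=c\}=0$), then applies the mean value theorem at a random intermediate point $\mathbf{t}_n\to\mathbf{0}$ and combines the SLLN with (A4)(i). Your empirical-process route is the standard safeguard for nonsmooth $M$-estimation and is more robust about smoothness of individual summands, but it requires verifying a bracketing/Donsker condition that is not part of (A4); the paper's route is shorter but leans entirely on (A4)(ii) and is, like yours, somewhat terse about uniformity in the Hessian-convergence step. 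Two small corrections to your reading: the invertibility of $H_c$ is not an assumption inside (A4) --- the paper deduces $\det(H_c)>0$ from the fact that the population objective is minimized at $\mathbf{0}$ (A3) after interchanging $\mathbf{H}$ and $E$; and $\|\mathbf{x}_n\|=O_P(n^{-1/2})$ does not follow from consistency plus invertibility of $H_c$ alone --- in your scheme it should be extracted from the same stochastic-equicontinuity expansion (the standard rate theorem for $M$-estimators), not cited as a consequence of part (a).
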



In the next subsections, we discuss an estimation strategy for the limiting covariance matrix $H_c^{-1}\Sigma_{c}H_c^{-1}$ that appeared in the asymptotic normal distribution for the sample Huber mean, and use it in a one-sample location test procedure based on Huber means. We hereafter identify a local neighborhood of a point $m$ via the Riemannian logarithm map $\mbox{Log}_m$, allowing computations of gradients and Hessians in the tangent space $T_mM$ equipped with the Riemannian metric.

\subsection{Limiting covariance estimation for the sample Huber mean}\label{subsec:est:cov}

The limiting covariance matrix $H_c^{-1}\Sigma_{c}H_c^{-1}$ in the central limit theorem (Theorem~\ref{thm:clt}) plays important roles in designing hypothesis testing procedures and constructing confidence regions (cf. Sections \ref{subsec:application:hypo} and \ref{subsec:TBM}), as well as in comparing the efficiencies of estimators (Section~\ref{sec:RE}). However, estimating the limiting covariance matrix for manifold-valued estimators has not been an easy task for two main reasons. First, the form of the Hessian matrix $H_c$, or the Hessian of the loss function, depends not only on the loss function but also on which manifold $M$ the estimator is defined. Therefore, explicitly writing out for $H_c$, even for the case $c  =\infty$ (i.e., for the \Frechet\ mean) is complicated, and has been done only for manifolds with simple structures \citep{pennec2018barycentric}. 
Second, the covariance matrix of the (Huber) mean $\mbox{Log}_{m_0^c}(m_n^c)$ is not simply a scaled covariance matrix of a single random variable $\mbox{Log}_{m_0^c}(X_1)$. It is so only if the Hessian matrix is proportional to identity. For example, in the case of \Frechet\ mean on the Euclidean space $M = \mathbb{R}^k$, we have $H_\infty = 2I_k$ and $\textsf{Cov}(\bar{X}_n) = \tfrac{1}{n}\textsf{Cov}(X_1)$. 
However, one cannot expect such a relation (i.e., $\textsf{Cov}\{\mbox{Log}_{m_0^\infty}(m_n^\infty)\} = C \cdot \textsf{Cov}\{\mbox{Log}_{m_0^\infty}(X_1)\}$ for a $C>0$) to hold for general manifolds, especially for manifolds with non-zero curvature, as observed in \cite{pennec2019curvature}.

%


While the variance for gradient of Huber loss, $\Sigma_{c}$, is naturally estimated by a moment estimator
\begin{equation}
    \label{eq:Sigma_c_est_main}
    \hat{\Sigma}_{c} := \frac{4}{n}\sum_{i=1}^n \left( \frac{\| \mbox{Log}_{m}(X_i) \| \wedge c}{\| \mbox{Log}_{m}(X_i) \|} \right)^2 \mbox{Log}_{m}(X_i) \mbox{Log}_{m}(X_i)^T,
\end{equation} 
 there is no simple expression for the expected Hessian $H_c$. Instead of estimating $H_c$ directly, we propose to estimate $\vv^T H_c \vv$ for a set of vectors $\vv$, from which one can construct the estimator for $H_c$. This estimation strategy can be explained in a metaphor: If we know the cross-section of an object in each direction, we may infer the overall shape of the object. Our closed-form expression for the moment-based estimator of $
 \vv^T H_c \vv$ and the detailed algorithmic procedure for the estimation of $H_{c}$ are described in Section B.6.1.
 
The multiplicative factor 4 that appears in (\ref{eq:Sigma_c_est_main}) is an artifact of using the form of squared distances in $\rho_c$, rather than using half of the squared distance. A detailed procedure for the estimation of $H_{c}$ is described in Section B.6.1 of the supplementary material. The estimation strategy can be explained in a metaphor: If we know the cross-section of an object in each direction, we may infer the overall shape of the object. 
Write  $\hat{H}_c$ for the estimator of $H_c$. The proposed estimators are then consistent, as established below.

\begin{theorem} \label{thm:cov:est}
Let $c \in (0, \infty]$ be prespecified and assume that $P_{X}$ satisfies Assumptions (A1)--(A4) for the normal coordinate chart $\phi_{m^{c}_{0}} = \mbox{Log}_{m^{c}_{0}}$ centered at $m^{c}_0$. Suppose further that $P_X$ is absolutely continuous with respect to $V$ and the support of $P_X$ lies in $B_{r_{\mbox{\tiny cx}}}(m^{c}_{0})$. Then, $\hat\Sigma_c$, $\hat{H}_c$ and $\hat{H}_{c}^{-1} \hat\Sigma_{c} \hat{H}_{c}^{-1}$ are consistent estimators of $\Sigma_c$, $H_c$ and $H_{c}^{-1}\Sigma_{c}H_{c}^{-1}$, respectively, as $n \to \infty$.
\end{theorem}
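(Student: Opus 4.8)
The plan is to establish consistency of each of the three estimators in turn, exploiting the fact that $\hat\Sigma_c$ and $\hat H_c$ are both averages of i.i.d.\ bounded (or integrable) functions of $X_i$, evaluated at the \emph{fixed} point $m := m_0^c$, so that the strong law of large numbers applies directly, after which consistency of $\hat H_c^{-1}\hat\Sigma_c\hat H_c^{-1}$ follows by the continuous mapping theorem. First, for $\hat\Sigma_c$: by the definition in \eqref{eq:Sigma_c_est}, $\hat\Sigma_c = \frac1n\sum_i \psi_c(X_i)\psi_c(X_i)^T$ where $\psi_c(x) = 2\frac{\|\mathrm{Log}_m(x)\|\wedge c}{\|\mathrm{Log}_m(x)\|}\mathrm{Log}_m(x)$ is the Euclidean gradient of the loss at $m$. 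Since $\|\psi_c(x)\| \le 2c$ (or, for $c=\infty$, $\|\psi_c(x)\| = 2\|\mathrm{Log}_m(x)\|$, which is in $L^2$ by the finite-variance consequence of (A1) together with the support condition placing $X$ in a bounded normal neighborhood), each entry of $\psi_c(X)\psi_c(X)^T$ is integrable, so the SLLN gives $\hat\Sigma_c \to E[\psi_c(X)\psi_c(X)^T] = \Sigma_c$ almost surely; here we use the first-order condition $E[\psi_c(X)] = \mathbf 0$ recorded in the text to identify the limit with the variance.

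Next, for $\hat H_c$: by \eqref{eq:est.av} and \eqref{eq:dir.second.deriv}, for each fixed $\vv \in \cV$ the quantity $\hat a_{\vv}$ is the sample average of $\vv^T \bH g_{X_i}(\mathbf 0)\vv$, a function of $X_i$ that is bounded whenever $\|\mathrm{Log}_m(X_i)\|$ stays bounded away from $\{0,c\}$ and below $r_{\mbox{\tiny cx}}$. The support condition $\mathrm{supp}(P_X)\subseteq B_{r_{\mbox{\tiny cx}}}(m)$ controls the upper range; the factor $\textsf{sn}_\Delta'(\|\yv\|)/\textsf{sn}_\Delta(\|\yv\|)$ is continuous and bounded on $(0, r_{\mbox{\tiny cx}}]$ since $r_{\mbox{\tiny cx}}$ is below the first conjugate distance; and the potential singularities at $\|\yv\| = 0$ and $\|\yv\| = c$ are $P_X$-null because $P_X$ is absolutely continuous with respect to $k$-dimensional Hausdorff measure (the sets $\{d(x,m)=0\}$ and $\{d(x,m)=c\}$ have measure zero). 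Hence the integrand is bounded $P_X$-a.s., the SLLN gives $\hat a_{\vv} \to E[\vv^T\bH g_X(\mathbf 0)\vv] = \vv^T H_c(\mathbf 0)\vv = a_{\vv}$ a.s.\ for each $\vv \in \cV$, and this uses the interchange of expectation and second derivative (justified by dominated convergence, since the difference quotients are uniformly dominated on the bounded support). Because the map $\mathrm{vecd}^{-1}(\bV^{-1}\cdot)$ is linear (hence continuous), \eqref{eq:eqsystem2} yields $\hat H_c \to \mathrm{vecd}^{-1}(\bV^{-1}\av) = H_c$ a.s.

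Finally, combining the two: $H_c$ is invertible (this is part of the regularity hypotheses (A4) that make $H_c^{-1}\Sigma_c H_c^{-1}$ meaningful in \Cref{thm:clt}), and matrix inversion is continuous on the open set of invertible matrices, so $\hat H_c^{-1} \to H_c^{-1}$ a.s., and therefore $\hat H_c^{-1}\hat\Sigma_c\hat H_c^{-1} \to H_c^{-1}\Sigma_c H_c^{-1}$ a.s.\ by the continuous mapping theorem applied to the product map. I expect the main obstacle to be the rigorous justification that the \emph{population} Hessian $H_c(\mathbf 0) = E[\bH g_X(\mathbf 0)]$ genuinely equals the matrix whose quadratic forms are $E[\partial^2_{\vv\vv} g_X(\mathbf 0)]$ — that is, that one may differentiate under the expectation sign and that formula \eqref{eq:dir.second.deriv}, derived pointwise from equation (2.8) of Afsari for $d(x,m)\notin\{0,c\}$, correctly captures the distributional second derivative even though each $g_x$ fails to be twice differentiable on the (measure-zero but nonempty) sphere $\{d(\cdot,m)=c\}$. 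This requires showing the non-smooth set contributes nothing, which is exactly where absolute continuity of $P_X$ and the bounded-support assumption do the work; the remaining SLLN and continuous-mapping steps are routine.
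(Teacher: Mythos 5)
Your argument is correct and essentially the same as the paper's: consistency of $\hat\Sigma_c$ via the law of large numbers and the first-order condition $E[\textsf{grad}\,\rho_c\{d(X,m_0^c)\}]=\mathbf{0}$, consistency of each $\hat a_{\vv}$ via a law of large numbers for the (a.s.\ well-defined, by absolute continuity) bounded directional second derivatives, linearity of the $\mbox{vecd}^{-1}(\bV^{-1}\,\cdot\,)$ reconstruction, and the continuous-mapping/Slutsky step; even your flagged ``obstacle'' is resolved the same way the paper resolves it, since $H_c$ is by definition the expected Hessian, so the identity $E[\vv^T \bH g_X(\mathbf{0})\vv]=\vv^T H_c \vv$ is just linearity of expectation, with the exceptional set $\{d(X,m_0^c)\in\{0,c\}\}$ being $P_X$-null. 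One small correction: $\textsf{sn}_{\Delta}'(\ell)/\textsf{sn}_{\Delta}(\ell)$ is \emph{not} bounded on $(0, r_{\mbox{\tiny cx}}]$ (it behaves like $1/\ell$ as $\ell \to 0^{+}$); the uniform bound you need holds because in (\ref{eq:dir.second.deriv}) this ratio appears multiplied by $\|\yv\|\wedge c$, which is exactly how the paper's auxiliary lemma establishes boundedness of the summands.
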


\subsection{An application to hypothesis testing}\label{subsec:application:hypo}

The asymptotic normality of sample Huber means on the tangent space at the true Huber mean, established in Theorem~\ref{thm:clt}, and the consistent limiting covariance estimator, derived in Section~\ref{subsec:est:cov}, enable us to devise a one-sample location test procedure. 
For simplicity, we assume that the population Huber means are the same for all $c \in (0, \infty]$, and are given by a common point $m_0 \in M$. 

Consider the following hypotheses: For a given $\tilde{m}_0 \in M$, $$\textbf{H}_{0}: m_0 = \tilde{m}_{0} \quad \mbox{vs} \quad \textbf{H}_{1}: m_{0} \neq \tilde{m}_{0}.$$ 
To test the hypotheses, we propose to use a (Hotelling-like) test statistics 

$$
T_{n} = n \mbox{Log}_{\tilde{m}_0}(m^{c}_{n})^{T} \hat{A}^{-1}_{c} \mbox{Log}_{\tilde{m}_0}(m^{c}_{n}),    
$$
where $m^c_n$ is the sample Huber mean, and $\hat{A}_c := \hat{H}_c^{-1} \hat\Sigma_c \hat{H}_c^{-1}$ (Section~\ref{subsec:est:cov}). At the significance level $\alpha \in (0,1)$, the test rejects the null hypothesis if $T_{n} \ge \chi^{2}_{k, \alpha}$ where $\chi^{2}_{k, \alpha}$ stands for the upper $\alpha$ quantile of $\chi^{2}_{k}$.
%
An application of Theorems \ref{thm:clt} and \ref{thm:cov:est} verifies that the type I error of the proposed test is controlled for large sample sizes.

\begin{corollary}\label{cor:hypotest}
For a given $c \in (0,\infty]$, suppose that the conditions of Theorem~\ref{thm:cov:est} are all satisfied with $m_0^c = \tilde{m}_0$. Then, given any $\alpha \in (0, 1)$, $\lim_{n\to\infty} P(T_n \ge \chi^2_{k,\alpha} \mid \textbf{H}_{0}) = \alpha$, i.e., the type I error rate of the proposed test converges to $\alpha$ as $n \rightarrow \infty$. 
\end{corollary}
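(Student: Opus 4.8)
The plan is to show that under $\textbf{H}_0$ (i.e., $m_0^c = \tilde m_0$), the test statistic $T_n$ converges in distribution to $\chi^2_k$, so that $P(T_n \ge \chi^2_{k,\alpha} \mid \textbf{H}_0) \to P(\chi^2_k \ge \chi^2_{k,\alpha}) = \alpha$. First I would invoke \Cref{thm:clt}: since the conditions of \Cref{thm:cov:est} include Assumptions (A1)--(A4), part (b) of the central limit theorem gives
\begin{equation*}
\sqrt{n}\,\mbox{Log}_{\tilde m_0}(m^c_n) = \sqrt{n}\,\phi_{m_0^c}(m^c_n) \xrightarrow{d} N_k(\mathbf{0}, A_c), \qquad A_c = H_c^{-1}\Sigma_c H_c^{-1},
\end{equation*}
where I use that $\phi_{m_0^c} = \mbox{Log}_{m_0^c} = \mbox{Log}_{\tilde m_0}$ is exactly the normal coordinate chart centered at the true Huber mean under the null. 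Next I would invoke \Cref{thm:cov:est} to get $\hat A_c = \hat H_c^{-1}\hat\Sigma_c\hat H_c^{-1} \xrightarrow{P} A_c$, and note that $A_c$ is positive definite (it is a congruence transform of the covariance matrix $\Sigma_c$, which is nonsingular under (A4); one should record where $\Sigma_c \succ 0$ is guaranteed), so by the continuous mapping theorem $\hat A_c^{-1} \xrightarrow{P} A_c^{-1}$ as well.

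The core step is then a standard Slutsky/continuous-mapping argument for quadratic forms: writing $Z_n := \sqrt{n}\,\mbox{Log}_{\tilde m_0}(m^c_n)$, we have $T_n = Z_n^T \hat A_c^{-1} Z_n$. Since $Z_n \xrightarrow{d} Z \sim N_k(\mathbf 0, A_c)$ and $\hat A_c^{-1} \xrightarrow{P} A_c^{-1}$, the pair $(Z_n, \hat A_c^{-1})$ converges jointly in distribution to $(Z, A_c^{-1})$ (joint convergence follows because the second coordinate has a degenerate limit), and the map $(z, B) \mapsto z^T B z$ is continuous, so $T_n \xrightarrow{d} Z^T A_c^{-1} Z$. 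Because $A_c^{-1/2}Z \sim N_k(\mathbf 0, I_k)$, the limit $Z^T A_c^{-1} Z = \|A_c^{-1/2}Z\|^2$ is exactly $\chi^2_k$. Finally, since the $\chi^2_k$ distribution is continuous, the Portmanteau theorem applied to the closed set $[\chi^2_{k,\alpha},\infty)$ (whose boundary has probability zero under $\chi^2_k$) yields $P(T_n \ge \chi^2_{k,\alpha} \mid \textbf H_0) \to P(\chi^2_k \ge \chi^2_{k,\alpha}) = \alpha$, which is the claim.

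The main obstacle, such as it is, is not the Slutsky argument but the bookkeeping of which chart and which hypotheses make everything line up: one must be careful that the CLT in \Cref{thm:clt} is stated in the chart $\phi_{m_0^c}$ centered at the \emph{true} population Huber mean, and the test statistic $T_n$ uses $\mbox{Log}_{\tilde m_0}$; these coincide precisely because the corollary assumes $m_0^c = \tilde m_0$ (this is the only place $\textbf H_0$ enters), so I would state that identification explicitly at the outset. A secondary point worth a sentence is the nonsingularity of $A_c$ (equivalently of $\Sigma_c$ and $H_c$): $H_c$ is positive definite by the strict-convexity/Hessian conditions bundled into (A4) at the minimizer, and $\Sigma_c \succ 0$ since $P_X$ is absolutely continuous (so $\mbox{grad}\,\rho_c\{d(X,\mbox{Exp}_m(\mathbf 0))\}$ is not supported on a proper subspace), ensuring $\hat A_c^{-1}$ and $A_c^{-1}$ both exist; once this is noted, the remainder is routine.
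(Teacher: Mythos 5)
Your proposal is correct and follows essentially the same route as the paper: invoke \Cref{thm:clt} for $\sqrt{n}\,\mbox{Log}_{\tilde m_0}(m_n^c)\xrightarrow{d}N_k(\mathbf 0,A_c)$ under $\textbf{H}_0$, use \Cref{thm:cov:est} plus the continuous mapping theorem for $\hat A_c$, and conclude $T_n\xrightarrow{d}\chi^2_k$ via a Slutsky-type argument (the paper works with $\hat A_c^{-1/2}$ applied to the normalized mean rather than the quadratic-form map, a purely cosmetic difference). Your explicit remarks on the chart identification under $\textbf{H}_0$ and the nonsingularity of $A_c$ are sound and only make the bookkeeping more transparent.
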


To inspect the asymptotic power of the proposed test, we consider a sequence of contiguous alternative hypotheses: 
\begin{equation}
    \label{eq:alternatives}
    \textbf{H}_{0}: m_0 = \tilde{m}_{0} \quad \mbox{vs} \quad \textbf{H}_{1(n)}: m_{0} = \tilde{m}_{n},
\end{equation} 
where $\tilde{m}_{n}$ satisfies $d(\tilde{m}_{0}, \tilde{m}_{n}) = O(n^{-\beta})$ as $n\rightarrow \infty$ for a constant $0 < \beta < 1/2$.
Our next result, a corollary of Theorems \ref{thm:clt} and \ref{thm:cov:est}, states that the proposed level $\alpha$ test possesses an asymptotic power of 1 even under the contiguous alternatives (\ref{eq:alternatives}) shrinking towards $\mathbf{H}_{0}$ at a rate $d(\tilde{m}_0, \tilde{m}_{n}) = O(n^{-\beta})$ as $n\rightarrow \infty$.

\begin{corollary}\label{cor:stat:power}
For a given $c \in (0,\infty]$, suppose that the conditions of Theorem~\ref{thm:cov:est} are all satisfied with $m_0^c = \tilde{m}_n$ for each $n$. Then, $\lim_{n\to \infty} P(T_{n} \ge \chi^{2}_{k, \alpha} \mid \textbf{H}_{1(n)}) = 1$ for any $\alpha \in (0, 1)$. 
\end{corollary}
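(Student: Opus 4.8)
The plan is to prove the slightly stronger statement that, under $\textbf{H}_{1(n)}$, the test statistic $T_n$ diverges to $+\infty$ in probability. Since the rejection threshold $\chi^2_{k,\alpha}$ is a fixed finite constant, this immediately yields $\lim_{n\to\infty} P(T_n \ge \chi^2_{k,\alpha}\mid \textbf{H}_{1(n)}) = 1$. The whole argument is a short corollary of \Cref{thm:clt} and \Cref{thm:cov:est}, plus the triangle inequality.

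First I would pin down the relevant stochastic rates under $\textbf{H}_{1(n)}$, where the population Huber mean is $\tilde m_n$ and the hypotheses of \Cref{thm:cov:est} hold with $m_0^c = \tilde m_n$. Part (a) of \Cref{thm:clt} gives $m_n^c \to \tilde m_n$, and part (b), applied at $\tilde m_n$, gives $\sqrt n\,\mbox{Log}_{\tilde m_n}(m_n^c) \to N_k(\mathbf{0}, A_c)$ in distribution, hence $d(m_n^c,\tilde m_n) = \|\mbox{Log}_{\tilde m_n}(m_n^c)\| = O_P(n^{-1/2})$. Since $d(\tilde m_n,\tilde m_0)\to 0$, we get $d(m_n^c,\tilde m_0)\le d(m_n^c,\tilde m_n)+d(\tilde m_n,\tilde m_0)\to 0$ in probability, so with probability tending to $1$ the point $m_n^c$ lies inside the injectivity radius of $\tilde m_0$; thus $\mbox{Log}_{\tilde m_0}(m_n^c)$ is well defined (and $T_n$ is defined) with $\|\mbox{Log}_{\tilde m_0}(m_n^c)\| = d(\tilde m_0,m_n^c)$. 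Moreover, \Cref{thm:cov:est} applied at $\tilde m_n$ gives $\hat A_c = \hat H_c^{-1}\hat\Sigma_c\hat H_c^{-1}\to A_c$ with $A_c$ positive definite, so $\lambda_{\max}(\hat A_c) = O_P(1)$.

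Next I would put these pieces together. By the triangle inequality,
\[
\|\mbox{Log}_{\tilde m_0}(m_n^c)\| \;=\; d(\tilde m_0, m_n^c) \;\ge\; d(\tilde m_0, \tilde m_n) - d(\tilde m_n, m_n^c) \;=\; d(\tilde m_0, \tilde m_n) - O_P(n^{-1/2}).
\]
Because the alternatives shrink at the rate $n^{-\beta}$ with $\beta<1/2$, we have $\sqrt n\, d(\tilde m_0,\tilde m_n)\to\infty$, hence $\sqrt n\,\|\mbox{Log}_{\tilde m_0}(m_n^c)\| \ge \sqrt n\, d(\tilde m_0,\tilde m_n) - O_P(1)\to\infty$ in probability. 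Bounding the quadratic form below by the smallest eigenvalue of $\hat A_c^{-1}$,
\[
T_n = n\,\mbox{Log}_{\tilde m_0}(m_n^c)^T \hat A_c^{-1}\mbox{Log}_{\tilde m_0}(m_n^c) \;\ge\; \frac{\big(\sqrt n\,\|\mbox{Log}_{\tilde m_0}(m_n^c)\|\big)^2}{\lambda_{\max}(\hat A_c)} \;\xrightarrow{P}\; \infty,
\]
and the conclusion follows.

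The main obstacle is a technical one: $\textbf{H}_{1(n)}$ specifies a different underlying distribution $P_{X,n}$ for each $n$ (a triangular array), whereas \Cref{thm:clt} and \Cref{thm:cov:est} are stated for a fixed distribution. To make the first step legitimate one needs the $O_P(n^{-1/2})$ bound on $d(m_n^c,\tilde m_n)$ and the $O_P(1)$ bound on $\lambda_{\max}(\hat A_c)$ to hold \emph{uniformly} along $(P_{X,n})$. This is guaranteed by imposing the regularity conditions of \Cref{thm:cov:est} uniformly in $n$ — a common support ball, common curvature and injectivity-radius bounds, and a common integrable envelope for the derivatives of the loss — or, in the Riemannian symmetric-space setting of \Cref{sec:unbiased}, by taking each $P_{X,n}$ to be the pushforward of one fixed distribution under the isometry sending $\tilde m_0$ to $\tilde m_n$, in which case $A_c$ is literally constant along the sequence and no uniformity argument is needed. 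Everything else — the triangle inequality, the eigenvalue bound, and passing from $T_n\to\infty$ to power $\to1$ against the fixed threshold — is routine.
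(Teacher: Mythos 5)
Your proof is correct, and it takes a genuinely different (and in one respect tighter) route than the paper's. The paper's argument works through an upper bound: it invokes the parallel-transport comparison inequality (Kendall and Le's (5.7)) to get $\| \mbox{Log}_{\tilde m_0}(m_n^c) - \Gamma_{\tilde m_n \to \tilde m_0} \mbox{Log}_{\tilde m_n}(m_n^c)\| \le C\, d(\tilde m_0,\tilde m_n)$, concludes $\|\mbox{Log}_{\tilde m_0}(m_n^c)\| = O_P(n^{-\beta})$ and hence $T_n = O_P(n^{1-2\beta})$, and then compares $T_n/n^{1-2\beta}$ with the vanishing threshold $\chi^2_{k,\alpha}/n^{1-2\beta}$. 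You instead produce the \emph{lower} bound directly: reverse triangle inequality gives $d(\tilde m_0,m_n^c) \ge d(\tilde m_0,\tilde m_n) - O_P(n^{-1/2})$, and $\xv^T\hat A_c^{-1}\xv \ge \|\xv\|^2/\lambda_{\max}(\hat A_c)$ with $\lambda_{\max}(\hat A_c)=O_P(1)$ by \Cref{thm:cov:est}, so $T_n \to \infty$ in probability. This is more elementary (no parallel transport or Hessian-boundedness input is needed) and it supplies exactly the divergence statement that the power claim requires; the paper's displayed chain, which rests on an $O_P(n^{1-2\beta})$ upper bound, implicitly relies on the same kind of lower-bound reasoning you make explicit. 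Two caveats apply equally to both arguments and you handle them at least as carefully as the paper: (i) the conclusion needs $\sqrt n\, d(\tilde m_0,\tilde m_n)\to\infty$, i.e.\ the rate $O(n^{-\beta})$ must be read as an exact order (otherwise $\tilde m_n \equiv \tilde m_0$ is admissible and the claim fails), which you state explicitly; and (ii) the alternatives form a triangular array, so \Cref{thm:clt} and \Cref{thm:cov:est} must hold along the changing sequence $P_{X,n}$ — the paper applies them per $n$ without comment, whereas you flag the issue and sketch sufficient uniformity (or isometry-pushforward) conditions.
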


We confirm numerically that the proposed Huber mean-based test outperforms the \Frechet\ mean-based test under contamination or heavy tails. See Section C.1 for details.

In our experiment, when the underlying distribution is contaminated or has a long tail, the proposed Huber mean-based test is more powerful than the \Frechet\ mean-based test. For details on the numerical performance of the proposed test, see Section C.1.

\subsection{Relative efficiency and choice of the robustification parameter}\label{sec:RE}
The problem for choosing the robustification parameter $c$ of the Huber loss has been extensively studied in regression settings \citep[cf.][]{holland1977robust, zhou2018new}. In these settings, 
several criteria have been used to choose the parameter $c$, which are either derived from asymptotic theory \citep{holland1977robust}, finite-sample theory \citep{zhou2018new}, or data-dependent methods \citep{jiang2019robust}. 
To our knowledge, no work has addressed the selection of $c$ for Huber means on general metric spaces, including the usual multivariate Euclidean space $\bbR^{k}$ for $k \ge 2$. 

We discuss a strategy for choosing the value of $c$ based on the asymptotic results in Section~\ref{sec:asymptotics}. Specifically, we propose selecting $c$ such that the asymptotic relative efficiency (suitably defined for manifold-valued estimators) of the Huber mean $m_n^c$, relative to the Fr\'{e}chet mean, is at least 95\%. It will be verified that $c$ can be tuned so that the Huber mean is highly efficient even under Gaussian-type distributions on manifolds.

To evaluate the performance of $M$-valued estimators, the notion of relative efficiency is employed. For $n$ observations $X_{1}, X_{2}, ..., X_{n} \overset{i.i.d.}{\sim} P_{X}$ on $M$, suppose that $m_n = m_{n}(X_{1}, X_{2}, ..., X_{n})$ and $m'_n = m'_{n}(X_{1}, X_{2}, ..., X_{n})$ are unbiased estimators of the parameter $\mu = \mu(P_X) \in M$. For a local coordinate chart $\phi$ defined on an open set $U \subset M$ containing both $m'_{n}$ and $m_{n}$, the \textit{relative efficiency} (RE) of $m_{n}$ relative to $m'_{n}$ is defined as 
\begin{equation}\label{def:RE}
\textsf{RE}_{\phi}(m_{n}, m'_{n}) := \frac{ \textsf{tr}\{\textsf{Cov}(\phi(m'_{n}))\}}{\textsf{tr}\{\textsf{Cov}(\phi(m_{n}))\}}, 
\end{equation} 
where $\textsf{tr}(A)$ denotes the trace of the matrix $A$. We say that $m_{n}$ is more efficient than $m'_{n}$ if $\textsf{RE}_{\phi}(m_{n}, m'_{n}) > 1$. Since the covariance matrix $\textsf{Cov}(\phi(m_{n}))$ for a manifold-valued estimator $m_n$ is in general challenging to grasp, the \textit{asymptotic relative efficiency} (ARE) can be used to compare the performances of $m_n$ and $m_n'$ in a large-sample setting. The ARE of $m_n$  relative to $m'_n$ is defined as the limit of $\textsf{RE}_{\phi}(m_{n}, m'_{n}) $ as the sample size $n$ increases. That is, 
$\textsf{ARE}_{\phi}(m_{n}, m'_{n}):= \lim_{n \rightarrow \infty} \textsf{RE}_{\phi}(m_{n}, m'_{n})$.

Assume that for a location parameter $\mu = \mu(P_{X}) \in M$, the population Huber means are equal to $\mu$; that is, $\mu=m_{0}^c$  for any $c \in (0, \infty]$. Denote $\phi$ by a coordinate chart of $M$ centered at $\mu$. For two robustification parameters $c_1, c_2 \in (0, \infty]$, suppose that $m^{c}_{n}$ for $c=c_{1} ~ \mbox{and} ~ c_{2}$ are unbiased estimators of the parameter $\mu$; see Proposition~\ref{prop:unbiased}(b). In addition, we assume that the regularity  conditions for the CLT (Theorem~\ref{thm:clt}) hold. Under these settings, we compare the efficiencies of the Huber means for $c_1$ and $c_2$ by the ARE:
\begin{equation}\label{def:ARE}
\textsf{ARE}_{\phi}(m^{c_{1}}_{n}, m^{c_{2}}_{n}) = \lim_{n \rightarrow \infty} \frac{\textsf{tr}\{\textsf{Cov}(\phi(m^{c_{2}}_{n}))\}}{\textsf{tr}\{\textsf{Cov}(\phi(m^{c_{1}}_{n}))\}} = \frac{\textsf{tr}(H_{c_{2}}^{-1}\Sigma_{c_{2}} H_{c_{2}}^{-1})}{\textsf{tr}(H_{c_{1}}^{-1}\Sigma_{c_{1}} H_{c_{1}}^{-1})},
\end{equation}
where the matrices $H_c$ and $\Sigma_c$ are given in Theorem~\ref{thm:clt}. 
See Section B.8.1 for the conditions and the proof of Equation (\ref{def:ARE}). 
We say that $m^{c_{1}}_{n}$ is asymptotically more efficient than $m^{c_{2}}_{n}$ if $\textsf{ARE}_{\phi}(m^{c_{1}}_{n}, m^{c_2}_{n}) > 1$. 

It is well known that under the Gaussian assumption for the case $M = \mathbb{R}$, the sample mean $m_n^\infty$ is more efficient than Huber means $m_n^c$ for $c<\infty$. As there is no free lunch, to be resistant to unpredicted outliers (cf. Section~\ref{sec:robust}), the cost of efficiency, under Gaussian distributions, should be paid off \citep{huber2004robust}.
For general manifolds, the Gaussian assumption may be replaced by a Gaussian-type distribution on $M$ \citep{pennec2006intrinsic, said2022gaussian}, defined by the density function 
\begin{equation}\label{eq:Gaussian-type_distn}
f^{G}_{\mu, \sigma}(x) = C\exp\{-d^2(x, \mu)/2\sigma^2\} \quad \mbox{for any} ~ x \in M,
\end{equation}
where $\mu \in M$ and $\sigma>0$ are the location and scale parameters, and $C$ is a normalizing constant. 
We propose to select the parameter $c$ of the Huber loss function as the smallest $c$ such that the asymptotic efficiency of $m_n^c$ is at least 95\% when compared to the Fr\'{e}chet mean under the Gaussian-type distribution on $M$:
%
%
\begin{equation}\label{eq:c_hat}
    \hat{c} = \inf\{c \in (0, \infty): \textsf{ARE}_{\phi}(m^{c}_{n}, m^{\infty}_{n}) \ge 0.95\},
\end{equation}
in which the ARE is computed under the Gaussian-type distribution. 

Note that $\rho_{c}\{d(m_{1}, m_{2})\} = \sigma^2 \cdot \rho_{c/\sigma} \{d(m_{1}, m_{2})/\sigma\}$ for any $m_{1}, m_{2} \in M$, $c \in (0, \infty)$, and for any scale parameter $\sigma > 0$.  
With this in mind, since Huber means are minimizers of the expected Huber loss, it is reasonable to further parameterize $c$ to be $c = \kappa \sigma$ with $\kappa \ge 0$. For the case $M=\bbR$, $\hat{c} = 1.345\sigma$ has been the traditional choice for $c$, ensuring the ARE of 95\% against the sample mean \citep{holland1977robust, huber2004robust};
See the top left panel of Figure~\ref{fig:cutoff}. 

\begin{figure}[!t]
\center
\includegraphics[width = 0.8\linewidth]{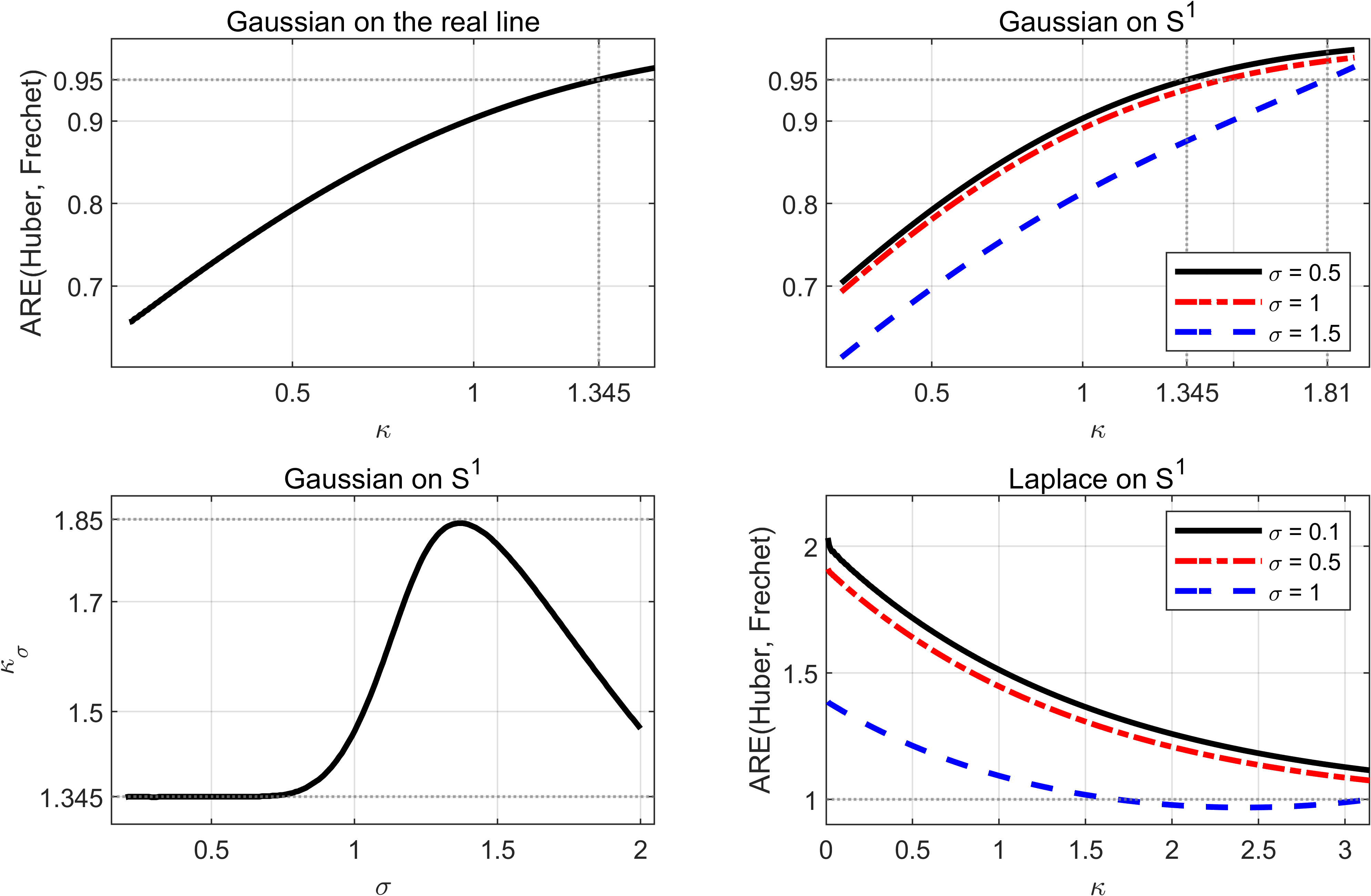}
\caption{(Top left) The ARE of the Huber mean over the \Frechet\ mean for varying $\kappa = c / \sigma$, under the Gaussian distribution on the real line. (Top right) The ARE versus $\kappa$ under the Gaussian-type distribution on $S^1$. 
(Bottom left) The ``optimal" $\kappa_\sigma$ versus $\sigma$ under the Gaussian-type distribution on $S^1$. (Bottom right) The ARE versus $\kappa$ under the Laplace-type distribution on $S^1$.
}
\label{fig:cutoff}
\end{figure}

We exemplify the choice of $c$ by $\hat{c}$ (\ref{eq:c_hat}) in the case where $M = S^1$. The unit circle $S^{1}$ is a manifold that has many statistical applications in computational biology and geostatistics, such as protein folding angles and wind directions \citep[e.g.,][]{huckemann2021data}. 
For bounded manifolds such as $S^1$, the Gaussian-type distribution (\ref{eq:Gaussian-type_distn}) has different shapes for different scale parameters. For instance, when parameterized by the signed angle and for small $\sigma$, the density $f^{G}_{\mu, \sigma}(\theta)$ closely resembles the Gaussian density functions. On the other hand, for large $\sigma$, the distribution is close to the uniform distribution on $S^1$. Thus, the meaning of the scale parameter $\sigma$ for Gaussian-type distribution on $S^{1}$ does not correspond to the standard deviation of the distribution.  
Nevertheless, we parameterize $c = \kappa \sigma$, and evaluate the value of $\kappa$ that provides 95\% ARE of the Huber mean, for several choices of $\sigma$ in (\ref{eq:Gaussian-type_distn}). This choice of $\kappa$ depends on $\sigma$, and we write the chosen value by $\kappa_\sigma$.


\begin{example}\label{0.95efficiency_S}
We parameterize the unit circle $S^{1}=\{(x,y) \in \bbR^2: x^2 + y^2 = 1\}$ by $\theta \in [-\pi, \pi)$. For a scale parameter $\sigma>0$, consider the Gaussian-type distribution $f^G_{\mu,\sigma}$, where the location parameter is set to be $\mu = (1,0) \in S^1$ (or equivalently by $\theta_\mu = 0$). For the open set $U = \{(\cos(\theta), \sin(\theta)): -\pi < \theta < \pi\} \subset S^1$ containing $\mu$, we choose to use the coordinate chart $\phi_{\mu}:U \rightarrow (-\pi, \pi)$, defined by $\phi_\mu((x,y)) = {\rm atan2}(y, x)$. For $X \sim f^{G}_{\mu, \sigma}$, we have 
\begin{equation}\label{eq:1d-hessianandcovariances}
\begin{aligned}
    \Sigma_{c} & =4\big[E\{d^2(X, \mu) \cdot 1_{d(X, \mu) \le c}\} + c^2P(d(X, \mu) > c)\big]  \\
    & = 4\sigma^2\{ E (Z_\sigma^2\cdot 1_{ |Z_\sigma| \le \kappa}) + \kappa^2 P(|Z_\sigma| > \kappa)\}, \\
    H_{c} & = 2P(d(X, \mu) < c) = 2P(|Z_\sigma| < \kappa),  \\
    \Sigma_{\infty} & = 4\sigma^2 \textsf{Var}(Z_\sigma) ~ \mbox{ and  } ~ H_{\infty} = 2,
\end{aligned}
\end{equation} 
where $Z_\sigma$ follows the standard Gaussian distribution, truncated to lie on $(-\pi/\sigma, \pi/\sigma)$. Numerically evaluating the above, and by (\ref{def:ARE}), we have evaluated $\textsf{ARE}_{\phi_{\mu}}(m^{\kappa \sigma}_{n}, m^{\infty}_{n})$ over a fine grid for $\kappa$ and several choices of $\sigma > 0$. In Figure~\ref{fig:cutoff} (top right panel), for each choice of $\sigma = 0.5, 1, 1.5$, the ARE as a function of $\kappa$ is plotted. 
For the smaller scale parameter $\sigma = 0.5$, choosing $\kappa = 1.345 ~ (=\kappa_{0.5})$ provides exactly 95\% ARE. This is consistent with the real line case, as the underlying distribution is nearly Gaussian. 
For larger scale parameters, the underlying distribution has smaller kurtosis (that is, less tailedness), and 95\% ARE is achieved at higher values of $\kappa$. In the bottom left panel of Figure~\ref{fig:cutoff}, we also depict the relationship between the scale parameter $\sigma$ and $\kappa_{\sigma}$ that provides 95\% ARE of the Huber mean. The relationship is nonlinear, but regardless of the value of the scale parameter, choosing $\kappa \in [1.345, 1.85]$ seems appropriate. 
\end{example}

From Example \ref{0.95efficiency_S},  
the recommended choice of $c$ is $c = 1.345\hat\sigma$, where $\hat\sigma$ is a robust estimate of the scale parameter. For $\hat\sigma$, one may use the median absolute deviation (MAD) divided by 0.6745 \citep{birch1980some}, 
where MAD is the median of the absolute deviations $d(x_{i}, \hat{\mu})$ from an initial location estimator $\hat\mu$ (e.g., the geometric median of $x_{1}, x_{2}, ..., x_{n}$). Numerical experiments on $S^2$ and $S^3$, reported in Section B.8.3, confirm that setting $c = 1.345 \hat{\sigma}$ yields a suitable choice.

We now demonstrate that the Huber means can be more efficient than \Frechet\ means, when $P_X$ is heavy-tailed. It is known that Huber means are efficient under the Laplace distribution on the real line (see Section B.8.3 for our proof). As a simple example on a manifold, consider the (truncated) Laplace distribution on $S^{1}$, whose density is given by, for $\mu \in M$ and $\sigma>0$, 
$f^{L}_{\mu, \sigma}(x) = C\exp\{-d(x, \mu)/\sigma\}$, 
where $C$ is a normalizing constant.


\begin{example}\label{circle:laplace}
Consider the sample Huber means on $S^1$ and suppose that $P_X$ is the Laplace-type distribution on $S^1$ with the location parameter $\mu = (1,0)$ and $\sigma > 0$. For $X \sim f^{L}_{\mu, \sigma}$, 
the corresponding expressions for $\Sigma_c$ and $H_c$ are (\ref{eq:1d-hessianandcovariances}) with $Z_\sigma$ replaced by $L_\sigma$ (following a truncated Laplace distribution). Utilizing these, 
the ARE of $m^{\kappa \sigma}_{n}$ over the Fr\'{e}chet mean is evaluated over a fine grid of $\kappa$, and is displayed at the bottom right panel of Figure~\ref{fig:cutoff}. Overall, the Huber means are asymptotically more efficient than the Fr\'{e}chet mean for most combinations of $\kappa$ and $\sigma$.
\end{example}


\subsection{Robustness of sample Huber means}\label{sec:robust}
In this subsection, 
we evaluate the breakdown point of the sample Huber mean, demonstrating its high robustness. Throughout, the sample Huber mean is assumed unique.

Let $\bX= (x_{1}, x_{2}, ..., x_{n}) \in M^{n}$ be an $n$-tuple of deterministic data points on $M$. For $k \ge 1$, we write $\bY_{k}= (y_{1}, y_{2}, ..., y_{n}) \in M^{n}$ for arbitrarily corrupted $\bX$, consisting of $k$ arbitrary points on $M$ and ($n-k$) points from $\bX$. Denote by $m^{c}_{n}(\bX)$ the sample Huber mean of $\bX$. The breakdown point of the sample Huber mean at $\bX$ is given by $\epsilon^{*}(m^{c}_{n}, \bX) = \min_{1\le k\le n}\{\frac{k}{n}: \sup_{\bY_{k}} d(m^{c}_{n}(\bX), m^{c}_{n}(\bY_{k})) = \infty \}$,
where the supremum is taken over all possible $\bY_{k}$. The higher the breakdown point is, the more resistant the Huber mean is to outliers. We show below that, for any sample size $n$, the breakdown point of the sample Huber mean for $c \in [0, \infty)$ is at least 0.5 on general Riemannian manifolds. 

\begin{theorem}[Breakdown point]\label{thm:robust:breakdown}
 
Let $\bX= (x_{1}, x_{2}, ..., x_{n})$ be an $n$-tuple of observations on $M$. Then, for any $c \in [0, \infty)$, $\epsilon^{*}(m^{c}_{n}, \bX) \ge \floor{\frac{n+1}{2}}/n$ where $\floor{\cdot}$ denotes the floor function.
\end{theorem}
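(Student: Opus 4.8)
The plan is to show that corrupting fewer than half of the observations cannot send the sample Huber mean to infinity, by exploiting the fact that for $c < \infty$ the Huber loss grows only linearly at infinity, so the objective function $F_n^c$ behaves like a weighted sum of distances for points far away. Write $k_0 = \floor{(n+1)/2}$; it suffices to prove that for any $k \le k_0 - 1$ and any corrupted sample $\bY_k$, the sample Huber mean $m_n^c(\bY_k)$ stays within a bounded region depending only on $\bX$ and $c$. Fix a basepoint $o \in M$ (say $o = m_n^c(\bX)$) and let $R_0 := \max_i d(o, x_i)$, so all the \emph{uncorrupted} points of $\bY_k$ lie in the closed ball $\overline{B_{R_0}(o)}$. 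The key quantitative input is the elementary inequality for the Huber loss: for all $s \ge 0$ and all $R \ge 0$, $\rho_c(s) \ge 2c\, s - 2c R - c^2$ is too crude near zero, so instead I would use that $\rho_c$ is convex, nondecreasing, $1$-Lipschitz-scaled at rate $2c$ for $s > c$, and in particular for any two points $p, q$ with $d(p,q) = D$ and any $x$ with $d(o,x) \le R_0$, by the triangle inequality $d(x,p) \ge D - R_0 - d(o,q)$, giving a lower bound on $\rho_c\{d(x,p)\}$ that grows linearly in $D$.

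The main estimate is a comparison of $F_n^c$ evaluated at a far-away candidate $p$ versus at the basepoint $o$. On one hand, the $n - k$ uncorrupted points each contribute at most $\rho_c(R_0 + d(o,p)) \le 2c\,d(o,p) + 2cR_0 + (\text{const})$ more at $p$ than at $o$ — actually I only need the trivial upper bound $F_n^c(o) \le \tfrac{1}{n}\sum_i \rho_c\{d(o,x_i)\} =: B_0$, a constant depending only on $\bX$ and $c$, since $\rho_c(0) = 0$ for the corrupted points counts nothing; wait, more carefully, $F_n^c(o) \le \tfrac{n-k}{n}\max_i \rho_c\{d(o,x_i)\} + \tfrac{k}{n}\cdot 0$ only if the corrupted points sit at $o$, which we cannot assume — so instead bound $F_n^c(m_n^c(\bY_k)) \le F_n^c(o)$ trivially by optimality, where now $F_n^c(o)$ involves the corrupted points too and need not be bounded. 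This is the genuine obstacle: the corrupted points can be placed so that $F_n^c$ is large \emph{everywhere}, so a direct "evaluate at $o$" argument fails. The fix is to compare $F_n^c$ at a far candidate $p$ with $F_n^c$ at the point $o$ only through the \emph{uncorrupted} coordinates and treat the corrupted coordinates via the bound $\rho_c\{d(x,p)\} \ge \rho_c\{d(x,o)\} - 2c\,d(o,p)$, valid because $\rho_c$ is $2c$-Lipschitz (its derivative is bounded by $2c$ since $c < \infty$). Summing, $F_n^c(p) - F_n^c(o) \ge \tfrac{1}{n}\sum_{i \in \text{uncorr}} \big[\rho_c\{d(x_i,p)\} - \rho_c\{d(x_i,o)\}\big] - \tfrac{k}{n}\cdot 2c\,d(o,p)$.

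For the uncorrupted sum, when $d(o,p) =: D$ is large, each term $d(x_i, p) \ge D - R_0$, so $\rho_c\{d(x_i,p)\} \ge 2c(D - R_0) - c^2$ once $D - R_0 > c$, while $\rho_c\{d(x_i,o)\} \le \rho_c(R_0)$ is bounded. Hence the uncorrupted sum is at least $\tfrac{n-k}{n}\big[2cD - (\text{const})\big]$, and therefore
\begin{equation*}
F_n^c(p) - F_n^c(o) \ge \frac{2c}{n}\big[(n-k) - k\big] D - C_1 = \frac{2c}{n}(n - 2k) D - C_1,
\end{equation*}
where $C_1$ depends only on $\bX$ and $c$. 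Since $k \le k_0 - 1 = \floor{(n+1)/2} - 1$, we have $n - 2k \ge n - 2(\floor{(n+1)/2} - 1) \ge 1$ (checking both parities of $n$), so the coefficient of $D$ is at least $2c/n > 0$. Thus for $D$ large enough (exceeding an explicit threshold depending only on $\bX, c, n$) we get $F_n^c(p) > F_n^c(o) \ge F_n^c(m_n^c(\bY_k))$, which forces $m_n^c(\bY_k)$ to lie in a bounded ball. Taking the supremum over all $\bY_k$ keeps the displacement $d(m_n^c(\bX), m_n^c(\bY_k))$ finite, so the breakdown point exceeds $(k_0 - 1)/n$, i.e. $\epsilon^*(m_n^c, \bX) \ge k_0/n = \floor{(n+1)/2}/n$. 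The one point requiring care beyond routine estimation is the Lipschitz bound $|\rho_c(s) - \rho_c(t)| \le 2c|s-t|$ combined with $|d(x,p) - d(x,o)| \le d(o,p)$; everything else is triangle-inequality bookkeeping, and the parity check $n - 2k_0 + 2 \ge 1$ is immediate. I would also remark that the argument uses $c < \infty$ essentially (the $2c$-Lipschitz constant blows up as $c \to \infty$), consistent with the Fr\'{e}chet mean having breakdown point $1/n$.
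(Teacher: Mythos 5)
Your argument is correct for $c\in(0,\infty)$ and is essentially the paper's own proof: both compare the corrupted objective at a candidate far from $m^{c}_{n}(\bX)$ with its value at $m^{c}_{n}(\bX)$, lower-bounding the corrupted points' contribution via the $2c$-Lipschitz property of $\rho_c$ and the uncorrupted points' contribution via the linear growth $\rho_c(t)\ge 2ct-c^2$, so that the majority count $n-2k\ge 1$ forces any minimizer into a bounded ball (the paper's bookkeeping with $\delta$ and the gain $\rho_c(t+\delta)-\rho_c(t)\ge c\delta$ is just a slightly different packaging of the same estimate). The only point to patch is $c=0$, which the theorem covers but where your coefficient $2c/n$ degenerates to zero: there $\rho_0=L_1$ is $1$-Lipschitz with linear growth rate $1$, so the identical argument runs with Lipschitz constant $\max(2c,1)$ (the paper instead delegates $c=0$ to the known result for the geometric median).
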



If $M$ is bounded, then the sample Huber mean, as well as the \Frechet\ mean, never breaks down, i.e., $\epsilon^{*}(m^{c}_{n}, \bX) =1$. It is thus natural to investigate the breakdown point of Huber means when $M$ is unbounded. We do this not only for the Huber mean, but also for all \emph{isometric-equivariant} estimators, when $M$ is \emph{homogeneous}. 

A map $\psi:M \rightarrow M$ is an isometry on $M$ if $d\{\psi(x),\psi(y)\} = d(x,y)$ for any $x, y \in M$, where $d(\cdot,\cdot)$ is the distance function on $M$. Examples of isometric transformations include translation, reflection, and rotation for $M=\bbR^{k}$;  rotation and reflection for $S^k$; and the matrix inversion, uniform scaling, and rotation for $(\mbox{Sym}^{+}(k), d_{\mbox{\tiny AI}})$ where $d_{\mbox{\tiny AI}}$ denotes the affine-invariant Riemannian distance on $\mbox{Sym}^{+}(k)$ \citep{pennec2006riemannian}. A manifold $M$ is said to be \emph{homogeneous} if its isometry group acts transitively; that is, for any $p,q\in M$, there exists an isometry $\psi$ such that $q = \psi(p)$. Examples of homogeneous manifolds include $\bbR^{k}$, $S^{k}, \bbH^{k}, ~ \mbox{and} ~ \mbox{Sym}^{+}(k)$ with their canonical Riemannian metrics. 

The notion of isometric equivariance is defined as follows.

\begin{definition}
For a fixed positive integer $n$, let $\bX = (x_{1}, x_{2}, ..., x_{n})$ be an $n$-tuple of observations on $M$. An estimator $m:M^{n} \rightarrow M, \bX \mapsto m(\bX)$ is said to be \emph{isometric-equivariant} if, for any isometry $\psi:M \rightarrow M$, $\psi\{m(\bX)\} = m\{\psi(\bX)\}$ where $\psi(\bX):= \big(\psi(x_{1}), \psi(x_{2}), ..., \psi(x_{n})\big)$.
\end{definition}

We note that for $\mathbb{R}^k$-valued multivariate statistics, one may seek affine equivariance of estimators \citep[e.g.,][]{scealy2021analogues}. However, the notion of affine equivariance is generally not defined for manifolds, and we have instead adopted the related concept of isometric equivariance.

We establish an upper bound on the breakdown point of isometric-equivariant estimators on $M$ that is unbounded and homogeneous.

\begin{theorem}\label{thm:breakdown:0.5}
Suppose $M$ is unbounded and homogeneous. Let $\mathbf{X}=(x_{1}, x_{2}, ..., x_{n})$ be an $n$-tuple of observations on $M$. For any $M$-valued isometric-equivariant estimator, say $m_{n}$, it holds that $\epsilon^{*}(m_{n}, \mathbf{X}) \le \floor{\frac{n+1}{2}}/n$.
\end{theorem}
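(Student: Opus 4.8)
The plan is to argue by contradiction, showing that if the asserted bound failed then $M$ would have to be bounded. Put $k := \floor{\tfrac{n+1}{2}}$ and record the elementary identity $n-k = \floor{\tfrac{n}{2}} \le k$, which is the only place the exact value of $k$ is used. Assume $\epsilon^{*}(m_{n},\bX) > k/n$. By the definition of the breakdown point as a minimum over the number of corrupted points, for every $j \in \{1,\dots,k\}$ the supremum $\sup\, d\big(m_{n}(\bX),\, m_{n}(\bY)\big)$ over all configurations $\bY \in M^{n}$ obtained from $\bX$ by replacing at most $j$ of its entries by arbitrary points of $M$ is finite; let $B<\infty$ be the maximum of these finitely many suprema. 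Thus $d\big(m_{n}(\bX),\, m_{n}(\bW)\big)\le B$ whenever $\bW$ differs from $\bX$ in at most $k$ coordinates.

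Next I would transfer this bound along isometries, uniformly. Every surjective $\psi \in \textsf{Iso}(M)$ is automatically injective by distance preservation, hence bijective with $\psi^{-1}\in\textsf{Iso}(M)$. Since $\bW \mapsto \psi^{-1}(\bW)$ is a bijection between the configurations differing from $\psi(\bX)$ in at most $k$ coordinates and those differing from $\bX$ in at most $k$ coordinates, and since isometric equivariance gives $m_{n}(\psi(\bX))=\psi(m_{n}(\bX))$ and $m_{n}(\bW)=\psi\big(m_{n}(\psi^{-1}(\bW))\big)$, we obtain
\[
d\big(m_{n}(\psi(\bX)),\, m_{n}(\bW)\big) = d\big(m_{n}(\bX),\, m_{n}(\psi^{-1}(\bW))\big) \le B
\]
for every such $\bW$. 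The essential point is that the single constant $B$ from the first paragraph serves simultaneously for all surjective $\psi$.

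Then I would build a ``bridge'' configuration linking $\bX$ and $\psi(\bX)$. Fix a surjective $\psi \in \textsf{Iso}(M)$ and set $\bZ=(z_{1},\dots,z_{n})$ with $z_{i}=x_{i}$ for $1\le i\le n-k$ and $z_{i}=\psi(x_{i})$ for $n-k<i\le n$. Then $\bZ$ agrees with $\bX$ in its first $n-k$ coordinates, hence differs from $\bX$ in at most $k$ coordinates; and $\bZ$ agrees with $\psi(\bX)=(\psi(x_{1}),\dots,\psi(x_{n}))$ in its last $k$ coordinates, hence differs from $\psi(\bX)$ in at most $n-k\le k$ coordinates. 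The triangle inequality together with the two preceding paragraphs yields
\[
d\big(m_{n}(\bX),\, \psi(m_{n}(\bX))\big) = d\big(m_{n}(\bX),\, m_{n}(\psi(\bX))\big) \le d\big(m_{n}(\bX),\, m_{n}(\bZ)\big) + d\big(m_{n}(\bZ),\, m_{n}(\psi(\bX))\big) \le 2B,
\]
valid for every surjective $\psi \in \textsf{Iso}(M)$. Finally, writing $p := m_{n}(\bX)$: if $M$ were unbounded there would be a point $q$ with $d(p,q)>2B$, and by surjective transitivity some surjective $\psi \in \textsf{Iso}(M)$ with $\psi(p)=q$, giving $d(p,\psi(p))=d(p,q)>2B$ and contradicting the last display. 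Hence $\epsilon^{*}(m_{n},\bX)\le k/n=\floor{\tfrac{n+1}{2}}/n$.

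The main obstacle, and in fact the only genuinely delicate step, is the uniformity over $\psi$ in the second paragraph: one must recognize that one fixed finite $B$ bounds $d\big(m_{n}(\psi(\bX)),\, m_{n}(\cdot)\big)$ for \emph{all} surjective isometries at once, which is precisely what lets the last step choose $\psi$ moving $m_{n}(\bX)$ arbitrarily far. The supporting bookkeeping, namely that $\bZ$ is a legitimate corruption of both $\bX$ and $\psi(\bX)$ with at most $k$ replaced entries, is routine but is exactly where $n-\floor{\tfrac{n+1}{2}}\le\floor{\tfrac{n+1}{2}}$ is needed.
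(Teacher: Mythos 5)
Your proof is correct and follows essentially the same route as the paper: assume the breakdown point exceeds $\floor{\tfrac{n+1}{2}}/n$, exploit that the resulting bound transfers along isometries via equivariance, construct a bridge configuration that is simultaneously within $\floor{\tfrac{n+1}{2}}$ replacements of $\bX$ and of $\psi(\bX)$ (using $n-\floor{\tfrac{n+1}{2}}\le\floor{\tfrac{n+1}{2}}$), and derive a contradiction by choosing, through unboundedness and surjective transitivity, a surjective isometry moving $m_{n}(\bX)$ farther than the bound allows. The only difference from the paper is cosmetic bookkeeping (the paper routes the triangle inequality through $m_n(\bY_k)$ and its pullback $\psi^{-1}(\bY_k)$ rather than through your mixed configuration $\bZ$), so no further comment is needed.
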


For any $c\in[0, \infty]$ and for any $M$, the sample Huber mean is isometric-equivariant (see Section B.9 for proof). For any unbounded homogeneous manifold, Theorems \ref{thm:robust:breakdown} and \ref{thm:breakdown:0.5} postulate that the Huber mean possesses a breakdown point of 0.5, which is the highest possible breakdown point among all isometric-equivariant estimators.

When $M$ is a Hadamard manifold, the Huber means for $c < \infty$ achieve the highest breakdown point of 0.5. In contrast, the \Frechet\ mean has a breakdown point of $1/n$ for any unbounded $M$. See Section B.9.3 for details.

\section{Numerical Examples}\label{sec:examples}

In this section, we provide numerical examples for utilizing sample Huber means. The numerical performances of the covariance estimator and the one-sample location test, proposed in Sections \ref{subsec:est:cov} and \ref{subsec:application:hypo}, are satisfactory, as demonstrated in Section C.1.

\subsection{Robustness and efficiency of Huber means}

We demonstrate that the Huber means are resistant to outliers on manifolds, and can be more efficient than the \Frechet\ mean. In the first example, we consider simulated data sets on the two-dimensional unit sphere, displayed in Figure~\ref{fig:vmf_sphere}. The directional data are generated from a mixture of von Mises-Fisher distribution
\[
X \sim 0.9 \cdot \textsf{vMF}(\mu_{1}, \kappa) + 0.1 \cdot \textsf{vMF}(\mu_{2}, 20 \kappa), 
\]
where $\mu_{1}=(1, 0, 0)$ and $\mu_{2}=(0, 0, 1)$, for several choices of concentration parameter $\kappa$. The data represent the situation where 10\% of data 
are contaminated. 
While the \Frechet\ mean is influenced by the group of outliers (near $\mu_2$), the Huber mean provides an accurate and robust estimation of $\mu_1$. 

\begin{figure}[t]
    \centering
    \includegraphics[width = \linewidth]{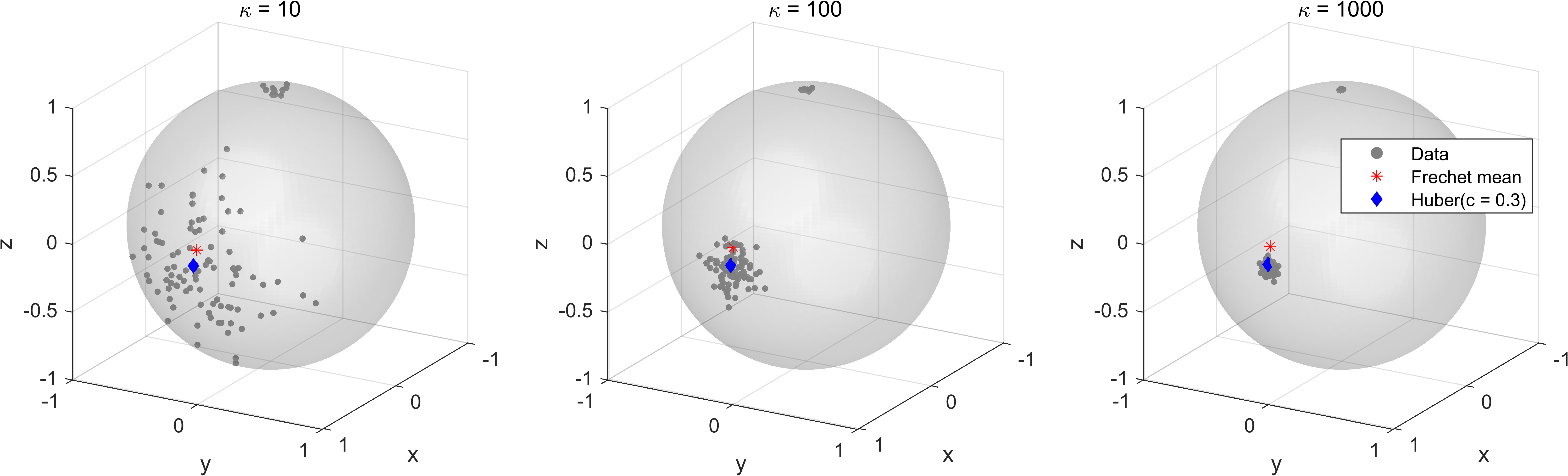}
    \caption{Toy data example on the unit sphere. The Huber mean (shown as blue diamond) is resistant to outliers (near the north pole), while the Fr\'{e}chet mean (red asterisk) is susceptible to outliers.}
    \label{fig:vmf_sphere}
\end{figure}

Our second example involves the unbounded manifold $\mbox{Sym}^{+}(2)$ with the canonical Riemannian metric, sometimes referred to as the affine-invariant geometric framework for $\mbox{Sym}^{+}(k)$ \citep{pennec2006riemannian}. The toy data are sampled from a lognormal distribution \citep{schwartzman2016}
on $\mbox{Sym}^{+}(2)$ with a mean of $I_2$. As done for the directional data example, 10\% of the data are replaced by gross outliers near $\mbox{Exp}_{I_{2}} \big(\begin{smallmatrix}
 10 &  5\sqrt{2} \\
  5\sqrt{2} &  10
 \end{smallmatrix}\big)$. This toy data set is displayed on the coordinates of $T_{I_2} \mbox{Sym}^+(2)$ in Figure~\ref{fig:mvn_spd}. There, the outliers are outside of the figure frame, so we have only indicated the direction of outlier locations. while the \Frechet\ mean is affected by the gross outliers, the Huber means as well as the geometric median remain resistant. Additionally, the Huber means continuously approach the true ``mean" at $I_2$ as the parameter $c$ decreases. 

\begin{figure}[t]
    \centering
    \includegraphics[width = 0.7\linewidth]{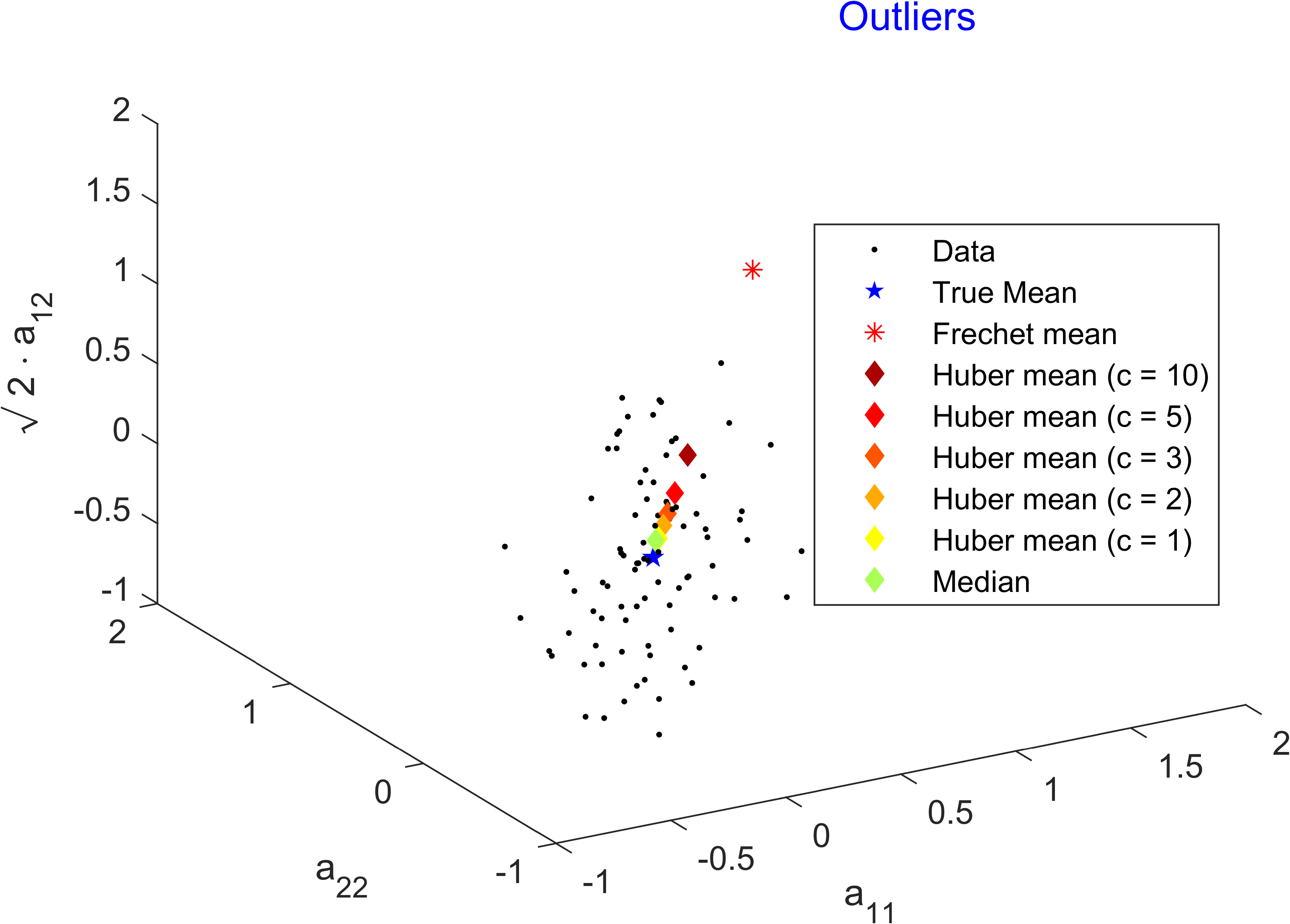}
    \centering
    \caption{Toy data example on $\mbox{Sym}^+(2)$. Huber means are closer to the uncontaminated true mean than the \Frechet\ mean, and they continuously approach the \Frechet\ mean as $c$ increases.}
    \label{fig:mvn_spd}
\end{figure}

To further support our claim that Huber means are more efficient than the \Frechet\ mean when the underlying distribution is contaminated or has heavy tails, we compared the mean squared errors (MSE) of Huber means with those of \Frechet\ means and geometric medians. For this, we take $m_0 = I_2$ as the true mean, and define the following:  
  $\mbox{MSE}(\hat{m}) := E d^2(\hat{m}, m_0)$, $\mbox{Bias}(\hat{m}) := d(E(\hat{m}), m_0)$ and $\textsf{Var}(\hat{m}) := Ed^2(\hat{m}, E(\hat{m}))$, where $E(\hat{m})$ is the \Frechet\ mean of the estimator $\hat{m}$. The first three columns of Table~\ref{table:num_result} compare (numerically evaluated) MSE, bias, and variances of the location estimators, when the underlying distribution is the contaminated lognormal distribution. In this case, 10\% of outliers force the \Frechet\ mean to be located far from the truth $I_2$, thus \Frechet\ means exhibit large bias, resulting in the inflated MSE. On the other hand, both Huber means and pseudo-Huber means show significantly smaller MSE. 
  
  \begin{table}[t]
\centering
\caption{The bias, variance and mean squared errors of Huber means, compared with those of \Frechet\ means, computed for the sample size $n = 100$, based on $1,000$ simulations. \label{table:num_result}}
\begin{tabular}{c||ccc|ccc}
  \multicolumn{1}{c}{}   & \multicolumn{3}{c}{lognormal with outliers} & \multicolumn{3}{c}{log-Laplace}\\ 
Type & Bias & Variance &MSE & Bias & Variance &MSE \\ \hline
Fr\'{e}chet mean & 1.728 & 0.003 & 2.990 & 0.004 & 0.045 & 0.045\\ 
Huber mean ($c=1$) & 0.106 & 0.003 & 0.014 & 0.007 & 0.035 & 0.035\\  
Pseudo-Huber mean ($c=1$) & 0.130 & 0.003 & 0.020 & 0.007 & 0.035 & 0.035 \\ 
Geometric median & 0.075 & 0.004 & 0.010 & 0.008 & 0.035 & 0.035
\end{tabular}
\end{table}

In addition, we also have evaluated the bias, variance, and MSE under a log-Laplace distribution centered at $m_0 = I_2$, where each coordinate of 
$ \mbox{Log}_{m_0}(X) $ is independently sampled from the standard Laplace distribution. As can be inspected from the latter three columns of Table~\ref{table:num_result}, the variance of Huber means is smaller than that of Fr\'{e}chet means. While all estimators are seemingly unbiased, the Huber mean appears to be more efficient than the Fr\'{e}chet mean with $\textsf{RE}_{\phi}(m^{1}_{n}, m^{\infty}_{n}) \approx \frac{0.0454}{0.0352} \approx 1.287 > 1$. 

In all examples, the geometric median performed slightly better than or comparably to the Huber means. However, only the Huber means enjoy asymptotic normality, enabling valid statistical inference. The asymptotic normality of the Huber means will be utilized in the real data analysis presented in the next subsection.

\subsection{An application to multivariate tensor-based morphometry
}\label{subsec:TBM}

To demonstrate the usefulness of the proposed Huber means on manifolds, we consider a real data set from a multivariate tensor-based morphometry, consisting of $2 \times 2$ symmetric positive-definite (SPD) matrices. The data are obtained from a particular vertex of morphometry tensors obtained from registered lateral ventricles of $n = 36$ infants \citep{paquette2017ventricular}. Among those $2 \times 2$ SPD matrices, there is a seemingly outlying observation, as displayed in Figure~\ref{fig:morphometry}. For displaying points and regions in the manifold ${\mbox{Sym}^+(2)}$, we used the orthogonal coordinates of the tangent space at $m_n^{c}$, the (sample) Huber mean for $c = 0.805$. A point $\xv = (x_1,x_2,x_3)^T$ in the figure corresponds to an SPD matrix 
$\mbox{Exp}_{m_n^{0.805}}\big(\begin{smallmatrix}
 x_{1} & x_{3}/\sqrt{2} \\
 x_{3}/\sqrt{2} &  x_2
 \end{smallmatrix}\big)$. 
 We chose the robustification parameter $c = 1.345\hat\sigma$, where $\hat\sigma$ is a robust estimate of $\sigma$ in the normal model of $N_3(\mathbf{0},\sigma^2 I_3)$ on the tangent space at the median, and the value 1.345 was adopted from our recommendation in Section \ref{sec:RE}.

\begin{figure}[t]
\centering
\includegraphics[width = \linewidth]{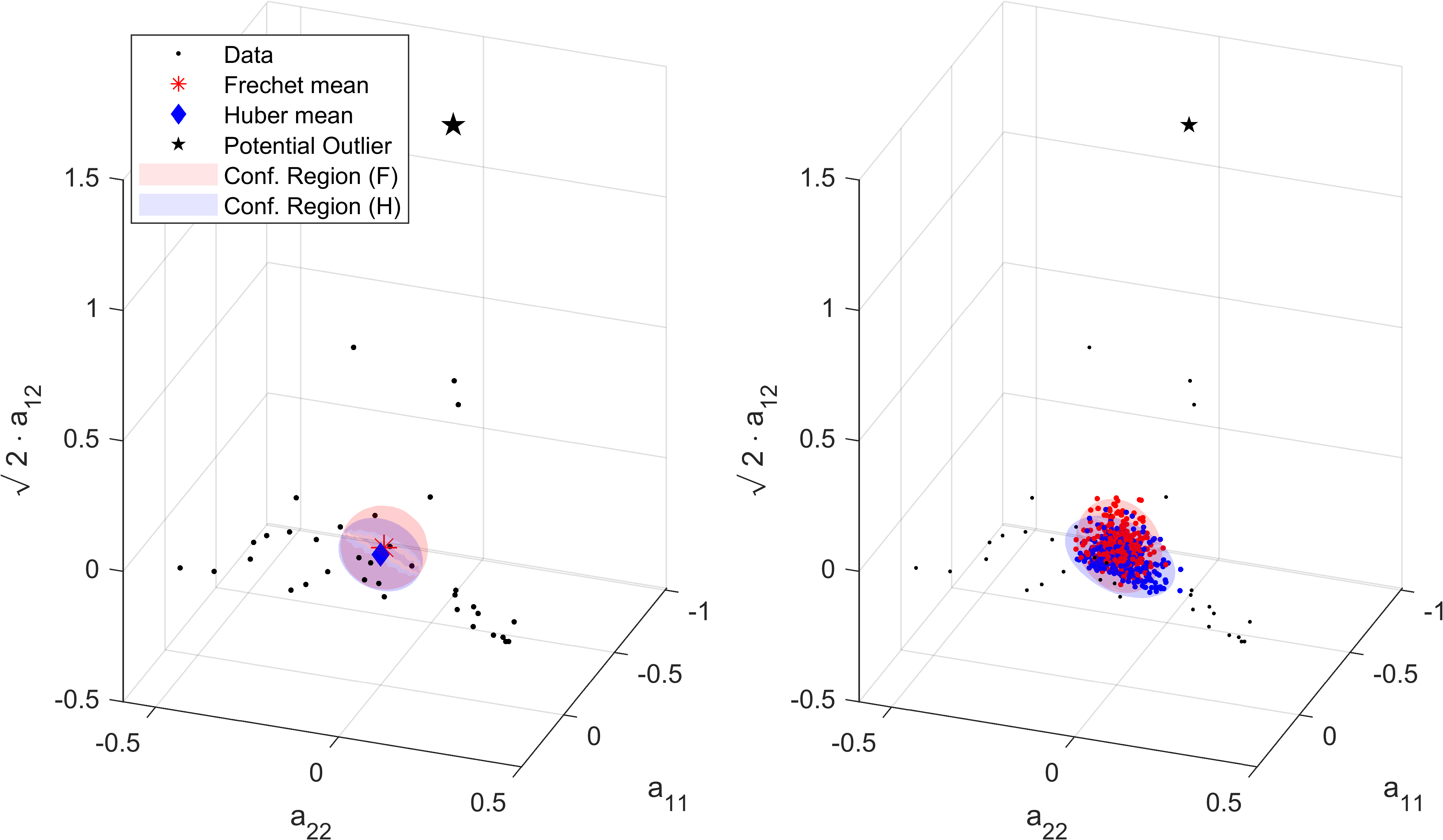} 
\caption{Huber means and their approximate confidence regions for a real sample of $2\times 2$ SPD matrices. (Left) Data and a potential outlier, overlaid with the sample \Frechet\ mean and the sample Huber mean for $c = 0.805$, and their 95\% confidence regions computed from the limiting covariance estimates (\ref{subsec:est:cov}). (Right) The same data with bootstrap replicates of the sample \Frechet\ and Huber means (red and blue dots, respectively), as well as bootstrap-approximated confidence regions.}
\label{fig:morphometry}
\end{figure}

In Figure~\ref{fig:morphometry}, we confirmed that the Huber mean
is less influenced by the potential outlier, when compared to the Fr\'{e}chet mean. As theoretically demonstrated in Section~\ref{sec:robust}, 
if the outlier were positioned farther from other data points, then the Fr\'{e}chet mean would break down, while the Huber mean would not. To demonstrate the robustness of Huber means in the presence of outliers, we have computed approximate confidence regions for the Huber mean (for $c = 0.805$) and for the \Frechet\ mean. To build these confidence regions, we utilize the asymptotic normality (Theorem~\ref{thm:clt}) together with the moment-based limiting covariance estimator $\hat{A}_c$. 
Specifically, the 95\% confidence region on the tangent space $T_{m^{c}_n}\mbox{Sym}^+(2) \cong \mathbb{R}^3$ is given by 
\begin{equation}
    \label{eq:CR}\mbox{CR}(m_0^c) := \{x \in \mathbb{R}^3: x^{T} (n^{-1}\hat{A}_c)^{-1} x \le \chi^{2}_{3, 0.05}\},
\end{equation} 
where $\chi^{2}_{3, 0.05}$ stands for the upper 5$\%$ quantile of the $\chi^{2}_{3}$ distribution. 
By visually inspecting these confidence regions in the left panel of Figure~\ref{fig:morphometry}, we observe that $\mbox{CR}(m_0^c)$, the confidence region for the Huber mean (for $c = 0.805$) resembles the (scaled) covariance of the data without the outlier. Heuristically, this is because the gradients used in computing $\hat\Sigma_{c}$ are clipped at $c$, thus effectively compensating for the presence of outliers. On the other hand, the confidence region $\mbox{CR}(m_0^\infty)$ for the \Frechet\ mean  is heavily affected by the potential outlier,

The next questions we address are whether the sampling distribution of Huber means is approximately normal and also whether the approximate covariance matrix $n^{-1}\hat{A}_c$ is an acceptable estimator of $\textsf{Cov}(\mbox{Log}_{m_0}m_n^c)$. This is particularly important given the small sample size ($n = 36$). To this end, we obtained a nonparametric bootstrap estimator of $\textsf{Cov}(\mbox{Log}_{m_0}m_n^c)$ for $c = 0.805$ and $\infty$. Specifically, for each of $B = 300$ bootstrap samples of size $n$, we computed the Huber means. For each bootstrap replicate of the Huber mean, denoted as $\breve{m}_n^{c}(b)$, we compute  $\mbox{Log}_{m_n^c} \breve{m}_n^{c}(b)$ for $b = 1, 2, \ldots, B$. The right panel of Figure~\ref{fig:morphometry} displays these bootstrap replicates, from which we can glimpse the sampling distribution of $\mbox{Log}_{m_0}m_n^c$. Even with the small sample size, we observe that the bootstrap-approximate distribution of $\mbox{Log}_{m_0}m_n^c$ (i.e., the empirical distribution of the bootstrap replicates  $\mathcal{L_{\rm boot}}(m_n^c):= \{ \mbox{Log}_{m_n^c} \breve{m}_n^{c}(b): b= 1, 2, \ldots, B\}$) is approximately normal. (See Section C.2 for normal QQ plots.)

The figure is also overlaid with the parametric (bootstrap-approximated) 95\% confidence regions for the \Frechet\ and Huber means, which are given by (\ref{eq:CR}) with $n^{-1}\hat{A}_c$ replaced by the empirical covariance matrix of the bootstrap replicates $\mathcal{L_{\rm boot}}(m_n^c)$. 
The bootstrap-approximated confidence regions (on the right panel) and the covariance-estimator-based confidence regions (left) are close to each other. We therefore conclude that for this dataset, $n^{-1}\hat{A}_c$ serves as a reliable estimator of the covariance for the Huber mean $m_n^c$.

\section{Discussion}\label{sec:discuss}




In this article, we have introduced Huber means on Riemannian manifolds, and have comprehensively investigated their mathematical, statistical, and computational properties. Huber means are highly robust to outliers and are more efficient than the \Frechet\ mean under heavy-tailed distributions. Additionally, to leverage the asymptotic normality of the sample Huber mean, we proposed a novel estimation strategy for the limiting covariance matrix and used it to construct a robust test for location parameters. We believe that this work opens many interesting research directions.

Affine equivariance, for example, is a desirable property for location estimators in $\bbR^{k}$ \citep{lopuhaa1991breakdown, scealy2021analogues}. However, achieving the affine equivariance for manifold-valued estimators has been quite challenging. Instead, we have used the isometric equivariance for location estimators in Section~\ref{sec:robust}. Nevertheless, as \cite{scealy2021analogues} explored analogues of affine-equivariant geometric medians for spherical data, we believe that developing affine-equivariant Huber means on Riemannian symmetric spaces is a worthwhile pursuit.

From the perspective of robust statistics, there are at least two interesting research directions. The first is to appropriately define and calculate the 
\textit{influence function} for Huber means on manifolds. While the breakdown point is a useful and intuitive notion of robustness, the influence function captures the central stability aspect of robustness more comprehensively. Another immediate research direction is the generalization of Huber means to \emph{$M$-estimators} for manifold-valued data. For example, a class of robust $M$-estimators for location can be defined as a minimizer of $m \mapsto E\rho\{d(X, m)/\sigma\}$, where $\rho$ has a bounded derivative. This class includes those utilizing (pseudo) Huber loss, Tukey's bisquare loss, and Hampel loss, among others. In line with the literature on $M$-estimators \citep{huber1964robust, holland1977robust, zhou2018new}, one may investigate general $M$-estimators along with robust scale estimations on manifolds.

Similarly, robust dimension reduction and regression estimators can be developed by utilizing the Huber loss or other robustness-inducing loss functions. Most existing works on the dimension reduction for manifold-valued data have used $L_2$ loss functions. (see e.g. \cite{jung2012analysis,pennec2018barycentric}). 
The \Frechet\ regression frameworks for manifold-valued or metric space-valued responses \citep{cornea2017regression, petersen2019frechet} can also be generalized to, for instance, Huber regression for random objects.

\section*{Acknowledgement}  
Sungkyu Jung was supported by the National Research Foundation of Korea (NRF) grants funded by the Korea government (MSIT) (RS-2023-00301976 and RS-2024-00333399).




\begin{appendix}

\section{Additional theoretical properties of Huber means}

\subsection{Huber means as maximum likelihood estimators}

As an analogue on Riemannian manifolds of the classical location-scale model in $\bbR$, \citep[e.g.,][]{lax1985robust, rousseeuw2002robust}, we 
consider the following location-scale model:
\begin{equation}\label{distr:huber:mfd}
f_{\mu, \sigma, c}(x) = C_{\mu, \sigma, c} \exp\{-\rho_{c}\{d(x, \mu)/\sigma)\}\},
\end{equation}
where $c \in [0, \infty]$ and $C_{\mu, \sigma, c}=(\int_{M} \exp\{-\rho_{c}\{d(x, \mu)/\sigma)\})^{-1}$ is the normalizing constant, and $V$ stands for volume measure (Riemannian measure) on $M$. The class $\{f_{\mu, \sigma, c}\}_{\mu, \sigma, c}$ belongs to distance-decay distributions, which includes Gaussian-type ($c=\infty$) and Laplace-type ($c=0$) distributions.
\begin{proposition}\label{prop:MLE}
For a given $c \in [0, \infty]$, suppose that $P_{X}$ is absolutely continuous with respect to $V$, and $\frac{d P_{X}}{dV} = f_{\mu, 1, c}$ as specified in (\ref{distr:huber:mfd}) with scale $\sigma=1$. Then, the sample Huber means for $c$ are maximum likelihood estimators of the location parameter $\mu= \mu(P_{X}) ~(= \textsf{mode}(P_{X}))$.
\begin{proof}[Proof of Proposition~\ref{prop:MLE}]
Let $(x_{1}, x_{2}, ..., x_{n})$ be an $n$-tuple of deterministic observations on $M$. The log-likelihood function of $\mu \in M$ is denoted by
\[
\ell(\mu) = n \log(C_{\mu, \sigma, c}) - \sum_{i=1}^{n} \rho_{c}\{d(\mu, x_i)\}. 
\]
Accordingly, the sample Huber means for $c$ maximize the log-likelihood of $\mu$, $\ell(\mu)$.
\end{proof}
\end{proposition}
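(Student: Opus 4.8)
The plan is to write the likelihood explicitly, strip off the factor that does not depend on $\mu$, and recognize the remainder as a constant multiple of the sample Huber objective $F^c_n$ of \Cref{def:Huber}. Fix the $n$-tuple $(x_1,\dots,x_n)$ of observations and recall that the scale $\sigma=1$ is known. Writing $C_{\mu,1,c}^{-1}=\int_M \exp[-\rho_c\{d(x,\mu)\}]\,dV(x)$ for the normalizing constant of the density (\ref{distr:huber:mfd}) with $\sigma=1$, the likelihood is
\begin{equation*}
L(\mu)=\prod_{i=1}^n f_{\mu,1,c}(x_i)=C_{\mu,1,c}^{\,n}\exp\!\Big(-\sum_{i=1}^n\rho_c\{d(x_i,\mu)\}\Big),
\end{equation*}
so that $\ell(\mu)=n\log C_{\mu,1,c}-n\,F^c_n(\mu)$. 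Provided $C_{\mu,1,c}$ is free of $\mu$, the maps $\mu\mapsto\ell(\mu)$ and $\mu\mapsto -F^c_n(\mu)$ have the same maximizers, so the set of maximum likelihood estimators of $\mu$ coincides with $E^{(c)}_n=\argmin_{m\in M}F^c_n(m)$, the sample Huber mean set; this is the maximum-likelihood claim.

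The step that genuinely needs justification --- and which I regard as the crux --- is that the normalizing constant $C_{\mu,1,c}$ does not depend on $\mu$. I would deduce this from homogeneity of the ambient space, consistent with the standing framework of \Cref{sec:unbiased} and \Cref{sec:robust}: when $\textsf{Iso}(M)$ acts transitively on $M$, given $\mu_1,\mu_2\in M$ pick an isometry $\psi$ with $\psi(\mu_1)=\mu_2$; the substitution $x=\psi(y)$, together with $d(\psi(y),\mu_2)=d(\psi(y),\psi(\mu_1))=d(y,\mu_1)$ and the isometry-invariance of the $k$-dimensional Hausdorff measure $V$, gives $\int_M\exp[-\rho_c\{d(x,\mu_2)\}]\,dV(x)=\int_M\exp[-\rho_c\{d(y,\mu_1)\}]\,dV(y)$, i.e.\ $C_{\mu,1,c}\equiv C$ is constant. (Without such symmetry the conclusion can fail, since then $\mu\mapsto n\log C_{\mu,1,c}$ tilts the log-likelihood away from the Huber minimizer; this is why the proposition lives in a setting where $M$ is a symmetric space.)

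It remains to confirm the parenthetical identification $\mu(P_X)=\textsf{mode}(P_X)$. Since $\rho_c:[0,\infty)\to[0,\infty)$ is strictly increasing for every $c\in[0,\infty]$, the density $x\mapsto f_{\mu,1,c}(x)=C\exp[-\rho_c\{d(x,\mu)\}]$ is a strictly decreasing function of $d(x,\mu)$ and hence is uniquely maximized at $x=\mu$, so $\textsf{mode}(P_X)=\mu$. Moreover $f_{\mu,1,c}$ is of the Mahalanobis form (\ref{eq:mahala:dist}) with $\Sigma=I$, hence $P_X$ is geodesically symmetric about $\mu$; if in addition $M$ is a Riemannian symmetric space, $P_X$ has finite variance and does not degenerate to a single geodesic, and the support condition of (C1) holds (which is vacuous when $M$ has nonpositive curvature), then \Cref{prop:unbiased} further identifies $\mu$ with the population Huber mean for every $c$, tying the sample MLE back to the parameter it estimates. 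Assembling the likelihood identity, the constancy of $C_{\mu,1,c}$, and the mode identification completes the proof.
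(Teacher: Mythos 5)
Your proposal is correct and follows the same basic route as the paper's proof — write the log-likelihood $\ell(\mu)=n\log C_{\mu,1,c}-nF^c_n(\mu)$ and identify its maximizers with the minimizers of $F^c_n$ — but you go further than the paper does on the one step that actually matters. The paper's proof simply writes $\ell(\mu)$ with the term $n\log(C_{\mu,\sigma,c})$ still present and then asserts "accordingly" that the Huber means maximize $\ell$; this is valid only if $C_{\mu,\sigma,c}$ does not depend on $\mu$, a fact the paper neither states nor proves (and note that the proposition as stated does not restrict $M$ to a homogeneous or symmetric space, and the displayed formula for $C_{\mu,\sigma,c}$ in (\ref{distr:huber:mfd}) is written with the Gaussian integrand, apparently a typo). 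Your argument supplies exactly the missing justification: transitivity of $\textsf{Iso}(M)$ plus isometry-invariance of the Hausdorff measure $V$ gives $C_{\mu,1,c}\equiv C$, and you correctly observe that without such homogeneity the term $\mu\mapsto n\log C_{\mu,1,c}$ can tilt the likelihood away from the Huber minimizer, so some symmetry hypothesis is genuinely needed for the proposition to hold as stated. Your additional verification that $\textsf{mode}(P_X)=\mu$ (strict monotonicity of $\rho_c$) and the tie-in to \Cref{prop:unbiased} are fine, though the latter is not needed for the MLE claim itself. In short: same decomposition as the paper, but your version closes a real gap in the paper's one-line argument by making the constancy of the normalizing constant explicit and conditional on homogeneity of $M$.
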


\subsection{Huber means bridge the Fr\'{e}chet mean and geometric median}\label{sec:bridge}

The Huber means continuously connect the Fr\'{e}chet mean and geometric median. To firmly show this, we utilize a notion of the limit of sets:
\begin{definition}\label{def_outer_limit}
Let $E, E_{1}, E_{2}, ...$ be a sequence of subsets of $M$. The sequence $(E_{n})_{n \ge 1}$ is said to converge to $E$ in \textit{outer limit} if
\[
\cap_{n=1}^{\infty}\overline{\cup_{k=n}^{\infty}E_{k}} \subseteq E.
\]
\end{definition}

Lemma~\ref{lem:subseq} characterizes the points in $\cap_{n=1}^{\infty}\overline{\cup_{k=n}^{\infty}E_{k}}$.
\begin{lemma}\label{lem:subseq}
For a point $p \in M$, $p \in \cap_{n=1}^{\infty}\overline{\cup_{k=n}^{\infty}E_{k}}$ if and only if there exist an increasing sequence $k_{n} ~ (k_{n} \ge n)$ and a sequence of points $p_{n} \in E_{k_{n}}$ such that $p_{n} \underset{n \rightarrow \infty}{\rightarrow} p$ (i.e., $d(p_{n}, p) \underset{n \rightarrow \infty}{\rightarrow} 0$).
\end{lemma}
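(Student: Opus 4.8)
The plan is to prove the biconditional in \Cref{lem:subseq} directly, using only the elementary metric-space characterization of closure: a point $q$ lies in $\overline{A}$ if and only if there is a sequence in $A$ converging to $q$. Unwinding the definition, the membership $p \in \cap_{n=1}^{\infty}\overline{\cup_{k=n}^{\infty}E_{k}}$ is equivalent to the statement that for every $N \ge 1$ and every $\varepsilon > 0$ there exist an index $k \ge N$ and a point $x \in E_k$ with $d(x,p) < \varepsilon$.

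For the reverse implication I would start from an increasing sequence $k_n \ge n$ and points $p_n \in E_{k_n}$ with $p_n \to p$. Fixing an arbitrary $m \ge 1$, every $n \ge m$ satisfies $k_n \ge n \ge m$, hence $p_n \in E_{k_n} \subseteq \cup_{k=m}^{\infty} E_k$; thus $p$ is a limit of a sequence lying in $\cup_{k=m}^{\infty} E_k$, so $p \in \overline{\cup_{k=m}^{\infty} E_k}$. Since $m$ was arbitrary, $p \in \cap_{m=1}^{\infty}\overline{\cup_{k=m}^{\infty}E_{k}}$, as required.

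For the forward implication I would construct the two sequences greedily by induction. Suppose $k_1 < k_2 < \cdots < k_{n-1}$ have been chosen together with points $p_j \in E_{k_j}$ satisfying $d(p_j, p) < 1/j$. Applying the characterization above with $N := \max\{n,\, k_{n-1}+1\}$ and $\varepsilon := 1/n$, pick an index $k_n \ge N$ and a point $p_n \in E_{k_n}$ with $d(p_n, p) < 1/n$. Then $(k_n)_{n\ge1}$ is strictly increasing with $k_n \ge n$, and $d(p_n,p)\to 0$, completing the construction. The only subtle point — and the closest thing to an obstacle — is ensuring that the indices can be kept strictly increasing while still extracting points arbitrarily close to $p$; this is precisely where one uses that $p$ lies in the closure of \emph{every} tail union $\cup_{k=N}^{\infty}E_k$, rather than merely in $\overline{\cup_{k\ge1}E_k}$. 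Everything else is routine.
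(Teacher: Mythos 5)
Your proposal is correct and follows essentially the same elementary route as the paper: both directions are a direct unwinding of the definition of closure of the tail unions, with the forward direction built by extracting points from $\cup_{k\ge N}E_k$ at distance less than $1/n$ with strictly increasing indices. The only difference is cosmetic—the paper splits into cases via $\overline{A}=A\cup A'$ (membership versus accumulation point), whereas your greedy induction with $N=\max\{n,\,k_{n-1}+1\}$ and the $\varepsilon$-ball characterization handles both cases uniformly, which is a slightly cleaner write-up of the same argument.
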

\begin{proof}[Proof of Lemma~\ref{lem:subseq}]
($\Rightarrow$) Denote by $A'$ the set of accumulation points of $A \subseteq M$. It is known that $\overline{A}=A \cup A'$ for any set $A \subset M$. Define $A_{n}=\cup_{k=n}^{\infty}E_k$. Then, $p \in \cap_{n=1}^{\infty}\overline{A_{n}}$ implies $p \in \overline{A_{n}}=A_{n} \cup A'_{n}$ for all $n\ge 1$, thereby meaning that
\[
p \in A_{n} ~ \mbox{infinitely many} ~ n \quad \mbox{or} \quad p \in A'_{n} ~ \mbox{infinitely many}~ n.
\]
When the former case occurs, let $p_{n}=p$ for all $n \ge 1$. When the later case occurs, suppose that $p \in A'_{n_{k}}$ for any $k \ge 1$. By the definition of accumulation point,
\begin{eqnarray*}
\exists p_{1}, k_{1}~(\ge n_1) \quad &\mbox{s.t.}& \quad p_1 \in E_{k_1},~ d(p_1,p)<1, \\
\exists p_{2}, k_{2}~(\ge n_2, k_1) \quad &\mbox{s.t.}& \quad p_2 \in E_{k_2}, ~ d(p_2,p)<\frac{1}{2}, \\
\exists p_{3}, k_{3}~(\ge n_3, k_2) \quad &\mbox{s.t.}& \quad p_3 \in E_{k_3}, ~ d(p_3,p)<\frac{1}{3}, \\
&\vdots& \\
\exists p_{\ell}, k_{\ell}~(\ge n_\ell, k_{\ell-1}) \quad &\mbox{s.t.}& \quad p_\ell \in E_{k_\ell}, ~ d(p_\ell,p)<\frac{1}{\ell}.
\end{eqnarray*}
Replacing the $\ell$ by $n$, we obtain that $p_{n} \in E_{k_{n}}$ for any $n \ge 1$, and that $d(p_{n}, p) \underset{n \rightarrow \infty}{\rightarrow} 0$.  

($\Leftarrow$) If $p_{n}=p$ infinitely many $n$, then $p \in A_{m} \subset \overline{A_{m}}$ for any integer $m \ge 1$, meaning that $p \in \cap_{m=1}^{\infty} \overline{A_{m}}$. If $p_{n} \neq p$ all but finitely many $n$, then $p \in A'_{m} \subseteq \overline{A_{m}}$ for any integer $m\ge 1$, meaning that $p \in \cap_{m=1}^{\infty} \overline{A_{m}}$.

Combining these facts, we complete the proof.    
\end{proof}

The main ingredient for establishing the convergence of the Huber mean set (in the outer limit sense) is the following observation about the convergence of Huber loss as $c \rightarrow +0$ and $\infty$.
\begin{lemma}\label{lem:unif:conv}
For any $x \in [0, \infty)$, $\rho_{c}(x)$ converges pointwise to $L_{2}(x)$ as $c \rightarrow \infty$. Similarly, $\frac{\rho_{c}(x)}{2c}$ converges pointwise to $L_{1}(x)$ as $c \rightarrow +0$. The above statement holds when $\rho_{c}(x)$ is replaced by $\tilde{\rho}_{c}$.
\end{lemma}

\begin{proof}[Proof of Lemma~\ref{lem:unif:conv}]
For any $0 \le x < c$, $\rho_{c}(x) = L_{2}(x)$. That is, for any $x \in [0, \infty)$, $\rho_{c}(x)$ converges pointwise to $L_{2}(x)$ as $c \rightarrow \infty$. Moreover, we obtain that for any $x \in [0, \infty)$ 
\[
|\tilde{\rho}_{c}(x) - L_{2}(x)| = \frac{x^4}{(c+\sqrt{c^2+x^2})^2} \underset{c \rightarrow \infty}{\rightarrow} 0.
\]
Note that, for any $0 \le c \le x$,  $\frac{\rho_{c}(x)}{2c}=x - \frac{c}{2}$. Accordingly, for any $x \in [0, \infty)$ $\frac{\rho_{c}(x)}{2c}$ converges pointwise to $L_{1}(x)$ as $c\rightarrow +0$. Also, $\frac{\tilde{\rho}_{c}(x)}{2c}$ converges pointwise to $L_{1}(x)$ as $c\rightarrow +0$ for any $x \in [0, \infty)$ as follows:
\begin{eqnarray*}
|\frac{\tilde{\rho}_{c}(x)}{2c} - x| &=& |c(\sqrt{1+x^2/c^2} - 1) - x| \\
&=& |\sqrt{c^2 + x^2} - (c + x)|  \\
&=& \frac{2cx}{\sqrt{c^2+x^2}+c+x} \\
&\underset{c \rightarrow +0}{\rightarrow}& 0.
\end{eqnarray*}
\end{proof}

The collection of Huber mean sets bridges the Fr\'{e}chet mean set and geometric median set, which is established below. 

\begin{theorem}\label{thm:set:conv}
Suppose that $P_{X}$ has a finite variance. Then $E^{(c)}$ converges to $E^{(0)} ~ (\mbox{or} ~ E^{(\infty)})$ in outer limit as $c \rightarrow 0 ~ (\mbox{or} ~ c \rightarrow \infty$, respectively) where $E^{(0)}, E^{(\infty)}$ denote the sets of geometric median and Fr\'{e}chet mean, respectively.
\begin{proof}[Proof of Theorem~\ref{thm:set:conv}]
Note that $E^{(0)}$ and $E^{(\infty)}$ are non-empty by Theorem~\ref{thm:exist}. We shall show that, for any sequence $c_{k}$ with $c_{k} \underset{k \rightarrow \infty}{\rightarrow} \infty$, $E^{(c_{k})}$ converges to $E^{(\infty)}$ in outer limit; that is,
 \begin{equation}\label{outer_limit}
 \cap_{n=1}^{\infty} \overline{\cup_{k=n}^{\infty} E^{(c_{k})}} \subseteq E^{(\infty)}.
 \end{equation}
 For a given sequence $c_{n}$ as above, let $p \in \cap_{n=1}^{\infty} \overline{\cup_{k=n}^{\infty} E^{(c_{k})}}$. By Lemma~\ref{lem:subseq} there exist a subsequence $c_{k_\ell}$ and a sequence of points $p_{\ell} \in E^{(c_{k_{\ell}})}$ such that $p_{\ell} \underset{\ell \rightarrow \infty}{\rightarrow} p$. By the definition of $E^{(c_{k_\ell})}$, for any $m \in M$
 \begin{equation}\label{int_limit}
 \int \rho_{c_{k_\ell}}\{d(X, m)\} dP \ge \int \rho_{c_{k_\ell}}\{d(X, p_{\ell})\} dP.
 \end{equation} 
 Since $c_{k_{\ell}} \underset{\ell \rightarrow \infty}{\rightarrow} \infty$, for each $x \in M$ and for any sufficiently large $\ell$ 
 \begin{eqnarray*}
 |\rho_{c_{k_{\ell}}}\{d(x, p_{\ell})\} - d^2(x, p)| 
 &\le& |\rho_{c_{k_{\ell}}}\{d(x, p_{\ell})\} - \rho_{c_{k_{\ell}}}\{d(x, p)\}| \\ 
 & & + |\rho_{c_{k_{\ell}}}\{d(x, p)\} - d^2(x, p)| \\
 &=& |d^2(x, p_{\ell}) - d^2(x, p))| \\
 & & + |\rho_{c_{k_{\ell}}}\{d(x, p)\} - d^2(x, p)| \\
 & \underset{\ell \rightarrow \infty}{\rightarrow}& 0.
 \end{eqnarray*}
 Hence, $\rho_{c_{k_\ell}}\{d(x, p_{\ell})\} \underset{\ell \rightarrow \infty}{\rightarrow} d^2(x, p)$ for any $x \in M$. Taking $\lim_{\ell \rightarrow \infty}$ to (\ref{int_limit}), we obtain, for any $m \in M$
 \[
 \int d^2(X, m) dP \ge \int d^2(X, p) dP,  
 \]
 where the limit and integration in (\ref{int_limit}) can be interchanged by Lebesgue's dominated convergence theorem together with finite variance condition. (Notice that $\rho_{c_{k_{\ell}}}\{d(x, m)\} \le d^2(x, m), \rho_{c_{k_{\ell}}}\{d(x, p_{k_{\ell}})\} \le d^2(x, p) + 1$ for each $x \in M$ and any sufficiently large $\ell$.)
 It in turn means that $p \in E^{(\infty)}$. Therefore, (\ref{outer_limit}) holds.

Nextly, we shall show that, for any sequence $c_{k}$ with $c_{k} \underset{k \rightarrow \infty}{\rightarrow} +0$, $E^{(c_{k})}$ converges to $E^{(0)}$ in outer limit; that is,
 \begin{equation}\label{outer_limit2}
 \cap_{n=1}^{\infty} \overline{\cup_{k=n}^{\infty} E^{(c_{k})}} \subseteq E^{(0)}.
 \end{equation}
 For a given sequence $c_{k}$ as above, assume that $p$ is in the left-hand side of (\ref{outer_limit2}). Thanks to Lemma~\ref{lem:subseq}, there exists a subsequence $c_{k_\ell}$ and a sequence of points $p_{{\ell}} \in E^{(c_{k_{\ell}})}$ such that $p_{\ell} \underset{\ell \rightarrow \infty}{\rightarrow} p$. For any $m \in M$, we have 
 \begin{equation}\label{int_limit2}
 \int \rho_{c_{k_\ell}}\{d(X, m)\} dP \ge \int \rho_{c_{k_\ell}}\{d(X, p_{\ell})\} dP.
 \end{equation}
 Note that $\frac{\rho_{c}}{2c}(\cdot)$ converges pointwise to $L_{1}(\cdot)$ as $c\rightarrow +0$ by Lemma~\ref{lem:unif:conv}. With this in mind, (\ref{int_limit2}) is transformed into
 \begin{equation}\label{int_limit3}
 \int \frac{\rho_{c_{k_\ell}}\{d(X, m)\}}{2c_{k_\ell}}dP \ge \int \frac{\rho_{c_{k_\ell}}\{d(X, p_{\ell})\}}{2c_{k_\ell}} dP.
\end{equation} 
For each $x \in M$, observe that $$\frac{\rho_{c_{k_\ell}}\{d(x, m)\}}{2c_{k_{\ell}}} \le d(x, m) ~ \mbox{for all} ~ \ell, \quad \mbox{and} \quad \frac{\rho_{c_{k_\ell}}\{d(x, p_{\ell})\}}{2c_{k_{\ell}}} \le d(x, p) + 1$$ 
for any sufficiently large $\ell$. Hence, we can apply Lebesgue's dominated convergence theorem to (\ref{int_limit3}). By taking $\lim_{\ell \rightarrow \infty}$ to (\ref{int_limit3}), we obtain that, for any $m \in M$  
\[
\int d(X, m) dP \ge \int d(X, p) dP.
\]
Hence we get $p \in E^{(0)}$, thereby leading to (\ref{outer_limit2}). It completes the proof.
\end{proof}
\end{theorem}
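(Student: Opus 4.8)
The plan is to establish the two outer-limit convergences separately, in each case reducing the set-theoretic claim to a single minimization inequality by using \Cref{lem:subseq} to describe the points of $\bigcap_{n}\overline{\bigcup_{k\ge n}E^{(c_k)}}$, and then passing to the limit in that inequality with the help of \Cref{Lem:unif:conv} and dominated convergence. First note that a finite variance implies Assumption (A1) for every $c\in[0,\infty]$, so \Cref{thm:exist} guarantees that $E^{(0)}$, $E^{(\infty)}$, and every $E^{(c_k)}$ along a given sequence are nonempty.

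For the case $c\to\infty$, I would fix an arbitrary sequence $c_k\to\infty$ and a point $p\in\bigcap_{n}\overline{\bigcup_{k\ge n}E^{(c_k)}}$. By \Cref{lem:subseq} there are an increasing sequence $k_\ell$ and points $p_\ell\in E^{(c_{k_\ell})}$ with $p_\ell\to p$, and since each $p_\ell$ minimizes $F^{c_{k_\ell}}$, for every $m\in M$,
\[
\int \rho_{c_{k_\ell}}\{d(X,m)\}\,dP \ \ge\ \int \rho_{c_{k_\ell}}\{d(X,p_\ell)\}\,dP .
\]
Letting $\ell\to\infty$: on the left, $\rho_{c_{k_\ell}}\{d(x,m)\}\to d^2(x,m)$ pointwise by \Cref{Lem:unif:conv}, dominated by the $P_X$-integrable function $d^2(\cdot,m)$ (integrable by the finite-variance hypothesis and the triangle inequality), so the left side tends to $\int d^2(X,m)\,dP$. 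For the right side I would first show $\rho_{c_{k_\ell}}\{d(x,p_\ell)\}\to d^2(x,p)$ for each fixed $x$ by writing
\[
\bigl|\rho_{c_{k_\ell}}\{d(x,p_\ell)\}-d^2(x,p)\bigr|\le \bigl|\rho_{c_{k_\ell}}\{d(x,p_\ell)\}-\rho_{c_{k_\ell}}\{d(x,p)\}\bigr|+\bigl|\rho_{c_{k_\ell}}\{d(x,p)\}-d^2(x,p)\bigr|,
\]
observing that once $c_{k_\ell}>\max\{d(x,p),d(x,p_\ell)\}$ the function $\rho_{c_{k_\ell}}$ coincides with $L_2$ on the relevant range, so the first term equals $|d^2(x,p_\ell)-d^2(x,p)|\to 0$ by continuity of $d$ and the second term $\to 0$ by \Cref{Lem:unif:conv}; since $d^2(x,p_\ell)\le 2d^2(x,p)+2$ for all large $\ell$, dominated convergence gives that the right side tends to $\int d^2(X,p)\,dP$. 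Hence $\int d^2(X,m)\,dP\ge\int d^2(X,p)\,dP$ for all $m$, i.e.\ $p\in E^{(\infty)}$, which is exactly the inclusion \eqref{outer_limit}.

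For the case $c\to 0$, the scheme is identical but carried out for the rescaled loss $\rho_c/(2c)$, which by \Cref{Lem:unif:conv} converges pointwise to $L_1$. Dividing the minimization inequality by $2c_{k_\ell}>0$ leaves it intact, and the elementary bound $\rho_c(t)/(2c)\le t$ (verified on each branch of $\rho_c$) supplies the $P_X$-integrable majorants $d(\cdot,m)$ and $d(\cdot,p)+1$ — integrable because a finite variance forces a finite first moment — so passing to the limit yields $\int d(X,m)\,dP\ge\int d(X,p)\,dP$ for every $m$, i.e.\ $p\in E^{(0)}$. I expect the main obstacle to be the right-hand-side limits, where the cutoff $c_{k_\ell}$ and the center $p_\ell$ vary simultaneously: one must decouple them via the triangle-inequality split above and exhibit a single dominating function valid for all large $\ell$, exploiting the fact that finite variance at one base point propagates to every point of $M$. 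The left-hand limits, the rescaling by $2c_{k_\ell}$, and the appeal to \Cref{lem:subseq} are routine.
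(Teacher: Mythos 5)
Your proposal is correct and follows essentially the same route as the paper's proof: characterizing outer-limit points via \Cref{lem:subseq}, passing to the limit in the minimizer inequality using the pointwise convergence from \Cref{Lem:unif:conv}, the triangle-inequality split to decouple the varying cutoff from the varying center, and dominated convergence with majorants supplied by the finite-variance (hence finite first moment) hypothesis. Your dominating bound $d^2(x,p_\ell)\le 2d^2(x,p)+2$ with a threshold uniform in $x$ is, if anything, stated slightly more carefully than the paper's.
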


The sample version of Theorem~\ref{thm:set:conv} immediately follows by replacing $P_{X}$ with $P_{n}$. Given $n$ deterministic observations, $(x_{1}, x_{2}, ..., x_{n}) \in M^{n}$, we denote $E^{(c)}_{n}$ by $E^{(c)}_{n}(x_{1}, x_{2}, ..., x_{n})$ for clarity.
\begin{corollary}\label{cor:conv:sam}
Let $n$ be a fixed positive integer. $E_{n}^{(c)}(x_{1}, x_{2}, ..., x_{n})$ converges to $E_{n}^{(0)}(x_{1}, x_{2}, ..., x_{n})$ (or $E^{(\infty)}_{n}(x_{1}, x_{2}, ..., x_{n})$) in outer limit as $c \rightarrow +0$ (or $c \rightarrow \infty$, respectively).
\end{corollary}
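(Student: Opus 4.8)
The plan is to obtain \Cref{cor:conv:sam} as an immediate specialization of \Cref{thm:set:conv}, taking the underlying distribution to be the empirical measure $P_{n} = \frac{1}{n}\sum_{i=1}^{n}\delta_{x_{i}}$ associated with the fixed data $(x_{1},\dots,x_{n})$. The point is that all of the objects in \Cref{cor:conv:sam} are the population-level objects of \Cref{thm:set:conv} evaluated at $P_{n}$.

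First I would check that $P_{n}$ satisfies the single hypothesis of \Cref{thm:set:conv}, namely a finite variance. This is immediate: for any $m \in M$, $\int d^{2}(X,m)\,dP_{n} = \frac{1}{n}\sum_{i=1}^{n} d^{2}(x_{i},m) < \infty$, being a finite sum of finite terms.

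Next I would observe that, by \Cref{def:Huber}, substituting $P_{n}$ for $P$ turns the population objective $F^{c}$ into the sample objective $F^{c}_{n}$, and hence the population Huber mean set $E^{(c)}$ into the sample Huber mean set $E^{(c)}_{n}(x_{1},\dots,x_{n})$; in particular $E^{(0)}$ and $E^{(\infty)}$ become the sample geometric median set $E^{(0)}_{n}$ and the sample Fr\'{e}chet mean set $E^{(\infty)}_{n}$, respectively. Applying \Cref{thm:set:conv} with $P$ replaced by $P_{n}$ then gives, for every sequence $c_{k} \to +0$,
\[
\bigcap_{N=1}^{\infty}\overline{\bigcup_{k=N}^{\infty} E^{(c_{k})}_{n}(x_{1},\dots,x_{n})} \;\subseteq\; E^{(0)}_{n}(x_{1},\dots,x_{n}),
\]
and analogously with $E^{(\infty)}_{n}$ on the right-hand side when $c_{k}\to\infty$. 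By \Cref{def_outer_limit} this is precisely convergence in outer limit, and since the inclusion holds along every such sequence $c_{k}$, the stated convergence as $c \to +0$ (resp. $c \to \infty$) follows.

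I expect no real obstacle: the only item needing verification is the finite-variance property of $P_{n}$, which is trivial. As an alternative route one could re-run the proof of \Cref{thm:set:conv} verbatim with $P_{n}$ in place of $P$, invoking \Cref{lem:subseq} and \Cref{Lem:unif:conv} exactly as before; in this version the appeal to Lebesgue's dominated convergence theorem is replaced by the elementary fact that a finite average of pointwise-convergent sequences converges to the average of the limits, so the finite-variance hypothesis is not even needed there. I would record the substitution argument as the main proof and mention this direct variant only in passing.
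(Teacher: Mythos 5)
Your proposal is correct and matches the paper's own argument, which likewise obtains \Cref{cor:conv:sam} by replacing $P_{X}$ with $P_{n}$ in \Cref{thm:set:conv}; your verification that $P_{n}$ trivially has finite variance is the only hypothesis to check and you handle it properly.
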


By Theorem~\ref{thm:set:conv} and Corollary~\ref{cor:conv:sam}, the Huber mean bridges the Fr\'{e}chet mean and geometric median in the sense that each of the Huber means for $c$ becomes closer to the geometric median (and Fr\'{e}chet mean) as $c\rightarrow +0 ~ (\mbox{or}~ c \rightarrow \infty$, respectively). Note that Theorem~\ref{thm:set:conv} and Corollary~\ref{cor:conv:sam} are written in terms of the limit of sets, defined in Definition~\ref{def_outer_limit}. If the Huber means are unique, then the results can be described more familiarly. To this end, suppose that, for all $c\in [0, \infty]$, each of $E^{(c)}$ is singleton. For each $c \in [0, \infty]$, we denote $E^{(c)}=\{m^{c}_{0} \}$. Now, the following corollary of Theorem~\ref{thm:set:conv} is written in the way of sequences of points on $M$. 
\begin{corollary}\label{cor:meaning:huber}
\begin{itemize}
\item[(a)] Suppose that there exist $p_0 \in M$ and $r > 0$ such that $\textsf{supp}(P_{X}) \subseteq B_{r}(p_0)$ and $r \le \frac{1}{2}\min\{\frac{\pi}{2\sqrt{\Delta}},r_{\mbox{\tiny inj}}(M)\}$ (which may be infinite), and that $P_{X}$ does not degenerate to any single geodesic. If $(c_{k})_{k \ge 1}$ is a nonnegative sequence satisfying $c_{k} \underset{k \rightarrow \infty}{\rightarrow} \infty$, then $m^{c_k}_{0} \underset{k \rightarrow \infty}{\rightarrow} m^{\infty}_0$. If $(c_{k})_{k \ge 1}$ is a nonnegative sequence satisfying $c_{k} \underset{k \rightarrow \infty}{\rightarrow} 0$, then $m^{c_{k}}_{0} \underset{k \rightarrow \infty}{\rightarrow} m^{0}_0$.
\item[(b)] Let $n$ be a fixed positive integer. Given an $n$-tuple of deterministic observations $(x_{1}, x_{2}, ..., x_{n}) \in M^{n}$, suppose that there exist $p_0 \in M$ and $r > 0$ such that $x_{1}, x_{2}, ..., x_{n}$ lie in $B_{r}(p_0)$ and $r \le \frac{1}{2}\min\{\frac{\pi}{2\sqrt{\Delta}}, r_{\mbox{\tiny inj}}(M)\}$ (which may be infinite), and that $x_{1}, x_{2}, ..., x_{n}$ do not lie in any single geodesic. If $(c_{k})_{k \ge 1}$ is a nonnegative sequence satisfying $c_{k} \underset{k \rightarrow \infty}{\rightarrow} \infty$, then $m^{c_k}_{n} \underset{k \rightarrow \infty}{\rightarrow} m^{\infty}_n$. If $(c_{k})_{k \ge 1}$ is a nonnegative sequence satisfying $c_{k} \underset{k \rightarrow \infty}{\rightarrow} 0$, then $m^{c_k}_{n} \underset{k \rightarrow \infty}{\rightarrow} m^{0}_n$.
\end{itemize}
\end{corollary}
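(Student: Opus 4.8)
The plan is to reduce the assertion about sequences of points to the outer-limit convergence of the Huber mean \emph{sets} (Theorem~\ref{thm:set:conv} for part (a), Corollary~\ref{cor:conv:sam} for part (b)) together with the accumulation-point characterization in Lemma~\ref{lem:subseq}, after first ruling out that the minimizers drift off to infinity along the sequence $(c_k)$.

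First I would verify that all the relevant means are well-defined and unique. Under the stated support condition $r \le \tfrac{1}{2}\min\{\tfrac{\pi}{2\sqrt{\Delta}}, r_{\mbox{\tiny inj}}(M)\}$, Assumption (A2) holds for \emph{every} $c \in [0,\infty]$, since the admissible radius $r_0$ in (\ref{eq:r_0}) only becomes larger for $c \ge \pi/\sqrt{\Delta}$; Assumption (A1) holds because $\mathrm{supp}(P_X)$ is bounded when $r<\infty$, and when $r=\infty$ we use finite variance, which is in any case needed to invoke Theorem~\ref{thm:set:conv}. Since $P_X$ does not degenerate to a single geodesic, Theorem~\ref{thm:unique:pop}(a)(i) applies for all $c$, so each $E^{(c)}=\{m_0^{c}\}$ is a singleton; in particular $E^{(0)}=\{m_0^{0}\}$ and $E^{(\infty)}=\{m_0^{\infty}\}$. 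For part (b) the same conclusion follows from Corollary~\ref{cor:unique:sam}(a)(i), and $P_n$ trivially has finite variance.

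Next comes the \emph{tightness} step, which I expect to be the crux. Take a nonnegative sequence $c_k\to\infty$. Evaluating $F^{c_k}$ at the center $p_0$ of the support ball gives $F^{c_k}(m_0^{c_k}) \le F^{c_k}(p_0) \le \int d^2(X,p_0)\,dP =: \sigma_0^2 < \infty$, using $\rho_c \le L_2$. Fix any $c_1>0$; for all $k$ with $c_k \ge c_1$, monotonicity of $c\mapsto \rho_c(x)$ yields $F^{c_k}(m) \ge \int \rho_{c_1}\{d(X,m)\}\,dP$, and since $\mathrm{supp}(P_X)\subseteq B_r(p_0)$ and $\rho_{c_1}$ is nondecreasing, $F^{c_k}(m) \ge \rho_{c_1}\bigl((d(m,p_0)-r)_+\bigr)$. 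As $\rho_{c_1}$ is strictly increasing with $\rho_{c_1}(x)\to\infty$, the two bounds force $d(m_0^{c_k},p_0) \le r + \rho_{c_1}^{-1}(\sigma_0^2) =: R < \infty$ for all large $k$, and $\overline{B_R(p_0)}$ is compact by Hopf--Rinow (this is the uniform-in-$c$ refinement of the coercivity Proposition~\ref{prop:cpt}). For $c_k\to 0$ I would run the identical argument with $\rho_{c_k}$ replaced by $\rho_{c_k}/(2c_k)$, using that $\rho_c(x)/(2c) \le L_1(x)$ and that $c\mapsto \rho_c(x)/(2c)$ is nonincreasing, so $\tfrac{1}{2c_k}F^{c_k}(m_0^{c_k}) \le \int d(X,p_0)\,dP < \infty$ while $\tfrac{1}{2c_k}F^{c_k}(m) \ge \tfrac{1}{2c_2}\rho_{c_2}\bigl((d(m,p_0)-r)_+\bigr)$ whenever $c_k \le c_2$, again confining the minimizers to a fixed ball. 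In part (b) these bounds are obtained by evaluating $F_n^{c_k}$ at $x_1$ and are unconditional, $P_n$ having bounded support.

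Finally I would argue by contradiction. If $m_0^{c_k}\not\to m_0^{\infty}$, then $d(m_0^{c_{k_j}},m_0^{\infty})\ge\varepsilon$ along some subsequence; by the tightness step this subsequence lies in a compact set, so it has a further subsequence converging to some $p$ with $d(p,m_0^{\infty})\ge\varepsilon$, hence $p\ne m_0^{\infty}$. But then, by Lemma~\ref{lem:subseq}, $p\in\cap_{n=1}^{\infty}\overline{\cup_{k\ge n}E^{(c_k)}}$, which by Theorem~\ref{thm:set:conv} is contained in $E^{(\infty)}=\{m_0^{\infty}\}$ — a contradiction. The same reasoning with the $c\to 0$ half of Theorem~\ref{thm:set:conv} (respectively Corollary~\ref{cor:conv:sam} for part (b)) settles the other cases. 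The principal obstacle, as indicated, is the tightness step: outer-limit convergence of sets only controls accumulation points and does not itself prevent escape to infinity, so making the coercivity bound uniform in $c$ — via the monotonicity of $\rho_c$ and of $\rho_c/(2c)$ — is what makes the sequential statement go through.
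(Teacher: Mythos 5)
Your proposal is correct and follows the same overall route as the paper: confine the minimizers to a fixed compact set, then combine the outer-limit convergence of \Cref{thm:set:conv} (resp.\ \Cref{cor:conv:sam} for part (b)) with \Cref{lem:subseq} and a subsequence argument exploiting uniqueness of $m_0^{0}$, $m_0^{\infty}$ and the $m_0^{c_k}$'s. The only place you diverge is the tightness step: the paper simply invokes \Cref{lem:contain:ball}, which (for finite $r$) places every Huber mean, for every $c$, inside the same support ball $B_r(p_0)$, whose closure is compact by Hopf--Rinow; you instead reprove a uniform-in-$c$ coercivity bound by sandwiching $F^{c_k}$ (resp.\ $F^{c_k}/(2c_k)$) between $\rho_{c_1}$ (resp.\ $\rho_{c_2}/(2c_2)$) and $L_2$ (resp.\ $L_1$), using the monotonicity of $c\mapsto\rho_c(x)$ and $c\mapsto\rho_c(x)/(2c)$. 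Your version is more self-contained and makes explicit the uniform-in-$c$ nature of the confinement, which the paper leaves implicit; the paper's version is shorter given the lemma. One caveat: your lower bound $F^{c_k}(m)\ge\rho_{c_1}\{(d(m,p_0)-r)_+\}$ is vacuous when $r=\infty$, so in that case you should replace the support ball by any finite ball of positive $P_X$-mass and restrict the integral to it, exactly as in the proof of \Cref{prop:cpt} (the paper's own appeal to compactness of $\overline{B_r(p_0)}$ is equally in need of this patch when $r=\infty$); with that small adjustment, and your observation that finite variance must be assumed (or follows from bounded support) to invoke \Cref{thm:set:conv}, the argument goes through.
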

\begin{proof}[Proof of Corollary\ref{cor:meaning:huber}] We begin with a proof for Part(a). By Lemma~\ref{lem:contain:ball}, $m^{\infty}_{0}$ and $m^{c_{k}}_{0}$'s entirely lie in $B_{r}(p_0)$ where $p_{0} \in M$ and $r > 0$ are specified in Assumption (A2). By Theorem~\ref{thm:set:conv}, (\ref{outer_limit}) holds. Since $E^{(\infty)}$ and $E^{(c_{k})}$'s are singleton, $\cap_{n=1}^{\infty} \overline{\cup_{k=n}^{\infty} \{m^{c_{k}}_{0} \}} \subseteq \{m^{\infty}_{0}\}$. As $m^{c_{k}}_{0}$'s lie in $\overline{B_{r}(p_0)}$ which is sequentially compact by Hopf-Rinow theorem, there exists an accumulation point of $(m^{c_{k}}_{0})_{k \ge 1}$ in $\overline{B_{r}(p_0)}$, thereby meaning that $\phi \neq \cap_{n=1}^{\infty} \overline{\cup_{k=n}^{\infty} \{m^{c_{k}}_{0} \}} = \{m^{\infty}_{0}\}$. Due to Lemma~\ref{lem:subseq}, every convergent subsequence of $(m_{0}^{c_{k}})_{k \ge 1}$ converges to $m^{\infty}_{0}$. Since $(m_{0}^{c_{k}})_{k \ge 1}$ is bounded, it holds that $m^{c_{k}}_{0} \underset{k \rightarrow \infty}{\rightarrow} m^{\infty}_{0}$. In the same way, for any nonnegative sequence satisfying $c_{k} \rightarrow 0$, an argument similar to above leads that $m^{c_{k}}_{0} \underset{k \rightarrow \infty}{\rightarrow} m^{0}_0$.

Proof for Part(b). An argument similar to Part(a) together with Corollary~\ref{cor:conv:sam} leads to the desired result. 
\end{proof}

Before closing this section, by combining Corollary~\ref{cor:meaning:huber}, Theorem~\ref{thm:robust:breakdown}, and Theorem~\ref{thm:breakdown:0.5},  we remark that the (pseudo) Huber mean ``continuously" bridges the $L_{1}$- and $L_{2}$ centers of mass while preserving the highest robustness (on Riemannian symmetric spaces). As a result, the Huber mean has attractive properties of the $L_{1}$- and $L_{2}$ centers evenly. For example, the sample Huber mean is highly efficient and robust to outliers at the same time (Proposition~\ref{prop:c:worthwhile:supp} and Theorem~\ref{thm:robust:breakdown}); the pseudo-Huber mean can be computed by an algorithm that has a linear rate of convergence.

\subsection{Conditions for ensuring the convergence of estimating algorithm}\label{sec:conv:alg}

To ensure the convergence of Algorithm~\ref{alg:Huber}, a recent result from the literature of optimizations on Riemannian manifolds \citep{ferreira2019gradient} can be used. We do this for the case where the pseudo-Huber loss for $c \in (0, \infty)$ is used. 
The Lipshitz continuity of the gradient of the objective function $\tilde{F}_{n}^{c}$ plays an important role in ensuring the convergence. We say the \textit{gradient Lipshitz continuity} of $\tilde{F}_{n}^{c}$ is satisfied on a strongly convex set $M_1 ~ (\subset M)$ if the following condition holds for some constant $L < \infty$, for all $m_{1}, m_{2} \in M_1$:  
\begin{equation}\label{eq:Lipshitz:grad}
\|\textsf{grad} \tilde{F}_{n}^{c}(m_{1})- \Gamma_{m_{2} \rightarrow m_{1}}\{\textsf{grad}\tilde{F}_{n}^{c}(m_{2})\}\|_{m_1} \le L \cdot d(m_{1}, m_{2}), 
\end{equation}
where $\Gamma_{m_{2} \rightarrow m_{1}}$ denotes the parallel transport from $T_{m_{2}}M$ to $T_{m_{1}}M$ along the length-minimizing geodesic from $m_{2}$ to $m_{1}$. 
To guarantee (\ref{eq:Lipshitz:grad}), we suppose that sectional curvatures of $M$ are lower bounded by some finite constant $\delta ~ (> -\infty)$, as assumed in Section~\ref{sec:setup}. 
Since the pseudo-Huber loss is twice-continuously differentiable, the Riemannian Hessian operator of $\tilde{F}^{c}_{n}$ at $p\in M$, $\textsf{Hess}\tilde{F}^{c}_{n}(p)[\cdot]: T_{p}M \rightarrow T_{p}M, v \mapsto \textsf{Hess}\tilde{F}^{c}_{n}(p)[v] ~ (= \nabla_{v} \textsf{grad}\tilde{F}^{c}_{n}(p))$, is well-defined for any $p \in M$. Thus, the map 
$p \mapsto \|\textsf{Hess}\tilde{F}_{n}^{c}(p)\|_{\textrm{op}}$, where $\|\cdot\|_{\textrm{op}}$ denotes the operator norm, is continuous. 

We suppose that the data points $x_{1}, x_{2}, ..., x_{n}$ are contained in an open ball of finite radius $r' \le \tfrac{1}{2}\min\{\tfrac{\pi}{\sqrt{\Delta}}, r_{\mbox{\tiny inj}}(M) \}$, denoted by $B_{r'}(p_0)$ for some $p_0 \in M$. Then, for $L_{c}:= \sup_{p \in \overline{B_{r'}(p_0)}} \|\textsf{Hess}\tilde{F}_{n}^{c}(p)\|_{\textrm{op}} = \max_{p \in \overline{B_{r'}(p_0)}} \|\textsf{Hess}\tilde{F}_{n}^{c}(p)\|_{\textrm{op}}$, we have $L_{c} < \infty$ as $\overline{B_{r'}(p_0)}$ is compact by Hopf-Rinow theorem. Consequently, under the same assumption, the gradient Lipshitz continuity (\ref{eq:Lipshitz:grad}) on $B_{r'}(p_0)$ holds for $L=L_{c}$, to which we apply the result of \cite{ferreira2019gradient}, to obtain the following.

\begin{proposition}[Convergence of Algorithm~\ref{alg:Huber}]\label{prop:alg:conv}
For a given $c \in (0,\infty)$, suppose that $\tilde{\rho}_{c}$ is used. Given an $n$-tuple of deterministic observations $(x_{1}, x_{2}, ..., x_{n}) \in M^{n}$, suppose that $x_{1}, x_{2}, ..., x_{n}$ lie in $B_{r'}(p_0)$, and that for any iteration $k \ge 1$ of Algorithm~\ref{alg:Huber}, $m_{k}$ lies in $B_{r'}(p_0)$ where $r'$, $p_0$ are specified above. If $\alpha \in (0, 1/L_{c}]$, then $m_{k} \underset{k\rightarrow \infty}{\rightarrow} m^{c}_{n}$, and the convergence rate is linear.

\begin{proof}[Proof of Proposition~\ref{prop:alg:conv}]
For any $m_{1}, m_{2} \in \overline{B_{r'}(p_0)}$, by the mean-value theorem, we have
\[
\|\frac{\textsf{grad}\tilde{F}_{n}^{c}(m_{1}) - \Gamma_{m_{2} \to m_{1}}\{\textsf{grad} \tilde{F}_{n}^{c}(m_{2})\}}{d(m_{1}, m_{2})}\|_{m_1} \le \sup_{p \in \overline{B_{r'}(p_0)}} \|\textsf{Hess}\tilde{F}_{n}^{c}(p)\|_{\textrm{op}} = L_{c},  
\]
where the norm on the left denotes the Euclidean norm in $T_{x}M \cong \mathbb{R}^{k}$ and the norm on the right, $\|\cdot\|_{\textrm{op}}$, stands for the operator norm. Hence, the gradient Lipshitz continuity of $\tilde{F}_{n}^{c}$ holds on $B_{r'}(p_0)$. By applying \citep[Theorem 3,][]{ferreira2019gradient},\footnote{This is the exact point where the lower boundedness of sectional curvatures ($\delta > -\infty$) is required.} we obtain that $m_{k} \underset{k \rightarrow \infty}{\rightarrow} m^{c}_{n}$ and $\tilde{F}_{n}^{c}(m_{k}) - \tilde{F}_{n}^{c}(m^{c}_{n}) = O(1/k)$. It means that the iteration complexity is $O(1/\epsilon)$ to achieve $\tilde{F}_{n}^{c}(m_{k}) - \tilde{F}_{n}^{c}(m^{c}_{n}) < \epsilon$. So, the convergence rate is linear.
\end{proof} 
\end{proposition}

We remark that if the gradient Lipshitz continuity (\ref{eq:Lipshitz:grad}) is true for $F_n^c$, then the statement of Proposition~\ref{prop:alg:conv} is true for the case where Huber loss is used. While we conjecture that (\ref{eq:Lipshitz:grad}) holds for $F_n^c$, it has been difficult to verify as $F_n^c$ is not twice differentiable. Proposition~\ref{prop:alg:conv} indicates that extra care is required to ensure the convergence of Algorithm~\ref{alg:Huber}. The proposition specifies that the step size $\alpha$ of Algorithm~\ref{alg:Huber} should be chosen such that $\alpha \in (0, 1/L_{c}]$. Since evaluating $L_{c}$ is generally challenging, we recommend using a smaller $\alpha$ for smaller values of $c$. Additionally, $\alpha$ should be sufficiently small to ensure that $m_{k}$ remains within $B_{r'}(p_0)$ for each iteration $k$.

Note that Algorithm~\ref{alg:Huber} employs a fixed step size $\alpha$. For manifolds with lower-bounded sectional curvature, utilizing a small, but fixed step size is sufficient in guaranteeing the convergence of the gradient descent algorithm. In cases where the manifold's sectional curvatures are not lower-bounded, one may modify Algorithm~\ref{alg:Huber} by employing varying step sizes, specifically the Armijo step sizes as used in Algorithm 3.1 of \cite{wang2021convergence}. With this modified algorithm, it follows that $m_{k} \underset{k \rightarrow \infty}{\rightarrow} m^{c}_{n}$ and a linear rate of convergence is achieved under some technical conditions \citep[see Theorem 3.6,][]{wang2021convergence}.

\section{Technical details, additional lemmas, corollaries, and propositions}\label{appendix:B}

\subsection{The regularity conditions (A4) for central limit theorem}

To verify the central limit theorem for sample Huber means (Theorem~\ref{thm:clt} in the main article Section~\ref{sec:asymptotics}), we assume the following regularity conditions. 

\underline{(A4)} The following hold for $P_X$.
 \begin{itemize}
     \item[(i)] $H_{c}(\mathbf{x})$ is continuous at $\mathbf{x}=\mathbf{0}$.
     \item[(ii)] The map $\mathbf{x} \mapsto \rho_{c}\{d(X, \phi^{-1}_{m^{c}_{0}}(\mathbf{x}))\}$
 is twice-continuously differentiable at $\mathbf{x}=\mathbf{0}$ with probability 1.
     \item[(iii)] Moreover, the 
 \textit{integrability condition} is satisfied: 
\begin{equation}\label{integrability1}
    \begin{cases} E\{\textsf{grad}d^2(X, \phi_{m_{0}^{c}}^{-1}(\mathbf{0}))\} ~ \mbox{exists},  \\
 \textsf{Cov}\{\textsf{grad}d^2(X, \phi_{m_{0}^{c}}^{-1}(\mathbf{0}))\} ~ \mbox{exists, and} \\
 E\{\textsf{Hess}d^2(X, \phi_{m_{0}^{c}}^{-1}(\mathbf{x}))\} ~ \mbox{exists for $\mathbf{x}$ near $\mathbf{0}$ \mbox{and is non-singular at} $\mathbf{x}=\mathbf{0}$}.
  \end{cases}
\end{equation}
 \end{itemize}

 We conjecture that for general manifold, the assumption of $P_X$ being absolutely continuous (together with unimodal density in the case of manifolds with positive curvature) and having a finite variance guarantees Assumption (A4), but we have only partial proof for the conjecture.

 An example under which the regularity conditions (A4) are satisfied is given below. 
 
 \begin{proposition}\label{example:regularity}
     For the case where $M = \mathbb{R}^k$, Assumption (A4) is satisfied if $P_X$ is absolutely continuous with respect to the Lebesgue measure on $\bbR^{k}$ with finite second moment (i.e., $E\| X \|^2 < \infty$). 
 \end{proposition}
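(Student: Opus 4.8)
The plan is to exploit the flatness of $M=\mathbb{R}^k$. I would use the canonical global chart $\phi_{m^c_0}(y) = y - m^c_0$, so that $\phi_{m^c_0}^{-1}(\xv) = m^c_0 + \xv$ and, writing $Y := X - m^c_0$, the loss appearing in Assumption (A4) becomes $g_Y(\xv) := \rho_c\{d(X,\phi_{m^c_0}^{-1}(\xv))\} = \rho_c(\|Y - \xv\|)$. Since $P_X$ is absolutely continuous with respect to Lebesgue measure and $E\|X\|^2<\infty$, the same holds for $P_Y$, with $E\|Y\|^2<\infty$. The key structural observation is that $g_Y$ is the composition of the $C^1$ function $\rho_c$ with a Euclidean norm, so it is $C^\infty$ on all of $\mathbb{R}^k$ except on the sphere $S_Y := \{\xv : \|\xv - Y\| = c\}$: on the region $\|\xv - Y\| < c$ it equals the smooth map $\xv\mapsto \|Y-\xv\|^2$, and on $\|\xv - Y\| > c$ it equals $\xv\mapsto 2c\|Y-\xv\| - c^2$, which is smooth away from $\xv = Y$, a point lying strictly inside $S_Y$.

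For condition (ii), I would note that $g_Y$ is twice continuously differentiable at $\xv=\mathbf{0}$ whenever $\mathbf{0}\notin S_Y$, i.e.\ whenever $\|Y\|\neq c$; since the sphere $\{\|y\|=c\}$ is Lebesgue-null, $P(\|Y\|=c)=0$ by absolute continuity, so (ii) holds with probability $1$. For condition (iii), the relevant function is $d^2(X,\phi_{m^c_0}^{-1}(\xv)) = \|Y-\xv\|^2$, whose Euclidean gradient at $\mathbf{0}$ is $-2Y$ and whose Euclidean Hessian is the constant $2I_k$; thus $E\{\textsf{grad}\,d^2\}=-2E(Y)$ and $\textsf{Var}\{\textsf{grad}\,d^2\}=4\,\textsf{Var}(Y)$ exist because $E\|Y\|^2<\infty$, and $E\{\bH d^2\}=2I_k$ exists trivially. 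Hence (iii) is immediate.

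The only part requiring a genuine argument is condition (i), continuity of $H_c(\xv) = E[\bH g_Y(\xv)]$ at $\xv=\mathbf{0}$. I would first record the explicit Hessian off $S_Y$:
\[
\bH g_Y(\xv) = 2 I_k\, 1_{\|Y - \xv\| < c} + \frac{2c}{\|Y-\xv\|}\bigl(I_k - \tfrac{(Y-\xv)(Y-\xv)^{T}}{\|Y-\xv\|^2}\bigr)\,1_{\|Y - \xv\| > c}.
\]
The crucial point is a uniform operator-norm bound: the first term has norm $2$, and in the second term $I_k - (Y-\xv)(Y-\xv)^T/\|Y-\xv\|^2$ is an orthogonal projection (norm $\le 1$) while $2c/\|Y-\xv\| \le 2$ since $\|Y-\xv\|>c$ there; hence $\|\bH g_Y(\xv)\|_{\mathrm{op}}\le 2$ for all $\xv\notin S_Y$ and all $Y$. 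For fixed $Y$ with $\|Y\|\neq c$, continuity of $\xv\mapsto\|Y-\xv\|$ shows that $\|Y-\xv\|$ stays strictly on one side of $c$ for $\xv$ near $\mathbf{0}$, so $\xv\mapsto\bH g_Y(\xv)$ is continuous at $\mathbf{0}$; this holds for $P_Y$-a.e.\ $Y$. Then for any sequence $\xv_n\to\mathbf{0}$ the integrands $\bH g_Y(\xv_n)$ converge a.s.\ to $\bH g_Y(\mathbf{0})$ and are dominated by the constant $2$ (integrable on a probability space), so the dominated convergence theorem yields $H_c(\xv_n)\to H_c(\mathbf{0})$, proving (i). A minor bookkeeping remark: $H_c(\xv)$ is well-defined for every $\xv$ near $\mathbf{0}$, since $\bH g_Y(\xv)$ fails to exist only on the $P_Y$-null sphere $S_Y$.

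I do not expect a serious obstacle here: once the reduction $Y = X - m^c_0$ is made, everything reduces to elementary computations, and the only mild subtlety — the non-$C^2$ sphere $S_Y$ — is neutralized by absolute continuity. What makes the Euclidean case easy (and the general-manifold conjecture mentioned before the proposition hard) is precisely that here $\bH d^2$ is constant and $\bH d$ is explicitly a bounded multiple of a projection; on a curved manifold one would instead need corresponding uniform bounds on $\bH d^2$ and $\bH d$ in normal coordinates together with a uniform integrability argument, neither of which is automatic.
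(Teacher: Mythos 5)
Your proposal is correct and follows essentially the same route as the paper's proof: explicit computation of the Euclidean gradient/Hessian of $\rho_c(\|Y-\xv\|)$, the uniform operator-norm bound $2$ together with absolute continuity and dominated convergence for condition (i), a measure-zero argument for the non-$C^2$ sphere $\{\|Y\|=c\}$ for condition (ii), and the constant Hessian $2I_k$ plus finite second moment for condition (iii). The only cosmetic differences are that you identify the non-smooth set explicitly rather than appealing to a Sard-type statement, and you apply dominated convergence to the full Hessian at once instead of treating the indicator and projection terms separately as the paper does.
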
 

\begin{proof}[Proof of Proposition~\ref{example:regularity}]
    
Assume that the distribution of $\bbR^k$-valued random variable $X$, $P_X$, is absolutely continuous, and satisfies $\int_{\bbR^{k}} \|\mathbf{x}\| dP_{X}(\mathbf{x}) = E\|X\| < \infty$. Let $c \in (0,\infty)$ be fixed. 

(i) Note that $$H_c(\xv) = 2 E I_{k} \cdot 1_{\|X -\xv\| \le c} + E \left\{ \frac{2c}{\|X -\xv\|} \left( I_k -  \frac{(X -\xv ) (X -\xv )^T}{\|X -\xv\|^2} \right) \cdot 1_{\|X -\xv\| > c}
\right\}.$$ Since $P_X$ is absolutely continuous, as $\xv \to 0$
$$ 
E 1_{\|X -\xv\| \le c} = P(\|X -\xv\| \le c) \to  P(\|X\| \le c).
$$
Moreover, the integrand of the second term, $\frac{2c}{\|X -\xv\|} \left( I_k -  \frac{(X -\xv ) (X -\xv )^T}{\|X -\xv\|^2} \right)$ is continuous (with repect to $\xv$) and it is absolutely bounded by $2I_k$. Thus, by dominated convergence theorem, together with the fact that $P_X$ is absolutely continuous, the second term, as a function $\xv$ is continuous at $\xv = 0$. Thus, $H_c(\xv)$ is continuous at $\xv$.

(ii) Since the set of points at which the function is not $C^2$ is measure zero (cf. Sard's theorem), and $X$ is absolutely continuous, the function is $C^2$ with probability 1. 

(iii) Suppose that $M=\bbR^{k}$ with the Riemannian distance $d(\mathbf{x}, \mathbf{y}) = \|\mathbf{x}-\mathbf{y}\|$ for any $\mathbf{x}, \mathbf{y} \in \mathbb{R}^{k}$, where $\|\cdot\|$ refers to the standard Euclidean norm. Hence, note that $E\{\textsf{grad} d^2(X, \phi^{-1}_{m^{c}_{0}}(\mathbf{0})) \} = E\{\textsf{grad}|_{\mathbf{x}=\mathbf{0}} \|X - \mathbf{x} \|^2\} = E \{2(\mathbf{x} - X)|_{\mathbf{x} = \mathbf{0}}\} = -2EX$ exists because each component of $EX$ is not larger than $E\|X\| ~(\le (E\|X\|^2)^{1/2})$. Similarly, $\textsf{Cov}\{\textsf{grad} d^2(X, \phi^{-1}_{m^{c}_0}(\mathbf{0})) \} = \textsf{Cov}\{\textsf{grad}|_{\mathbf{x}= \mathbf{0}} \|X - \mathbf{x}\|^2\} = \textsf{Cov}\{2(\mathbf{x} - X)|_{\mathbf{x} = \mathbf{0}}\} = 4\textsf{Cov}(X)$ exists owing to $E\|X\|^2 < \infty$. Finally, we obtain that $E\{\textsf{Hess} d^2(X, \phi^{-1}_{m^{c}_0}(\mathbf{x})) \} = E\{\textsf{Hess} \|X - \mathbf{x} \|^2 \} = 2I_{k}$ exists for $\mathbf{x}$ near $\mathbf{0}$, and becomes non-singular at $\mathbf{x}=\mathbf{0}$. Therefore, the integrability condition (\ref{integrability1}) is valid. 

\end{proof}

\subsection{Proofs for Section \ref{sec:exist:unique}}

\subsubsection{Proofs of Theorem \ref{thm:exist} and its corollary}

We begin with establishing a type of \textit{coercivity} property of the objective function $F^{c}$ with respect to $P_{X}$. 

\begin{proposition}\label{prop:cpt}
For a given $c \in [0, \infty]$, suppose that $P_{X}$ satisfies Assumption (A1). There exists a compact set $K_{c} \subseteq M$ such that 
\begin{eqnarray}\label{coercive}
\centering
&& \inf_{m \in K_{c}} F^{c}(m) = \inf_{m \in M} F^{c}(m), ~ \mbox{and} ~ \nonumber \\
&& \mbox{if} ~ K_{c} \subsetneq M, ~  \inf_{m \in K_{c}} F^{c}(m) < \inf_{m \in M \setminus K_{c}} F^{c}(m). 
\end{eqnarray}
Therefore, $E^{(c)}\subset K_{c}$ and $E^{(c)}$ is compact.
\begin{proof}[Proof of Proposition~\ref{prop:cpt}]

The argument used below works for any form of the objective function $m \mapsto \int \rho \{ d(X,m)\} dP$, provided that $\rho: [0,\infty) \to [0,\infty)$ is convex, strictly increasing, and the value of the objective function is finite for any $m \in M$. Thus, the statement of Proposition~\ref{prop:cpt} is valid when the Huber mean is replaced by the pseudo-Huber mean.

We note first that the objective function $F^c(\cdot)$ is continuous on $M$. Thus the set of minimizers $E^{(c)}$ is closed. 
We divide the proof according to whether $M$ is bounded or not.

\textbf{Case I ($M$ is bounded).} Let $K_{c}= M$. Then, $K_{c}$ is compact by the Hopf-Rinow theorem. Since $M$ is connected and complete, every closed and bounded subset of $M$ is compact. In particular, $E^{(c)} \subset K_{c} = M$ is bounded, and is thus compact as asserted.    

\textbf{Case II ($M$ is unbounded).} We shall verify that $\{F^{c}(m): m \in M\}$ is not a singleton. Suppose that, there exists $C \ge 0$ such that $F^{c}(m)=C$ for every $m \in M$. Since $\rho_{c}:[0, \infty) \rightarrow [0, \infty)$ is  convex, it holds that $\rho_{c}(x) + \rho_{c}(y) \ge 2\rho_{c}(\frac{x+y}{2})$ for any $x, y \ge 0$. Due to the preceding inequality, for any $m_{1}, m_{2} \in M$
\begin{eqnarray*}
2C=F^{c}(m_{1})+F^{c}(m_2) &=& \int \rho_{c}\{d(X, m_{1})\} + \rho_{c}\{d(X, m_{2})\} dP \\
&\ge& 2 \int \rho_{c} \left\{\frac{d(X, m_{1}) + d(X, m_{2})}{2}\right\} dP \\
&\ge& 2 \int \rho_{c} \{ d(m_{1}, m_{2})/2 \}dP \\
&=& 2 \rho_{c} \{d(m_{1}, m_{2})/2\}.
\end{eqnarray*}
Accordingly, we get $C \ge \rho_{c}\{d(m_{1}, m_{2})/2\}$, which is a contradiction since $m_{1}, m_{2}$ can be chosen so that $\rho_{c}\{d(m_{1}, m_{2})/2\} > C$. Consequently $\{F^{c}(m): m\in M\}$ is not singleton. 

Since $\{F^{c}(m): m \in M \}$ has at least two elements, there exists $m_{1} \in M$ such that $F^{c}(m_{1}) > \inf_{m \in M} F^{c}(m) \ge 0$. Since $M=\cup_{r \ge 0} B_{r}(m_{1})$ and $P(X \in M)=1$, there exists $r_{1} > 0$ such that $P(X \in B_{r_{1}}(m_{1})) > 0$. Let $K':=B_{r_{1}}(m_1)$. Since $\frac{\inf_{m \in K'} F^{c}(m)}{P(X \in K')} \ge 0$ is satisfied and $\rho_{c}(\cdot)$ is strictly increasing, there exists $r_{2} ~(> r_{1})$ such that for any $r \ge r_{2}$, $$\rho_{c}(r - r_{1}) > \frac{\inf_{m \in K'}F^{c}(m)}{P(X \in K')}$$ holds. Then, $K'':=\overline{B_{r_{2}}(m_{1})} ~(\supset K')$ is compact due to the Hopf-Rinow theorem. Due to the monotonicity of $\rho_{c}(\cdot)$, for any $m_{2} \in M \setminus K''$
\begin{eqnarray*}
F^{c}(m_{2}) \ge \int_{K'} \rho_{c}\{d(X, m_{2})\}dP \ge \rho_{c}(r_{2} - r_{1})P(X \in K') > \inf_{m \in K'} F^{c}(m),  
\end{eqnarray*}
where the second inequality holds by $d(x, m_{2}) \ge d(m_{1}, m_{2}) - d(x, m_{1}) \ge r_{2} - r_{1}$ for any $x \in K'$. Accordingly, the minimizers of $F^{c}$ do not lie in $M\setminus K''$. Since $K' \subset K''$, the minimizers must lie in $K''$. Letting $K_{c}=K'' \subsetneq M$ gives 
\[
\inf_{m \in K_{c}} F^{c}(m) = \inf_{m \in M} F^{c}(m) < \inf_{m \in M \setminus K_{c}} F^{c}(m).
\]
We moreover obtain that $E^{(c)} \subset K_{c}$, where $E^{(c)}=\argmin_{m \in M} F^{c}(m)$. Since every closed subset of a compact set is compact, so is $E^{(c)}$.
\end{proof}
\end{proposition}

The coercivity property of $F^c$ states that any minimizer of $F^{c}$, i.e., any Huber mean, lies in a compact set. This does not guarantee that there exists a minimizer of $F^{c}$.

\begin{proof}[Proof of Theorem~\ref{thm:exist}]
For each $c \in (0, \infty)$, the (pseudo) Huber loss function $\rho_{c}$ has a Lipshitz constant $2c$. When $c=0$, moreover, $\rho_{0}$ has a Lipshitz constant $1$. To sum up, for any choice of $c \in [0, \infty)$, $\rho_{c}$ has a Lipshitz constant $2c \lor 1 := \max(2c, 1)>0$. For a given $\epsilon > 0$, suppose that $m_{1}, m_{2} \in M$ and $d(m_{1}, m_{2}) < \epsilon/(2c \lor 1)$. Then, 
\begin{eqnarray*}
|F^{c}(m_1) - F^{c}(m_2)| &=& \int |\rho_{c}\{d(X, m_{1})\} - \rho_{c}\{d(X, m_{2})\}| dP \\
 &\le& (2c \lor 1) \int |d(X, m_1) - d(X, m_2)|dP \\
 &\le& (2c \lor 1) \int d(m_{1}, m_{2})dP \\
 & < & (2c \lor 1) \cdot \epsilon/(2c \lor 1) \\
 &=& \epsilon. 
\end{eqnarray*}
Hence, $F^{c}$ is (uniformly) continuous when $c \in [0, \infty)$. In the case of $c=\infty$, 
it is not difficult to show that $F^{\infty}$ is continuous on $M$, which is available upon request. Consequently, $F^{c}$ is continuous for any $c \in [0, \infty]$. 

By Proposition~\ref{prop:cpt}, there exists a compact $K_{c} \subseteq M$ such that (\ref{coercive}) holds. Since $F^{c}$ is continuous, $F^{c}(K_{c}):=\{F^{c}(m) : m \in K_{c} \} \subset M$ is compact, and thus the minimum of $F^{c}$ is attained in $K_{c}$. Therefore, the population Huber mean exists (i.e., $E^{(c)} \neq \phi$). 

\end{proof}

For any data set with finite sample size $n$, the sample (pseudo) Huber mean exists. This is observed by substituting $P_{X}$ in Theorem~\ref{thm:exist} with $P_{n}$.

\begin{corollary}[Existence of the sample Huber means]\label{cor:exist:sam}
 Let $c \in [0, \infty]$ and $n$ be any positive integer. Given $n$ observations $(x_{1}, x_{2}, ..., x_{n}) \in M^{n}$, the sample Huber mean exists; that is, $E_{n}^{(c)}(x_{1}, x_{2}, ..., x_{n}) \neq \phi$.
\begin{proof}[Proof of Corollary~\ref{cor:exist:sam}]
By replacing $P_{X}$ in Theorem~\ref{thm:exist} by $P_{n} ~(=\frac{1}{n}\sum_{i=1}^{n} \delta_{x_{i}})$, the sample Huber mean exists; that is, $E_{n}^{(c)}(x_{1}, x_{2}, ..., x_{n}) \neq \phi$ for any finite $n$ (note that $P_{n}$ clearly satisfies Assumption (A1)).
\end{proof}
\end{corollary}

Before closing this subsection, we remark that Assumption (A1) is equivalent to the condition that $\int \rho_{c}\{d(X, m)\}dP < \infty$ for \textit{any} $m \in M$. If $P_X$ has a finite variance, i.e., $\int d^2(X, m)dP < \infty$ for an $m\in M$, then (A1) holds for any $c \in [0, \infty]$. 
For a more relaxed condition on $P_{X}$, Assumption (A1) can be replaced by $\int [\rho_{c}\{d(X, m)\} - \rho_c\{d(X, m_0)\}]dP < \infty$ for some $m, m_0\in M$. This enables us to define the Huber mean even when $P_{X}$ is highly heavy-tailed. In this case, $E^{(c)}$ in Definition~\ref{def:Huber} should be defined as $\mbox{argmin}_{m \in M} \int [\rho_c\{d(X, m)\} - \rho_c\{d(X, m_0)\}]dP$, as used in \cite{sturm2003probability, schotz2022strong, schotz2025variance}. For ease of discussion, however, we have left it in the form of (A1).

\subsubsection{Proofs of Theorem \ref{thm:unique:pop} and its corollary}



The following lemma is used to ensure that the (pseudo) Huber mean set lies in a strongly convex ball. Lemma~\ref{lem:contain:ball} is stated for a general loss function $\rho$, and both the Huber loss function $\rho_c$ and the pseudo-Huber loss function $\tilde{\rho}_{c}$ for any $c \in [0,\infty]$ satisfy the required conditions.

\begin{lemma}\label{lem:contain:ball}
Let $\rho: [0, \infty) \to [0, \infty)$ be any continuous and strictly increasing function. Suppose that $P_{X}$ satisfies 
$\int \rho\{d(X,m)\} dP < \infty$
and that
$\textsf{supp}(P_{X}) \subseteq B_{r_0}(p_0),$ 
where $\textsf{supp}(P_{X})$ denotes the support of $P_{X}$,  and $r_0 = \frac{1}{2}\min\{\frac{\pi}{\sqrt{\Delta}}  , r_{\mbox{\tiny inj}}(M) \}$. (Recall that $1/\sqrt{\Delta}$ is treated as $\infty$ if $\Delta \le 0$.)
Then, there exist $p_0 \in M$ and a finite $0 <r \le r_0$ such that $E_{\rho} \subset B_r(p_0)$, where 
$E_{\rho} = \argmin_{m \in M} \int \rho\{d(X,m)\} dP$.
\end{lemma}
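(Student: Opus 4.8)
The plan is to show that the objective $G(m):=\int\rho\{d(X,m)\}\,dP$ attains its infimum only at points within a fixed distance of $p_0$; here $p_0$ and $r_0$ are as in the hypothesis. First note that $G$ is finite on all of $M$ (the integrability hypothesis propagates from one point to every point, exactly as for Assumption (A1); alternatively, when $\rho$ is Lipschitz one has $|G(m_1)-G(m_2)|\le(\mathrm{Lip}\,\rho)\,d(m_1,m_2)$) and that $G$ is continuous, by the dominated-convergence argument used in the proof of \Cref{thm:exist}. Hence $E_\rho$ is closed. I split into two cases according to whether $r_0$ is infinite.

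If $r_0=\infty$, the support condition is vacuous. The coercivity argument of \Cref{prop:cpt} applies verbatim --- it uses only that $\rho$ is strictly increasing, that $G$ is finite, and (to rule out $G$ being constant) that $\rho$ is convex, all of which hold for $\rho_c$ and $\tilde\rho_c$. Thus $E_\rho$ lies in a compact set, is therefore bounded, and $E_\rho\subseteq B_r(p_0)$ for $r:=1+\sup_{m\in E_\rho}d(p_0,m)<\infty=r_0$.

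Now suppose $r_0<\infty$. Then $\overline{B_{r_0}(p_0)}$ is compact by Hopf--Rinow, so $\textsf{supp}(P_X)$ --- closed and contained in the open ball $B_{r_0}(p_0)$ --- is compact, and $r_X:=\max_{x\in\textsf{supp}(P_X)}d(p_0,x)<r_0$. An a priori bound localizes the minimizers: if $d(p_0,m)=s$ then $d(x,m)\ge s-r_X$ for every $x\in\textsf{supp}(P_X)$, so $G(m)\ge\rho(s-r_X)$, while any minimizer $m^\ast$ satisfies $G(m^\ast)\le G(p_0)\le\rho(r_X)$; since $\rho$ is strictly increasing this forces $s\le 2r_X$. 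Hence $E_\rho\subseteq\overline{B_{2r_X}(p_0)}$, a compact set, so $E_\rho$ is compact, and it remains only to prove $E_\rho\subseteq B_{r_0}(p_0)$ --- after which $r:=\max_{m\in E_\rho}d(p_0,m)<r_0$ finishes the proof. The bound $r_0\le\tfrac12\min\{\pi/\sqrt\Delta,r_{\mathrm{inj}}(M)\}$ makes $B_{r_0}(p_0)$ strongly convex, and (following the comparison-geometry arguments of \citep{afsari2011riemannian,afsari2013convergence}) it yields: for every $m\in\partial B_{r_0}(p_0)$ and every $x\in\overline{B_{r_X}(p_0)}$, the angle $\angle_m(p_0,x)$ of the geodesic triangle $(p_0,m,x)$ at $m$ is acute, uniformly in $x$. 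Granting this, the one-sided outward directional derivative of $G$ at a boundary point $m$ along the radial geodesic equals $\int\rho'_+\{d(x,m)\}\,(-\cos\angle_m(p_0,x))\,dP(x)$, which is strictly positive, since $\rho'_+>0$ away from $0$ for a convex strictly increasing $\rho$ and $d(x,m)>0$ on $\textsf{supp}(P_X)$. A minimizer $m^\ast$ with $d(p_0,m^\ast)=r_0$ would thus be improved by a small inward step, a contradiction; and a minimizer with $d(p_0,m^\ast)>r_0$ is ruled out by combining this strict outward increase with the a priori localization and the (suitably controlled) behavior of $t\mapsto\rho\{d(x,\gamma(t))\}$ along the minimal radial geodesic $\gamma$ through $m^\ast$ --- which is unique because $d(p_0,m^\ast)\le 2r_X<r_{\mathrm{inj}}(M)$ --- so that $t\mapsto G(\gamma(t))$ strictly increases on $[r_0,d(p_0,m^\ast)]$ and $G(m^\ast)>G(\gamma(r_0))\ge\inf_M G$, again a contradiction. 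Therefore $E_\rho\subseteq B_{r_0}(p_0)$.

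The nonpositive-curvature instance ($\Delta\le 0$, including $r_{\mathrm{inj}}(M)$ possibly infinite) of the last paragraph is routine: $\overline{B_{r_0}(p_0)}$ is a convex subset of a CAT$(0)$ region, metric projection onto it is nonexpansive and strictly contracting toward interior points (so no minimizer is strictly outside), $d(x,\cdot)$ is globally geodesically convex so $G$ is convex on $\overline{B_{r_0}(p_0)}$, and the boundary angle is acute by the Euclidean comparison estimate $\cos\angle_m(p_0,x)\ge(r_0-r_X)/(2r_0)>0$. The main obstacle is the positive-curvature case: when $\Delta>0$ and $r_0$ is as large as $\tfrac{\pi}{2\sqrt\Delta}$, the naive ``retract onto $\partial B_{r_0}(p_0)$'' heuristic genuinely fails (radial retraction of a far point need not decrease its distance to a support point), so one must carry out the model-space (Rauch/Alexandrov) comparisons under the precise radius restriction, exactly as in \citep{afsari2011riemannian}; this is the step I would expect to require the most care.
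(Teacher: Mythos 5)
Your overall architecture (finiteness and continuity of the objective, coercivity from \Cref{prop:cpt} when $r_0=\infty$, an a priori localization of minimizers to a ball of roughly twice the support radius, and a first-variation/acute-angle argument to exclude minimizers on the boundary sphere) parallels the paper's proof, and those pieces are fine. The genuine gap is in how you exclude a minimizer $m^\ast$ lying strictly \emph{between} the spheres of radius $r_0$ and $2r_X$. You assert that $t\mapsto G(\gamma(t))$ is strictly increasing on $[r_0,\,d(p_0,m^\ast)]$ along the radial geodesic, but this is exactly the comparison-geometry content you leave unproved, and as stated it is not even true pointwise: on $S^2$ with $\Delta=1$, $r_0=\pi/2$, take $p_0$ the north pole, $\gamma$ the meridian at longitude $0$, and a support point $x$ at longitude $180^\circ$ with colatitude $80^\circ$ (so $x\in B_{r_0}(p_0)$). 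Then $d(x,\gamma(t))$ increases only up to $t=100^\circ$ and \emph{decreases} on $(100^\circ,160^\circ)$, so $\rho\{d(x,\gamma(t))\}$ is not monotone on $[r_0,2r_X]$, and for mass concentrated near such an $x$ one even gets $G(\gamma(120^\circ))<G(\gamma(90^\circ))$. Hence the claimed monotonicity of $G\circ\gamma$ cannot be deduced from the boundary-angle estimate plus ``suitably controlled behavior''; establishing the aggregate statement would be essentially as hard as the lemma itself. (There is also a sign slip in your outward derivative: with the angle $\angle_m(p_0,x)$ acute, the outward derivative is $\int \rho'_+\{d(x,m)\}\cos\angle_m(p_0,x)\,dP$, not its negative; and note that both you and the paper implicitly use differentiability/convexity of $\rho$ beyond the stated ``continuous and strictly increasing'' hypothesis.)

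The paper closes this annulus case by a different device, which is why it avoids your difficulty: for $m_0$ with $r_0<d(p_0,m_0)<2r_0$ it reflects $m_0$ across the boundary sphere through the point $m'$ where the radial geodesic crosses it, and the Toponogov comparison (Section 2.2.1 of \cite{afsari2011riemannian}) yields the \emph{pointwise} strict inequality $d(m'',x)<d(m_0,x)$ for every $x$ in the support simultaneously, hence $G(m'')<G(m_0)$ by monotonicity of $\rho$ alone — no monotonicity along the connecting path is needed (indeed, in the example above the reflected point $\gamma(60^\circ)$ is closer to $x$ than $\gamma(120^\circ)$ even though the intermediate boundary point $\gamma(90^\circ)$ is farther). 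To repair your proof in the positive-curvature case you should replace the radial-monotonicity claim by this reflection argument (or an equivalent model-space comparison giving a pointwise inequality uniform over the support), after which your boundary first-variation step, which matches the paper's final step, completes the argument.
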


\begin{proof}[Proof of Lemma~\ref{lem:contain:ball}]

  We divide the proof into two situations depending on the value of $r_0$: Case I for $r_0 = \infty$ and Case II for $r_0 <\infty$. Let $F_\rho: M \to \mathbb{R}$ be defined as $F_\rho(m) = \int \rho\{d(X,m)\} dP$.

\textbf{Case I ($r_0 = \infty$)}. In this case, Assumption (A2) is not useful as it allows unbounded support for the distribution: $\textsf{supp}(P_{X}) \subseteq M$. On the other hand, since the objective function is integrable, there exists a compact set $K \subset M$ such that $E_{\rho} \subset  K = \overline{B_{r'}(p_0)}$ for some $r' <\infty$ and $p_0 \in M$. (See the proof of Proposition \ref{prop:cpt}.) Pick a small $\epsilon > 0$, and let $r = r' + \epsilon$. Then, $E_{\rho} \subset K \subset B_r(p_0)$, as required. 

\textbf{Case II ($r_0 <\infty$)}. Assumption (A2) leads that $\textsf{supp}(P_{X}) \subseteq B_{r_0}(p_0)$, where $r_0$ is given in (\ref{eq:r_0}) and $p_0$ is specified in the assumption. Set $r = r_0$. 
The rest of the proof closely follows the argument used for Theorem 2.1 of \cite{afsari2011riemannian}.

Firstly, we shall show that $E_{\rho} \subset B_{2r}(p_0)$. Suppose this is not true. There exists $m_0 \in E$ such that $d(p_0, m_0) \ge 2r$. In view of the triangle inequality, for any $x \in \textsf{supp}(P_{X})$, $d(p_0, x) < r \le d(x, m_0)$, which implies $\rho\{d(p_0, x)\} < \rho\{d(x, m_0)\}$ for any $x \in \textsf{supp}(P_{X})$. Accordingly, $F_\rho(p_0) < F_\rho(m_0)$, which is a contradiction. 

Secondly, we shall prove that $E_{\rho} \subset \overline{B_{r}(p_0)}$. Suppose that there exists $m_0 \in E_{\rho}$ such that $r < d(m_0, p_0) < 2r$. Let $m'$ be the intersection of the (length-minimizing) geodesic, joining $p_{0}$ and $m_0$, and the boundary of $B_{r}(p_0)$. Let $m''$ be the reflection point of $m_0$ with respect to $m'$ across the boundary of $B_{r}(p_0)$. Then, it holds that $m'' \in B_{r}(p_0)$ and $d(m_0, m')=d(m', m'')$. As shown in Section 2.2.1 of \cite{afsari2011riemannian}, for any $x \in \textsf{supp}(P_{X}) \subseteq B_{r}(p_0)$, $d(m'', x) < d(m_{0}, x)$. (The key idea is an application of the Toponogov comparison theorem,\footnote{The Toponogov comparison theorem provides information on the geodesic deviation in terms of sectional curvatures. This is the exact point where the upper boundedness of sectional curvatures ($\Delta < \infty$) is needed.} for details see, \citep[page. 420,][]{chavel2006riemannian} 
or \citep[Theorem 2.2,][]{afsari2011riemannian}.) Consequently, $\rho\{d(m'', x)\} < \rho\{d(m_{0}, x)\}$ for any $x \in \textsf{supp}(P_{X})$, and thus $F_\rho(m'') < F_\rho(m_{0})$, which is a contradiction.

Thirdly, we shall verify that $E_{\rho} \subset B_{r}(p_0)$. Suppose that there exists $m_0 \in E_{\rho}$ such that $m_0$ lies in the boundary of $B_{r}(p_0)$; that is, $d(p_0, m_0)=r$. For each $x \in \overline{B_{r}(p_0)}$, $\rho'\{d(x, m_0)\} >0$ and $\mbox{Log}_{m_{0}}(x)/\|\mbox{Log}_{m_{0}}(x)\|$ is an inward-pointing vector on the boundary of $B_{r}(p_0)$. Hence, $-\textsf{grad}F_\rho(m_0)= \int \rho^{'}\{d(X, m_0)\} \cdot \mbox{Log}_{m_0}(X)/\|\mbox{Log}_{m_0}(X)\|dP \neq \mathbf{0}$ is also an inward-pointing vector at the boundary of $B_{r}(p_0)$. Therefore, there exists $m \in B_{r}(p_0)$ such that $F_\rho(m) < F_\rho(m_0)$. This contradicts the assumption of the existence of a Huber mean at the boundary of $B_r(p_0)$. 
\end{proof}

It is well known that the regular convexity radius of $M$, denoted by $r_{\mbox{\tiny cx}}(M)$, is lower bounded by $\frac{1}{2} \min\{\frac{\pi}{\sqrt{\Delta}}, r_{\mbox{\tiny inj}}(M)\}$ \citep[cf.][]{afsari2013convergence}. Thus, for $r \in (0,r_0]$ in the conclusion of Lemma~\ref{lem:contain:ball}, the ball $B_r(p_0)$ is strongly convex.

\begin{proof}[Proof of Theorem~\ref{thm:unique:pop}]
For any $c \in [0,\infty]$, the conditions of Lemma~\ref{lem:contain:ball} are satisfied for either $\rho_c$ or $\tilde{\rho}_c$. Thus, the (pseudo) Huber mean set lies in $B_r(p_0)$, which is strongly convex. Here, both $r$ and $p_0$ are as specified in Lemma~\ref{lem:contain:ball}.

We begin with a proof for Part(a).

\textbf{Case I ($c \in [\tfrac{\pi}{\sqrt{\Delta}},\infty]$).} 
Since $r \le r_0 \le \frac{\pi}{2\sqrt{\Delta}}$, we have $\textsf{supp}(P_{X}) \subseteq B_{\frac{\pi}{2\sqrt{\Delta}}}(p_0)$ and any Huber mean $m \in E^{(c)}$ also lies in $B_{\frac{\pi}{2\sqrt{\Delta}}}(p_0)$. For any $x,m \in B_{\frac{\pi}{2\sqrt{\Delta}}}(p_0)$, $d(x,m) \le \pi/ \sqrt{\Delta} \le c$ and, in turn $$\rho_c\{d(x,m)\} = d^2(x, m).$$
Thus, for any $c \ge \tfrac{\pi}{\sqrt{\Delta}}$, the Huber mean set is exacly the Fr\'{e}chet mean set: $E^{(c)} = E^{(\infty)}$. Furthermore, due to the bounded support condition of $P_{X}$, Assumption (A1) leads that $\int d^2(X,m) dP =  \int \rho_c\{d(X,m)\} dP  < \infty $. Under this finite variance condition, the uniqueness of the Fr\'{e}chet mean is verified in, e.g., \citep[Theorem 2.1,][]{afsari2011riemannian} and \citep[Proposition 4.3,][]{sturm2003probability}.

\textbf{Case II ($c \in (0,\tfrac{\pi}{\sqrt{\Delta}})$).} 
Let $\gamma$ be any geodesic on $M$ such that $\gamma \cap B_r(p_0) \neq \emptyset$. (We abuse the notation for a geodesic $\gamma$, which either stands for a function from an interval to $M$ or the image of the function $\gamma$ in $M$.) Since $B_r(p_0)$ is strongly convex, there exist $-\infty\le t_0<t_1 \le \infty$ such that $\gamma((t_0,t_1)) = \gamma \cap B_r(p_0)$.
We collect several facts from \cite{karcher1977riemannian,petersen2006riemannian,fletcher2009geometric,afsari2011riemannian}. 
First of all, for any $x \in B_{r_0}(p_0)$, $\tfrac{\partial^2}{\partial t^2} d(x,\gamma(t))$ is defined for $x \neq \gamma(t)$, and is nonnegative. 
Furthermore, for $x \in B_{r_0}(p_0) \setminus \gamma$, 
$  \tfrac{\partial^2}{\partial t^2} d(x,\gamma(t)) > 0$. This can be seen from the Hessian comparison theorem\footnote{This is the exact point where the upper boundedness of sectional curvatures ($\Delta < \infty$) is required.} \citep[see, e.g.,][]{afsari2011riemannian}: Recall that $\Delta ~(< \infty)$ is the least upper bound of the sectional curvatures of $M$. Then, the Hessian comparison theorem states that for any $t \in (t_0,t_1)$, 
\begin{equation}
    \label{eq:HessianComparison} \tfrac{\partial^2}{\partial t^2} d(x,\gamma(t)) \ge \frac{\textsf{sn}_{\Delta}'(d(x,\gamma(t))}{\textsf{sn}_{\Delta}(d(x,\gamma(t))} \sin^2(\alpha),
\end{equation} 
where $\alpha$ is the angle formed at $\gamma(t)$ by the geodesic $\gamma$ and the unique length-minimizing geodesic from $x$ to $\gamma(t)$, and 
$\textsf{sn}_{\Delta}(s) = \tfrac{1}{\sqrt{\Delta} }\sin (s\sqrt{\Delta} )$ for $\Delta >0$, 
$s$ for $\Delta =0$, and $\tfrac{1}{\sqrt{|\Delta|} }\sinh (s\sqrt{|\Delta|} )$ for $\Delta <0$. It can be checked that for any $x \in B_{r_0}(p_0)$, $d(x,\gamma(t)) < \tfrac{\pi}{2\sqrt{\Delta}}$, which in turn leads that 
$$
\frac{\textsf{sn}_{\Delta}'(d(x,\gamma(t))}{\textsf{sn}_{\Delta}(d(x,\gamma(t))} > 0, 
$$
unless $d(x,\gamma(t)) = 0$. 
Therefore,  for $x \in B_{r_0}(p_0) \setminus \gamma$, the angle is non-zero (i.e., $\sin(\alpha) \neq 0$), and by (\ref{eq:HessianComparison}), we have 
$  \tfrac{\partial^2}{\partial t^2} d(x,\gamma(t)) > 0$.
On the other hand, if $x \in \gamma \cap B_{r_0}(p_0)$, but $x \neq \gamma(t)$, then 
$\sin(\alpha) = 0$, leading that 
 $  \tfrac{\partial^2}{\partial t^2} d(x,\gamma(t)) = 0$. 
Recall that  $ \tfrac{\partial^2}{\partial t^2} d^2(x,\gamma(t))> 0$ for any $x \in B_{r_0}(p_0)$ and for any $t \in (t_0,t_1)$ \citep{karcher1977riemannian}.

Observe now that for the given geodesic $\gamma$, and for $t \in (t_0,t_1)$, and $x \in B_{r_0}(p_0)$, 
\begin{align*}
    \tfrac{\partial^2}{\partial t^2}\rho_c\{d(x, \gamma(t))\} = \begin{cases}
          \tfrac{\partial^2}{\partial t^2} d^2(x, \gamma(t)) & \mbox{ if } d(x, \gamma(t)) < c, \\
        \mbox{undefined} & \mbox{ if } d(x, \gamma(t)) = c, \\
        2c  \tfrac{\partial^2}{\partial t^2} d(x, \gamma(t)) & \mbox{ if } d(x, \gamma(t)) > c. 
    \end{cases}
\end{align*}Let $x \in B_{r_0}(p_0)$ be such that $x \neq \gamma(t)$ for any $t \in \mathbb{R}$. 
Then, for $t \in (t_0,t_1)$ such that $d(x, \gamma(t)) \neq c$, we have $\tfrac{\partial^2}{\partial t^2}\rho_c\{d(x, \gamma(t))\} > 0$.
Moreover, the case $d(x, \gamma(t)) = c$ occurs for at most two values of $t \in (t_0,t_1)$. Therefore, the first derivative $t \mapsto \tfrac{\partial}{\partial t}\rho_c\{d(x, \gamma(t))\}$ is strictly increasing for $t \in (t_0,t_1)$, and the Huber loss function 
$t \mapsto \rho_c\{d(x, \gamma(t))\} $ is strictly convex on $(t_0,t_1)$.

Suppose now that $x \in B_{r_0}(p_0)$ be such that $x = \gamma(t')$ for some $t' \in (t_0,t_1)$. 
For $t \in (t_0,t_1)$ such that $d(x, \gamma(t))  < c$, $\tfrac{\partial^2}{\partial t^2}\rho_c\{d(x, \gamma(t))\} = \tfrac{\partial^2}{\partial t^2} d^2(x, \gamma(t)) > 0$. 
For $t \in (t_0,t_1)$ such that $d(x, \gamma(t))  > c$, $\tfrac{\partial^2}{\partial t^2}\rho_c\{d(x, \gamma(t))\} = \tfrac{\partial^2}{\partial t^2} d(x, \gamma(t)) = 0$. Moreover, $\tfrac{\partial^2}{\partial t^2}\rho_c\{d(x, \gamma(t))\}$ is undefined only for at most two values of $t \in (t_0,t_1)$. Therefore, the first derivative $t \mapsto \tfrac{\partial}{\partial t}\rho_c\{d(x, \gamma(t))\}$ is increasing for $t \in (t_0,t_1)$, and the Huber loss function 
$t \mapsto \rho_c\{d(x, \gamma(t))\} $ is convex on $(t_0,t_1)$. 

To sum up, for $x \in B_{r_0}(p_0)$, $a,b \in (t_0,t_1)$ and $s \in (0,1)$, 
$\rho_c\{d(x, \gamma((1-s)a + sb))\} \le (1-s)\rho_c\{d(x, \gamma(a))\} + s\rho_c\{d(x, \gamma(b))\}$ and the inequality is strict as long as $x \in B_{r_0}(p_0) \setminus \gamma$.

Assume that Condition (i) holds. For any $a,b \in (t_0, t_1)$ and for any $s \in (0,1)$, we get
\begin{align}\label{eq:convex_ineq}
    F^c&\{\gamma((1-s)a + sb)\}  = \int \rho_c\{d(X, \gamma((1-s)a + sb))\} dP \nonumber \\ 
    & = \int_{B_{r_0}(p_0)} \rho_c\{d(x, \gamma((1-s)a + sb))\} dP_{X}(x) \nonumber \\
     & = \int_{B_{r_0}(p_0) \setminus \gamma} \rho_c\{d(x, \gamma((1-s)a + sb))\} dP_{X}(x) \nonumber \\ 
     & + \int_\gamma \rho_c\{d(x, \gamma((1-s)a + sb))\} dP_{X}(x) \nonumber \\
     & < (1-s) \int_{B_{r_0}(p_0)} \rho_c\{d(x, \gamma(a))\}dP_{X}(x) 
 +  s \int_{B_{r_0}(p_0)} \rho_c\{d(x, \gamma(b))\}dP_{X}(x) 
 \nonumber \\ 
     & =   (1-s) \int \rho_c\{d(X, \gamma(a))\}dP 
 +  s \int \rho_c\{d(X, \gamma(b))\}dP. 
\end{align}
Thus, the objective function $t \mapsto F^c(\gamma(t))$ is strictly convex.

Still assuming that Condition (i) holds, suppose now that the population Huber mean is not unique, and that $m_{0}$ and $m'_{0}$ are two distinct population Huber means with respect to $P_{X}$; that is, $m_{0}$ and $m'_{0}$ are both minimizers of $F^{c}$. Since $m_0, m'_0 \in B_{r}(p_0)$, a strongly convex set, there exists a geodesic $\gamma$ connecting $m_{0} = \gamma(s_0)$ and $m'_{0} = \gamma(s_1)$ (for some $s_0 < s_1$). 
Under Condition (i), the function $t\mapsto F^c(\gamma(t))$ is strictly convex on $B_{r}(p_0)$, which contradicts to the hypothesis of two distinct Huber means. Therefore, the Huber mean is unique. 
 
Let us now assume that Condition (ii) holds. Let $\gamma$ be the geodesic $\gamma_X$ given in Condition (ii), so that $P(X \in M\setminus \gamma) = 0.$ Then, calculations similar to (\ref{eq:convex_ineq}) leads that 
the function $t \mapsto F^c(\gamma(t))$ is convex (but it may not be strictly convex). 

Suppose again that both $m_0$ and $m_0'$ minimize $F^c$ (while choosing $m_0$ as in Condition (ii)), and let $\gamma$ be the geodesic connecting $m_0 = \gamma(s_0)$ and $m_0' = \gamma(s_1)$ for some $t_0 < s_0 < s_1 < t_1$.  Since $t \mapsto F^c(\gamma(t))$ is convex, every point on the geodesic segment from $m_0$ to $m_0'$ is also a minimizer of $F^c$. That is, for all $t \in [s_0,s_1]$, 
\begin{equation}
    \label{eq:everythingsminimizer}
    F^c(\gamma(t)) = F^c(\gamma(s_0)) = F^c(\gamma(s_1)) = \min_{t \in (t_0,t_1)} F^c(\gamma(t)).
\end{equation}
The equation above requires that the derivative of $F^c(\gamma(t))$ is constant (in fact, it is zero) for $t \in [s_0,s_1]$. We shall show that this can not happen under Condition (ii). 

Assume without loss of generality that $\gamma$ is a unit speed geodesic. For simplicity, let $Y$ be a random variable, having values on $(t_0, t_1)$, defined by $Y = \gamma^{-1}(X)$ if $X \in \gamma$ and is $(t_0 + t_1 ) /2$ if $X \notin \gamma$. (Recall that under Condition (ii), $P(X \notin \gamma) = 0$.) Write $G$ for the cumulative distribution function of the pullback measure $\gamma^{*}P_{X}$ (the pullback of $P_{X}$ by $\gamma$). Then 
\begin{eqnarray*}
F^c(\gamma(t)) &=& \int_{B_{r_0}(p_0)} \rho_c \{d(x, \gamma(t)\} dP_{X}(x) \\
 &=& \int_{\gamma} \rho_c \{d(x, \gamma(t))\} dP_{X}(x) \\ 
 &=& \int_{(t_0,t_1)} \rho_c (|y - t|) dG(y).
\end{eqnarray*}
Moreover, for any $t \in (t_0,t_1)$, we have 
\begin{eqnarray*}
\frac{\partial}{\partial t} F^c(\gamma(t)) &=& \int_{(t_0,t_1)} 
   \frac{\partial}{\partial t} \rho_c (|y - t|) dG(y) \\
    &=& 2\int_{(t_0,t_1)} 
   (t-y)1_{|y-t|\le c} + c \cdot \mbox{sgn}(t-y)1_{|y-t| > c}dG(y),
\end{eqnarray*}
where $\mbox{sgn}(x) = 1 ~ \mbox{for} ~ x \ge 0, ~ -1 ~ \mbox{for} ~ x<0$. Pick an $0< h < \min\{c, s_1-s_0\}$, and let $s = s_0 + h$. 
Condition (ii) leads that, for this choice of $s$, 
$P(d(X, \gamma(s)) \le c) = P(s-c \le Y \le s +c ) >0$. Thus, at least one of the following is true:\begin{align}
    & P(s-c \le Y \le s) > 0, \label{eq:conditionii-case1}\\
    & P(s \le Y \le s+c) > 0.  \label{eq:conditionii-case2}
\end{align}

Suppose that (\ref{eq:conditionii-case1}) is true. It can be checked that 
\begin{align*}
\frac{\partial}{\partial t}\mid_{t = s} & F^c(\gamma(t)) - \frac{\partial}{\partial t}\mid_{t = s-h} F^c(\gamma(t)) \\
  &=  2\int_{[s-c-h,s-c)} y - (s-c-h) dG(y) + 
     2\int_{[s-c,s+c-h)} h dG(y) \\ 
   &  + 2\int_{(s+c-h,s+c)} (s+c) - y  dG(y) \\
   & \ge  2h \cdot P( s-c \le Y \le s+c - h) \\
   & \ge  2h\cdot P(s-c \le Y \le s)  \\ 
   & > 0.
\end{align*} 
Hence, the derivative of $F^c(\gamma(t))$ is not constant, which contradicts to (\ref{eq:everythingsminimizer}). 

Suppose now that (\ref{eq:conditionii-case2}) is true. For this case, we have  
\begin{eqnarray*}
\frac{\partial}{\partial t}\mid_{t = s+h} F^c(\gamma(t)) - \frac{\partial}{\partial t}\mid_{t = s} F^c(\gamma(t)) &\ge& 2h \cdot P( s-c +h \le Y \le s+c ) \\
&\ge& 2h \cdot P(s \le Y \le s+c) \\
&>& 0,
\end{eqnarray*}which also contradicts to (\ref{eq:everythingsminimizer}). In either case, the Huber mean must be unique. \\

\textbf{Case III ($c  = 0$).} 
Let $\gamma$ be any geodesic on $M$ and let $\infty \le t_0 < t_1 \le \infty$ be as specified in the proof for Case II. 
Note that for the case where $c = 0$, $\rho_0 \{d(x, \gamma(t))\} = d(x,\gamma(t))$.

For $x \in B_{r_0}(p_0) \setminus \gamma$,
$\tfrac{\partial^2}{\partial t^2} d(x, \gamma(t)) > 0$ for any $t \in (t_0,t_1)$, thus 
the function $t \mapsto d(x,\gamma(t))$ is strictly convex on $ (t_0,t_1)$. If $x = \gamma(t')$ for some $t' \in (t_0,t_1)$, then 
the second derivative of the distance function is not defined at $t'$, but it can be checked that the function $d(x,\gamma(\cdot))$ is convex on $ (t_0,t_1)$. Therefore, assuming that Condition (i) holds, a calculation similar to  (\ref{eq:convex_ineq}) yields that the objective function is strictly convex along any geodesic. An argument similar to the proof for Case II shows that the geometric median is unique.

Assume now that Condition (ii) holds. Suppose that the population Huber mean (i.e., the geometric median) is not unique, and that $m_0$ and $m_0'$ are two distinct medians. Let $\gamma$ be the geodesic connecting $m_0 = \gamma(s_0)$ and $m_0' = \gamma(s_1)$. Since $d(x,\gamma(\cdot))$ is convex on $ (t_0,t_1)$, $\gamma(t)$ must be a median for all $t \in [s_0,s_1]$. 
This leads that, 
using the notation introduced in the proof of Case II, 
\begin{equation}
    \label{eq:multiple_medians}
    [s_0, s_1] \subset G^{-1}(\tfrac{1}{2}) := \{t \in \mathbb{R}: G(t) = \frac{1}{2}\}.
\end{equation} 
However, Condition (ii) also states that $P(Y \in (s,s+\epsilon)) > 0$ for any $s \in [s_0,s_1]$ and for any $\epsilon>0$, thus $G$ must be an increasing function on $[s_0, s_1]$. This contradicts to (\ref{eq:multiple_medians}). Thus, the geometric median must be unique.\\

Proof for Part(b). Let $\gamma$ be any geodesic on $M$ and let $\infty \le t_0< t_1 \le \infty$ be as specified in the proof for Part(a), Case II.  Since $\tilde{\rho}_c(\cdot)$ is smooth for any $c \in (0,\infty)$, 
 we obtain that, for any $x\in B_{r_0}(p_0)$ and $t \in (t_0,t_1)$
\begin{align*}
    \tilde{\rho}_c\{ d(x,\gamma(t))\} &= 2c^2 \left( \sqrt{1 + \frac{d^2(x,\gamma(t))}{c^2}} - 1\right),\\
    \frac{\partial}{\partial t} \tilde{\rho}_c\{ d(x,\gamma(t))\} & =  \frac{ c \tfrac{\partial}{\partial t} d^2(x,\gamma(t))}{  \sqrt{d^2(x,\gamma(t)) + c^2} }, \\
       \frac{\partial^2}{\partial t^2} \tilde{\rho}_c\{ d(x,\gamma(t))\} &  = c \left[
       \frac{ \tfrac{\partial^2}{\partial t^2} d^2(x,\gamma(t))  \{d^2(x,\gamma(t)) + c^2\} + \tfrac{1}{2} \{ \tfrac{\partial}{\partial t} d^2(x,\gamma(t)) \}^2 }
       { \{ d^2(x,\gamma(t)) + c^2\}^{3/2}}
       \right].
\end{align*}
Since $\tfrac{\partial^2}{\partial t^2} d^2(x, \gamma(t))  > 0$, it holds that $\frac{\partial^2}{\partial t^2} \tilde{\rho}_c\{ d(x,\gamma(t))\} > 0$, which implies that the function $t \mapsto  \tilde{\rho}_c \{d(x,\gamma(t))\}$ is strictly convex. Therefore, an argument similar to Part(a), Case II under Condition (i), is used to yield the desired result.
\end{proof}

Given $n$ deterministic observations, $(x_{1}, x_{2}, ..., x_{n}) \in M^{n}$, by replacing $P_{X}$ with $P_{n}$ in Theorem~\ref{thm:unique:pop}, we obtain sufficient conditions for the uniqueness of sample Huber means as follows. 
\begin{corollary}[Uniqueness of sample Huber means]\label{cor:unique:sam}
\begin{enumerate}
\item[(a)]
Let $c \in [0,\infty]$ be pre-specified. Given $n$ deterministic observations $(x_{1}, x_{2}, ..., x_{n}) \in M^{n}$, suppose that the sample points $x_{1}, x_{2}, ..., x_{n}$ lie in an open ball of radius $r_0$ as specified in (\ref{eq:r_0}), and that either of the following holds: 
\begin{enumerate}
 \item[(i)] The sample points are in general position (i.e., they do not lie on a single geodesic).
  \item[(ii)] The sample points lie in a single geodesic $\gamma_X$ and the sample size $n$ is an odd number. 
  \item[(iii)] The sample points lie in a single geodesic $\gamma_X$, $n = 2k$ for some $k \ge 1$, and for the order statistics $(x_{(1)}, x_{(2)}, ..., x_{(n)})$ of $(x_{1}, x_{2}, ..., x_{n})$ along the geodesic $\gamma_{X}$ it holds that $d(x_{(k)}, x_{(k+1)})\le 2c$. 
\end{enumerate} 
Then, the sample Huber mean is unique. 
\item[(b)]
Let $c \in (0, \infty)$ be pre-specified. Suppose that the sample points $x_{1}, x_{2}, ..., x_{n}$ lie in an open ball of radius $r_0 = \frac{1}{2}\min\{\frac{\pi}{\sqrt{\Delta}}, r_{\mbox{\tiny inj}}(M) \}$. Then, the sample pseudo-Huber mean is unique.
\end{enumerate}

\begin{proof}[Proof of Corollary~\ref{cor:unique:sam}]
We begin with a proof for Part(a). For Conditions (ii) and (iii), we denote $(x_{(1)}, x_{(2)}, ..., x_{(n)})$ by the order statistics of $(x_{1}, x_{2}, ..., x_{n})$ along the geodesic $\gamma_{X}$. It is noteworthy that the geodesic $\gamma_{X}$ containing $x_1, x_2, \ldots, x_n$ is homeomorphic (in fact, isometric) to an interval in $\mathbb{R}$, as the geodesic is contained in a strongly convex ball. Hence, the order statistics $x_{(1)}, x_{(2)}, \ldots, x_{(n)}$ are well-defined (up to reflection). 

\textbf{Case I ($c \in (0, \infty]$).} Suppose that Condition (i) holds. By replacing $P_{X}$ with $P_{n}$ in Theorem~\ref{thm:unique:pop}, the sample Huber mean is unique.

Next, suppose that Condition (ii) holds. Then, $n$ is equal to $2k+1$ for some integer $k \ge 0$. Parametrize $\gamma_{X}$ by arc-length, and denote $\gamma_{X} : [0,\, \ell] \rightarrow M$. There exist $0 = t_{1} \le t_{2} \le ... \le t_{2k+1} = \ell$ such that for all $1 \le i \le (2k+1)$, $\gamma_{X}(t_{i}) = x_{(i)}$. It is obvious that $E^{(c)}_{n}$ lies in $\gamma_{X}$ (cf. Lemma~\ref{lem:conv}). With abusing of notation, we write $F^{c}_{n}(\gamma_{X}(t))$ by $F^{c}_{n}(t)= \sum_{i=1}^{n}\rho_{c}(|t-t_{i}|)$ for $0 \le t \le \ell$. Since $\frac{\partial F^{c}_{n}(t)}{\partial u}|_{t=t_0} < 0$ for any $t \in (0, (t_{k+1}-c)\vee 0)$ and $F^{c}_{n}(\cdot)$ is continuous on $[0, (t_{k+1}-c)\vee 0]$, $F^{c}_{n}(\cdot)$ is strictly decreasing on $[0, (t_{k+1}-c)\vee 0]$. Similarly, since $\frac{\partial F^{c}_{n}(t)}{\partial t}|_{t=t_0} > 0$ for any $t_0 \in ((t_{k+1} + c)\wedge \ell, \ell])$ and $F_{n}(\cdot)$ is continuous on $[(t_{k+1} + c)\wedge \ell, \ell]$, $F^{c}_{n}(\cdot)$ is strictly increasing on $[(t_{k+1} + c)\wedge \ell, \ell]$. Hence, it follows that $E^{(c)}_{n} \subseteq \gamma_{X}[(t_{k+1} - c) \vee 0 , (t_{k+1} + c) \wedge \ell]$. Since $F^{c}_{n}(t)= \sum_{1 \le i \le n, i\neq k+1} \rho_{c}(|t-t_i|) + (t- t_{k+1})^2$ is strictly convex for $t \in [(t_{k+1} - c) \vee 0 , (t_{k+1} + c) \wedge \ell]$ (owing to the fact that $t \mapsto \rho_{c}(|t-t_i|)$ is convex for all $1 \le i \le n, ~ i \neq k+1$ and $t \mapsto (t-t_{k+1})^2$ is strictly convex), the minimizer of $F^{c}_{n}$ is unique. 

Lastly, suppose that Condition (iii) holds. In this case, $n = 2k$ for some integer $k \ge 1$, and $F^{c}_{n}(t)=\sum_{i=1}^{2k} \rho_{c}(|t-t_i|)$ together with $t_{k+1} - t_{k} \le 2c$. It is easy to check that $\frac{\partial F^{c}_{n}(t)}{\partial t}|_{t = t_0} < 0$ for any $t_0 \in (0, (\frac{t_{k} + t_{k+1}}{2} - c)\vee 0)$ and $F^{c}_{n}(\cdot)$ is continuous on $[0, (\frac{t_{k} + t_{k+1}}{2}-c) \vee 0]$. In turn, $F^{c}_{n}(\cdot)$ is strictly decreasing on $[0, (\frac{t_{k} + t_{k+1}}{2}-c)\vee 0]$. Similarly, owing to the fact that $\frac{\partial F^{c}_{n}(t)}{\partial t}|_{t=t_0} > 0$ for any $t_0 \in ((\frac{t_{k} + t_{k+1}}{2} + c)\wedge \ell, \ell])$ and $F^{c}_{n}(\cdot)$ is continuous on $[(\frac{t_{k} + t_{k+1}}{2} + c)\wedge \ell, \ell]$, it follows that $F^{c}_{n}(\cdot)$ is strictly increasing on $[(\frac{t_{k} + t_{k+1}}{2} + c)\wedge \ell, \ell]$. That is, $E^{(c)}_{n} \subseteq \gamma_{X}[(\frac{t_{k} + t_{k+1}}{2} - c) \vee 0 , (\frac{t_{k} + t_{k+1}}{2} + c) \wedge \ell]$. The condition, $t_{k+1} -t_{k} \le 2c$, implies that $t_{k+1} - \frac{t_{k+1} + t_{k}}{2}, \frac{t_{k+1} + t_{k}}{2} - t_{k} \le c$. We note that $F^{c}_{n}(t)=\sum_{i=1}^{2k} \rho_{c}(|t-t_i|)$ is strictly convex for $t \in [(\frac{t_{k} + t_{k+1}}{2} - c) \vee 0, \frac{t_{k} + t_{k+1}}{2}]$ because $\rho_{c}(|t- t_{k}|) = (t- t_{k})^2$ is strictly convex on the closed interval. In a similar manner, we obtain that $F^{c}_{n}(t)=\sum_{i=1}^{2k} \rho_{c}(|t-t_i|)$ is strictly convex for $t \in [\frac{t_{k} + t_{k+1}}{2}, (\frac{t_{k} + t_{k+1}}{2} + c) \wedge \ell]$ as $\rho_{c}(|t- t_{k}|) = (t- t_{k})^2$ is strictly convex on the closed interval. Moreover, $F^{c}_{n}(t)$ is smooth at $t = \frac{t_{k} + t_{k+1}}{2}$. Using these facts, we get $\frac{F^{c}_{n}(u)}{\partial u}|_{u=t}$ is strictly increasing for $t \in ((\frac{t_{k} + t_{k+1}}{2} - c) \vee 0, (\frac{t_{k} + t_{k+1}}{2} + c) \wedge \ell)$. Therefore, $F^{c}_{n}(t)=\sum_{i=1}^{2k} \rho_{c}(|t-t_i|)$ is strictly convex for $t \in [(\frac{t_{k} + t_{k+1}}{2} - c) \vee 0, (\frac{t_{k} + t_{k+1}}{2} + c) \wedge \ell]$, implying that the minimizer of $F^{c}_{n}$ is unique.

\textbf{Case II ($c=0$).} We use the same notation in Case I, i.e., $F^{0}_{n}(t) = \sum_{i=1}^{n} |t-t_i|$. Suppose that Condition (i) holds. By replacing $P_{X}$ with $P_{n}$ in Theorem~\ref{thm:unique:pop}, the sample geometric median is unique. 

Next, suppose that Condition (ii) holds. Denote $n=2k+1$ for some integer $k \ge 0$. It directly follows that $\frac{\partial F^{0}_{n}(t)}{\partial t}|_{t=t_0} < 0$ for any $t_{0} \in (0, t_{k+1}) \setminus \{t_{1}, t_{2}, ..., t_{k}\}$. An application of the Goldowsky-Tonelli theorem \citep{goldowsky1928note} leads that $F^{0}_{n}(t)$ is strictly decreasing for $t \in [0, t_{k+1}]$. Similarly, it is easy to check that $\frac{\partial F^{0}_{n}(t)}{\partial t}|_{t=t_0} > 0$ for any $t_{0} \in (t_{k+1}, t_{2k+1}) \setminus \{t_{k+2}, t_{k+3}, ..., t_{2k-1}, t_{2k}\}$. Applying the Goldowsky-Tonelli theorem again, we obtain that $F^{0}_{n}(t)$ is strictly increasing for $t \in [t_{k+1}, t_{2k+1}]$. Combining these facts, we obtain that $F^{0}_{n}(t)$ is minimized only at $t=t_{k+1}$. 

Finally, suppose that Condition (iii) holds. Denote $n=2k$ for some integer $k \ge 1$. 
Owing to the fact that $c=0$, we have $t_{k} = t_{k+1}$. By the same argument on the case where Condition (iii) holds, it is easy to see that $F^{0}_{n}(t)$ is minimized only at $t=t_{k} ~ (=t_{k+1})$. \\

Proof for Part(b). Replacing $P_{X}$ by $P_{n}$ in Theorem~\ref{thm:unique:pop}(b), we obtain the desired result.
\end{proof}

\end{corollary}

\subsection{Examples of non-unique Huber means}\label{sec:nonuniqueHuber1}

When Assumption (A2) is violated, the Huber mean is not unique in general. 
For instance, when $P_{X}$ is the uniform distribution on $M = S^{k}$, each point on $S^{k}$ is the population Huber mean with respect to $P_{X}$, thereby not ensuring the uniqueness of Huber means. 
One may be strongly tempted to impose the weaker restriction, $0 < r \le \frac{1}{2}\min\{\frac{\pi}{\sqrt{\Delta}}, r_{\mbox{\tiny inj}}(M)\}$, on Assumption (A2). In this case, however, the uniqueness of Huber means is generally not guaranteed. 
For example, we suppose that $M=S^{2} ~ (\mbox{i.e.,} ~ \Delta = 1)$ and three points are symmetrically positioned on the equator of $S^2$. If $P_{X}$ is equally distributed on the three points, the number of Huber means with respect to $P_{X}$ is at least two by symmetry, not ensuring the uniqueness of Huber means. Here, we call $r_0 ~ (=:r_{\mbox{\tiny uni}}(c))$ in Assumption (A2) as the \textit{uniqueness radius} of $c$. That is, $r_{\mbox{\tiny uni}}(c) = \frac{\pi}{2\sqrt{\Delta}}$ when $\frac{\pi}{\sqrt{\Delta}} \le c \le \infty$ and $r_{\mbox{\tiny uni}}(c) = \frac{\pi}{4\sqrt{\Delta}}$ when $0 \le c < \frac{\pi}{\sqrt{\Delta}}$. For the case where $\frac{\pi}{\sqrt{\Delta}} \le c \le \infty$, the uniqueness radius is optimal in the sense that, if the radius is enlarged a little bit, then the uniqueness of Huber means does not hold. (Consider the case where $M=S^{2}$ and three points with equal mass are symmetrically positioned on the equator of $S^2$.) On the other hand, it seems that $\frac{\pi}{4\sqrt{\Delta}}$ may not be optimal values for the uniqueness radius of $c$ for all $0 \le c \le \frac{1}{\sqrt{\Delta}}$. While an accurate investigation on $r_{\mbox{\tiny uni}}(c)$ for $0 \le c < \frac{\pi}{\sqrt{\Delta}}$ is still an open problem, the uniqueness radius for $0 \le c \le \frac{1}{\sqrt{\Delta}}$ is not enlarged to be $\frac{\pi}{2\sqrt{\Delta}}$; see our example in Remark~\ref{rmk:tight} below.

\begin{remark}[Adapted from Remark 2.4 in \cite{afsari2011riemannian}]\label{rmk:tight}
Suppose that $M=S^2$ with the usual metric, i.e., $\Delta = 1$. We now construct an example where $r_{\mbox{\tiny uni}}(1)$ cannot be enlarged to be $\frac{\pi}{2\sqrt{\Delta}}$. For any $c \in [0,\frac{1}{\sqrt{\Delta}}]$, similar examples can be made. (For the case where $c=0$, refer to Remark 2.4 in \cite{afsari2011riemannian}). Let $p$ be the north pole and $r \in (0,\frac{\pi}{2}]$. Three points, $x_{1}, x_{2}, x_{3} \in S^2$, are positioned so that $d(p, x_{1}) = d(p, x_{2})=d(p, x_{3}) = r$ and the three points are symmetrically positioned with respect to $p$ and the equator of $S^2$ (see Figure~\ref{fig:tight}). Suppose that $P_{X}$ is equally distributed on the three points. If the Huber mean for $c=1$ with respect to $P_{X}$ is unique, the possible candidate of the Huber mean is only $p$ by symmetry. However, if we choose $r \simeq 0.4975 \pi$, then $F^{1}(p) = F^{1}(x_{1})=F^{1}(x_{2})=F^{1}(x_{3})$ by elementary calculations of spherical geometry. In this case, the uniqueness of Huber mean for $c=1$ is violated, and $r_{\mbox{\tiny uni}}(1)$ cannot be enlarged to be $\frac{\pi}{2}$.   
\end{remark}

\begin{figure}[!t]
\center
\includegraphics[scale=0.5]{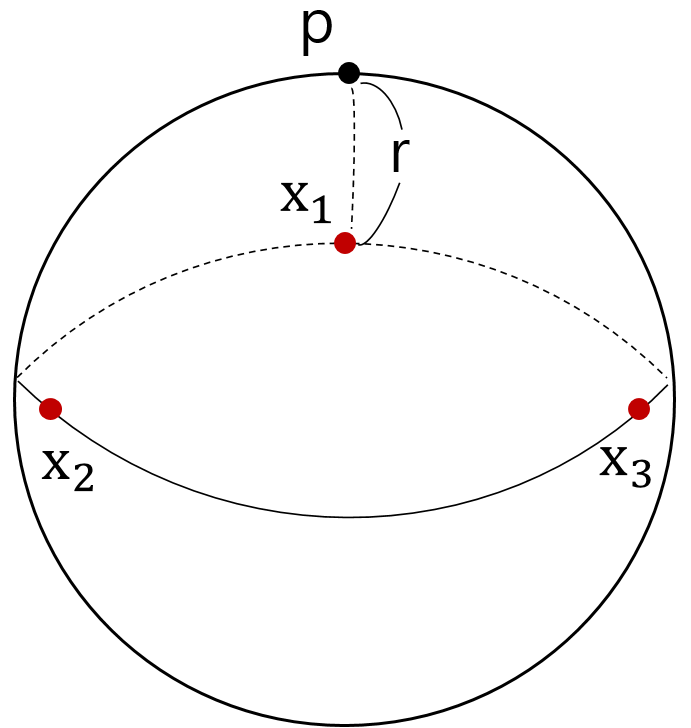}
\caption{Let $p$ be the north pole of $S^2$. Suppose that $\textsf{supp}(P_{X})=\{x_{1}, x_{2}, x_{3}\}$. The three points $x_{1}, x_{2}$ and $x_{3}$ are symmetrically positioned with respect to $p$ and the equator of $S^2$. $P_{X}$ is equally distributed on the three points. If we choose $r \simeq 0.4975\pi$, then the four points, $p$, $x_{1}$, $x_{2}$, and $x_{3}$, may minimize $F^{1}$. Accordingly, the population Huber means for $c=1$ with respect to $P_{X}$ is not unique.}
\label{fig:tight}
\end{figure}

\subsection{Proofs for Section \ref{sec:unbiased}}

\subsubsection{Proof of Proposition \ref{prop:unbiased}}

To prove that the sample Huber mean is unbiased under regularity conditions, we need a set of lemmas, as stated next.

\begin{lemma}\label{lem:sam:meas}
Suppose that Condition (C1) is satisfied. For a given $c \in [0, \infty]$ and an integer $n \ge 1$, the sample Huber mean $m^{c}_{n}(X_{1}, X_{2}, \ldots, X_{n})$ is a random variable.
\begin{proof}[Proof of Lemma~\ref{lem:sam:meas}]
Note that $X_{1}, X_{2}, ..., X_{n}$ are already random variables having the distribution $P_{X}$ on $M$. Due to Corollary~\ref{cor:unique:sam}, the function of sample Huber mean 
$$m^{c}_{n}:M^{n} \to M, \quad (x_{1}, x_{2}, \ldots, x_{n}) \mapsto m^{c}_{n}(x_{1}, x_{2}, \ldots, x_{n})$$
is well-defined for any $(x_{1}, x_{2}, \ldots, x_{n})$ for which $x_{1}, x_{2}, \ldots, x_{n}$ do not lie in a single geodesic on $M$. It is easy to see that $m^{c}_{n}(\mathbf{x})$ is continuous at $\mathbf{x} = (x_{1}, x_{2}, \ldots, x_{n}) \in M^{n}$, with respect to the usual product topology in $M^{n}$, for which the sample points $x_{1}, x_{2}, \ldots, x_{n}$ do not lie in a single geodesic on $M$. Since $P_{X}$ does not degenerate to any single geodesic due to Condition (ii) in (C1), $X_{1}, X_{2}, ... X_{n}$ do not lie in a single geodesic with probability 1. As the Riemannian measure is complete, $m^{c}_{n}(X_{1}, X_{2}, \ldots, X_{n})$ is a random variable, i.e., measurable.
\end{proof}
\end{lemma}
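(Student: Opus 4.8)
The plan is to realize $m^{c}_{n}$ as a Borel-measurable map $M^{n} \to M$ that is defined on the event of full probability where the data are in general position, and then to conclude by measurability of compositions. Write $G \subseteq M^{n}$ for the set of tuples $(x_{1}, x_{2}, \ldots, x_{n})$ that do not lie on a single geodesic. Since lying on a common geodesic is a closed condition, $G$ is an open, hence Borel, subset of $M^{n}$. By \Cref{cor:unique:sam}(a)(i), together with the support restriction in Condition (C1)(iii) that places the data in an open ball of the radius $r_{0}$ required there, the sample Huber mean is \emph{unique} for every $\mathbf{x} \in G$; thus $\mathbf{x} \mapsto m^{c}_{n}(\mathbf{x})$ is a genuine single-valued function on $G$.

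The heart of the argument is to show this function is continuous on $G$. Fix $\mathbf{x} \in G$ and a sequence $\mathbf{x}^{(j)} \to \mathbf{x}$ in $G$. First I would note that $(\mathbf{x}, m) \mapsto F^{c}_{n}(m; \mathbf{x}) = \tfrac{1}{n}\sum_{i} \rho_{c}\{d(x_{i}, m)\}$ is jointly continuous, since $\rho_{c}$ and $d$ are continuous. By the coercivity of \Cref{prop:cpt}, applied to the empirical measures $P_{n}$, all the minimizers $m^{c}_{n}(\mathbf{x}^{(j)})$ together with $m^{c}_{n}(\mathbf{x})$ are eventually confined to a common compact set $K$ (the data $\mathbf{x}^{(j)}$ being eventually in a fixed bounded region). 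Any subsequential limit $m^{\ast}$ of $m^{c}_{n}(\mathbf{x}^{(j)})$ then satisfies $F^{c}_{n}(m^{\ast}; \mathbf{x}) \le F^{c}_{n}(m; \mathbf{x})$ for all $m \in M$, obtained by passing to the limit in the minimizing inequalities $F^{c}_{n}(m^{c}_{n}(\mathbf{x}^{(j)}); \mathbf{x}^{(j)}) \le F^{c}_{n}(m; \mathbf{x}^{(j)})$ via joint continuity; hence $m^{\ast}$ is itself a minimizer of $F^{c}_{n}(\cdot; \mathbf{x})$. Uniqueness at $\mathbf{x} \in G$ forces $m^{\ast} = m^{c}_{n}(\mathbf{x})$, so every subsequential limit agrees and $m^{c}_{n}(\mathbf{x}^{(j)}) \to m^{c}_{n}(\mathbf{x})$. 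Thus $m^{c}_{n}$ is continuous, and therefore Borel measurable, on $G$.

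To finish, I would extend $m^{c}_{n}$ to all of $M^{n}$ by setting it equal to an arbitrary fixed point of $M$ off $G$; the resulting map $\hat{m}: M^{n} \to M$ is Borel measurable because $G$ is Borel. The random vector $(X_{1}, X_{2}, \ldots, X_{n})$ is measurable into $M^{n}$ as a product of the $M$-valued random variables $X_{i}$. By Condition (C1)(ii), $P_{X}$ does not degenerate to any single geodesic, from which $(X_{1}, X_{2}, \ldots, X_{n}) \in G$ with probability one; consequently $m^{c}_{n}(X_{1}, \ldots, X_{n}) = \hat{m}(X_{1}, \ldots, X_{n})$ almost surely, and the latter is measurable as a composition of measurable maps. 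Hence $m^{c}_{n}(X_{1}, \ldots, X_{n})$ is a random variable.

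I expect the main obstacle to be the continuity step, namely justifying that the $\argmin$ varies continuously with the data. This is a maximum-theorem type statement whose only delicate points are (a) confining all minimizers to one fixed compact set as the data vary, which I would handle via \Cref{prop:cpt}, and (b) ruling out escape of subsequential limits of minimizers, which I would settle using uniqueness on $G$. A secondary point worth pinning down is the almost-sure general-position claim: while one reads it off Condition (C1)(ii), it is worth confirming that under the standing hypotheses the event that $X_{1}, \ldots, X_{n}$ share a common geodesic is genuinely null.
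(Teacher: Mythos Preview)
Your proposal is correct and follows essentially the same route as the paper: well-definedness on the general-position set via \Cref{cor:unique:sam}, continuity of the $\argmin$ map there, and the almost-sure general-position claim from Condition (C1)(ii). The paper's proof is much terser---it simply asserts ``it is easy to see that $m^{c}_{n}(\mathbf{x})$ is continuous''---whereas you actually supply the maximum-theorem style argument (joint continuity of the objective, compact confinement of minimizers via \Cref{prop:cpt}, subsequential limits forced to the unique minimizer), which is the right way to justify that step. You also explicitly flag the general-position-with-probability-one point as needing care, which the paper takes for granted; your instinct there is sound.
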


From Lemma~\ref{lem:sam:meas}, the sampling distribution of Huber means, $P_{m^{c}_{n}}$, is well-defined. Next lemma characterizes the location of central tendency of data.

\begin{lemma}\label{lem:conv}
For a given $c \in [0, \infty]$, suppose that $P_{X}$ satisfies Assumption (A1). Suppose further that there exists $p \in M$ such that $\textsf{supp}(P_{X}) \subseteq B_{r}(p)$ where $r = \frac{1}{2}\min\{\frac{\pi}{2\sqrt{\Delta}}, r_{\mbox{\tiny inj}}(M)\}$ (which may be infinite). Then, $E^{(c)}$ is contained in the closure of the convex hull of $\textsf{supp}(P_{X})$, and $E^{(c)}_{n}$ is contained the closure of the convex hull of $(X_{i})_{i=1}^{n}$. That is, 
$$
E^{(c)} \subseteq \overline{\mbox{conv}\{\textsf{supp}(P_{X})\}} \quad \mbox{and}\quad E_{n}^{(c)} \subseteq \overline{\mbox{conv}\{(X_{i})_{i=1}^{n}\}},
$$
where $\mbox{conv}\{A\}$ refers to the convex hull of $A ~ (\subset M)$ which is the smallest strongly convex set containing $A$.
\end{lemma}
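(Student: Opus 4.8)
The plan is to show that any point lying outside the closed convex hull $\overline{\textsf{conv}\{\textsf{supp}(P_X)\}}$ (abbreviated $\overline{C}$) cannot minimize $F^c$, by exhibiting a strictly better competitor obtained via projection onto $\overline{C}$. First I would record the geometric setup: by the support condition $\textsf{supp}(P_X)\subseteq B_r(p)$ with $r = \tfrac12\min\{\tfrac{\pi}{2\sqrt\Delta}, r_{\mbox{\tiny inj}}(M)\}$, the ball $B_r(p)$ is strongly convex (in fact it lies strictly inside the regular convexity radius, as noted after Lemma~\ref{lem:contain:ball}), so $\overline{C}\subseteq \overline{B_r(p)}$ is a compact, strongly convex set. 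On such a set the metric projection $\pi_{\overline C}:M \to \overline{C}$ is well-defined and single-valued for points within the convexity radius, and I would invoke the standard nonexpansiveness/contraction property of projection onto a convex set in a manifold with an upper curvature bound: for any $q\notin \overline{C}$ and any $x\in\overline{C}$, one has $d(\pi_{\overline C}(q), x) \le d(q,x)$, with strict inequality when $q\neq \pi_{\overline C}(q)$ and $x$ is not ``behind'' $q$ — more precisely, $d(\pi_{\overline C}(q),x) < d(q,x)$ for every $x\in\overline{C}$. (This is exactly the kind of comparison-geometry fact used in the proof of Lemma~\ref{lem:contain:ball} via the Toponogov/Hessian comparison theorem, restricted to the strongly convex ball.)

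Next, suppose for contradiction that $m_0\in E^{(c)}$ but $m_0\notin \overline{C}$. Set $m_0' = \pi_{\overline C}(m_0)\in\overline{C}$, so $m_0'\neq m_0$. Since $\textsf{supp}(P_X)\subseteq \overline{C}$, the contraction property gives $d(X,m_0') < d(X,m_0)$ for $P_X$-almost every $X$ — indeed for every $x\in\textsf{supp}(P_X)$. Because $\rho_c$ is continuous and strictly increasing on $[0,\infty)$, this yields $\rho_c\{d(X,m_0')\} < \rho_c\{d(X,m_0)\}$ pointwise on $\textsf{supp}(P_X)$, and integrating (the integrals are finite by Assumption (A1) and the remark that (A1) at one point implies it at every point) gives $F^c(m_0') < F^c(m_0)$, contradicting $m_0\in E^{(c)} = \argmin F^c$. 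Hence $E^{(c)}\subseteq\overline{C}$. The statement for the sample Huber mean, $E_n^{(c)}\subseteq \overline{\textsf{conv}\{(X_i)_{i=1}^n\}}$, follows by the now-routine substitution of $P_X$ with the empirical measure $P_n = \tfrac1n\sum_{i=1}^n\delta_{X_i}$, which trivially satisfies (A1) and whose support is $\{X_1,\ldots,X_n\}\subseteq B_r(p)$ almost surely.

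The main obstacle is the projection-contraction step: one must be careful that $\overline{C}$ really does lie within a region where metric projection is well-defined, single-valued, and strictly distance-decreasing, and that the relevant comparison inequality holds on $B_r(p)$ rather than only on a smaller ball. The choice $r=\tfrac12\min\{\tfrac{\pi}{2\sqrt\Delta}, r_{\mbox{\tiny inj}}(M)\}$ is tailored so that $B_r(p)$ sits strictly inside both the injectivity radius and $\tfrac{\pi}{2\sqrt\Delta}$ (half the upper-curvature-bound diameter), which is precisely the regime in which these projection properties are available; I would cite the corresponding statements from \cite{afsari2011riemannian} (Section~2.2.1 and Theorem~2.2) or \cite{karcher1977riemannian}, mirroring how Lemma~\ref{lem:contain:ball} was argued. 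Everything else — strict monotonicity of $\rho_c$, finiteness of integrals, the empirical-measure substitution — is straightforward.
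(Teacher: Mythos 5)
The paper itself states Lemma~\ref{lem:conv} without a proof (only a remark on the existence of the convex hull follows it), so your proposal has to stand on its own; your projection-onto-the-convex-hull strategy is the standard route for this kind of statement (cf. Theorem~2.1 and Section~2.2.1 of \cite{afsari2011riemannian}), and the reduction of the sample statement to the empirical measure $P_n$ is exactly right.

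There is, however, a genuine gap in the key step. The minimization defining $E^{(c)}$ is over all of $M$, so the hypothetical minimizer $m_0\notin\overline{C}$ may a priori lie far from $B_r(p)$ --- beyond the injectivity radius of points of $\overline{C}$ and beyond the scale $\pi/(2\sqrt{\Delta})$ at which the comparison results you cite apply. In that regime the nearest-point projection onto $\overline{C}$ need not be single-valued, and the strict contraction $d(\pi_{\overline{C}}(q),x)<d(q,x)$ for all $x\in\overline{C}$ is not delivered by the local (regular-ball) Toponogov/Hessian comparison arguments; your own caveat about the convexity radius concerns $\overline{C}$, but the real issue is the location of the point $q=m_0$ being projected. (When $r=\infty$, i.e.\ the Hadamard-type case, the global CAT(0) projection argument is fine; the problem is the case $r<\infty$.) The fix is short and uses machinery already in the paper: the hypotheses of Lemma~\ref{lem:contain:ball} hold here (the support radius $r$ is at most the $r_0$ there), and its three-step reflection argument, run with the support ball $B_r(p)$, gives $E^{(c)}\subseteq B_r(p)$ (and $E^{(c)}_n\subseteq B_r(p)$ with $P_n$ in place of $P_X$). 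After this confinement, $m_0$, its foot point $m_0'$, and $\textsf{supp}(P_X)$ all lie in the closed strongly convex ball $\overline{B_r(p)}$, whose radius is at most $\pi/(4\sqrt{\Delta})$ and at most half the injectivity radius; there the first-variation argument (angle at $m_0'$ at least $\pi/2$) combined with the comparison theorem legitimately yields $d(x,m_0')<d(x,m_0)$ for every $x$ in the support, and strict monotonicity of $\rho_c$ plus integrability under (A1) gives $F^c(m_0')<F^c(m_0)$, the desired contradiction. With that repair (and noting that single-valuedness of the projection is not even needed --- any nearest point in the compact set $\overline{C}$ works), your argument is correct.
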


In Lemma~\ref{lem:conv}, note that $\mbox{conv}\{\textsf{supp}(P_{X})\}$ exists, and $\mbox{conv}\{(X_{i})_{i=1}^{n}\}$ exists with probability 1, since $\textsf{supp}(P_{X})$ is contained in a strongly convex ball $B_{r}(p)$. The following lemma below gives information about the dispersion of $P_{m_n^c}$.

\begin{lemma}\label{lem:sample:finite}
Suppose that Condition (C1) is satisfied. For a given $c \in [0, \infty]$ and a sample size $n \ge 1$, $P_{m^{c}_{n}}$ has a finite variance.
\begin{proof}[Proof of Lemma~\ref{lem:sample:finite}]
Let $c \in [0,\infty]$ and $n \ge 1$ be a fixed integer. When $\Delta > 0$, then the statement holds as $P_{m^{c}_{n}}$ has a bounded support (cf. (iii) of Condition (C1)). For this reason, suppose that $\Delta \le 0$. That is, $M$ is a Hadamard manifold, i.e., simply connected and complete Riemannian manifold with nonpositive curvature. Due to Theorem~\ref{thm:unique:pop} and Corollary~\ref{cor:unique:sam}, $m^{c}_{n}$ exists, and is unique with probability 1. By application of Lemma~\ref{lem:conv}, with probability 1 it holds that $m^{c}_{n} \in \overline{\mbox{conv}\{(X_{i})_{i=1}^{n}\}}$. Hence, we obtain that, with probability 1 
$$
d(\mu, m^{c}_{n}) \le \max_{1 \le i \le n} \{d(\mu, X_{i})\} ~ (=: d(\mu, X_{(n)})),
$$
where the inequality holds due to an application of Proposition 2.3 in \cite{sturm2003probability}. Utilizing the above inequality, we have 
$$
\int d^2(\mu, m^{c}_{n}) dP \le \int d^2(\mu, X_{(n)})dP \le n \int d^2(\mu, X_{1})dP < \infty,
$$
where the second inequality holds by an application of Theorem 3.1 in \cite{papadatos1995maximum}.
\end{proof}
\end{lemma}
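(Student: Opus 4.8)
The plan is to reduce the finiteness of the variance of $P_{m^{c}_{n}}$ to an elementary maximal inequality for the data, after invoking the structural results already established. First I would record that, under Condition (C1), the sample Huber mean is almost surely unique: Conditions (C1)(ii)--(iii) supply exactly the non-degeneracy and support bound required by \Cref{thm:unique:pop} and \Cref{cor:unique:sam}. Combined with \Cref{lem:sam:meas}, this makes $m^{c}_{n}$ a genuine $M$-valued random variable, so $P_{m^{c}_{n}}$ is well defined, and ``having finite variance'' amounts to producing a single point $q\in M$ with $\int d^{2}(q,m^{c}_{n})\,dP<\infty$.

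The natural candidate is $q=\mu$, the symmetry centre from (C1)(iii). The key geometric input is \Cref{lem:conv}: with probability one, $m^{c}_{n}\in\overline{\textsf{conv}\{X_{1},\dots,X_{n}\}}$. By (C1)(iii) the data all lie in $B_{r}(\mu)$ with $r=\tfrac{1}{2}\min\{\tfrac{\pi}{2\sqrt{\Delta}},r_{\mbox{\tiny inj}}(M)\}$, and since $r$ is at most the regular convexity radius $r_{\mbox{\tiny cx}}(M)$, the ball $B_{r}(\mu)$ is strongly convex and the function $x\mapsto d(\mu,x)$ is convex on it. A convex function on the compact convex set $\overline{\textsf{conv}\{X_{1},\dots,X_{n}\}}$ attains its maximum at an extreme point, and the extreme points are among the $X_{i}$; hence, almost surely,
\[
d(\mu,m^{c}_{n})\;\le\;\max_{1\le i\le n} d(\mu,X_{i}).
\]

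From here a crude union-type bound finishes the argument: writing $Y_{i}:=d^{2}(\mu,X_{i})\ge 0$,
\[
\int d^{2}(\mu,m^{c}_{n})\,dP\;\le\;E\Big[\max_{1\le i\le n} Y_{i}\Big]\;\le\;\sum_{i=1}^{n}E[Y_{i}]\;=\;n\,E\big[d^{2}(\mu,X_{1})\big],
\]
which is finite because (C1)(ii) assumes $P_{X}$ has finite variance, and this (via the triangle inequality) forces $\int d^{2}(\mu,X)\,dP<\infty$ for the fixed point $\mu$. Therefore $P_{m^{c}_{n}}$ has finite variance. One could alternatively replace the estimate $\max_{i}Y_{i}\le\sum_{i}Y_{i}$ by a sharper moment bound for the maximum (of the Papadatos/Arnold--Groeneveld type), but it is not needed here.

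The only delicate step is the second one: confining $\overline{\textsf{conv}\{X_{1},\dots,X_{n}\}}$, and hence $m^{c}_{n}$, to a region on which $d(\mu,\cdot)$ is controlled by its values at the $X_{i}$. This rests on \Cref{lem:conv} (whose own proof uses the curvature/injectivity-radius support bound) together with the fact that a metric ball of radius below $r_{\mbox{\tiny cx}}(M)$ is strongly convex, so that distance-to-a-point is convex there and attains its maximum over a convex hull at a data point. The remaining ingredients---almost-sure uniqueness, measurability, finiteness of $E[d^{2}(\mu,X_{1})]$, and the inequality $\max_{i}Y_{i}\le\sum_{i}Y_{i}$---are routine bookkeeping.
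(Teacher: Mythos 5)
Your proposal follows essentially the same route as the paper's proof: almost-sure uniqueness, containment of $m^{c}_{n}$ in $\overline{\textsf{conv}\{X_{1},\dots,X_{n}\}}$ via \Cref{lem:conv}, the bound $d(\mu,m^{c}_{n})\le\max_{i}d(\mu,X_{i})$, and then a moment bound of order $n\,E\,d^{2}(\mu,X_{1})<\infty$. The only differences are cosmetic: you justify the max-inequality explicitly through convexity of $d(\mu,\cdot)$ on the strongly convex ball (the paper asserts it directly from the convex-hull containment), and you replace the paper's appeal to Papadatos' Theorem 3.1 with the elementary bound $\max_i Y_i\le\sum_i Y_i$, which yields the same factor $n$ and is perfectly adequate.
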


By Lemma~\ref{lem:sample:finite}, $P_{m^{c}_{n}}$ satisfies Assumption (A1). In turn, the existence of the population Huber mean for $c$ with respect to $P_{m^{c}_{n}}$ is guaranteed. The next lemma gives information about where the population Huber mean is exactly located.

\begin{lemma}\label{lem:geo:symm}
Suppose that Condition (C1) is satisfied. For any $c \in [0, \infty]$ and any sample size $n \ge 1$, $P_{m^{c}_{n}}$ is geodesically symmetric about $\mu$ that is specified in (C1).
\end{lemma}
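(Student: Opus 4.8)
\noindent\textbf{Proof proposal for \Cref{lem:geo:symm}.} The plan is to combine two ingredients that are already available: the sample Huber mean is isometric-equivariant, and the geodesic symmetry $s_\mu$ is an isometry of $M$ under which $P_X$ is invariant. In short, \emph{equivariance transports the symmetry of $P_X$ to the law of $m^c_n$.}

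First I would record well-posedness. Under Condition (C1), parts (ii) and (iii) give that the sample Huber mean $m^c_n$ is unique with probability $1$ (\Cref{thm:unique:pop} with Condition~(i) there, and \Cref{cor:unique:sam}(a)(i), since $P_X$ does not degenerate to a single geodesic and $\textsf{supp}(P_X)$ lies in a ball of the required radius), and by \Cref{lem:sam:meas} the map $\mathbf{x}=(x_1,\ldots,x_n)\mapsto m^c_n(\mathbf{x})$ is a random variable; moreover $s_\mu\colon M\to M$ is an isometry because $M$ is a Riemannian symmetric space. Next I would invoke isometric equivariance of the sample Huber mean (Appendix B.8). Concretely, for any isometry $\psi$ one has $F^c_n(m;\psi(x_1),\ldots,\psi(x_n)) = F^c_n(\psi^{-1}(m);x_1,\ldots,x_n)$ since $d$ is $\psi$-invariant, hence $E^{(c)}_n(\psi(\mathbf{x})) = \psi\{E^{(c)}_n(\mathbf{x})\}$ as sets. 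Taking $\psi = s_\mu$, on the almost sure event where the minimizer is a singleton this reads
\[
m^c_n(s_\mu(X_1),\ldots,s_\mu(X_n)) = s_\mu\{m^c_n(X_1,\ldots,X_n)\} = s_\mu(m^c_n).
\]

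Then I would transfer the symmetry of $P_X$. Because $X_1,\ldots,X_n$ are i.i.d.\ with law $P_X$ and $P_X$ is geodesically symmetric with respect to $\mu$ (so $s_\mu(X_i)$ has law $P_X$), the tuple $(s_\mu(X_1),\ldots,s_\mu(X_n))$ has the same joint distribution as $(X_1,\ldots,X_n)$: applying the same continuous map coordinatewise preserves the marginals and independence. Applying the measurable map $m^c_n(\cdot)$ and using the displayed identity,
\[
s_\mu(m^c_n) = m^c_n(s_\mu(X_1),\ldots,s_\mu(X_n)) \overset{d}{=} m^c_n(X_1,\ldots,X_n) = m^c_n,
\]
so the law of $s_\mu(m^c_n)$ equals $P_{m^c_n}$. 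By the definition of geodesic symmetry, this is exactly the assertion that $P_{m^c_n}$ is geodesically symmetric with respect to $\mu$.

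The only delicate point, and the one I would state carefully, is that the equivariance identity is most naturally phrased for the minimizer \emph{set} $E^{(c)}_n$, and must be passed to the single-valued estimator $m^c_n$ on the full-probability event where the Huber mean is unique; this is precisely what the uniqueness consequences of (C1) and the measurability in \Cref{lem:sam:meas} provide. Everything else is routine bookkeeping, so I do not expect a substantive obstacle here.
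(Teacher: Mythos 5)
Your proposal is correct and follows exactly the route the paper intends: the paper's own justification of this lemma is precisely the one-line remark that it follows from the isometric equivariance of Huber means (\Cref{lem:huber:isoequi}), combined with the facts that $s_\mu$ is an isometry and that the i.i.d.\ sample is distributionally invariant under $s_\mu$, with uniqueness and measurability supplied by \Cref{cor:unique:sam} and \Cref{lem:sam:meas}. You have simply written out the details the paper leaves implicit, so there is no substantive difference between the two arguments.
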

By utilizing the isometric equivariance of Huber means (cf. Lemma~\ref{lem:huber:isoequi}), it is not difficult to verify Lemma~\ref{lem:geo:symm}; the proof is omitted. 
We are now prepared to verify the Proposition~\ref{prop:unbiased}.

\begin{proof}[Proof of Proposition~\ref{prop:unbiased}]
We begin with a proof for Part (a). 

Let $c \in [0, \infty]$ be a prespecified constant, and $n \ge 1$ be a fixed integer. Since $P_{X}$ has a finite variance due to Condition (ii) in (C1), $P_{X}$ satisfies Assumption (A1). By an application of Theorems \ref{thm:exist} and \ref{thm:unique:pop}, the population Huber mean for $c$ with respect to $P_{X}$, $m^{c}_{0}$, exists and is unique. We note that $X$ and $s_{\mu}(X)$ have the same distribution on $M$, i.e., $s_{\mu}(X) \overset{d}{=} X$. Since the population Huber mean with respect to $P_{X}$ is isometric-equivariant (cf. Lemma~\ref{lem:huber:isoequi}), $s_{\mu}(m^{c}_{0})$ is a unique population Huber mean for $c$ with respect to $P_{s_{\mu}(X)} ~(= P_{X})$, leading to
\begin{equation}\label{eq:inv:same}
s_{\mu}(m^{c}_{0}) = m^{c}_{0}.
\end{equation}
Since $E^{(c)} \subseteq B_{r}(\mu)$ due to Lemma~\ref{lem:contain:ball}, $E^{(c)} ~(=\{m^{c}_{0}\})$ lies in the strongly convex ball $B_{r}(\mu)$. Assume that $m^{c}_{0} \neq \mu$. Then, there is a unique length-minimizing (minimal) geodesic $\gamma$ joining $\mu$ and $m^{c}_{0}$. Denote $\tilde{\gamma}$ by the reverse geodesic of $\gamma$ through $\mu$. Since the geodesic symmetry $s_{\mu}$ reverses $\gamma$, the geodesic $\tilde{\gamma}$ connects the $\mu$ and $s_{\mu}(m^{c}_{0})$. Consider a combined geodesic of $\gamma$ and $\tilde{\gamma}$, denoted by $\tilde{\gamma} \cup \gamma$, joining $m_0^{c}$, $\mu$, and $s_{\mu}(m_0^{c})$ in order. In turn, the joined geodesic becomes the length-minimizing geodesic connecting $s_{\mu}(m^{c}_{0})$ and $m^{c}_{0}$ (since the three points $s_{\mu}(m^{c}_{0})$, $\mu$, and $m^{c}_{0}$ lie in the strongly convex ball $B_{r}(\mu)$), which implies that $$d(m^{c}_{0}, s_{\mu}(m^{c}_{0})) = 2 d(\mu, m^{c}_{0}) > 0.$$ It is a contradiction to (\ref{eq:inv:same}), thereby leading to $m^{c}_{0} = \mu$. Therefore, the population Huber means for $c$ with respect to $P_{X}$ equals $\mu$, as desired. \\

Proof for Part (b). Let $c \in [0, \infty]$ be prespecified and $n \ge 1$ be a fixed integer. By an application of Lemma~\ref{lem:sam:meas}, $m^{c}_{n}(X_{1}, X_{2}, \ldots, X_{n})$ is a random variable. Hence, the distribution (law) of $m^{c}_{n}$, denoted by $P_{m^{c}_{n}}$, is well-defined. By an application of Lemma~\ref{lem:sample:finite}, $P_{m^{c}_{n}}$ satisfies Assumption (A1); thus, the population \Frechet\ mean with respect to $P_{m^{c}_{n}}$ exists by an application of Theorem~\ref{thm:exist}. Moreover, since $P_{m^{c}_{n}}$ is geodesically symmetric about $\mu$ owing to Lemma~\ref{lem:geo:symm}, the population Fr\'{e}chet mean with respect to $P_{m^{c}_{n}}$ equals $\mu$ by an application of Proposition~\ref{prop:unbiased}(a). Because $\mu$ is the same as the population Huber mean for $c$ with respect to $P_{X}$ by an application of Proposition~\ref{prop:unbiased}(a) again, the population Fr\'{e}chet mean with respect to $P_{m^{c}_{n}}$ equals the population Huber mean for $c$ with respect to $P_{X}$, meaning that the sample Huber mean for $c$ is unbiased. Furthermore, it directly follows that the sample Huber mean for $c$ is an unbiased estimator of $\mu$. 
\end{proof}

\color{black}
\subsection{Proofs for Section \ref{sec:asymptotics}}

\subsubsection{Proof of Theorem \ref{thm:slln}}

\begin{proof}[Proof of Theorem~\ref{thm:slln}]
In the case of $c=\infty$, Theorem~\ref{thm:slln} has been proved by an application of Theorems A.3 and A.4 in \cite{huckemann2011intrinsic}. For this reason, assume $c\in [0, \infty)$. \\ 
\underline{Step I.} We shall show that, with probability 1
\begin{equation}\label{zie:consi}
\cap_{n=1}^{\infty} \overline{\cup_{k=n}^{\infty} E^{(c)}_{k}} \subseteq E^{(c)}.
\end{equation}
If the left-hand side of (\ref{zie:consi}) is empty, clearly (\ref{zie:consi}) holds. Suppose that $\cap_{n=1}^{\infty} \overline{\cup_{k=n}^{\infty} E^{(c)}_{k}} \neq \phi$. Let $\{p_1, p_2, ... \}$ be a countable dense subset of $M$. (The data space $M$ is assumed to be separable in Section~\ref{sec:setup}.) There exists a sequence of events $(A_{k})_{k \ge 1}$ such that for any $k \ge 1$ $P(A_k)=1$ and if $A_k$ has occurred then it holds that
\[
\frac{1}{n}\sum_{i=1}^{n} \rho_{c}\{d(X_i, p_k)\} \underset{n \rightarrow \infty}{\rightarrow} \int \rho_{c}\{d(X, p_k)\} dP. 
\]
Let $A:= \cap_{k=1}^{\infty} A_k$. Then $P(A)=1$. If $A$ has occurred, for any $k\ge 1$
\[
\frac{1}{n}\sum_{i=1}^{n} \rho_{c}\{d(X_i, p_k)\} \underset{n \rightarrow \infty}{\rightarrow} \int \rho_{c}\{d(X, p_k)\} dP, 
\]
where it holds by the strong law of large numbers. We wish to show that 
\[
P\big(\frac{1}{n}\sum_{i=1}^{n} \rho_{c}\{d(X_i, p)\} \underset{n \rightarrow \infty}{\rightarrow} \int \rho_{c}\{d(X, p)\} dP \quad \mbox{for all}~ p \in M \big) = 1.
\]
For any $p \in M$, there exists a sequence $(p_{k_{u}})_{u \ge 1}$ such that $p_{k_u} \underset{u \rightarrow \infty}{\rightarrow} p$.  
\begin{equation}\label{split}
\frac{1}{n} \sum_{i=1}^{n} \rho_{c}\{d(X_i, p)\} = \frac{1}{n} \sum_{i=1}^{n} \rho_{c}\{d(X_i, p_{k_{u}})\} + \frac{1}{n} \sum_{i=1}^{n} [\rho_{c}\{d(X_i, p)\} - \rho_{c}\{d(X_i, p_{k_{u}})\}]. 
\end{equation}
$\rho_{c}$ has a Lipshitz constant $2c \lor 1$. In view of the triangle inequality, 
\begin{eqnarray*}
|\frac{1}{n} \sum_{i=1}^{n} [\rho_{c}\{d(X_i, p)\} - \rho_{c}\{d(X_i, p_{k_{u}})\}]| &\le& \frac{(2c \lor 1)}{n} \sum_{i=1}^{n} |d(X_i, p) - d(X_i, p_{k_{u}})| \\ 
&\le& (2c \lor 1) d(p, p_{k_{u}}) \\
& & \underset{u \rightarrow \infty}{\rightarrow} 0
\end{eqnarray*}
By taking $\limsup_{n \rightarrow \infty}, \liminf_{n \rightarrow \infty}$ to (\ref{split}) respectively, with probability 1  
\begin{eqnarray*}
\limsup_{n \rightarrow \infty} \frac{1}{n} \sum_{i=1}^{n} \rho_{c}\{d(X_i, p)\} \le \int \rho_{c}\{d(X, p)\}dP + (2c \lor 1) d(p, p_{k_{u}}),  \\
\liminf_{n \rightarrow \infty} \frac{1}{n} \sum_{i=1}^{n} \rho_{c}\{d(X_i, p)\} \ge \int \rho_{c}\{d(X, p)\}dP - (2c \lor 1) d(p, p_{k_{u}}).  
\end{eqnarray*}
Taking $\lim_{u \rightarrow \infty}$ to the above inequalities, we have
\[
P\big(\frac{1}{n}\sum_{i=1}^{n} \rho_{c}\{d(X_i, p)\} \underset{n \rightarrow \infty}{\rightarrow} \int \rho_{c}\{d(X, p)\} dP \quad \mbox{for all}~ p \in M \big) = 1.
\]

Suppose now that $p' \in \cap_{n=1}^{\infty} \overline{\cup_{k=n}^{\infty} E^{(c)}_{k}}$. By an application of Lemma~\ref{lem:subseq}, there exists a sequence of points $p_{k_{\ell}} \in E^{(c)}_{k_{\ell}}$ such that $p_{k_{\ell}} \underset{\ell \rightarrow \infty}{\rightarrow} p'$. For each $\ell$, with probability 1
\begin{eqnarray}\label{eq_final}
\frac{1}{k_{\ell}}\sum_{i=1}^{k_{\ell}}\rho_{c}\{d(X_i, p)\} \ge \frac{1}{k_{\ell}} \sum_{i=1}^{k_{\ell}}\rho_{c}\{d(X_i, p_{k_{\ell}})\} \quad \mbox{for all} ~ p \in M.
\end{eqnarray}
Taking $\liminf_{\ell \rightarrow \infty}$ to (\ref{eq_final}), we obtain that, with probability 1, for all $p \in M$
\begin{eqnarray}\label{eq_final2}
\int \rho_{c}\{d(X, p)\}dP &\ge& \lim_{\ell \rightarrow \infty} \frac{1}{k_{\ell}}\sum_{i=1}^{k_{\ell}}\rho_{c}\{d(X_i, p')\} \nonumber \\
& & + \liminf_{\ell \rightarrow \infty} \frac{1}{k_{\ell}}\sum_{i=1}^{k_{\ell}}[\rho_{c}\{d(X_i, p_{k_{\ell}})\} - \rho_{c}\{d(X_i, p')\}]. \nonumber \\
\end{eqnarray}
In view of the triangle inequality,
\begin{align}
\frac{1}{k_{\ell}}\sum_{i=1}^{k_{\ell}}|\rho_{c}\{d(X_i, p_{k_{\ell}})\} - \rho_{c}\{d(X_i, p')\}| \le& \frac{(2c \lor 1)}{k_{\ell}}\sum_{i=1}^{k_{\ell}}[d(X_{i}, p_{k_{\ell}}) - d(X_{i}, p')] \nonumber \\
\le& (2c \lor 1) d(p_{k_{\ell}}, p') \nonumber \\    &\underset{\ell \rightarrow \infty}{\rightarrow} 0.  \label{eq_final3} 
\end{align}
Combining (\ref{eq_final2}) and (\ref{eq_final3}), we obtain that with probability 1
\begin{eqnarray*}
&& \int \rho_{c}\{d(X, p)\}dP \ge \int \rho_{c}\{d(X, p')\}dP \quad \mbox{for all} ~ p \in M.
\end{eqnarray*}
It means that $p' \in E^{(c)}$ with probability 1. Since $p' \in \cap_{n=1}^{\infty} \overline{\cup_{k=n}^{\infty} E^{(c)}_{k}}$ was arbitrarily chosen,  $\cap_{n=1}^{\infty}\overline{\cup_{k=n}^{\infty} E^{(c)}_{k}} \subseteq E^{(c)}$ with probability 1.

\underline{Step II.} We shall show that, with probability 1, for any $\epsilon > 0$ there exists $n_0 \ge 1$ such that 
\begin{equation}\label{BPC}
\cup_{k=n_0}^{\infty} E^{(c)}_{k} \subseteq \{p \in M : d(p, E^{(c)}) \le \epsilon \}.
\end{equation}
The proof of (\ref{BPC}) follows those of Theorem A.4 in \cite{huckemann2011intrinsic} and Lemma B.18 in \cite{jung2025averaging}. By Proposition~\ref{prop:cpt}, there exists a compact set $K_{c} \subseteq M$ such that $E^{(c)} \subset K_{c}$. There exist $p_0 \in M, r > 0$ such that $K_{c} \subseteq K:=\{p \in M : d(p_0, p) \le r \}$. Under the probability 1 event that (\ref{zie:consi}) occurs, every accumulation point of $\cup_{k=1}^{n} E^{(c)}_{k}$ lies in $E^{(c)}$ (since $E^{(c)}$ is sequentially compact by Proposition~\ref{prop:cpt}). Define a sequence $a_{n}=\sup_{m \in E^{(c)}_{n}} d(m, E^{(c)})$. Under (\ref{zie:consi}) occurs, $a_{n} \underset{n \rightarrow \infty}{\nrightarrow} 0$ yields $a_{n} \underset{n \rightarrow \infty}{\rightarrow} \infty$. In short, there are only two cases either $a_{n} \underset{n \rightarrow \infty}{\rightarrow} 0$ or $a_{n} \underset{n \rightarrow \infty}{\rightarrow} \infty$ with probability 1. We will exclude the case $a_{n} \underset{n \rightarrow \infty}{\rightarrow} \infty$.

Suppose that $a_{n} \underset{n \rightarrow \infty}{\rightarrow} \infty$. Choose a sequence of points $m_{n} \in E^{(c)}_{n}$ such that $a_{n}(1-\frac{1}{n}) \le d(m_{n}, E^{(c)}) \le a_{n}$.
Since $a_{n} \underset{n \rightarrow \infty}{\rightarrow} \infty$, $d(m_{n}, E^{(c)}) \underset{n \rightarrow \infty}{\rightarrow} \infty$. Recall that $E^{(c)} \subset K$. For each $x \in K$,
\begin{eqnarray*}
d(m_{n}, E^{(c)}) &=& d(m_{n}, m') \\
&\le& d(m_{n}, x) + d(x, m_{x}) + d(m_{x}, m') \\
&\le& d(m_{n}, x) + d(x, p_0) + d(p_0, m_{x}) + d(m_{x}, p_0) + d(p_0, m') \\
&\le& d(m_{n}, x) + 4r,
\end{eqnarray*}
where each of $m_{x}, m' \in E^{(c)}$ is a point satisfying $$d(x, m_{x}) = \min_{m \in E^{(c)}}d(x, m) \quad \mbox{and} \quad d(m_{n}, m')= \min_{m \in E^{(c)}}d(m_{n}, m),$$ respectively. (Note that $E^{(c)}$ is compact by Proposition~\ref{prop:cpt}.) That is, $d(m_{n}, x) \ge d(m_{n}, E^{(c)}) - 4r$. Hence, $d(m_{n}, E^{(c)}) \underset{n\rightarrow \infty}{\rightarrow} \infty$ gives rise to 
\[
\exists (C_{n})_{n \ge 1} ~ \mbox{with} ~ C_{n} \underset{n \rightarrow \infty}{\rightarrow} \infty ~ \mbox{such that} ~ d(m_{n}, x) \ge C_{n} ~ \mbox{for any} ~ x \in K.
\]
For each $n$, with probability 1 there exists $k(n) ~ \mbox{such that} ~ X_{n_{j}}'s ~ \mbox{lie in}~ K ~ \mbox{for any} ~ j=1, 2, ..., k(n).$ By the strong law of large numbers, with probability 1
\[
\frac{k(n)}{n} \underset{n \rightarrow \infty}{\rightarrow} P_{X}(K) = P_{X}\big(\{p \in M: d(p_0, p) \le r\}\big) > 0.
\]
With probability 1,
\begin{equation}\label{ell:1}
\ell_{n}:= f_{n}(m_{n}) \ge \frac{1}{n} \sum_{j=1}^{k(n)} d(m_{n}, X_{n_{j}}) \ge \frac{k(n)}{n} C_{n} \underset{n \rightarrow \infty}{\rightarrow} \infty.
\end{equation}
Choose a point $p \in E^{(c)}$. By the strong law of large numbers again, with probability 1 
\begin{eqnarray}\label{ell:2}
\ell_{n} \le F^{c}_{n}(p) \underset{n \rightarrow \infty}{\rightarrow} F^{c}(p) &=& \int \rho_{c}\{d(X, p)\}dP < \infty,
\end{eqnarray}
as assumed in Assumption (A1). It is a contradiction, since (\ref{ell:1}), (\ref{ell:2}) hold at the same time with probability 1. Therefore, we can exclude the case $a_{n} \underset{n \rightarrow \infty}{\rightarrow} \infty$, and we get $a_{n} \underset{n \rightarrow \infty}{\rightarrow} 0$, yielding (\ref{BPC}) holds. 

Note that (\ref{BPC}) is equivalent to
\begin{eqnarray*}
& \forall \epsilon > 0 ~ \exists n_{0} ~ \mbox{s.t.} ~ \forall m \in \cup_{k=n_0}^{\infty} E^{(c)}_{k} \Rightarrow d(m, E^{(c)}) \le \epsilon \\
&\Leftrightarrow \forall \epsilon > 0 ~ \exists n_{0} ~ \mbox{s.t.} ~ 
n \ge n_0 \Rightarrow \forall m \in E^{(c)}_{n} ~ d(m, E^{(c)}) \le \epsilon \\
&\Leftrightarrow \forall \epsilon > 0 ~ \exists n_{0} ~ \mbox{s.t.} ~ 
n \ge n_0 \Rightarrow \sup_{m \in E^{(c)}_{n}} d(m, E^{(c)}) \le \epsilon.
\end{eqnarray*}
It is equivalent to
\begin{equation*}
\lim_{n \rightarrow \infty} \sup_{m \in E^{(c)}_{n}} d(m, E^{(c)}) = 0 \quad \mbox{a.s.}
\end{equation*}
which completes the proof.
\end{proof}

In fact, for $c \in (0,\infty]$, the result of Theorem~\ref{thm:slln} 
can be obtained by an application of Theorems A.3 and A.4 in \cite{huckemann2011intrinsic}.
Moreover, the strong consistency of the geometric median on Riemannian manifolds has been verified in Theorem 5 of \cite{arnaudon2012medians}. For completeness, we have provided our proof of Theorem~\ref{thm:slln}, which largely follows the arguments of \cite{huckemann2011intrinsic}, but is modified to include the case $c=0$.

We note that the convergence in Theorem~\ref{thm:slln} may be called a convergence in \textit{one-sided} Hausdorff distance between $E^{(c)}_{n}$ and $E^{(c)}$ but not in (two-sided) Hausdorff distance, which is defined as 
\begin{eqnarray}\label{Hausdorff}
d_{H}(E^{(c)}_{n}, E^{(c)}) &=& \max\{\sup_{m \in E^{(c)}_n} \inf_{p \in E^{(c)}} d(m, p),~ \sup_{p \in E^{(c)}} \inf_{m \in E^{(c)}_n} d(m, p)\} \nonumber \\ 
&=& \max\{\sup_{m \in E^{(c)}_n} d(m, E^{(c)}),~ \sup_{p \in E^{(c)}} d(E^{(c)}_{n}, p) \}.
\end{eqnarray}
See \cite{schotz2022strong} for these terminologies.
If the population Huber mean is unique, then $E^{(c)}_{n}$ converges to the Huber mean in the two-sided Hausdorff distance, as stated next.

\begin{corollary}\label{a.s.conv}
For a given $c \in [0, \infty]$, suppose that $P_{X}$ satisfies Assumption (A1), and that $E^{(c)} ~ (=\{m^{c}_{0} \})$ is a singleton set. With probability 1,
\begin{eqnarray*}
\lim_{n\rightarrow \infty} d_{H}(E^{(c)}_{n}, E^{(c)})=0.
\end{eqnarray*}
Additionally, if $E^{(c)}_{n} ~ (= \{m^{c}_{n}\})$ is singleton for any $n$ with probability 1, then $m^{c}_{n} \underset{n \rightarrow \infty}{\rightarrow} m^{c}_{0}$ almost surely.
\begin{proof}[Proof of Corollary~\ref{a.s.conv}]
Let $E^{(c)}=\{m^{c}_0\}$. By Theorem~\ref{thm:slln}, with probability 1
\begin{eqnarray*}
\lim_{n\rightarrow \infty} d(E^{(c)}_{n}, m^{c}_0) &=& \lim_{n\rightarrow \infty} \inf_{m \in E^{(c)}_n} d(m, m^{c}_0) \\ 
&\le& \lim_{n\rightarrow \infty} \sup_{m \in E^{(c)}_{n}} d(m, m^{c}_0) \\
&=& \lim_{n\rightarrow \infty} \sup_{m \in E^{(c)}_{n}} d(m, E^{(c)}) \\
&=& 0.
\end{eqnarray*}
By (\ref{Hausdorff}), $\lim_{n\rightarrow \infty} d_{H}(E^{(c)}_n, E^{(c)})=0$ with probability 1. 

Additionally, if $E^{(c)}_{n}(X_{1}, X_{2}, ..., X_{n}) ~ (= \{m^{c}_{n}\})$ is singleton for any $n$ with probability 1, then $m^{c}_{n} \underset{n \rightarrow \infty}{\rightarrow} m^{c}_{0}$ almost surely (since $d(m^{c}_{n}, m^{c}_{0}) \underset{n \rightarrow \infty}{\rightarrow} 0$ almost surely).
\end{proof}
\end{corollary}

\subsubsection{Proof of Theorem \ref{thm:clt}}
\begin{proof}[Proof of Theorem~\ref{thm:clt}]
Proof for Part (a). It is satisfied by an application of Corollary~\ref{a.s.conv}. 

Proof for Part (b). When $c=\infty$, it is known that the consequence has been already proven in \citep{bhattacharya2003large, bhattacharya2005large}. In the case of $c \in (0, \infty)$, the proof closely follows the way paved in \cite{bhattacharya2003large, bhattacharya2005large, huckemann2011inference, jung2025averaging}. Given the local coordinate chart $(\phi_{m^{c}_0}, U)$ centered at $m^{c}_0$, due to Part(a) $m^{c}_{n}$ falls within $U$ eventually almost surely; thus, $\phi_{m^{c}_0}(m^{c}_{n})$ is well-defined with probability 1. Observe now that, for each $x \in M$ and any $i=1, 2, ..., k$, $\frac{\partial}{\partial x_i} \rho_{c}\{d(x, \phi_{m_{0}^{c}}^{-1}(\mathbf{0}))\} \le \frac{\partial}{\partial x_i} d^2(x, \phi_{m_{0}^{c}}^{-1}(\mathbf{0}))$. Hence, (\ref{integrability1}) in turn gives $E[\frac{\partial}{\partial x_i} \rho_{c}\{d(X, \phi_{m_{0}^{c}}^{-1}(\mathbf{0}))\}] < \infty$ for all $i=1,2, ..., k$. That is, it holds that $E[\textsf{grad}\rho_{c}\{d(X, \phi_{m_{0}^{c}}^{-1}(\mathbf{0}))\}]$ exists. For each $x \in M$, recall that $\mathbf{x} \mapsto \rho_{c}\{d(x, \phi^{-1}_{m^{c}_{0}}(\mathbf{x}))\}$ is continuously differentiable at near $\mathbf{x} = \mathbf{0}$ almost surely (as assumed in Assumption (A3)), and that $E\{\textsf{grad}d^2(X, \phi_{m_{0}^{c}}^{-1}(\mathbf{0}))\} ~ \mbox{exists}$ (as assumed in Assumption (A4)). The expectation and differentiation below can be interchanged by Lebesgue's dominated convergence theorem. That is,
\[
E[\textsf{grad}\rho_{c}\{d(X, \phi^{-1}_{m^{c}_{0}}(\mathbf{0}))\}] = \textsf{grad}E[\rho_{c}\{d(X, \phi^{-1}_{m^{c}_{0}}(\mathbf{0}))\}] = \mathbf{0}.
\]
Note further that
\begin{align*}
E[\frac{\partial}{\partial x_i} & \rho_{c}\{d(X, \phi_{m_{0}^{c}}^{-1}(\mathbf{0}))\}\frac{\partial}{\partial x_j} \rho_{c}\{d(X, \phi_{m_{0}^{c}}^{-1}(\mathbf{0}))\}] \\
&\le E\{\frac{\partial}{\partial x_i} d^2(X, \phi_{m_{0}^{c}}^{-1}(\mathbf{0}))\frac{\partial}{\partial x_j} d^2(X, \phi_{m_{0}^{c}}^{-1}(\mathbf{0}))\} \\ 
&< \infty,
\end{align*}
for all $i, j = 1, 2, ..., k$, where it holds that $\textsf{Cov}[\textsf{grad}d^2(X, \phi_{m_{0}^{c}}^{-1}(\mathbf{0}))]$ exists as assumed in (A4). Now let $\mathbf{x}_{n}=(x_{n,1}, x_{n,2}, ..., x_{n,k})^{T}:=\phi_{m^{c}_0}(m^{c}_{n}) \in \bbR^{k}$. Since the local coordinate chart $\phi_{m^{c}_{0}}$ is defined on the neighborhood $U$ near $m^{c}_{0}$, by an application of Part(a), it holds that $m_{n}^{c} \rightarrow m^{c}_{0}$ almost surely as $n\rightarrow \infty$. So, $\mathbf{x}_{n}$ is well-defined for large $n$ with probability 1. (Note that $\phi_{m^{c}_{0}}$ is defined only on the neighborhood $U$ of $m^{c}_{0}$.) Let $g_{n}(\mathbf{x}_{n}):= \sum_{i=1}^{n} \rho_{c}\{d(X_{i}, \phi^{-1}_{m^{c}_{0}}(\mathbf{x}_{n}))\}$. By an application of the multivariate central limit theorem  \citep[cf.][]{anderson1958introduction}, $\frac{1}{\sqrt{n}} \textsf{grad}g_{n}(\mathbf{0}) = \frac{\sqrt{n}}{n}\sum_{i=1}^{n}\textsf{grad}\rho_{c}\{d(X_i, \phi_{m^{c}_{0}}^{-1}(\mathbf{0}))\}$ converges to $N_{k}(\mathbf{0}, \Sigma_{c})$ in distribution as $n \rightarrow \infty$, since the expectation and covariance of $\textsf{grad}\rho_{c}\{d(X, \phi^{-1}_{m^{c}_{0}}(\mathbf{0}))\}$ exist.

Applying the mean-value theorem to each component of $\frac{1}{\sqrt{n}}\textsf{grad} g_{n}(\mathbf{x}_{n})$, we obtain that
\begin{eqnarray}\label{eq:mvt}
0=\frac{1}{\sqrt{n}}\textsf{grad}g_{n}(\mathbf{x}_{n})=\frac{1}{\sqrt{n}}\textsf{grad}g_{n}(\mathbf{0})+\{\frac{1}{n} \textsf{Hess}g_{n}(\mathbf{t}_{n})\} \cdot \sqrt{n} \mathbf{x}_{n}
\end{eqnarray}
for some $\mathbf{t}_{n}=(t_{1}x_{n,1}, t_{2}x_{n,2}, ..., t_{k}x_{n,k})^{T} \in \bbR^{k}$. Here $\cdot$ denotes the matrix multiplication and $0 \le t_{i} \le 1$ for any $1 \le i \le k$. Since $\mathbf{x}_{n} \underset{n \rightarrow \infty}{\rightarrow} \mathbf{0}$ almost surely, it follows that $\mathbf{t}_{n} \underset{n \rightarrow \infty}{\rightarrow} \mathbf{0}$ almost surely. In combination with the strong law of large numbers and the continuity of $H_{c}(\mathbf{x})$ at $\mathbf{x}=\mathbf{0}$, $\frac{1}{n}\textsf{Hess}g_{n}(\mathbf{t}_{n})$ converges to $H_{c} ~ (=H_{c}(\mathbf{0}))$ as $n \rightarrow \infty$ almost surely. Recall that $\mathbf{x} \mapsto \rho_{c}\{d(X, \phi^{-1}_{m^{c}_{0}}(\mathbf{x}))\}$ is twice-continuously differentiable at $\mathbf{x}=\mathbf{0}$ with probability 1 (due to condition (iii) of (A4)), and that $\mathbf{x} \mapsto E[\textsf{Hess}d^2(X, \phi^{-1}_{m^{c}_{0}}(\mathbf{x}))]$ exists for $\mathbf{x}$ near $\mathbf{0}$ (owing to condition (ii) of (A4)). By an application of Lebesgue's dominated convergence theorem, the differentiation and expectation can be interchanged twice. In this regard, we obtain that $\textsf{Hess}E[\rho_{c}\{d(X, \phi^{-1}_{m^{c}_{0}}(\mathbf{x}))\}]=E[\textsf{Hess}\rho_{c}\{d(X, \phi^{-1}_{m^{c}_{0}}(\mathbf{x}))\}]$ at $\mathbf{x}=\mathbf{0}$. Accordingly,
$H_{c}= E[\textsf{Hess}\rho_{c}\{d(X, \phi^{-1}_{m^{c}_{0}}(\mathbf{0}))\}]=\textsf{Hess}E[\rho_{c}\{d(X, \phi^{-1}_{m^{c}_{0}}(\mathbf{0}))\}]$ is non-singular by (\ref{integrability1}); that is, its determinant is non-zero, $\mbox{det}(H_{c}) \neq 0$. Since the determinant function is continuous, for all sufficiently large $n$, $\frac{1}{n} \textsf{Hess}g_{n}(\mathbf{t}_{n})$ is invertible with probability 1. Moreover, $\{\frac{1}{n} \textsf{Hess}g_{n}(\mathbf{t}_{n})\}^{-1}$ converges to $H^{-1}_{c}$ almost surely as $n \rightarrow \infty$. Thanks to (\ref{eq:mvt}), we get, for all sufficiently large $n$, $\sqrt{n} \mathbf{x}_{n} = -\{\frac{1}{n} \textsf{Hess}g_{n}(\mathbf{t}_{n})\}^{-1} \frac{1}{\sqrt{n}}\textsf{grad} g_{n}(\mathbf{0})$ with probability 1. By Slutsky's theorem, therefore, $\sqrt{n} \phi_{m^{c}_{0}}(m^{c}_{n}) ~ (=\sqrt{n}\mathbf{x}_{n})$ converges to $N_{k}(\mathbf{0}, H_{c}^{-1}\Sigma_{c}H^{-1}_{c})$ in distribution as $n \rightarrow \infty$.
\end{proof}

\subsection{Technical details in Section \ref{subsec:est:cov}}

\subsubsection{A novel estimation procedure for limiting covariance matrix}
Let $c \in (0,\infty]$ be prespecified. For notational simplicity, we use $m$ for the population location parameter $m:= m_0^c$. Furthermore, we use the Riemannian logarithmic map to define the normal coordinate chart of $M$ centered at $m$, i.e., $\phi_m =  \mbox{Log}_m$, and assume further that, for an open neighborhood $U$ of $m$, $\phi_m(U) \subset \mathbb{R}^k$. Finally, suppose that $P_X$ is absolutely continuous with respect to $V$. 

We propose a novel moment-based estimation procedure for $\Sigma_c$ and $H_c$ for any $c \in (0,\infty]$. To the best of our knowledge, our approach for estimating the Hessian matrix $H_c$ for both Huber means and Fr\'{e}chet means, has never been considered in the literature. For the case of $c = \infty$, our moment-based estimator of $\Sigma_c$ has been used in some previous works \citep{hotz2015intrinsic, eltzner2019smeary, jung2025averaging}, but it has been misused as an estimator for $H^{-1}_{\infty}\Sigma_{\infty}H^{-1}_{\infty}$.

\textbf{Estimation for $\Sigma_c$}. 
Recall that $\Sigma_c = \textsf{Cov} [\textsf{grad} \rho_c\{ d(X, \mbox{Exp}_{m}(\mathbf{0}))\}]$. Since $m = m_{0}^{c}$ is the minimizer of the expected loss function, 
$E [\textsf{grad} \rho_c\{ d(X, \mbox{Exp}_{m}(\mathbf{0}))\} ] = \mathbf{0}$,  
and 
$$ 
\Sigma_c = 
E [ \textsf{grad} \rho_c\{ d(X, \mbox{Exp}_{m}(\mathbf{0}))\} ] [\textsf{grad} \rho_c\{ d(X, \mbox{Exp}_{m}(\mathbf{0}))\} ]^T.
$$
With $\textsf{grad} \rho_{c}\{ d(x, \mbox{Exp}_{m}(\mathbf{0}))\} = 2 \frac{\| \mbox{Log}_{m}(x) \|_{m} \wedge c}{\| \mbox{Log}_{m}(x) \|_{m}}\cdot \mbox{Log}_{m}(x)$ for any $x \in U$, $\Sigma_{c}$ is naturally estimated by a moment estimator
\begin{equation}
    \label{eq:Sigma_c_est}
    \hat{\Sigma}_{c}:=\frac{4}{n}\sum_{i=1}^n \left( \frac{\| \mbox{Log}_{m}(X_i) \|_{m} \wedge c}{\| \mbox{Log}_{m}(X_i) \|_{m}} \right)^2 \mbox{Log}_{m}(X_i) \mbox{Log}_{m}(X_i)^T.
\end{equation}

The multiplicative factor 4 that appears in (\ref{eq:Sigma_c_est}) is an artifact of using the form of squared distances in $\rho_c$, rather than using half of the squared distance. 

\textbf{Estimation for $H_{c}$}. 
We propose a novel estimation procedure for the Hessian matrix $H_{c}$. Recall that $H_c := H_c(\mathbf{0})$ and $H_c(\xv)  = E \{ \textsf{Hess} g_X(\xv) \}$. 
$H_c(\xv) = E[\textsf{Hess} \rho_c\{ d(X, \mbox{Exp}_{m}(\xv))\}]$.
 
Directly evaluating the Hessian matrix 
$\textsf{Hess} \rho_c\{ d(x, \mbox{Exp}_{m}(\xv))\}$ for a given $x \in M$ is quite challenging, especially because the Huber loss function involves both the distance and squared distance functions. Even the seemingly simple task of deriving the Hessian matrix of the squared distance function is complicated, as demonstrated by \cite{pennec2018barycentric} for spheres and hyperbolic spaces. Instead of directly evaluating the Hessian matrix of the Huber loss function, we propose to evaluate a finite set of second-directional derivatives of the Huber loss function, from which the corresponding Hessian matrix can be identified. In other words, our strategy is to obtain an estimator of ${H}_{c}$ from the numerical value of the quadratic form $\mathbf{v}^{T}{H}_{c}\mathbf{v}$ for a set of tangent vectors $\mathbf{v}$ in $T_m(M)$. The tactic can be explained in a metaphor: If we know the cross-section of an object in each direction, we may infer the overall shape of the object.

Specifically, our estimation procedure for $H_c$ consists of two steps. Write $\ev_i$ for the $i$th coordinate unit vector in $\mathbb{R}^k$, and let $\cV = \{\ev_1, \ev_2, \ldots, \ev_k\} \cup\{ (\ev_i + \ev_j)/\sqrt{2} : 1\le i < j \le k \}$. 
\begin{itemize}
    \item[Step 1.] For the given set  $\cV$ of direction vectors, compute an estimator $\hat{a}_{\vv}$ of $a_{\vv} := \vv^T {H_c} \vv$ for each $\vv \in \cV$.
    \item[Step 2.] Using $\{ (\hat{a}_{\vv}, \vv): \vv \in \cV  \}$, solve the system of equations $\hat{a}_{\vv} = \vv^T \hat{H}_c \vv$, $\vv \in \cV$, 
to obtain $\hat{H}_c$. 
\end{itemize}
 
We now elaborate detailed procedures for each step.

\textbf{Step 1}. 
 For notational simplicity, the loss function for a given data point $x \in M$ is denoted by $g_x: \mathbb{R}^k \to \mathbb{R}$, so that 
$g_x(\xv) = \rho_c\{ d(x, \mbox{Exp}_{m}(\xv))\}$. Let $\vv \in T_m(M) \cong \mathbb{R}^k$ be a direction vector, satisfying $\|\vv\|_{m} = 1$.
The second-directional derivative of $g_x$ along $\vv$ at $\xv$ is 
$\partial_{\vv \vv}^2 g_x(\xv) = \vv^T \textsf{Hess} g_x(\xv) \vv$. 
Let $\gamma_{\vv}$ be the unit speed geodesic on $M$, passing through $m$ with the initial velocity vector $\vv$, which is parameterized by $t$ in some interval including zero, i.e., $\gamma_{\vv}(t) = \mbox{Exp}_m (t\vv)$. Then, 
\begin{align*}
    \vv^T \textsf{Hess}g_x(\mathbf{0}) \vv  = \partial_{\vv \vv}^2 \mid_{t= 0} g_x(\mathbf{0})  = \frac{\partial^2}{\partial t^2 } \mid_{t= 0} \rho_c\{ d(x, \gamma_{\vv}(t))\}.
\end{align*}
We define $\textsf{sn}_{\Delta}(s) = \tfrac{1}{\sqrt{\Delta} }\sin (s\sqrt{\Delta} )$ for $\Delta >0$, 
$s$ for $\Delta =0$, and $\tfrac{1}{\sqrt{|\Delta|} }\sinh (s\sqrt{|\Delta|} )$ for $\Delta <0$. From Equation (2.8) of \cite{afsari2011riemannian} and by an application of the chain rule for differentiation, we obtain, for $(x, m)$ such that $ d(x,m) \notin \{0,c\}$, 
\begin{align*}
    \frac{\partial^2}{\partial t^2 }\mid_{t= 0} \rho_c\{ d(x, \gamma_{\vv}(t))\} & = 
      2\left[ \cos^2(\alpha) + d(x,m)  \frac{\textsf{sn}_{\Delta}'(d(x,m))}{\textsf{sn}_{\Delta}(d(x,m))} \sin^2(\alpha)\right]1_{d(x,m) \le c} \\ 
       &\quad + 
       2c\left[ \frac{\textsf{sn}_{\Delta}'(d(x,m))}{\textsf{sn}_{\Delta}(d(x,m))} \sin^2(\alpha) \right]1_{d(x,m) > c}, 
\end{align*}
where $\alpha = \alpha(x,m,\vv)$ is the angle formed by the shortest geodesic from $x$ to $m$, and $\gamma_{\vv}$. The cosine and sine of $\alpha$ can be expressed using $\yv = \mbox{Log}_{m}(x) \in T_{m}M \cong \mathbb{R}^k$ as follows: 
\begin{align*}
    \cos(\alpha) = \frac{\yv^T \vv  }{ \|\yv\|_{m}} = \frac{\yv^T \vv  }{ d(x,m)}, ~ \sin^2(\alpha) =  1 - \cos^2(\alpha) = 1- \frac{(\yv^T \vv)^2}{d^2(x,m)}.
\end{align*}
Summarizing the arguments so far, for $m \in M$, a tangent vector $\vv \in T_m(M)$, and $x \in M$ such that $d(x, m) \notin \{0,c\}$, we have, with  $\yv = \mbox{Log}_{m}(x)$, 
\begin{equation}
    \label{eq:dir.second.deriv}
    \vv^T \textsf{Hess} g_{x}(\mathbf{0}) \vv 
    = 2\frac{(\yv^T \vv)^2  }{ \|\yv\|_{m}^2}1_{\|\yv\|_{m} \le c}  + 
    2  (\|\yv\|_{m} \wedge c)  \frac{\textsf{sn}_{\Delta}'(\|\yv\|_{m})}{\textsf{sn}_{\Delta}(\|\yv\|_{m})} \left\{ 1 - \frac{(\yv^T \vv)^2}{\|\yv\|_{m}^2}\right\}.
\end{equation}
For a sample $X_{1}, X_{2}, \ldots, X_{n} \in M$, we define 
\begin{equation}\label{eq:est.av}
\hat{a}_{\vv} = 
\frac{1}{n}\sum_{i=1}^n \vv^T \textsf{Hess} g_{X_{i}}(\mathbf{0}) \vv, 
\end{equation}
where $\vv^T \textsf{Hess} g_{X_{i}}(\mathbf{0}) \vv$ is given by (\ref{eq:dir.second.deriv}) with $x$ replaced by $X_i$. If $d(m, X_{i}) = 0$ or $c$ for some $i =1, 2, \ldots, n$, then the corresponding observation is excluded from the definition (\ref{eq:est.av}).

\textbf{Step 2}.
The set of ``basis" vectors $\cV = \{\ev_1, \ev_2, \ldots, \ev_k\} \cup\{ (\ev_i + \ev_j)/\sqrt{2} : 1\le i < j \le k \}$ are chosen so that $\{\vv\vv^T: \vv \in \cV\}$ forms the basis of the vector space  ${\rm Sym}(k)$ consisting of symmetric $k\times k$ real matrices. Define a vectorization operator $\mbox{vecd}: {\rm Sym}(k) \to \mathbb{R}^{(k+1)k/2}$ by 
\begin{align}
  \mbox{vecd}(H) &= (h_{11},h_{22}, h_{12})^T  \ \mbox{for}\  k = 2,  \nonumber \\
  \mbox{vecd}(H) &= (h_{11},h_{22}, h_{33}, h_{12}, h_{13}, h_{23})^T  \ \mbox{for}\ k = 3, \label{eq:vecd}\\
  \mbox{vecd}(H) &= (h_{11},h_{22},\ldots,h_{kk}, h_{12},\ldots,h_{1k},h_{23},\ldots,h_{2k},\ldots, h_{k-1,k})^T, \nonumber
\end{align} 
for a $k \times k$ symmetric matrix $H$ whose $(i,j)$th element is $h_{ij} = h_{ji}$. Any ``$\mbox{vecd}(H)$" is an ordered list of ${(k+1)k}/{2}$ scalars. We index the elements of the tuple $\mbox{vecd}(H)$ by the subscript $(i,j)$ of $h_{ij}$. 

The equations obtained from Step 1 are 
$$\hat{a}_{\vv} = \vv^T H \vv = \mbox{trace}(H \vv\vv^T) = \mbox{vecd}(H)^T \mbox{vecd}(\vv\vv^T),$$
for each $\vv \in \cV$, where the symmetric matrix $H$ is the estimator we wish to obtain. There are exactly $(k+1)k/2$ elements in $\cV$, and we choose to index the elements of $\cV$ by the index set $\{ (i,j): 1\le i\le  j \le k\}$ used for indexing the elements of the tuple $\mbox{vecd}(H)$. That is, $\vv^{(i,i)} = \ev_i$ for $i=1, 2, \ldots, k$ and $\vv^{(i,j)} = (\ev_i + \ev_j) / \sqrt{2}$ for $1 \le i < j \le k$. Then, the system of equations we wish to solve can be written as 
\begin{equation}
    \label{eq:eqsystem}
    \hat\av = \bV \mbox{vecd}(H),
\end{equation}
where 
$\hat\av = (
\hat{a}_{\vv^{(1,1)}},\ldots, \hat{a}_{\vv^{(k-1,k)}}
)^T$
and 
$\bV$ is the $\tfrac{(k+1)k}{2} \times \tfrac{(k+1)k}{2}$ matrix, whose rows are given by  $\mbox{vecd}(\vv^{(i,j)} (\vv^{(i,j)})^T)$'s. By construction, the matrix $\bV$ is invertible, and the solution of the equation system is
\begin{equation}
    \label{eq:eqsystem2}
    \hat{H}_c = \mbox{vecd}^{-1}( \bV^{-1} \hat\av).
\end{equation}

\textbf{Estimation for $H_{c}^{-1}\Sigma_cH_c^{-1}$}. Our estimator for the limiting covariance matrix $H_{c}^{-1}\Sigma_cH_c^{-1}$ is given by
\begin{equation}\label{est:covariance}
\hat{H}_c^{-1} \hat\Sigma_c \hat{H}_c^{-1},
\end{equation}
where $\hat\Sigma_c$ and $\hat{H}_c^{-1}$ are defined in Equations (\ref{eq:Sigma_c_est}) and (\ref{eq:eqsystem2}), respectively.

\subsubsection{Proof of Theorem \ref{thm:cov:est}}

To prove Theorem~\ref{thm:cov:est}, the following lemma is needed.
\begin{lemma}\label{lem:Hessian:intermediate:result}
Suppose the assumptions of Theorem~\ref{thm:cov:est} are satisfied. Then, for any $\vv \in T_{m^{c}_0}M$ satisfying $\|\vv\|_{m^{c}_0} = 1$,  

(i) $\vv^T \textsf{Hess}_{g_{X}}(\mathbf{0}) \vv$ in (\ref{eq:dir.second.deriv}) is well-defined with probability 1;

(ii) As $n\to \infty$, $\hat{a}_\vv \to  \vv^T H_c \vv
$
in probability. 
\end{lemma}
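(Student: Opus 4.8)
The plan is to deduce both assertions from the explicit formula (\ref{eq:dir.second.deriv}) for the directional second derivative, together with two elementary ingredients: (a) absolute continuity of $P_X$ makes the ``bad'' set $\{d(X,m^{c}_{0})\in\{0,c\}\}$, where (\ref{eq:dir.second.deriv}) need not hold, a $P_X$-null set, and (b) the strong law of large numbers. Write $m := m^{c}_{0}$ and $\yv := \mbox{Log}_{m}(X)$, and recall that (\ref{eq:dir.second.deriv}) was obtained from equation (2.8) of \cite{afsari2011riemannian} and the chain rule, so it is valid whenever $\|\yv\| = d(X,m)\notin\{0,c\}$ and $\textsf{sn}_{\Delta}(\|\yv\|)\neq 0$. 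For part (i), I would first note that the support condition $\textsf{supp}(P_X)\subseteq B_{r_{\mbox{\tiny cx}}}(m)$ confines $X$ to a strongly convex ball on which $\textsf{sn}_{\Delta}(\|\yv\|)>0$: when $\Delta>0$, geodesic distances within such a ball are strictly below $\pi/\sqrt{\Delta}$, and when $\Delta\le 0$ the function $\textsf{sn}_{\Delta}$ has no positive zeros. Hence the only possible obstructions are the events $\{\|\yv\|=0\}$ and $\{\|\yv\|=c\}$; but since $P_X$ is absolutely continuous with respect to the $k$-dimensional Hausdorff measure, the singleton $\{m\}$ and the geodesic sphere $\{x:d(x,m)=c\}$, of Hausdorff dimension $0$ and $k-1$ respectively, are $P_X$-null. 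Thus $d(X,m)\notin\{0,c\}$ almost surely, and on that event (\ref{eq:dir.second.deriv}) gives the value $\vv^{T}\bH g_{X}(\mathbf{0})\vv$, proving (i). (One may alternatively cite Assumption (A4)(ii), which already states $g_X$ is $C^2$ at $\mathbf{0}$ with probability $1$.)

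For part (ii), applying the null-set argument of (i) simultaneously to $X_1,X_2,\ldots$ (a countable union of $P$-null events) shows that, with probability $1$, no observation is ever excluded from (\ref{eq:est.av}), so that for every $n$, $\hat{a}_{\vv} = \tfrac1n\sum_{i=1}^{n}\vv^{T}\bH g_{X_i}(\mathbf{0})\vv$ is the sample mean of i.i.d.\ copies of $W := \vv^{T}\bH g_{X}(\mathbf{0})\vv$. To apply the SLLN I would check $E|W|<\infty$: from (\ref{eq:dir.second.deriv}),
\[
0 \;\le\; W \;\le\; 2 + 2\,(\|\yv\|\wedge c)\,\frac{\textsf{sn}_{\Delta}'(\|\yv\|)}{\textsf{sn}_{\Delta}(\|\yv\|)},
\]
and the right-hand side is bounded on $\textsf{supp}(P_X)$, since $s\mapsto(s\wedge c)\,\textsf{sn}_{\Delta}'(s)/\textsf{sn}_{\Delta}(s)$ extends to a bounded continuous function of $\|\yv\|$ over the (compact) range of $\|\yv\|$ on the support — it has finite limit $1$ at $s=0$ and $\textsf{sn}_{\Delta}$ vanishes only at $0$ and at $\pi/\sqrt{\Delta}$, the latter lying outside the support. (Alternatively one dominates $W\le\vv^{T}\bH d^{2}(X,\mbox{Exp}_{m}(\mathbf{0}))\vv$ and invokes the integrability clause (A4)(iii).) The SLLN then yields $\hat{a}_{\vv}\to E[W]$ almost surely, hence in probability, and linearity of expectation together with the definition $H_c(\mathbf{x}) = E[\bH\rho_c\{d(X,\mbox{Exp}_{m}(\mathbf{x}))\}]$ evaluated at $\mathbf{x}=\mathbf{0}$ identifies the limit: $E[W] = \vv^{T}E[\bH g_{X}(\mathbf{0})]\vv = \vv^{T}H_{c}\vv$. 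This proves (ii).

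The only step requiring genuine care is the envelope bound in (ii) — verifying that the geometric factor $(\|\yv\|\wedge c)\,\textsf{sn}_{\Delta}'(\|\yv\|)/\textsf{sn}_{\Delta}(\|\yv\|)$ remains bounded on $\textsf{supp}(P_X)$ — which rests on the support radius staying strictly below $\pi/\sqrt{\Delta}$ when $\Delta>0$ (so $\textsf{sn}_{\Delta}$ is bounded away from zero there) and on the removable singularity of $\textsf{sn}_{\Delta}'/\textsf{sn}_{\Delta}$ at the origin; everything else is routine bookkeeping with $P_X$-null sets and the classical strong law.
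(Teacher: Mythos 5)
Your proposal is correct and follows essentially the same route as the paper: part (i) is the absolute-continuity null-set argument for $\{d(X,m^c_0)\in\{0,c\}\}$, and part (ii) bounds the directional second derivative uniformly on the support (using that $d(X,m^c_0)<r_{\mbox{\tiny cx}}\le\pi/(2\sqrt{\Delta})$ keeps $\textsf{sn}_\Delta$ positive and that $(s\wedge c)\,\textsf{sn}_\Delta'(s)/\textsf{sn}_\Delta(s)$ has a removable singularity at $0$), then applies the law of large numbers and linearity of expectation to identify the limit $\vv^T H_c\vv$. The only cosmetic differences are that you invoke the strong law (the paper uses the weak law, which suffices for convergence in probability) and that you make explicit the almost-sure non-exclusion of observations in $\hat{a}_\vv$, a point the paper leaves implicit.
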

\begin{proof}[Proof of Lemma~\ref{lem:Hessian:intermediate:result}]
(i) Since $P_X$ is absolutely continuous, $P(d(X,m) \in \{0,c\}) = 0$. For any $\vv \in T_m(M)$, since $\vv^T \textsf{Hess}_{g_{X}}(\mathbf{0}) \vv$ is well defined for $x$ satisfying $d(x,m) \notin \{0,c\}$, $\vv^T \textsf{Hess}_{g_{X}}(\mathbf{0}) \vv$ is well-defined with probability 1. 

(ii) It is known that the regular convexity radius of $M$,  $r_{\mbox{\tiny cx}}$, is lower bounded by $\frac{1}{2} \min\{\frac{\pi}{\sqrt{\Delta}}, r_{\mbox{\tiny inj}}(M)\}$ \citep[cf.][]{afsari2013convergence}. In particular, for any $x$ in the support of $P_X$, $ d(x,m) < r_{\mbox{\tiny cx}} \le \pi / (2{\sqrt{\Delta}})$, if $\Delta > 0$. 

We note that, for any given $c \in (0,\infty]$, $\frac{\partial^2}{\partial t^2 } \mid_{t= 0} \rho_c\{ d(x, \gamma_{\vv}(t))\}$ is uniformly bounded over any choice of the pair $m, x \in M$ such that $d(x,m) < r_{\mbox{\tiny cx}}$, and for any $\vv \in T_{m^c_0}M$, $\|\vv\|_{m^c_0} = 1$. This is can be checked by the following three arguments, which are true for all cases of $\Delta$.  

(1) There exists a constant $C <\infty$ such that, for any $ 0 \le \ell \le c $, it holds that 
$0 \le \ell \cdot \frac{\textsf{sn}_{\Delta}'(\ell)}{\textsf{sn}_{\Delta}(\ell)} \le C$. 

(2) The function $\ell \mapsto \frac{\textsf{sn}_{\Delta}'(\ell)}{\textsf{sn}_{\Delta}(\ell)}$ is a strictly decreasing on $[0, r_{\mbox{\tiny cx}})$ and is strictly positive on the same domain, thus is it bounded on $c < \ell < r_{\mbox{\tiny cx}}$. 

(3) Both $\cos^2(\alpha)$ and $\sin^2(\alpha)$ are bounded for any $\alpha \in \mathbb{R}$. 

Since the $n$ random variables $\frac{\partial^2}{\partial t^2 } \mid_{t= 0}\rho_c\{ d(X_i, \gamma_{\vv}(t))\}$, indexed by $i =1, 2, \ldots, n$ are $\textit{i.i.d.}$ and bounded, the weak law of large numbers gives that 
$$
\hat{a}_\vv \underset{n \to \infty}{\to} E [\vv^T \textsf{Hess} g_{X_1}(\mathbf{0}) \vv] = \vv^T E [\textsf{Hess} g_{X_1}(\mathbf{0})] \vv = \vv^T H_c \vv,
$$
as $n\to \infty$ in probability.
\end{proof}

\begin{proof}[Proof of Theorem \ref{thm:cov:est}]
For a given $0 < c \le \infty$, we shall show that 
\begin{equation}\label{est1}
\hat{\Sigma}_{c} := \frac{4}{n}\sum_{i=1}^{n} (\frac{\|\mbox{Log}_{m^{c}_{0}}(X_i)\|_{m^c_{0}} \wedge c}{\|\mbox{Log}_{m^{c}_{0}}(X_i)\|_{m^c_{0}}})^2 \mbox{Log}_{m^{c}_{0}}(X_i) \mbox{Log}_{m^{c}_{0}}(X_i)^{T}
\end{equation}
is a consistent estimator of $$\Sigma_{c}=\textsf{Cov}[\textsf{grad} \rho_{c}\{d(X, \mbox{Exp}_{m^{c}_0}(\mathbf{\mathbf{x}}))\}] = \textsf{Cov}[\textsf{grad}\rho_{c}\{d(X, m)\}],$$ where $\textsf{grad}$ stands for the (Riemannian) gradient with respect to the second argument. By Assumption (A4), $m^{c}_0$ is the unique minimum of $m \mapsto E [\rho_{c}\{d(X, m) \}]$, and thus $\textsf{grad}E [\rho_{c}\{d(X, m^{c}_{0})\}] = \mathbf{0}$. Since $E \{\textsf{grad}d^2(X, m^{c}_0)\} < \infty$ due to Assumption (A4), we obtain
\begin{equation}\label{integrable}
E [\textsf{grad}\rho_{c}\{d(X, m^{c}_0)\}] = \mathbf{0},
\end{equation}
by Lebesgue's dominated convergence theorem. By (\ref{integrable}),
\begin{eqnarray*}
\Sigma_{c} &=& \textsf{Cov}[\textsf{grad}\rho_{c}\{d(X, m^{c}_0)\}] = E [\textsf{grad}\rho_{c}\{d(X, m^{c}_0)\} \cdot \textsf{grad}\rho_{c}\{d(X, m^{c}_0)\}^{T}]\\ 
&=& 4E[(\frac{d(X, m^{c}_{0}) \wedge c}{d(X, m^{c}_{0})})^2 \mbox{Log}_{m^{c}_{0}}(X)\cdot \mbox{Log}_{m^{c}_{0}}(X)^T].
\end{eqnarray*}
Hence, (\ref{est1}) is a consistent estimator of $\Sigma_{c}$ by the strong law of large numbers. 

Secondly, we shall show that $\hat{H_{c}}$ is consistent estimator of $\hat{H_{c}}$. As shown in Lemma~\ref{lem:Hessian:intermediate:result}, each $\hat{a}_\vv$ converges to $\vv^TH_c \vv$ in probability as $n \rightarrow \infty$. Since $H_c$ is a linear combination of $a_\vv$'s and $\hat{H}_c$ is the same linear combination of $\hat{a}_\vv$'s, $\hat{H}_c \rightarrow H_c$ in probability as $n \to \infty$.

By the continuous mapping theorem, finally, $\hat{H}^{-1}_{c}$ converges to $H^{-1}_{c}$ in probability as $n \rightarrow \infty$. Due to Slutsky's theorem, it follows that $\hat{H}^{-1}_{c}\Sigma_{c}\hat{H}^{-1}_{c}$ is a consistent estimator of $H^{-1}_{c}\Sigma_{c}H^{-1}_{c}$. 
\end{proof}

\subsection{Proofs for Section \ref{subsec:application:hypo}}

\subsubsection{Proof of Corollary \ref{cor:hypotest}}
\begin{proof}[Proof of Corollary~\ref{cor:hypotest}]
By the continuous mapping theorem, we obtain that $\hat{A}_{c}^{-1/2} \overset{P}{\rightarrow} A_{c}^{-1/2}$ as $n \rightarrow \infty$; that is, $\hat{A}^{-1/2}_{c}= A^{-1/2}_{c} + o_{P}(1)$ as $n \rightarrow \infty$. Under the null hypothesis, $\sqrt{n} \phi_{\tilde{m_0}}(m^{c}_{n}) \overset{d}{\rightarrow} N_{k}(\mathbf{0}, A_{c})$ as $n \rightarrow \infty$ (by the result of Theorem~\ref{thm:clt}). Since any weakly convergent sequence of random vectors should be stochastically bounded, $\sqrt{n} \phi_{\tilde{m_0}}(m^{c}_{n}) = O_{P}(1)$ as $n\rightarrow \infty$. By the result of Theorem~\ref{thm:clt} again, we get $A^{-1/2}_{c} \sqrt{n} \phi_{\tilde{m_0}}(m^{c}_{n}) \overset{d}{\rightarrow} N_{k}(\mathbf{0}, I_{k})$ as $n\rightarrow \infty$. Due to Slutsky's theorem, we get
\begin{eqnarray*}
\hat{A}^{-1/2}_{c} \sqrt{n} \phi_{\tilde{m_0}}(m^{c}_{n}) 
&=& (A^{-1/2}_{c} + o_{P}(1))\sqrt{n} \phi_{\tilde{m_0}}(m^{c}_{n}) \\
&=& A^{-1/2}_{c} \sqrt{n} \phi_{\tilde{m_0}}(m^{c}_{n}) + \underbrace{o_{P}(1) \cdot O_{P}(1)}_{=o_{P}(1)}  \\ 
& \overset{d}{\rightarrow} & N_{k}(\mathbf{0}, I_{k}) \quad \mbox{as} ~ n \rightarrow \infty.
\end{eqnarray*}
Hence, it holds that $n\phi_{\tilde{m}_0}(m^{c}_{n})^{T} \hat{A}^{-1}_{c}\phi_{\tilde{m}_0}(m^{c}_{n}) \overset{d}{\rightarrow} \chi^{2}_{k}$ as $n \rightarrow \infty$. Under the null hypothesis, therefore, $P(n\phi_{\tilde{m}_0}(m^{c}_{n})^{T} \hat{A}^{-1}_{c}\phi_{\tilde{m}_0}(m^{c}_{n}) \ge \chi^{2}_{k, \alpha}) \underset{n \rightarrow \infty}{\rightarrow} \alpha$. It means that the type I error of this statistical test is suitably controlled for large sample sizes. 
\end{proof}

\subsubsection{Proof of Corollary \ref{cor:stat:power}}

\begin{proof}[Proof of Corollary~\ref{cor:stat:power}]

For $m_1 \in M$ and  $m_2,x \in B_{r_{\mbox{\tiny cx}}}({m}_1)$, the inequality (5.7) of \cite{kendall2011limit} gives 
$$
\| \mbox{Log}_{m_1}(x) - \Gamma_{m_2 \to m_1} \mbox{Log}_{m_2}(x) \|_{m_1} \le  
d(m_1,m_2) \cdot \sup_{m' \in B_{r_{\mbox{\tiny cx}}}(m_1) } \| \nabla \mbox{Log}_{m'}(x)\|_{m'}.
$$
As argued in \cite{Lin2022additive} (see the paragraph after Assumption 5), the quantity $\| \nabla \mbox{Log}_{m'}(x) \|_{m'}$ is bounded if the operator norm of the Hessian is bounded.  
Let $m_1 = \tilde{m}_0$. Since $\textsf{supp}(P_X) \subseteq B_{r_{\mbox{\tiny cx}}} (\tilde{m}_0)$, we have that the operator norm of Hessian of the squared distance function is uniformly bounded (which can be deduced from the proof of Theorem 5 in which we have seen that directional derivatives are uniformly bounded for any $x,m$ in the support of $P_X$). 
Since $d(\tilde{m}_0, \tilde{m}_n) = O(n^{-\beta})$ as $n \to \infty$, there exists an $N$ such that for all $n \ge N$, $d(\tilde{m}_0, \tilde{m}_n) < r_{\mbox{\tiny cx}}$.
Thus, replacing $m_2$ by $\tilde{m}_n$, there exists a $C>0$ satisfying 
\begin{equation}
\label{eq:corollary5_part_xxx}
    \| \mbox{Log}_{\tilde{m}_0}(x) - \Gamma_{\tilde{m}_n \to \tilde{m}_0} \mbox{Log}_{\tilde{m}_n}(x) \|_{\tilde{m}_0} \le  C \cdot
d(\tilde{m}_0,\tilde{m}_n), 
\end{equation}
 for all $x \in B_{r_{\mbox{\tiny cx}}}(\tilde{m}_0)$. 

Note that $d(\tilde{m}_0, m_n^c) \le d(\tilde{m}_0, \tilde{m}_n) + d(\tilde{m}_n, m_n^c) = O(n^{-\beta}) + O_P(n^{-1/2}) = O_P(n^{-\beta})$ as $n \to \infty$, which in turn leads that 
$\lim_{n\to \infty}P(m_n^c \in B_{r_{\mbox{\tiny cx}}}(\tilde{m}_0) \ | \ \textbf{H}_{1(n)} ) = 1$. 
Suppose now that the event  $m_n^c \in B_{r_{\mbox{\tiny cx}}}(\tilde{m}_0)$ (this event is called $G_n$ hereafter) has occurred. Then, from (\ref{eq:corollary5_part_xxx}) and the fact that  the parallel transport preserves
the Riemannian norm, we have
\begin{align*}
    \|\mbox{Log}_{\tilde{m}_0} (m_n^c) \|_{\tilde{m}_0} 
    & \le 
    \| \mbox{Log}_{\tilde{m}_0}(m_n^c) - \Gamma_{\tilde{m}_n \to \tilde{m}_0} \mbox{Log}_{\tilde{m}_n}(m_n^c) \|_{\tilde{m}_0} 
    + \|\Gamma_{\tilde{m}_n \to \tilde{m}_0} \mbox{Log}_{\tilde{m}_n} (m_n^c) \|_{\tilde{m}_0}  \\
    & \le C \cdot
d(\tilde{m}_0,\tilde{m}_n)
    + \| \mbox{Log}_{\tilde{m}_0} (m_n^c) \|_{\tilde{m}_0} 
\end{align*}
By the central limit theorem for sample Huber means (Theorem~\ref{thm:clt}), it holds that
$\mbox{Log}_{\tilde{m}_n} (m_n^c) = O_P(n^{-1/2})$ as $n \to \infty$. Thus, as $n \to \infty$
$$\|\mbox{Log}_{\tilde{m}_0} (m_n^c) \|_{\tilde{m}_0} = O_P(n^{-\beta} ) + O_P(n^{-1/2}) = O_P(n^{-\beta}),$$ which in turn leads that 
$$T_{n} = n\mbox{Log}_{\tilde{m}_0}(m^{c}_{n})^{T}\hat{A}^{-1}_{c} \mbox{Log}_{\tilde{m}_0}(m^{c}_{n}) = O_{P}(n^{1-2\beta})$$
as $n \rightarrow \infty$ under the event $G_n$. 
Note that $\frac{T_{n}}{n^{1-2\beta}} \ge 0$ for any sample size $n \ge 1$. 
Therefore, 
\begin{align*}
\liminf_{n \rightarrow \infty} & P(T_{n} \ge \chi^{2}_{k, \alpha} \ | \ \mathbf{H}_{1(n)})  \\ 
 = & \liminf_{n \rightarrow \infty} \left[  P( \{ T_{n} \ge \chi^{2}_{k, \alpha} \} \cap G_n\ | \ \mathbf{H}_{1(n)}) + 
P( \{ T_{n} \ge \chi^{2}_{k, \alpha} \} \cap G_n^c\ | \ \mathbf{H}_{1(n)})\right] \\ 
 \ge & \liminf_{n \rightarrow \infty} P\left(\frac{T_{n}}{n^{1-2\beta}} \ge \frac{\chi^{2}_{k, \alpha}}{n^{1-2\beta}} \ | \ \textbf{H}_{1(n)}\right) \\
=& \liminf_{n \rightarrow \infty} P\left(O_{P}(1) \ge \frac{\chi^{2}_{k, \alpha}}{n^{1-2\beta}} \ | \ \textbf{H}_{1(n)}\right) \\ 
=& 1.
\end{align*}
The inequality above is given by $P(G_n \ | \ \textbf{H}_{1(n)} ) \to 1$ as $n \to \infty$ and the last equality holds due to $\frac{\chi^{2}_{k, \alpha}}{n^{1-2\beta}} \underset{n \to \infty}{\to} 0$.
\end{proof}

\subsection{Technical details in Section \ref{sec:RE}}

\subsubsection{Technical conditions and proof of Equation (\ref{def:ARE})}

In this subsection, a technical condition is introduced to satisfy Definition~\ref{def:ARE}. Suppose the technical condition, i.e., $\sup_{n \ge 1}n^2 E \|\phi(m^{c}_{n}) \|^4 < \infty$ for $c=c_{1}, c_{2} \in (0, \infty]$ holds. Then, Equation (\ref{def:ARE}) is naturally satisfied, as stated next. 

\begin{proposition}\label{prop:tech:supp}
Under the same conditions in Theorem~\ref{thm:clt}. For given constants $c_{1}, c_{2} \in (0, \infty]$, if $\sup_{n \ge 1}n^2 E \|\phi(m^{c}_{n})\|^4 < \infty$ for $c=c_{1}, c_{2}$ holds, then we obtain
\begin{eqnarray*}
\textsf{ARE}_{\phi}(m^{c_{1}}_{n}, m^{c_{2}}_{n}) 
=\frac{\textsf{tr}(H_{c_{2}}^{-1}\Sigma_{c_{2}} H_{c_{2}}^{-1})}{\textsf{tr}(H_{c_{1}}^{-1}\Sigma_{c_{1}} H_{c_{1}}^{-1})}.
\end{eqnarray*}
The next lemma is required to prove the proposition.
\begin{lemma}\label{lem:moment}
For a sequence of real-valued random variables $(X_n)_{n \ge 1}$, suppose that $\sup_{n \ge 1} EX^{4}_{n} < \infty$ and $X_{n} \to X$ in distribution as $n \to \infty$. Then, $EX^2_{n} \underset{n \to \infty}{\to} EX^2$.
\begin{proof}[Proof of Lemma~\ref{lem:moment}]
By Portmanteau theorem, we have $E(X^{2}_{n} \wedge M) \underset{n \to \infty}{\to} E(X^{2} \wedge M)$ for any $M>0$. We note that, for any $n\ge 1$ 
$$
X^2 = (X^2 \wedge M) + (X^2 - M) \cdot 1_{X^2 \ge M},~ X_{n}^2 = (X_{n}^2 \wedge M) + (X_{n}^2 - M) \cdot 1_{X_{n}^2 \ge M}. 
$$
An application of Fatou's inequality leads that, for a given $M>0$
\begin{eqnarray*}
E(X^2-M) \cdot 1_{X^2 \ge M} \le E(X^2 \cdot 1_{X^2 \ge M}) &\le& E(X^2 \cdot \frac{X^2}{M})
\le \frac{EX^4}{M} \\
&\le& \frac{\liminf_{n\to \infty}EX^{4}_{n}}{M} \\
&\le& \frac{\sup_{n \ge 1} EX^{4}_{n}}{M},
\end{eqnarray*}
and 
\begin{eqnarray*}
E(X_{n}^2-M)\cdot 1_{X^2 \ge M} \le EX_{n}^2 \cdot 1_{X_{n}^2 \ge M} &\le&  E(X_{n}^2 \cdot \frac{X^2}{M}) \\
&\le& \frac{EX_{n}^4}{M} \\
&\le& \frac{\sup_{n \ge 1} EX_{n}^{4}}{M}.
\end{eqnarray*}
With this in mind,
\begin{eqnarray*}
\limsup_{n \to \infty} |EX^2 - EX^2_{n}| &\le& \limsup_{n \to \infty} |E(X^2 \wedge M) - E(X^2_{n}\wedge M)| \\ 
& & + \limsup_{n \to \infty}E|(X^2 - M)\cdot 1_{X^2 \ge M}| \\ 
& & + \limsup_{n \to \infty} E|(X^2_{n} \wedge M)\cdot 1_{X_{n} \ge M}| \\
&\le& \frac{2\sup_{n \ge 1} EX^{4}_{n}}{M} \\ & \underset{M \to \infty}{\to}& 0.
\end{eqnarray*}
Since $M$ was arbitrarily chosen, we get $$\lim_{n \to \infty} |EX^2 - EX^2_{n}|= \limsup_{n \to \infty} |EX^2 - EX^2_{n}| = 0,$$ by letting $M \to \infty$.
\end{proof}
\end{lemma}

The proof of Proposition~\ref{prop:tech:supp} simply involves applying the results from Theorem~\ref{thm:clt} and Lemma~\ref{lem:moment}, as detailed below.
\begin{proof}[Proof of Proposition~\ref{prop:tech:supp}]
Now enter into the proof of Proposition~\ref{prop:tech:supp}. From the result of Theorem~\ref{thm:clt}, we obtain that $\sqrt{n}\phi(m^{c}_{n}) \overset{d}{\to} N_{k}(\mathbf{0}, H^{-1}_{c}\Sigma_{c}H^{-1}_{c})$ as $n\to\infty$ for $c= c_{1}$ and $c_2$. By a coordinate-wise application of Lemma~\ref{lem:moment} to the previous fact, we obtain that $\textsf{tr}\{\textsf{Cov}(\sqrt{n} \phi(m^{c}_{n}))\} \underset{n \to \infty}{\to} \textsf{tr}(H^{-1}_{c}\Sigma_{c}H^{-1}_{c})$ for $c=c_1$ and $c_2$. Therefore, 
\begin{eqnarray*}
\textsf{ARE}_{\phi}(m^{c_{1}}_{n}, m^{c_{2}}_{n}) = \lim_{n \rightarrow \infty} \textsf{RE}_{\phi}(m^{c_{1}}_{n}, m^{c_{2}}_{n}) 
&=& \lim_{n \rightarrow \infty} \frac{\textsf{tr}\{\textsf{Cov}(\phi(m^{c_{2}}_{n}))\}}{\textsf{tr}\{\textsf{Cov}(\phi(m^{c_{1}}_{n}))\}} \\
&=& \lim_{n \rightarrow \infty} \frac{\textsf{tr}\{\textsf{Cov}(\sqrt{n}\phi(m^{c_{2}}_{n}))\}}{\textsf{tr}\{\textsf{Cov}(\sqrt{n}\phi(m^{c_{1}}_{n}))\}} \\ 
&=&\frac{\textsf{tr}(H_{c_{2}}^{-1}\Sigma_{c_{2}} H_{c_{2}}^{-1})}{\textsf{tr}(H_{c_{1}}^{-1}\Sigma_{c_{1}} H_{c_{1}}^{-1})},
\end{eqnarray*}
which completes the proof.
\end{proof}
\end{proposition}

\subsubsection{A rationale for selecting the robustification parameter}

In Section~\ref{sec:RE}, we proposed to select the parameter $c$ of the Huber loss function as the smallest $c$ such that the asymptotic efficiency of $m_n^c$ is at least 95\% when compared to the Fr\'{e}chet mean under the Gaussian-type distribution on $M$. The proposition below supports that such an attempt is worthwhile.

\begin{proposition}\label{prop:c:worthwhile:supp}
Suppose that $P_{X}$ satisfies Assumptions (A1), (A3), and (A4) for all $c \in (0, \infty]$, and $(\ref{def:ARE})$ always holds. Then, there exists a constant $c_{0} \in [0, \infty)$ such $\textsf{ARE}_{\phi}(m^{c_{0}}_{n}, m^{\infty}_{n}) \ge 0.95$.
\begin{proof}[Proof of Proposition~\ref{prop:c:worthwhile:supp}]
It is enough to show that 
\[
\lim_{c \rightarrow \infty}\textsf{ARE}_{\phi}(m^{c}_{n}, m^{\infty}_{n}) = \lim_{c \rightarrow \infty} \frac{\textsf{tr}(H_{\infty}^{-1}\Sigma_{\infty} H_{\infty}^{-1})}{\textsf{tr}(H_{c}^{-1}\Sigma_{c} H_{c}^{-1})}=1.
\]
It is obtained that $\textsf{Hess}E[\rho_{\infty}\{d(X, \phi_{m^{\infty}_{0}}(\mathbf{x}))\}]=E[\textsf{Hess}\rho_{\infty}\{d(X, \phi_{m^{\infty}_{0}}(\mathbf{x}))\}]$ for $\mathbf{x}$ near $\mathbf{0}$ (by Assumption (A3) for $c=\infty$). Note that $H_{\infty}=E[\textsf{Hess}\rho_{\infty}\{d(X, \phi^{-1}_{m^{\infty}_{0}}(\mathbf{0}))\}]$ is positive-definite since $E[\rho_{\infty}\{d(X, \phi^{-1}_{m^{\infty}_{0}}(\mathbf{x}))\}]$ has a global minimum at $\mathbf{x}=\mathbf{0}$ (by Assumption (A3) for $c=\infty$); that is, its determinant is positive, $\mbox{det}(H_{\infty}) > 0$. Since $H^{-1}_{\infty}$ exists, it now suffices to prove that $\Sigma_{c} \rightarrow \Sigma_{\infty}, H_{c} \rightarrow H_{\infty}$ as $c \rightarrow +\infty$, which follows from the combination of Lemma~\ref{lem:unif:conv} and Lebesgue's dominated convergence theorem.
\end{proof}
\end{proposition}

The above proposition states that, for \textit{any} underlying distribution, the robustification parameter $c$ can be tuned so that the sample Huber mean is at least 95\% efficient relative to the Fr\'{e}chet mean.

Using Proposition~\ref{prop:c:worthwhile:supp}, we next provide an example for the choice of $c$ on the real line. 
For the case $M=\bbR$, the robustification parameter $c = 1.345\sigma$ has been the traditional choice for $c$, ensuring the ARE of 95\% against the sample mean \citep{holland1977robust, huber2004robust}. 
 In the following Example~\ref{0.95efficiency_R}, we confirm that utilizing our expression for the ARE in Definition~\ref{def:ARE}, obtained from the asymptotic results in Theorem~\ref{thm:clt}, also provides the same figure.

 \begin{example}\label{0.95efficiency_R}
 Let $M=\mathbb{R}$, $\mu=0$, and the reference measure be the Lebesgue measure in $\mathbb{R}$. A local coordinate chart $(\phi_{\mu}, U)$ near $\mu$ with $U=(-\infty, \infty)$ and $\phi_{\mu}: x \mapsto x$. For $\sigma>0$, if $X \sim N(0, \sigma^2)$, then it follows that $\Sigma_{c}=4[E\{X^2\cdot 1_{|X| \le c}\} + c^2P(|X| > c)]$, $\Sigma_{\infty}= 4\textsf{Var}(X)=4\sigma^2$, $H_{c} = 2P(|X| \le c)$, and $H_{\infty}=2$. From (\ref{def:ARE}), we have 
 \begin{equation*}
     \textsf{\mbox{ARE}}_{\phi_{\mu}}(m^{c}_{n}, m^{\infty}_{n})  = \frac{\sigma^2 \{P(|X| \le c)\}^2}{E\{X^2 \cdot 1_{|X| \le c}\} + c^2 P(|X| > c)}  
    = \frac{\{P(|Z| \le \kappa)\}^2}{E\{Z^2 \cdot 1_{|Z| \le \kappa}\} + \kappa^2 P(|Z| > \kappa)},
 \end{equation*}
 where $Z \sim N(0, 1)$ and $\kappa = c/\sigma$. As shown in in the top left panel of Figure~\ref{fig:cutoff}, $\textsf{\mbox{ARE}}_{\phi_{\mu}}(m^{c}_{n}, m^{\infty}_{n})$ is an increasing function of $c$, and we have confirmed that the Huber mean for $c=1.345 \sigma$ possesses 95\% ARE when compared to the sample mean, i.e., $\textsf{\mbox{ARE}}_{\phi_{\mu}}(m^{1.345 \sigma}_{n}, m^{\infty}_{n}) = 0.95$. 
 \end{example}

\subsubsection{Asymptotic relative efficiency of Huber means on the real line and spheres}

In this subsection, we provide an example under which Huber means are asymptotically more efficient than \Frechet\ mean on $\mathbb{R}$, and discuss the choice of $c$ on $S^{2}$ and $S^3$. In multiple regression settings where $M = \bbR^{k}$ (as a descriptor space), under heavy-tailed distribution on random noises, it is known that estimators obtained from the Huber loss are more efficient than ones obtained from the $L_{2}$ loss \citep[cf.][]{huber2004robust, zhou2023enveloped}. 

First, in this respect, we show below that Huber means are asymptotically more efficient than the Fréchet mean under Laplace distribution on $\bbR$.

\begin{proposition}\label{prop:M-type}
Suppose that $\frac{dP_{X}}{dV} = f_{\mu, \sigma, 0}$ as specified in (\ref{distr:huber:mfd}) with $M=\bbR$. For a given $c \in (0, \infty)$, suppose that $\rho_{c}$ is used. Then, Huber means are asymptotically more efficient than the Fréchet mean, i.e., $\textsf{ARE}_{\phi}(m^{c}_{n}, m^{\infty}_{n}) > 1$, where $\phi:\bbR \rightarrow \bbR, x \mapsto x$.
\begin{proof}[Proof of Proposition~\ref{prop:M-type}]
The probability density function of the Laplace distribution is given by $f_{\mu, \sigma, 0}(x) = \frac{1}{2\sigma} e^{-|x - \mu|/\sigma}$ for any $x \in \bbR$. By the translation invariance of Laplace distribution, without loss of generality, we may assume that $\mu=0$. It follows that $H_{c}=2P(|X| \le c)$, $\Sigma_{c}=4E\{X^2 \cdot 1_{|X| \le c}\} + 4c^2 P(|X| > c)$, $\Sigma_{\infty}=4\textsf{Var}(X) = 4EX^2$, and $H_{\infty}=2$. Hence, 
\begin{eqnarray*}
\textsf{ARE}_{\phi}(m^{c}_{n}, m^{\infty}_{n}) &=& \frac{(H_{c})^2 \Sigma_{\infty}}{(H_{\infty})^2 \Sigma_{c}} = \frac{EX^2 \cdot \{P(|X| \le c)\}^2}{E\{X^2 \cdot 1_{|X| \le c}\} + c^2P(|X| > c)} \\
&=& \frac{2\sigma^2(1-e^{-c/\sigma})^2}{2\sigma^2-(c^2 + 2\sigma c + 2\sigma^2)e^{-c/\sigma} + c^2e^{-c/\sigma}} \\
&>& 1,
\end{eqnarray*}
which is equivalent to $(\sigma + c)e^{c/\sigma} > \sigma$. Therefore, the inequality holds for any $c > 0$. 
\end{proof}
\end{proposition}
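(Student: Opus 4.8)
The plan is to reduce the assertion to the one-dimensional case of the central limit theorem and then to an elementary monotonicity inequality. Since $M=\bbR$ is one-dimensional, the matrices $H_{c}$ and $\Sigma_{c}$ appearing in \Cref{thm:clt} are scalars, so equation (\ref{def:ARE}) gives $\textsf{ARE}_{\phi}(m^{c}_{n},m^{\infty}_{n})=(H_{c})^{2}\Sigma_{\infty}/\{(H_{\infty})^{2}\Sigma_{c}\}$. To invoke (\ref{def:ARE}) one needs the technical condition of \Cref{prop:tech}, i.e.\ $\sup_{n\ge1}n^{2}E|\phi(m^{c}_{n})|^{4}<\infty$ for $c$ and for $\infty$; this is available here because $P_{X}$ is a Laplace law on $\bbR$ with all moments finite (alternatively one appeals to the standing assumption in \Cref{sec:RE} that (\ref{def:ARE}) holds). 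By translation invariance of the Laplace distribution I may take $\mu=0$, whence $\phi(X)=X$ has mean zero.

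First I would record the scalar ingredients. The derivations in \Cref{0.95efficiency_R} use only the absolute continuity of $P_{X}$ on $\bbR$, not Gaussianity, and yield $H_{c}=2P(|X|\le c)$, $\Sigma_{c}=4[E\{X^{2}1_{|X|\le c}\}+c^{2}P(|X|>c)]$, $H_{\infty}=2$ and $\Sigma_{\infty}=4\,\textsf{Var}(X)=4EX^{2}$; hence
\[
\textsf{ARE}_{\phi}(m^{c}_{n},m^{\infty}_{n})=\frac{\{P(|X|\le c)\}^{2}\,EX^{2}}{E\{X^{2}1_{|X|\le c}\}+c^{2}P(|X|>c)}.
\]
Next I would evaluate the moments of $f_{0,\sigma,0}(x)=\tfrac{1}{2\sigma}e^{-|x|/\sigma}$: one finds $EX^{2}=2\sigma^{2}$, $P(|X|\le c)=1-e^{-c/\sigma}$, $P(|X|>c)=e^{-c/\sigma}$, and, by a routine integration by parts, $E\{X^{2}1_{|X|\le c}\}=2\sigma^{2}-(c^{2}+2\sigma c+2\sigma^{2})e^{-c/\sigma}$. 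Substituting, the $c^{2}e^{-c/\sigma}$ terms in the denominator cancel and
\[
\textsf{ARE}_{\phi}(m^{c}_{n},m^{\infty}_{n})=\frac{2\sigma^{2}(1-e^{-c/\sigma})^{2}}{2\sigma^{2}-2(\sigma c+\sigma^{2})e^{-c/\sigma}}.
\]

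Finally I would check that this ratio exceeds $1$. Put $u=c/\sigma\in(0,\infty)$; clearing the positive denominator, $\textsf{ARE}_{\phi}(m^{c}_{n},m^{\infty}_{n})>1$ is equivalent to $(1-e^{-u})^{2}>1-(u+1)e^{-u}$, which after expansion simplifies to $e^{-u}(e^{-u}+u-1)>0$, i.e.\ to $g(u):=e^{-u}+u>1$. Since $g(0)=1$ and $g'(u)=1-e^{-u}>0$ for $u>0$, we conclude $g(u)>1$ for all $u>0$, so the claimed strict inequality holds.

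All of these steps are short: the moment evaluations are textbook integrals and the last step is a one-line calculus argument. The only point needing genuine care is the applicability of (\ref{def:ARE}) — that is, verifying the fourth-moment boundedness hypothesis of \Cref{prop:tech}, or else relying on the blanket assumption in \Cref{sec:RE} — which I expect to be the main (if modest) obstacle; everything downstream is direct computation.
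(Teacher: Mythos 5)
Your proof is correct and follows essentially the same route as the paper's: compute the scalar quantities $H_{c}=2P(|X|\le c)$, $\Sigma_{c}=4[E\{X^{2}1_{|X|\le c}\}+c^{2}P(|X|>c)]$, $H_{\infty}=2$, $\Sigma_{\infty}=4EX^{2}$ under the centered Laplace law, substitute into (\ref{def:ARE}), and verify the resulting elementary inequality. Your final reduction to $e^{-u}+u>1$ with $u=c/\sigma$ is a correct (and arguably cleaner) form of the last step — the paper's stated equivalent condition $(\sigma+c)e^{c/\sigma}>\sigma$ appears to carry a sign slip — and your added care in justifying the use of (\ref{def:ARE}) via \Cref{prop:tech} only strengthens the argument.
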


The proposition above holds specifically for the Huber loss.
While we initially thought that this result had already been established elsewhere, to the best of our knowledge, we could not find a proof of Proposition~\ref{prop:M-type} in the existing literature.
We believe that our proof provides the first rigorous derivation of this result.

Second, we derive the ARE of Huber means on $S^k ~ (k\ge2)$ (compared to the \Frechet\ means with respect to Gaussian-type distributions) and discuss the choice of $c$ for specific cases $k=2, 3$ via numerical evaluations. The formula for ARE on $S^k$ is presented in detail in Example~\ref{0.95efficiency_spheres} below.

\begin{example}[Asymptotic relative efficiency of Huber means on $S^k$]\label{0.95efficiency_spheres}
The $k$-dimensional unit sphere is represented by $S^{k}=\{(x_1,x_2, \ldots, x_{k+1}) \in \bbR^{k+1}: \sum_{i=1}^{k+1} x^2_i = 1\}$ for $k \ge 2$. For a scale parameter $\sigma>0$, we consider a random variable $X$ following the Gaussian-type distribution $f^G_{\mu,\sigma}$ on $S^k$, where the location parameter is set to be $\mu = (1, 0, 0, \ldots, 0)^{T} \in S^k$. In a normal coordinate chart at $\mu$, we denote $Y= \mbox{Log}_{\mu}(X)$. Then, an application of \citep[Lemma 3,][]{kim2020kurtosis} leads that the density function of $Y$ is  
$
f_{Y}(\mathbf{y}) = C_{k, \sigma} \cdot \exp(-\|\mathbf{y}\|^2/(2\sigma^2)) \cdot (\sin(\|\mathbf{y}\|)/\|\mathbf{y}\|)^{k-1},
$
in which $C_{k, \sigma}$ is a normalization constant depending on $k$ and $\sigma$. Using this formula and hyper-spherical coordinate system on $T_{\mu}S^k$, we obtain that, for $X\sim f^{G}_{\mu, \sigma}$, 
\begin{equation}\label{2d3d_hessianandcovariance}
\begin{aligned}
\Sigma_{c} &= 2C\big[\int_{0}^{c} r^2\sin^{k-1}(r)\cdot e^{-r^2/(2\sigma^2)} dr +c^2 \int_{c}^{\pi} \sin^{k-1}(r)\cdot e^{-r^2/(2\sigma^2)} dr\big]I_k, \\ 
\Sigma_{\infty} &= C [\int_{0}^{\pi} r^2\sin^{k-1}(r)\cdot e^{-r^2/(2\sigma^2)} dr]I_{k}, \\
H_{c} &= 2C\big[(\int_{0}^{\pi} \sin^{k-1}(\theta_1)\cos^2(\theta_1)d\theta_1)(\int_{0}^{c} \sin^{k-1}(r)\cdot e^{-r^2/(2\sigma^2)}dr) \\
& + (\int_{0}^{\pi} \sin^{k+1}(\theta_1)d\theta_1)(\int_{0}^{c} r\sin^{k-2}(r)\cos(r)\cdot e^{-r^2/(2\sigma^2)}dr) \\
& + c(\int_{0}^{\pi} \sin^{k+1}(\theta_1)d\theta_1)(\int_{c}^{\pi} \sin^{k-2}(r)\cos(r)\cdot e^{-r^2/(2\sigma^2)}dr)\big]I_{k}, ~ \mbox{and} \\
H_{\infty} &= C\big[(\int_{0}^{\pi} \sin^{k-1}(\theta_1)\cos^2(\theta_1)d\theta_1)(\int_{0}^{\pi} \sin^{k-1}(r)\cdot e^{-r^2/(2\sigma^2)}dr) \\
& + (\int_{0}^{\pi} \sin^{k+1}(\theta_1)d\theta_1)(\int_{0}^{\pi} r\sin^{k-2}(r)\cos(r)\cdot e^{-r^2/(2\sigma^2)}dr)\big]I_{k}
\end{aligned}
\end{equation}
where $C= 4\pi C_{k, \sigma} \cdot \prod_{2\le j \le k-1} \int_{0}^{\pi} \sin^{k-j}(\theta_j)d\theta_j$ and $I_k$ is the $k \times k$ identity matrix. Since these quantities are scaled identity matrices, $\textsf{ARE}(m^{c}_{n}, m^{\infty}_{n})$ can be numerically evaluated for each value of $c$ and $\sigma$. 

\end{example} 

Utilizing (\ref{2d3d_hessianandcovariance}), we now investigate the choice of $c$ as the minimum value for which the Huber mean is at least 95\% relatively efficient compared to Gaussian-type distributions (see (\ref{eq:c_hat}) in the main article). For Gaussian-type distributions on $S^2$ and $S^3$ (with scale parameter set as $\sigma = 0.1, 0.5$ or $1$), we have numerically evaluated $\textsf{ARE}(m^{\kappa \sigma}_{n}, m^{\infty}_{n})$ over a fine grid of $\kappa>0$. These numerically evaluated AREs are plotted in Figure \ref{fig:ARE:2sphere:3sphere}. 
The smallest value of $\kappa$ achieving  95\% ARE varies, depending on the value of the scale parameter $\sigma$. These range $\kappa \in [1.21, 1.50]$ for $S^2$ and  $\kappa \in [1.14, 1.62]$ for $S^3$. Overall, our recommend choice of $\kappa = 1.345$ (or $c = 1.345\sigma$) appear to be reasonable.

\begin{figure}[th]
    \centering
    \includegraphics[scale=0.25]{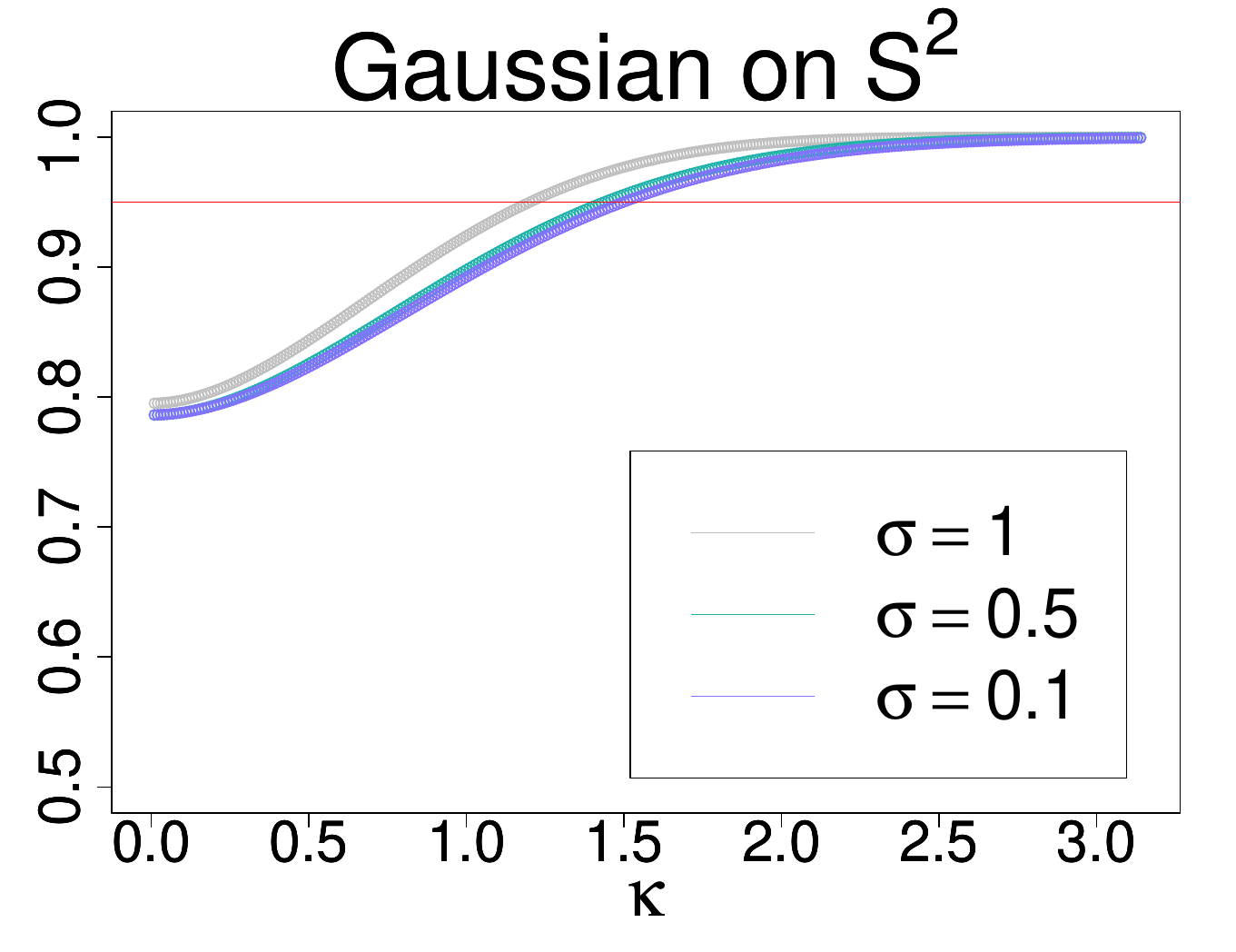}
    \includegraphics[scale=0.25]{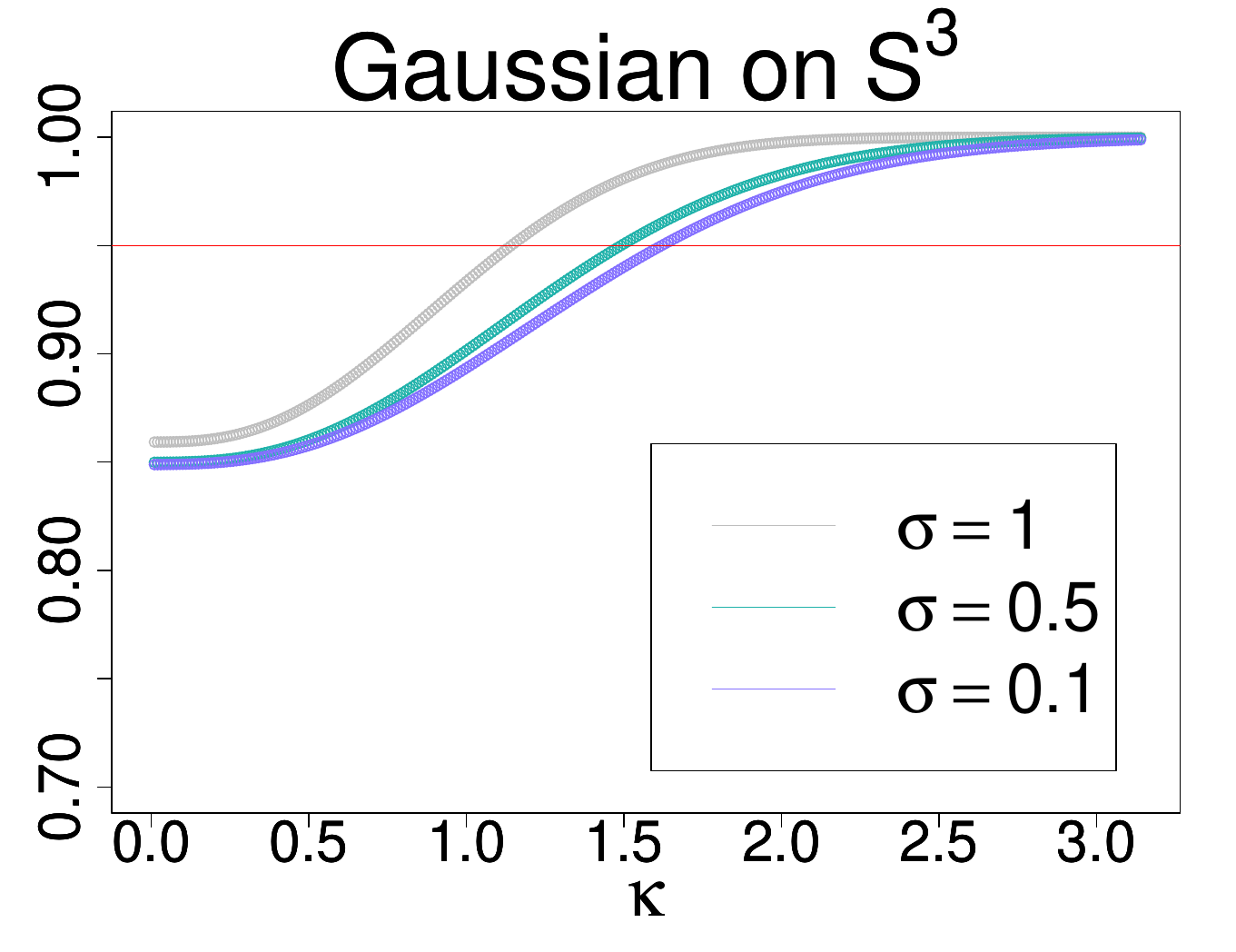}
    \includegraphics[scale=0.25]{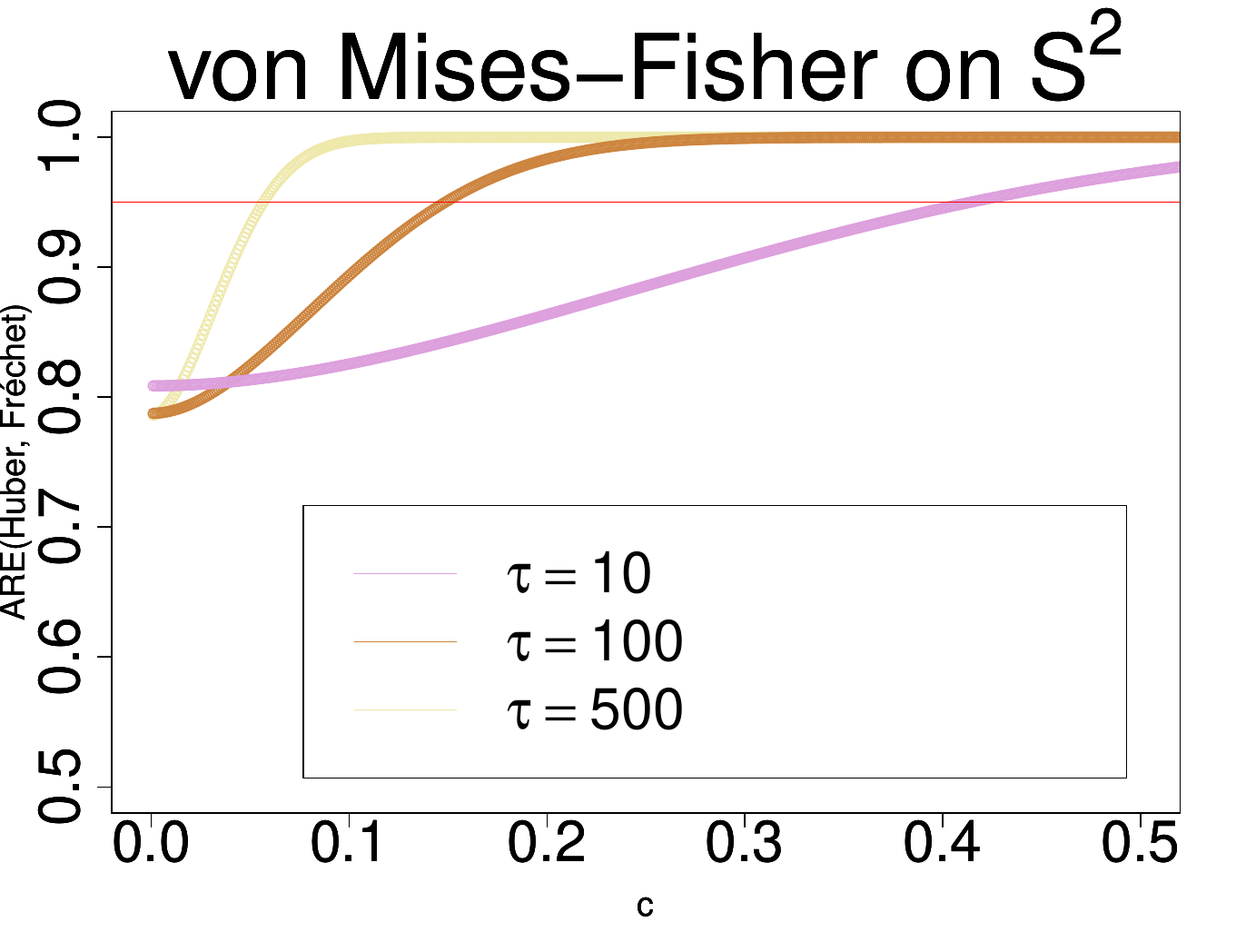}
    \includegraphics[scale=0.25]{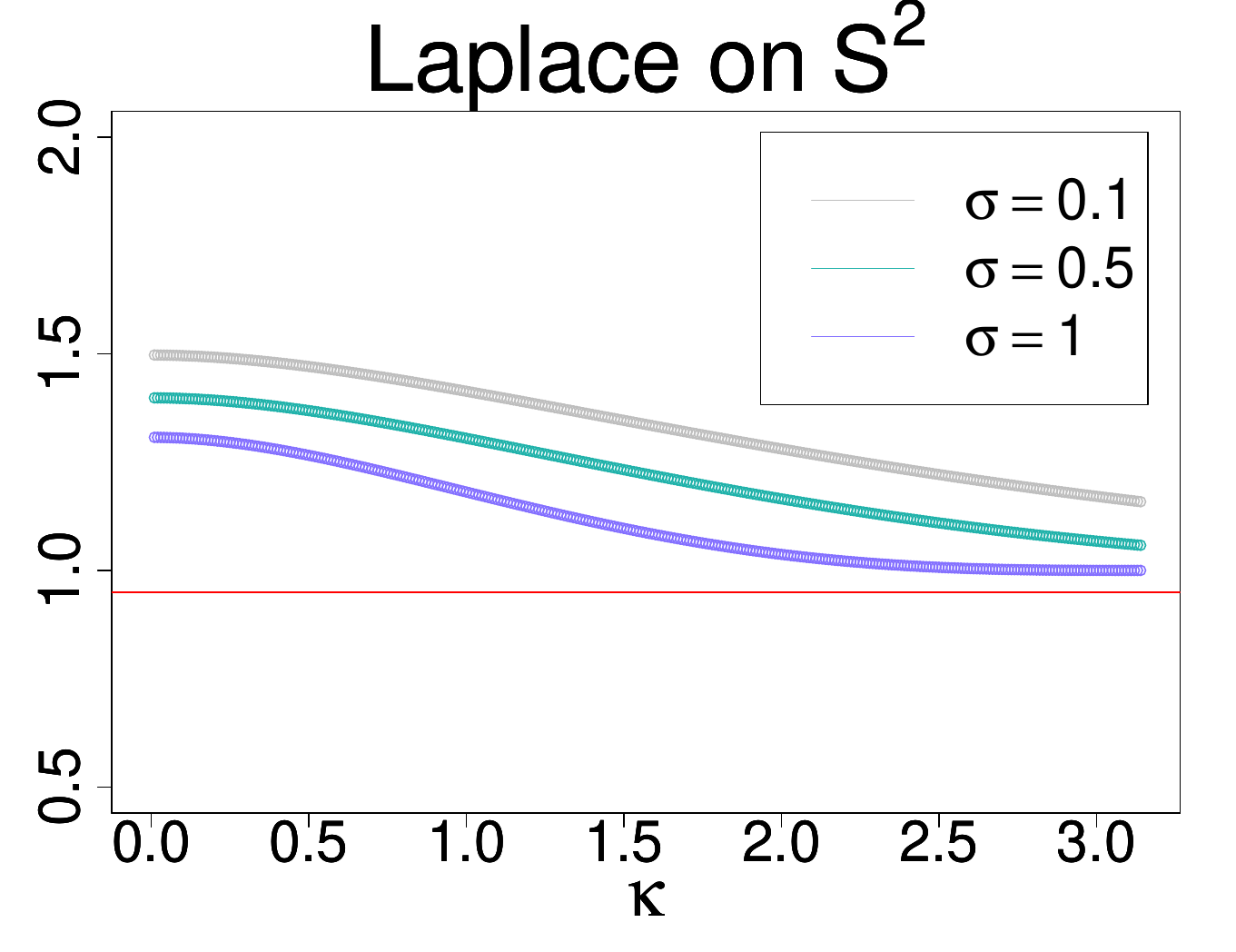}
    \caption{Graphs of $\textsf{ARE}(m^{c}_{n}, m^{\infty}_{n})$ over various choices of robustification parameter $c$. In each panel, the horizontal red line represents the value of 95\% relative efficiency compared to the \Frechet\ mean. 
    (Top) AREs computed over a grid of $\kappa = c/\sigma$ under Gaussian-type distributions with scale parameter $\sigma$ on $S^2$ and $S^3$. (Bottom left) AREs computed over a find grid of $c$ under the von Mises-Fisher distributions with concentration parameter $\tau$ on $S^2$ (Bottom right) AREs over a grid of $\kappa = c/\sigma$ under Laplace-type distributions on $S^2$ with scale parameter $\sigma$. }
    \label{fig:ARE:2sphere:3sphere}
\end{figure}

 In Figure \ref{fig:ARE:2sphere:3sphere}, we have also plotted the AREs of Huber means when the underlying distributions are von Mises-Fisher and Laplace. We point out that the Huber mean is asymptotically more efficient than the \Frechet\ mean under Laplace-type distributions on $S^2$.

\subsection{Proofs for Section \ref{sec:robust} and additional results}

Before verifying Theorems \ref{thm:robust:breakdown} and \ref{thm:breakdown:0.5}, we shall show that the population and sample Huber means for any $c \in [0, \infty]$ are isometric-equivariant, which is a desirable property of location estimators.

Recall that the uniqueness of Huber means is assumed in Section~\ref{sec:robust}.

\begin{lemma}\label{lem:huber:isoequi}
The population and sample (pseudo) Huber means for any $c \in [0, \infty]$ are isometric-equivariant.
\begin{proof}[Proof of Lemma~\ref{lem:huber:isoequi}]
Let $c \in [0, \infty]$ be prespecified, and $\psi$ be an isometry on $M$. 

\textbf{Case I} (Huber means). Suppose that $\rho_{c}$ is used. Since $m^{c}_{n}(\mathbf{X})$ minimizes the map $$m \mapsto \sum_{x \in \mathbf{X}} \rho_{c}\{d(m, x)\} = \sum_{x \in \mathbf{X}} \rho_{c}\{d(\psi(m), \psi(x))\} = \sum_{y \in \psi(\mathbf{X})} \rho_{c}\{d(\psi(m), y)\},$$ it follows that $\psi\{m^{c}_{n}(\bX)\}$ minimizes the map $m \mapsto \sum_{y\in \psi(\mathbf{X})} \rho_{c}\{d(m, y)\}$. We hence obtain that $m^{c}_{n}\{\psi(\bX)\} = \psi\{m^{c}_{n}(\bX)\}$. In other words, the sample Huber mean is isometric-equivariant.

We next verify the isometric-equivariance of \emph{population} Huber means. To this end, define $\psi(P_{X}):=P_{X} \circ \psi^{-1}$ that is a push-forward measure (distribution) of $P_{X}$ on $M$. Denote $m_{0}^{c}(P_{X})$ by the population Huber mean for $c$ with respect to $P_{X}$. Since $m_{0}^{c}(P_{X})$ minimizes the map 
\begin{eqnarray*}
m \mapsto \int \rho_{c}\{d(m, x)\}dP_{X}(x) &=& \int \rho_{c}\{d(\psi(m), \psi(x))\}dP_{X}(x) \\
&=& \int \rho_{c}\{d(\psi(m), y)\}d(P_{X}\circ \psi^{-1})(y),
\end{eqnarray*}
it follows that $\psi\{m_{0}^{c}(P_{X})\}$ minimizes the map $m \mapsto \int \rho_{c}\{d(m, y)\}d(P_{X} \circ \psi^{-1})(y)$. Therefore $m^{c}\{\psi(P_{X})\} = \psi\{m^{c}(P_{X})\}$. In other words, the population Huber mean for $c$ with respect to $P_{X}$ is isometric-equivariant.

\textbf{Case II} (Pseudo Huber means). An argument similar to the above verifies the isometric equivariance of (sample and population) pseudo-Huber means for any $c \in [0, \infty]$ by replacing $\rho_{c}$ with $\tilde{\rho_{c}}$.
\end{proof}
\end{lemma}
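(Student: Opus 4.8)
The plan is to exploit the fact that both the Huber and pseudo-Huber objective functions depend on the data only through the Riemannian distances $d(m, x_i)$, and that an isometry preserves these distances exactly. Since the relevant objective is a sum (or integral) of a fixed scalar function $\rho_c$ applied to such distances, transporting both the evaluation point and the data by $\psi$ leaves the objective value unchanged. This yields equivariance of the minimizers once one notes that $\psi$ is a bijection of $M$ onto itself.

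First I would treat the sample Huber mean. Fix $c \in [0,\infty]$ and an isometry $\psi: M \to M$. For data $\bX = (x_1, \ldots, x_n)$, the sample Huber mean $m^{c}_{n}(\bX)$ minimizes $m \mapsto \sum_{i=1}^n \rho_c\{d(m, x_i)\}$. Using $d(m, x_i) = d(\psi(m), \psi(x_i))$, this sum equals $\sum_{i=1}^n \rho_c\{d(\psi(m), \psi(x_i))\}$; writing $y_i = \psi(x_i)$, it is exactly the objective $m' \mapsto \sum_i \rho_c\{d(m', y_i)\}$ associated with the data $\psi(\bX)$, evaluated at $m' = \psi(m)$. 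Because $\psi$ is a bijection of $M$, minimizing $m \mapsto G(\psi(m))$ over $m \in M$ is equivalent to minimizing $G$ over all of $M$, with minimizers related by $\argmin_{m} G(\psi(m)) = \psi^{-1}(\argmin_{m'} G(m'))$. Hence $\psi\{m^{c}_{n}(\bX)\}$ is a minimizer of the $\psi(\bX)$-objective, and by the standing uniqueness assumption it equals $m^{c}_{n}\{\psi(\bX)\}$.

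For the population Huber mean I would use the pushforward measure $\psi(P_{X}) := P_{X} \circ \psi^{-1}$. By the standard change-of-variables formula, $\int \rho_c\{d(m', y)\}\, d(P_{X} \circ \psi^{-1})(y) = \int \rho_c\{d(m', \psi(x))\}\, dP_{X}(x)$, so the same distance-preservation computation shows that $m \mapsto \int \rho_c\{d(m, x)\}\, dP_{X}(x)$ coincides, under $m \mapsto \psi(m)$, with the $\psi(P_{X})$-objective. Bijectivity of $\psi$ again transfers the minimizer, giving $m^{c}\{\psi(P_{X})\} = \psi\{m^{c}(P_{X})\}$. The pseudo-Huber case is handled verbatim, replacing $\rho_c$ by $\tilde{\rho}_c$ throughout, since the argument never uses any property of the loss beyond its being a fixed function of the distance.

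The only load-bearing facts are that an isometry of a complete connected Riemannian manifold is a distance-preserving bijection of $M$ (so that $\{\psi(m): m \in M\} = M$ and minimizers can be pulled back), and neither smoothness nor convexity of the loss plays any role. Consequently there is no genuine obstacle; the one point to state carefully is the surjectivity of $\psi$, which is precisely what licenses replacing $\argmin_{m} G(\psi(m))$ by $\psi^{-1}(\argmin_{m'} G(m'))$.
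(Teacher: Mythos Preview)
Your proposal is correct and follows essentially the same route as the paper: both arguments use that $\rho_c$ (or $\tilde\rho_c$) depends on the data only through $d(m,x)$, invoke $d(m,x)=d(\psi(m),\psi(x))$, and for the population case apply the change-of-variables formula for the pushforward $P_X\circ\psi^{-1}$. Your only addition is the explicit remark that surjectivity of $\psi$ is what licenses transferring the argmin, a point the paper leaves implicit.
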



\subsubsection{Proof of Theorem \ref{thm:robust:breakdown}}

\begin{proof}[Proof of Theorem \ref{thm:robust:breakdown}]

We use an argument motivated by \citep[Theorem 2.2,][]{lopuhaa1991breakdown}. When $c=0$, this is proven by an application of \citep[Theorem 2,][]{fletcher2009geometric}. For this reason, assume that $c \in (0, \infty)$. Let $\bY_{k}$ be a corrupted set of observations from $\bX$. Specifically, $\bY_{k}$ includes $k$ arbitrary points on $M$ and $(n-k)$ points from $X$. Firstly we shall show that, for a given $k \le \floor{\frac{n-1}{2}}$ and any $\bY_{k}$, $d(m^{c}_{n}(\bX), m^{c}_{n}(\bY_{k}))$ is bounded by a constant that does not depend on the choice of $\bY_{k}$. Without loss of generality, assume that $\bY_{k}=(y_1, y_2, ..., y_n)\in M^n$ and each of $y_i ~ (1 \le i \le k)$ is an arbitrary point on $M$, $y_i=x_i$ for $k+1 \le i \le n$. Let $R := \max_{1 \le i
\le n}d(x_{i}, m^{c}_{n}(\bX))$, $B:=\overline{B_{2R}(m_{n}^{c}(\bX))}=\{p \in M : d(p, m^{c}_{n}(\bX)) \le 2R \}$, and $\delta := \max\{\inf_{p \in B} d(p, m^{c}_{n}(\bY_{k})), c\}$. Note that $0 \le \delta < \infty$. Due to triangle inequality, we also get $d(m^{c}_{n}(\bX), m^{c}_{n}(\bY_k)) \le 2R + \delta$. Here, denote $\rho_{c}(t)=0$ for $t \le 0$. If $y_i \neq x_i$, in view of the triangle inequality and the monotonicity of $\rho_{c}$, then
\begin{eqnarray}\label{eq:different}
d(y_i, m^{c}_{n}(\bY_{k})) 
&\ge& d(y_i, m^{c}_{n}(\bX)) - d(m^{c}_{n}(\bX), m^{c}_{n}(\bY_k)) \nonumber \\ 
&\ge& d(y_i, m^{c}_{n}(\bX)) - (2R + \delta) \nonumber \\
\Rightarrow \rho_{c}\{d(y_i, m^{c}_{n}(\bY_{k}))\} & \ge & \rho_{c}\{d(y_i, m^{c}_{n}(\bX)) - (2R + \delta)\}.  
\end{eqnarray}
If $y_i = x_i$, in view of the monotonicity of $\rho_{c}$, then
\begin{eqnarray}\label{eq:same}
d(y_i, m^{c}_{n}(\bY_k)) 
& = & d(x_i, m^{c}_{n}(\bY_k)) \ge R + \delta \ge d(y_i, m^{c}_{n}(\bX)) + \delta \nonumber \\
\Rightarrow \rho_{c}\{d(y_i, m^{c}_{n}(\bY_k))\} 
& \ge & \rho_{c}\{d(y_i, m^{c}_{n}(\bX)) + \delta\}.  
\end{eqnarray}
Note that $2ct-c^2 \le \rho_{c}(t)$ for any $t\in (-\infty, \infty)$ and that $\rho_{c}(t) < 2ct$ for any $t \in (0, \infty)$. Moreover, $\delta \ge c$ implies that $\rho_{c}(t+\delta) - \rho_{c}(t) \ge c \cdot \delta$ for $t \in [0, \infty)$. With these in mind, assume that $\delta > \frac{2k(c+2R)}{n-2k}$. By (\ref{eq:different}) and (\ref{eq:same}),
\begin{align*}
\sum_{i=1}^{n}  \rho_{c}\{d(y_i,& m^{c}_{n}(\bY_k))\} \ge  \sum_{i=1}^{k} \rho_{c}\{d(y_i, m^{c}_{n}(X))-(2R+\delta)\}  \\ & + \sum_{i=k+1}^{2k} \rho_{c}\{d(y_i, m^{c}_{n}(X))+\delta\} + \sum_{i=2k+1}^{n} \rho_{c}\{d(y_i, m^{c}_{n}(X))+\delta\}  \\
 \ge & -2kc^2 - 4cRk + \sum_{i=1}^{k} [2c \cdot \{d(y_i, m^{c}_{n}(X)) + d(y_{k+i}, m^{c}_{n}(X))\}] \\ & + \sum_{i=2k+1}^{n} [\rho_{c}\{d(y_i, m^{c}_{n}(X))\} +c\delta] \\
 \ge &-2kc^2 -4cRk +(n-2k)c\delta + \sum_{i=1}^{n} \rho_{c}\{d(y_i, m^{c}_{n}(X))\} \\ > & \sum_{i=1}^{n} \rho_{c}\{d(y_i, m^{c}_{n}(X))\},  
\end{align*} 
which contradicts the fact that $m^{c}_{n}(\bY_k)$ is the Huber mean of $\bY_k$. Thus, $\delta \le \frac{2k(c+2R)}{n-2k}$ and $d(m^{c}_{n}(\bX), m^{c}_{n}(\bY_k)) \le 2R + \delta \le 2R + \frac{2k(c+2R)}{n-2k}$ for all $\bY_k$. Therefore, $\epsilon^*(m^{c}_{n},\bX) \ge \floor{\frac{n+1}{2}/n}$.
\end{proof}

Note that Theorem~\ref{thm:robust:breakdown} holds only when the Huber loss is used. When the pseudo-Huber loss is used, we guess that the same result holds, but it has been subtly unproven.

\subsubsection{Proof of Theorem \ref{thm:breakdown:0.5}}

If the isometry group of $M$, denoted by $\textsf{Iso}(M)$, is transitive, then $M$ is said to be a \emph{homogeneous manifold} (of $\textsf{Iso}(M)$). The isometry group $\textsf{Iso}(M)$ is said to be \textit{transitive} if for any $m_{1}, m_{2} \in M$ there exists a $\psi \in \textsf{Iso}(M)$ such that $\psi(m_{1})=m_{2}$.

\begin{proof}[Proof of Theorem~\ref{thm:breakdown:0.5}]

The proof is primarily motivated by an argument used in \citep[Theorem 2.1,][]{lopuhaa1991breakdown}. Let $\bY_{k}$ be a corrupted set of observations from $\bX$. Specifically, $\bY_{k}$ includes $k$ arbitrary points on $M$ and $(n-k)$ points from $X$. Suppose that $\epsilon^{*}(m_n, \bX) > \floor{\frac{n+1}{2}}/n$. To show this, choose an integer $k$ such that $\floor{\frac{n+1}{2}} \le k < n$. Since $\epsilon^{*}(m_n, \bX) > \floor{\frac{n+1}{2}}/n$, there exists $0 < C < \infty$ such that, for every $\bY_{k}$,
\begin{equation}\label{eq:bound}
d(p, m_{n}(\bY_k)) \le C. 
\end{equation}
Here, we denote $p=m_{n}(\bX)$. In other words, $m_{n}(\mathbf{Y}_{k})$ lies in the closed ball of radius $C$ centered at $p$. Choose an isometry, say $\psi$, satisfying $d(p, \psi(p)) > 2C$. (It can be possible because $M$ is unbounded and $\textsf{Iso}(M)$ transitively acts on $M$.) It is known that all isometries on $M$ are bijective when $M$ is connected and complete. As $\psi$ is bijective, $\psi^{-1}$ exists and is also isometric. Note that $2(n-k) \le 2(n-\floor{\frac{n+1}{2}}) \le n$. Let $\bY_{k}$ be a corrupted set of observations (from $\bX$) that contains the points $\{x_{1}, x_{2}, ..., x_{n-k}, \psi(x_{1}), \psi(x_{2}), ..., \psi(x_{n-k})\}$, and define 
\begin{eqnarray*}
\bZ_{k} = \psi^{-1}(\bY_{k}) = \{\psi^{-1}(x_{1}), \psi^{-1}(x_{2}), \ldots, \psi^{-1}(x_{n-k}), \psi^{-1}(\psi(x_{1})), \\  
\qquad \psi^{-1}(\psi(x_{2})), \ldots, \psi^{-1}(\psi(x_{n-k})), \ldots \},
\end{eqnarray*}
which equals $\{x_{1}, x_{2}, ..., x_{n-k}, \psi^{-1}(x_{1}), \psi^{-1}(x_{2}), ..., \psi^{-1}(x_{n-k}), ... \}$. Both $\bY_{k}$ and $\bZ_{k}$ are corrupted sets of points in $\bX_{k}$ with $(n-k)$ fixed points, $x_{1}, x_{2}, ..., x_{n-k}$. The main idea of this proof is to observe that both $M$-valued estimators of $\mathbf{Y}_{k}, \mathbf{Z}_{k}$ cannot simultaneously lie in the closed ball of radius $C$ centered at $p$. From (\ref{eq:bound}), it holds that $d(p, m_{n}(\bY_{k})), d(p, m_{n}(\bZ_{k})) \le C$. Since $m_{n}$ is isometric-equivariant,
\begin{eqnarray*}
2C \ge d(p, m_{n}(\bY_{k})) + d(p, m_{n}(\bZ_{k})) &=& d(p, m_{n}(\bY_{k})) + d\{p, \psi^{-1}(m_{n}(\bY_{k}))\}  \\
&=& d(p, m_{n}(\bY_{k})) + d\{\psi(p), m_{n}(\bY_{k})\} \\
&\ge& d(p, \psi(p)) \\
&>& 2C,
\end{eqnarray*}
which is a contradiction. Therefore, $\epsilon^{*}(m_{n}, \bX) \le \floor{\frac{n+1}{2}}/n$.
\end{proof}

\subsubsection{Additional theorems for robustness of Huber means}

While the proof of Theorem 7 utilizes a fancy argument motivated by \citep[Theorem 2.1,][]{lopuhaa1991breakdown}, it primarily depends on the transitivity of $\mbox{Iso}(M)$. Under a small perturbation of the Riemannian metric (or deformation) of a given homogeneous manifold, its isometry group reduces to $\{\mbox{identity map}\}$. Although it might be possible that the result stated in Theorem 7 continues to hold for deformed manifolds, we are unable to verify it using the argument used in Theorem 7. 

Instead, we assume that $M$ belongs to the class of Hadamard manifolds.\footnote{That is, it is a simply connected and complete manifold with non-positive curvature. It is also written as ``global NPC manifold" \citep[Proposition 3.1,][]{sturm2003probability}.} 
Huber means are unique for data on a Hadamard manifold (see Corollary~\ref{cor:unique:sam}). 

\paragraph{Breakdown point of Huber means on Hadamard manifolds}
We first show that the breakdown point of (pseudo) Huber means for any $c \in [0, \infty]$ is not greater than 0.5.

\begin{theorem}
Suppose $M$ is a Hadamard manifold. For a prespecified $c \in [0, \infty]$ and for an $n$-tuple of observations on $M$, say $\textbf{X}=(x_1, x_2, \ldots, x_n)\in M^n$, it holds that  $\epsilon^{*}(m^{c}_{n},\mathbf{X})\le \floor{\frac{n+1}{2}}/n$ and $\epsilon^{*}(\tilde{m}^{c}_{n},\mathbf{X}) \le \floor{\frac{n+1}{2}}/n$. 
\begin{proof}[Proof of Theorem~\ref{thm:hada:supp}]

The essence of this proof is to note that Hadamard manifolds are diffeomorphic to $\mathbb{R}^{k}$ due to Cartan-Hadamard theorem \citep[e.g., see Theorem 1.10,][]{lee2006riemannian}. In particular, for any $p \in M$, $\mbox{Exp}_{p}$ is a diffeomorphism between $M$ and $T_{p}M ~(\cong \mathbb{R}^{k})$. Naturally, it holds that $r_{\mbox{\tiny inj}}(M)=\infty$ by the definition of injectivity radius, thereby implying that $m^{c}_{n}(\cdot)$ and $\tilde{m}^{c}_{n}(\cdot)$ are well-defined with probability 1 by Corollary B.5.

We begin with a proof for Huber means. If all the observations in $\mathbf{X}=(x_1,x_2,\ldots, x_{n})$ live in a single geodesic in $M$, then the proof easily ends, since the geodesic is isometric to $\mathbb{R}$. For this reason, we may assume that the observations of $\mathbf{X}$ do not lie in a single geodesic in $M$.

\textbf{Case I.} $n=2k$ for some positive integer $k$.

Suppose that $\epsilon^{*}(m^{c}_{n},\mathbf{X}) > \floor{\frac{n+1}{2}}/n = k/2k$. It means that there exists an $n$-tuple of observations $\textbf{X} = (x_1, x_2, \ldots, x_{2k})$ and a large $C>0$ such that $\overline{\textsf{conv}(\mathbf{X})} \subsetneq B_{C}\{m^{c}_{n}(\textbf{X})\}:= \{x\in M : d(x, m^{c}_{n}(\textbf{X})) < C \}$ holds and for any corrupted $\textbf{Y}_{k}=(x_1, \ldots, x_k, y_1, \ldots, y_k)$, it holds that
\begin{equation}\label{eq:contain:ball:supp}
d(m^{c}_{n}(\textbf{X}), m^{c}_{n}(\mathbf{Y}_{k})) < C, 
\end{equation}
which means that $m^{c}_{n}(\mathbf{Y}_{k}) \in B_{C}\{m^{c}_{n}(\mathbf{X}_{k})\}$. Now choose a point $p$ on the boundary of $B_{C}\{m^{c}_{n}(\textbf{X})\}$. For each $i$, define $y_i$ to be a reflection point of $x_i$ with respect to $p$; specifically, $y_{i} = \mbox{Exp}_{p}\{-\mbox{Log}_{p}(x_i)\}$ for any $i=1, 2, \ldots, k$. That is, the point $p$ is the midpoint of a unique geodesic joining $x_i$ and $y_i$. Notice that the Huber functional at $x$, $F^{c}_{n}(x)= \sum_{i=1}^{k}[\rho_{c}\{d(x, x_i)\} + \rho_{c}\{d(x, y_i)\}]$, is differentiable on $M \setminus \{x_1, x_2, \ldots, x_n\}$ and convex on the entire $M$, as the space is Hadamard. Hence, $-\textsf{grad}|_{x=p}F^{c}_{n}(x)$ is an inward-pointing vector across the boundary of $B_{C}\{m^{c}_{n}(\textbf{X})\}$ due to (\ref{eq:contain:ball:supp}). However, we get
\begin{eqnarray*}
-\textsf{grad}|_{x=p}F^{c}_{n}(x) &=& -\sum_{i=1}^{k} [\textsf{grad}|_{x=p}\rho_{c}\{d(x, x_i)\} + \textsf{grad}|_{x=p}\rho_{c}\{d(x, y_i)\}] \\
&=& \sum_{i=1}^{k} \rho'_{c}\{d(p, x_i)\} \cdot \frac{\mbox{Log}_{p}(x_i)}{\| \mbox{Log}_{p}(x_i) \|_{p}} + \rho'_{c}\{d(p, y_i)\} \cdot \frac{\mbox{Log}_{p}(y_i)}{\| \mbox{Log}_{p}(y_i) \|_{p}}\} \\
&=& \sum_{i=1}^{k} \rho'_{c}\{d(p, x_i)\}\{\frac{\mbox{Log}_{p}(x_i)}{\| \mbox{Log}_{p}(y_i) \|_{p}} - \frac{\mbox{Log}_{p}(x_i)}{\| \mbox{Log}_{p}(x_i) \|_{p}} \} \\
&=& \textbf{0} \in T_{p}M,
\end{eqnarray*}
meaning that p is the global minimum of $F^{c}_{n}$. It is a contradiction to (\ref{eq:contain:ball:supp}). 

\textbf{Case II.} $n=2k+1$ for some non-negative integer $k$.
An argument similar to the above easily verifies this case; specifically, for a corrupted $\textbf{Y}_{k}= (x_1, \ldots, x_k, y_1, \ldots, y_k, y_{k+1})$, define $y_i ~ (1 \le i \le k$) as same in Case I, and $y_{k+1} = \mbox{Exp}_{p}\{-\mbox{Log}_p(m^{c}_{n}(\mathbf{X}))\}$. Then, $-\textsf{grad}|_{x=p}F^{c}_{n}(x)$ is not an inward-pointing vector across the boundary of $B_{C}\{m^{c}_{n}(\textbf{X})\}$.

Owing to Cases I and II, we have proven that $\epsilon^{*}(m^{c}_{n},\mathbf{X}) \le \floor{\frac{n+1}{2}}/n$. The same argument is used to prove the result in the case of the pseudo-Huber mean; that is, $\epsilon^{*}(\tilde{m}^{c}_{n},\mathbf{X}) \le \floor{\frac{n+1}{2}}/n$. 
\end{proof}
\label{thm:hada:supp}
\end{theorem}

\begin{remark} In Theorem 7 of the main article, we consider the case where $M$ belongs to the class of (unbounded) Riemannian homogeneous spaces. In contrast, Theorem~\ref{thm:hada:supp} addresses the case where $M$ is a Hadamard manifold. 
Although both classes of manifolds are sufficiently large and practically important, they are not contained within one another.
The distinction between these two classes is illustrated in the following Venn diagram, along with representative examples.
\end{remark}
{ 
\begin{center} 
\begin{tikzpicture}

\draw[fill=blue!20, opacity=0.5] (4,0) circle (4); 
\draw[fill=red!20, opacity=0.5] (8,0) circle (4); 
\centering
\node at (3.5, 4) {\textbf{Hadamard manifolds}};
\node at (9, 4) {\textbf{Homogeneous manifolds}};


\node[align=center] at (2.1, 1.5) {Deformations of $\mathbb{H}^{k}$ \\
and their product spaces};
\node[align=center] at (2.1, -1.0) {Graph manifold \\
(with negative curvature)};

\node[align=center] at (6, 1.2)  {$\mbox{Sym}^{+}(k)$};
\node[align=center] at (6, 2.8) {$\mathbb{R}^k$};
\node[align=center] at (6, -2.5) {$\mathbb{R}^{3,1}$ \\(\small Minkowski space; pseudo-Riemannian)};
 
 \node[align=center] at (6, -1.4) {$\mathbb{C}\mathbb{H}^k$ \\ 
\small(complex hyperbolic space)};

\node[align=center] at (6, 2.0) {$\mathbb{H}^k$\\};
\node[align=center] at (6, 0) {$\mbox{GL}(k,m)$ \\
(\small Grassmannian manifold)};

\node[align=center] at (9.5, 2.8) {$S^1 \times \mathbb{R}^k$};
\node[align=center] at (9.5, 1.5) {$S^k$};
\node[align=center] at (9.5, -0.1) {$SO(k)$};

\node[align=center] at (9.5, -2.0) {$\mathbb{C}P^k$ \\
(complex projective space)};

\end{tikzpicture}
\end{center} 
}

\paragraph{Breakdown point of \Frechet\ means on Hadamard manifolds}
Motivated by the argument utilized in the proof of Theorem \ref{thm:hada:supp}, we secondly prove that the breakdown point of Fr\'{e}chet mean is asymptotically 0 when $M$ is a Hadamard manifold.

\begin{theorem}
Suppose $M$ is a Hadamard manifold. For an $n$-tuple of observations on $M$, say $\textbf{X}=(x_1, x_2, \ldots, x_n)$, it holds that $\epsilon^{*}(m^{\infty}_{n},\mathbf{X}) = 1/n$. 
\begin{proof}[Proof of Theorem~\ref{thm:frechet:supp}]
assume that $\epsilon^{*}(m^{\infty}_{n},\mathbf{X}) > 1/n$. It means that there exists an $n$-tuple of observations $\textbf{X} = (x_1, x_2, \ldots, x_{n})$ and a sufficiently large constant $C$ such that $\overline{\textsf{conv}(\mathbf{X})} \subsetneq B_{C}\{m^{\infty}_{n}(\textbf{X})\}$ and for any corrupted $\textbf{Y}_{1}=(y_1, x_2, x_3, \ldots, x_n) \in M^n$ it holds that
\begin{equation}\label{eq:contain:ball2:supp}
d(m^{\infty}_{n}(\textbf{X}), m^{c}_{n}(\textbf{Y}_{1})) < C
\end{equation}
Now select a point $p$ on the boundary of $B_{C}\{m^{c}_{n}(\textbf{X})\}$, and define $y_1 = \mbox{Exp}_{p}\{-2\sum_{i=2}^{n}\mbox{Log}_{p}(x_i)\}$. It is noteworthy that the \Frechet\ functional of $x$, $F^{\infty}_{n}(x)$, is differentiable and strictly convex on the entire $M$, as the space is Hadamard. On the other hand, we naturally obtain that  
\begin{eqnarray*}
-\textsf{grad}|_{x=p}F^{\infty}_{n}(x) &=& - \textsf{grad}|_{x=p} d^2(x, y_1) - \sum_{i=2}^{n} \textsf{grad}|_{x=p}d^2(x, x_i) \\
&=& 2\sum_{i=2}^{n}\mbox{Log}_{p}(x_i) - 2\sum_{i=2}^{n}\mbox{Log}_{p}(x_i) \\
&=& \textbf{0} \in T_{p}M,
\end{eqnarray*}
meaning that $p$ is equal to $m^{c}_{n}(\textbf{Y}_1)$. It is a contradiction to (\ref{eq:contain:ball2:supp}).
\end{proof}
\label{thm:frechet:supp}
\end{theorem}

\paragraph{Breakdown point of \Frechet\ mean is 0 when $M$ is unbounded.}

A natural question arising from Theorem~\ref{thm:frechet:supp} is whether Hadamard $M$ is necessary for \Frechet\ mean having the breakdown point of 0. However, the sample Huber means may not be unique in general Riemannian manifolds. To circumvent the problem, we redefine the breakdown point of the sample Huber means (including the \Frechet\ mean) as follows: 
Let $\bX$ be the data at hand, recall that $\mathbf{Y}_{k}$ is $\bX$ with its $k$ elements replaced by arbitrary values. 
We write $m^{c}_{n}$ for a sample Huber mean, which is an arbitrary member of $E^{c}_{n}(\mathbf{X})$, the set of sample Huber means. 
For the situation where $E_n^c(\mathbf{X})$ has multiple elements, we redefine the breakdown point of a potentially set-valued statistic $E_n^c$ at $\textbf{X}$ to be 
\begin{eqnarray}
\epsilon(m^{c}_{n}, \mathbf{X}) = \min_{1\le k\le n}\{\frac{k}{n}: \sup_{m^{c}_{n}(\mathbf{X}) \in E^{c}_{n}(\mathbf{X})} \sup_{\mathbf{Y}_{k}}\sup_{m^{c}_{n}(\mathbf{Y}_{k}) \in E^{c}_{n}(\mathbf{Y}_{k})} d(m^{c}_{n}(\mathbf{X}), m^{c}_{n}(\mathbf{Y}_{k})) = \infty \}.
\label{eq:redefine:break:supp}
\end{eqnarray}
Here, $E^{c}_{n}(\mathbf{X})$ (or $E^{c}_{n}(\mathbf{Y}_{k})$) denotes by the set of sample Huber means at $\mathbf{X}$ (or at $\mathbf{Y}_{k}$, respectively), and the supremum is taken over all possible $m^{c}_{n}(\mathbf{X})$, $\mathbf{Y}_{k}$, and  $m^{c}_{n}(\mathbf{Y}_{k})$. If $m_n^c$ is unique, then the definition (\ref{eq:redefine:break:supp}) reduces to the usual definition of the breakdown point (as defined in Section 3.7 of the main article).



Using the extended definition of the breakdown point (\ref{eq:redefine:break:supp}), We additionally show the result below.
\begin{theorem}
Suppose $M$ is unbounded. For any $n$-tuple of observations on $M$, say $\textbf{X}=(x_1, x_2, \ldots, x_n)$, it holds that  $\epsilon(m^{\infty}_{n},\mathbf{X})= 1/n$.  
\begin{proof}[Proof of Theorem~\ref{thm:frechet:zero:supp}]
Suppose $\epsilon(m^{\infty}_{n},\mathbf{X}) > 1/n$. Pick one element of $E^{c}_{n}(\mathbf{X})$, and denote by $m^{c}_{n}(\mathbf{X})$. For any choice of $m^{c}_{n}(\mathbf{Y}_1)$ there exists a sufficiently large $C < \infty$ such that all observations in $\mathbf{X}$ are contained in $B_{C}\{m^{c}_{n}(\mathbf{X})\}$ and for any corrupted $\textbf{Y}_{1}=(y_1, x_2, x_3, \ldots, x_n) \in M^n$ it holds that
\begin{equation}\label{unbounded:ball:supp}
d(m^{\infty}_{n}(\textbf{X}), m^{c}_{n}(\textbf{Y}_{1})) < C.
\end{equation}
It is known that there exists a point $p \in \partial\{B_{C}(m^{c}_{n}(\mathbf{X}))\}$ such that a geodesic ray starting from $m^{c}_{n}(\mathbf{X})$ through $p$ is an escaping geodesic with unit speed $\gamma:[0, \infty) \to M$. The term ``escaping" geodesic means that $d(\gamma(t), p) \underset{t\to \infty}{\to} \infty$ (e.g., see \citep[Exercise 7.6,][]{do1992riemannian}); that is, the escaping geodesic extends infinitely in a one direction, and does not oscillate wildly.

Denote $y_{1}= \mbox{Exp}_{p}[-(10n-1)\cdot \mbox{Log}_{p}\{(m^{c}_{n}(\mathbf{X})\}]$ and $\mathbf{Y}_{1}= (y_1, x_2, \ldots, x_n)$, and denote $q = \mbox{Exp}_{p}[-9\cdot\mbox{Log}_{p}\{(m^{c}_{n}(\mathbf{X})\}]$. Obviously, the point $q$ does not lie within the ball of radius $C$ center at $m^{c}_{n}(\mathbf{X})$. Given an $m^{c}_{n}(\mathbf{Y}_1)$, due to (\ref{unbounded:ball:supp}) 
\begin{eqnarray*}
\{(10n-1)C\}^2 \le d^2(y_1, m^{\infty}_{n}(\mathbf{Y}_{1})) &\le& F^{c}_{n}(m^{\infty}_{n}(\mathbf{Y}_{1})) ~ (\le F^{c}_{n}(q)) \\
&\le& d^2(y_1, q) + \sum_{i=2}^{n}d^2(x_i, q) \\ 
&\le& d^2(y_1, q) + \sum_{i=2}^{n} \{d(x_i, m^{\infty}_{n}(\mathbf{X})) + d(m^{\infty}_{n}(\mathbf{X}), q)\}^2 \\
&\le& \{10(n-1)C\}^2 + (n-1)\cdot (11C)^2.  
\end{eqnarray*}
It is equivalent to
\begin{equation*}
(10n-1)^2 \le 100(n-1)^2 + 121(n-1) \quad
\Leftrightarrow \quad 0 \le -59n - 22,
\end{equation*}
which is a contradiction. Since $m^{c}_{n}(\mathbf{X})$ was arbitrarily chosen from $E^{c}_{n}(\mathbf{X})$, the proof ends.
\end{proof}
\label{thm:frechet:zero:supp}
\end{theorem}

\color{black}
\section{Additional numerical results}

\subsection{Performances of the covariance estimator and the one-sample location test}

This subsection examines the numerical performance of the limiting covariance matrix estimator $\hat{A}_c = \hat{H}_c^{-1} \hat\Sigma_c \hat{H}_c^{-1} $ in Section \ref{subsec:est:cov}, and finite-sample type I error rates and powers of the one-sample location test discussed in Section \ref{subsec:application:hypo}.

To evaluate the performance of the estimator $\hat{A}_c$, we consider the situation where the limiting covariance matrix $A_c$ is estimated for the von Mises-Fisher distribution $\textsf{vMF}(m_0, 30)$ on the unit sphere $S^k$ (for $k = 2$ and $3$). Since $A_c$ itself is challenging to evaluate, we use an empirical covariance matrix of $\sqrt{{\tilde{n}}} \mbox{Log}_{m_0}(m_{\tilde{n}}^c)$, in which $m_{\tilde{n}}^c$ is computed from a large sample of size $ {\tilde{n}} = 1000$. Specifically, the true $A_c$ is approximated by $\textsf{A}_c := \frac{{\tilde{n}}}{R}\sum_{r= 1}^R  \{\mbox{Log}_{m_0}(m_{{\tilde{n}}}^c(r)) \} \cdot\{\mbox{Log}_{m_0}(m_{{\tilde{n}}}^c(r)) \}^T$, where $m_{{\tilde{n}}}^c(r)$ is the sample Huber mean, computed from the $r$th random sample of size ${\tilde{n}}$ for $r= 1, 2, \ldots, R$.

The estimator $\hat{A}_c$ (computed from a sample of size $n$) is then computed and compared with the (approximate) truth $\textsf{A}_c$ in terms of the relative error $\| \hat{A}_c - \textsf{A}_c\|_F /\| \textsf{A}_c\|_F$. Table \ref{tab:comparison} displays the averages (standard deviation) of the relative error, for varying sample sizes $n = 50, 100, 500, 1000$ and for a few representative choices of the parameter $c = 0.1, 0.3, 0.5$. (The proportion of observations whose distance from $m_0$ is greater than $c = 0.1, 0.3, 0.5$ are roughly 90\%, 50\%, 5\% for both $k = 2$ and $3$.) We observe that the relative error decreases as $n$ increases, and the estimator $\hat{A}_c$ appears to be an accurate estimator of $A_c$. 

\begin{table}[t]
\centering
\caption{Comparison of relative errors of $\hat{A}_c$ for $\textsf{vMF}(m_0, 30)$. Shown are the averages of relative errors with standard deviations in parenthesis.}
\label{tab:comparison}
\begin{tabular}{cccccc}
\hline
Manifold & $c$ & \( n = 50 \) & \( n = 100 \) & \( n = 500 \) & \( n = 1000 \) \\ \hline
&  \( 0.1 \) & 0.441 (0.322) & 0.294 (0.156) & 0.119 (0.052) & 0.086 (0.036) \\ 
\( S^2\) &  \( 0.3 \) & 0.275 (0.138) & 0.197 (0.096) & 0.089 (0.042) & 0.066 (0.029)\\
&  \( 0.5 \) & 0.241 (0.116) & 0.173 (0.079) & 0.090 (0.039) & 0.071 (0.030)\\ \hline
& \( 0.1 \) & 0.473 (0.213) & 0.321 (0.129) & 0.140 (0.047) & 0.102 (0.033)\\
\( S^3\)&  \(0.3 \) & 0.320 (0.119) & 0.226 (0.072) & 0.100 (0.028) & 0.073 (0.021)\\
& \( 0.5 \) & 0.290 (0.102) & 0.204 (0.068) & 0.096 (0.031) & 0.074 (0.022) \\ \hline
\end{tabular}
\end{table}

We next evaluate the finite-sample performance of the one-sample location test procedure utilizing the Huber mean. For this, we consider four models. 
\begin{enumerate}
    \item $\textsf{vMF}(m_0,30)$ on the unit sphere $S^2$
    \item $\tfrac{9}{10}\textsf{vMF}(m_0,30) + \tfrac{1}{10}\textsf{vMF}(m_0,1)$ on the unit sphere $S^2$
    \item Lognormal on $\mbox{Sym}^+(2)$, with mean $\delta \cdot 1_3$ and covariance $\tfrac{1}{2}I_3 +\tfrac{1}{2}1_31_3^T$ (in the vectorized coordinate)
    \item Log-Laplace on $\mbox{Sym}^+(2)$, with mean $\delta \cdot 1_3$ and scale parameter $\tfrac{5}{4}$
\end{enumerate}
For the first two models, the null hypothesis is set as $H_0:m_0 = \tilde{m}_0$ for a fixed location $\tilde{m}_0 \in S^2$. For each sample of size $n$, we have applied the proposed Huber mean-based test (in Section 3.5) for $c= 0.3$ at significance level $5\%$, while the true $m_0$ is chosen to satisfy either  
$d(m_0, \tilde{m}_0) = 0, 1, 2, 3, 4, 5$ degrees. We also compare the performance with that utilizing the \Frechet\ mean. Under the first model, we expect that both Huber and \Frechet\ mean-based tests perform similarly. On the other hand, the Huber mean-based test is expected to have higher powers for the second model. The second model is a scale mixture, thus exhibiting longer tails than the first model. 

For to the latter two models, the null hypothesis is set as $H_0: \delta = 0$ (or, equivalently, $m_0 = I_2$). We compare the size of the Huber mean-based test (for $c=3$) and the \Frechet\ mean-based test at significance level $5\%$, while the true mean 
$$m_0 = \delta
 \begin{pmatrix}
     1 & 0.5 \\
     0.5 & 1
 \end{pmatrix}$$
 is given by $\delta = 0, 0.05, 0.1, 0.15, 0.2, 0.25$. Here, Model 4 has longer tails than Model 3.
 
The probabilities of rejection of the test for finite sample sizes $n = 100, 300, 500, 1000$ are approximated using $1,000$ repetitions, and are shown in Tables~\ref{tab:comparison2} and \ref{tab:comparison3}. We observe that the type I error rates of the test procedure are well controlled at the significance level of 0.05, and that the power of the test sharply increases as sample size increases and as the effect size increases. Comparing Model 1 against Model 2 (and also Model 3 against Model 4), we observe that the power of the Huber mean-based test is higher than the \Frechet\ mean-based test, when the distribution has a higher tail. 

 
\begin{table}[t]
\centering
\caption{Models 1 and 2. Rejection rates of the one-sample location test. Leftmost column shows the values of $d(m_0,\tilde{m}_0)$ (degrees). }
\label{tab:comparison2}
\begin{tabular}{c|cccc|cccc}
\hline
Model 1 & \multicolumn{4}{c|}{Huber} & \multicolumn{4}{c}{\Frechet} \\
\hline
$n$  & $100$& $300$ & $500$ & $1000 $
& $100$& $300$ & $500$ & $1000 $\\ \hline
$0^\circ$ & 0.048 & 0.036 & 0.06 & 0.044 & 0.048 & 0.047 & 0.061 & 0.052 \\ 
$1^\circ$ & 0.133 & 0.255 & 0.458 & 0.755 & 0.141 & 0.268 & 0.468 & 0.771 \\
$2^\circ$ & 0.364 & 0.814 & 0.964 & 1 & 0.401 & 0.823 & 0.977 & 1 \\
$3^\circ$ & 0.674 & 0.998 & 1 & 1 & 0.693 & 0.998 & 1 & 1 \\ 
$4^\circ$ & 0.896 & 1 & 1 & 1 & 0.906 & 1 & 1 & 1 \\
$5^\circ$ & 0.983 & 1 & 1 & 1 & 0.984 & 1 & 1 & 1 \\ 
\hline
\\
\hline
Model 2 & \multicolumn{4}{c|}{Huber} & \multicolumn{4}{c}{\Frechet} \\
\hline
$n$  & $100$& $300$ & $500$ & $1000 $
& $100$& $300$ & $500$ & $1000 $\\ \hline
$0^\circ$ &  0.044 & 0.039 & 0.048 & 0.046 & 0.049 & 0.049 & 0.037 & 0.041 \\
$1^\circ$ &     0.105 & 0.211 & 0.347 & 0.653 & 0.064 & 0.093 & 0.128 & 0.254 \\
$2^\circ$ &     0.263 & 0.708 & 0.912 & 0.999 & 0.131 & 0.304 & 0.465 & 0.783 \\
$3^\circ$ &      0.540 & 0.968 & 1 & 1 & 0.248 & 0.562 & 0.777 & 0.982 \\
$4^\circ$ &    0.805 & 1 & 1 & 1 & 0.396 & 0.850 & 0.967 & 1 \\
$5^\circ$ &    0.950 & 1 & 1 & 1 & 0.571 & 0.964 & 0.997 & 1 \\   \hline
\end{tabular}
\end{table}

\begin{table}[!t]
\centering
\caption{Models 3 and 4. Rejection rates of the one-sample location test. The Leftmost column shows the values of $\delta$. }
\label{tab:comparison3}
\begin{tabular}{c|cccc|cccc}
\hline
Model 3 & \multicolumn{4}{c|}{Huber} & \multicolumn{4}{c}{\Frechet} \\
\hline
$n$  & $100$& $300$ & $500$ & $1000 $
& $100$& $300$ & $500$ & $1000 $\\ \hline
        0.00  & 0.026 & 0.041 & 0.036 & 0.033 & 0.025 & 0.041 & 0.038 & 0.030 \\
        0.05  & 0.042 & 0.087 & 0.129 & 0.310 & 0.041 & 0.089 & 0.125 & 0.301 \\
        0.10  & 0.097 & 0.328 & 0.568 & 0.907 & 0.094 & 0.340 & 0.576 & 0.916 \\
        0.15  & 0.240 & 0.732 & 0.930 & 0.999 & 0.243 & 0.727 & 0.938 & 0.999 \\
        0.20  & 0.427 & 0.940 & 0.994 & 1 & 0.418 & 0.942 & 0.995 & 1 \\
        0.25  & 0.644 & 0.993 & 1 & 1 & 0.653 & 0.995 & 1 & 1 \\
\hline
\\
\hline
Model 4 & \multicolumn{4}{c|}{Huber} & \multicolumn{4}{c}{\Frechet} \\
\hline
$n$  & $100$& $300$ & $500$ & $1000 $
& $100$& $300$ & $500$ & $1000 $\\ \hline
        0.00  & 0.012 & 0.017 & 0.017 & 0.016 & 0.009 & 0.013 & 0.014 & 0.016 \\
        0.05  & 0.015 & 0.044 & 0.073 & 0.141 & 0.019 & 0.039 & 0.061 & 0.103 \\
        0.10  & 0.044 & 0.217 & 0.342 & 0.709 & 0.041 & 0.174 & 0.283 & 0.604 \\
        0.15  & 0.113 & 0.499 & 0.784 & 0.992 & 0.089 & 0.402 & 0.671 & 0.964 \\
        0.20  & 0.231 & 0.802 & 0.973 & 1 & 0.193 & 0.698 & 0.937 & 1 \\
        0.25  & 0.396 & 0.948 & 0.999 & 1 & 0.331 & 0.901 & 0.995 & 1 \\

\hline

\end{tabular}
\end{table}

\subsection{QQ plots for the bootstrap-approximate distribution}

As referenced in Section 4.2 of the main article, we provide the quantile-quantile plots of the bootstrap replicates for the sample Huber mean and the Fr\'{e}chet mean in Figure~\ref{fig:morphometry2}. The bootstrap replicates of Fr\'{e}chet and Huber means are normally distributed (except their tails), which is consistent with the result of Theorem~\ref{thm:clt}.
\begin{figure}[!ht]
\centering
\includegraphics[width = \textwidth]{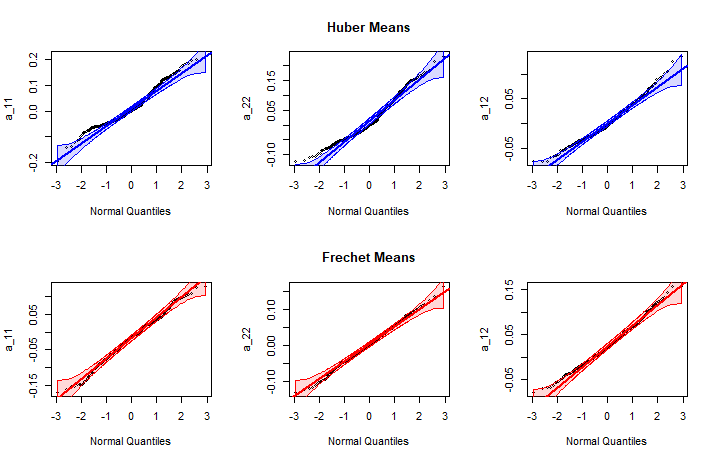}
\caption{Element-wise QQ plots of the bootstrap replicates for the sample Huber mean and the Fr\'{e}chet mean. Approximate 95$\%$ pointwise confidence envelopes are shown as transparent strips.}
 \label{fig:morphometry2}
 \end{figure}

\end{appendix}

\bibliographystyle{imsart-nameyear}
\bibliography{hmr}     

\end{document}